   \def\MR#1{}
\newtheorem{theorem}{Theorem}
\newtheorem{lemma}{Lemma}
\newtheorem{corollary}{Corollary}
\newtheorem{proposition}{Proposition}
\newtheorem{remark}{Remark}
\newtheorem{question}{Question}
\newtheorem{conjecture}{Conjecture}
\newtheorem{observation}{Ovservation}
\newtheorem*{theorem3}{Theorem 3}
\newtheorem*{theorem5}{Theorem 5}
\newtheorem*{theorem7}{Theorem 7}
\theoremstyle{definition}
\newtheorem{example}{Example}
\newtheorem{definition}{Definition}
\newcommand{\abar}{\bar{a}}
\newcommand{\bbar}{\bar{b}}
\newcommand{\cbar}{\bar{c}}
\newcommand{\ebar}{\bar{e}}
\newcommand{\xbar}{\bar{x}}
\newcommand{\vbar}{\bar{v}}
\newcommand{\tildeM}{\tilde{M}}
\newcommand{\tildeC}{\tilde{C}}
\newcommand{\prob}{\textnormal{prob}}
\newcommand{\diff}{\textnormal{diff}}
\newcommand{\cop}{\textnormal{cop}}
\newcommand{\dist}{\textnormal{dist}}
\newcommand{\ex}{\textnormal{ex}}
\newcommand{\FLAW}{\textnormal{FLAW}}
\newcommand{\calL}{\mathcal{L}}
\newcommand{\calF}{\mathcal{F}}
\newcommand{\calH}{\mathcal{H}}
\newcommand{\calC}{\mathcal{C}}
\newcommand{\calM}{\mathcal{M}}
\newcommand{\calA}{\mathcal{A}}
\newcommand{\calB}{\mathcal{B}}
\newcommand{\calD}{\mathcal{D}}
\newcommand{\calP}{\mathcal{P}}
\newcommand{\calE}{\mathcal{E}}
\newcommand{\calR}{\mathcal{R}}
\newcommand{\tildeE}{\tilde{E}}
\def\Forb{\operatorname{Forb}}
\title{
Structure and enumeration theorems for hereditary properties in finite relational languages
}
\author{
Caroline Terry}
\date{}
\begin{document}

\maketitle

\begin{abstract}

Given a finite relational language $\calL$, a hereditary $\calL$-property is a class of finite $\calL$-structures which is closed under isomorphism and model theoretic substructure.  This notion encompasses many objects of study in extremal combinatorics, including (but not limited to) hereditary properties of graphs, hypergraphs, and oriented graphs.  In this paper, we generalize certain definitions, tools, and results form the study of hereditary properties in combinatorics to the setting of hereditary $\calL$-properties, where $\calL$ is any finite relational language with maximum arity at least two.  In particular, the goal of this paper is to generalize how extremal results and stability theorems can be combined with standard techniques and tools to yield approximate enumeration and structure theorems.  We accomplish this by generalizing the notions of extremal graphs, asymptotic density, and graph stability theorems using structures in an auxiliary language associated to a hereditary $\calL$-property.  Given a hereditary $\calL$-property $\calH$, we prove an approximate asymptotic enumeration theorem for $\calH$ in terms of its generalized asymptotic density.  Further we prove an approximate structure theorem for $\calH$, under the assumption of that $\calH$ has a stability theorem.  The tools we use include a new application of the hypergraph containers theorem (Balogh-Morris-Samotij \cite{Baloghetal1}, Saxton-Thomason \cite{saxton-thomason}) to the setting of $\calL$-structures, a general supersaturation theorem for hereditary $\calL$-properties (also new), and a general graph removal lemma for $\calL$-structures proved by Aroskar and Cummings in \cite{AroskarCummings}.

\end{abstract}

\section{Introduction}


The study of hereditary properties of combinatorial structures is an important topic within the field of extremal combinatorics.  Out of the many results in this line of research has emerged an pattern for how to prove approximate asymptotic enumeration and structure results.  The aim of this paper is to provide a general framework in which to view these results and to formalize this pattern of proof.

\subsection{Background}

A nonempty class of graphs $\mathcal{P}$ is called a \emph{hereditary graph property} if it is closed under isomorphism and induced subgraphs.  Given a hereditary graph property $\mathcal{P}$, let $\mathcal{P}_n$ denote the set of elements of $\mathcal{P}$ with vertex set $[n]$.  There has been extensive investigation into the properties of $\calP_n$, where $\calP$ is a hereditary property of graphs and $n$ is large, see for instance \cite{Alekseev1, Alekseev2, BBW1, BBW2, BBW3, BoTh1, BoTh2,ZitoSch}.  The main questions addressed in these papers concern \emph{enumeration} (finding an asymptotic formula for $|\calP_n|$) and \emph{structure} (understanding what properties elements of $\calP_n$ have with high probability).  Given a graph $H$, $\Forb(H)$ (respectively $\Forb_{ind}(H)$) is the class of finite graphs omitting $H$ as a non-induced (respectively induced) subgraph.  For any graph $H$, both $\Forb(H)$ and $\Forb_{ind}(H)$ are hereditary graph properties.  Therefore, work on hereditary graph properties can be seen as generalizing the many structure and enumeration results about graph properties of the form $\Forb(H)$ and $\Forb_{ind}(H)$, for instance those appearing in \cite{HPS, KPR, ProSte3, ProSte2, ProSte1}.  From this perspective, the study of hereditary graph properties has been a central area of research in extremal combinatorics.

There are many results which extend the investigation of hereditary graph properties to other combinatorial structures.  Examples of this include  \cite{BaBoMo1} for tournaments, \cite{BaBoMo} for oriented graphs and posets, \cite{DN, KNR} for $k$-uniform hypergraphs, and \cite{Ishigami} for colored $k$-uniform hypergraphs.  The results in \cite{BM, BM2, PersonSchacht, NR} investigate asymptotic enumeration and structure results for specific classes of $H$-free hypergraphs, which are examples of hereditary properties of hypergraphs.  Similarly, the results in \cite{OKTZ} concern specific examples of hereditary properties of digraphs.  The results in \cite{MT} for metric spaces are similar in flavor, although they have not been studied explicitly as instances of hereditary properties. Thus, extending the investigation of hereditary graph properties to other combinatorial structures has been an active area of research for many years.  

From this investigation, patterns have emerged for how to prove these kinds of results, along with a set of standard tools, including extremal results, stability\footnote{This use of the word stability refers to a type of theorem from extremal combinatorics and is unrelated to the model theoretic notion of stability.}  theorems, regularity lemmas, supersaturation results, and the hypergraph containers theorem.  In various combinations with extremal results, stability theorems, and supersaturation results, Szemer\'{e}di's regularity lemma has played a key role in proving many results in this area, especially those extending results for graphs to other settings.  A sampling of these are \cite{ABBM, BBS, ProSte2} for graphs, \cite{AlonYuster} for oriented graphs, \cite{BM, BM2, DN, Ishigami, KNR, NR, PersonSchacht} for hypergraphs, and \cite{MT} for metric spaces.   The hypergraph containers theorem, independently developed in \cite{Baloghetal1, saxton-thomason}, has been used in many recent papers in place of the regularity lemma.   Examples of this include \cite{BBNLMS, BLPS, KKOT, Stegeretal, Baloghetal1, saxton-thomason} for graphs, \cite{OKTZ} for digraphs, and \cite{BW} for metric spaces. In these papers, the commonalities in the proofs are especially clear.  Given an extremal result, there is clear outline for how to prove an approximate enumeration theorem.  If on top of this, one can characterize the extremal structures and prove a corresponding stability theorem, then there is a clear outline for how to prove an approximate structure theorem.  The goal of this paper is to make these proof outlines formal using generalizations of tools, definitions, and theorems from these papers to the setting of structures in finite relational languages.

\subsection{Summary of Results}
Given a first-order language $\mathcal{L}$, we say a class $\mathcal{H}$ of $\mathcal{L}$-structures has the \emph{hereditary property} if for all $A\in \mathcal{H}$, if $B$ is a model theoretic substructure of $A$, then $B\in \mathcal{H}$.  
\begin{definition}
Suppose $\calL$ is a finite relational language.  A  \emph{hereditary $\calL$-property} is a nonempty class of finite $\mathcal{L}$-structures which has the hereditary property and which is closed under isomorphism.  
\end{definition}
This is the natural generalization of existing notions of hereditary properties of various combinatorial structures.  Indeed, for appropriately chosen $\mathcal{L}$, almost all of the results cited so far are for hereditary $\calL$-properties, including all hereditary properties of graphs, $k$-uniform hypergraphs, colored $k$-uniform hypergraphs, directed graphs, and posets, as well as the the metric spaces from \cite{MT}.  Thus hereditary $\calL$-properties are the appropriate objects to study in order to generalize many of the results we are interested in.

We now give a description of our results.  The precise statements require extensive preliminaries and appear in Section \ref{mainresults}.  Fix a finite relational language $\calL$ with maximum arity $r\geq 2$.  Given a hereditary $\calL$-property $\calH$, we will define an invariant associated to $\mathcal{H}$, called the \emph{asymptotic density of $\calH$}, denoted by $\pi(\calH)$ (see Definition \ref{pidef}).  Our first main result, Theorem \ref{enumeration}, gives an asymptotic enumeration of $\calH_n$ in terms of $\pi(\calH)$, where $\calH_n$ denotes the set of elements in $\calH$ with domain $[n]$.  

\begin{theorem3}[Enumeration]
Suppose $\mathcal{H}$ is a hereditary $\calL$-property.  Then the following hold. 
\begin{enumerate}
\item If $\pi(\mathcal{H})>1$, then $|\mathcal{H}_n|= \pi(\mathcal{H})^{{n\choose r}+o(n^r)}$.
\item If $\pi(\mathcal{H})\leq 1$, then $|\mathcal{H}_n|=2^{o(n^r)}$.
\end{enumerate}
\end{theorem3}

\noindent The tools we use to prove this theorem include a general supersaturation theorem for $\calL$-structures (Theorem \ref{GENSUPERSAT}) and a new adaptation of the hypergraph containers theorem to the setting of $\calL$-structures (Theorems \ref{version1}).

\begin{theorem7}[informal]  A hereditary $\calL$-property always has a supersaturation theorem.
\end{theorem7}

\noindent The proof of Theorem \ref{GENSUPERSAT} uses our hypergraph containers theorem for $\calL$-structures (Theorem \ref{version1}) and a powerful generalization by Aroskar and Cummings \cite{AroskarCummings} of the triangle removal lemma (Theorem \ref{triangleremoval2}).  Our proof strategies for these theorems draw on a series of enumeration results for combinatorial structures which employ the hypergraph containers theorem, namely those in \cite{saxton-thomason, Baloghetal1, OKTZ, BW, Stegeretal}.  

We will also define generalizations of extremal graphs (Definition \ref{genexdef}) and graph stability theorems (Definition \ref{stabdef}).  We will prove that the existence of a stability theorem, along with an understanding of extremal structure, always yield an approximate structure theorem.  This result, Theorem \ref{stab}, generalizes arguments appearing in many papers, including \cite{OKTZ, BM, BM2, PersonSchacht, MT}.  

\begin{theorem5}[informal] Stability theorem + Characterization of extremal structures $\Rightarrow$ Approximate Structure.
\end{theorem5}

\noindent The main tool used to prove Theorem \ref{stab} is a second adaptation of the hypergraph containers theorem to the setting of $\calL$-structures, namely Theorem \ref{COROLLARY2}.  Our adaptations of the hypergraph containers theorem, Theorems \ref{version1} and \ref{COROLLARY2}, rely on the original hypergraph containers theorem of \cite{Baloghetal1, saxton-thomason}, the general triangle removal lemma in \cite{AroskarCummings}, and the model theoretic tools developed in this paper.

In the last main section of the paper, we consider how our results relate to theorems about discrete metric spaces.  Given integers $r\geq 3$ and $n\geq 1$, let $M_r(n)$ be the set of metric spaces with distances all in $[r]$ and underlying set $[n]$.  We will reprove structure and enumeration theorems from \cite{MT} using the machinery of this paper, along with combinatorial ingredients from \cite{MT}.  We include this example because it demonstrates interesting behavior with regards to the existence of a stability theorem.  In particular, we will prove that when $r$ is even, the hereditary property associated to $\bigcup_{n\in \mathbb{N}}M_r(n)$ has a stability theorem in the sense of Definition \ref{stabdef}, but when $r$ is odd, this is not the case.  This formalizes an intuitive difference between the even and odd cases observed in \cite{MT}.  It is important to note that our results apply to languages with relations of arity larger than two, and to structures with non-symmetric relations.  To illustrate this we have included appendices explaining how our results apply to  examples in the settings of colored hypergraphs (Appendix \ref{coloredhg}), directed graphs (Appendix \ref{dgsec}), and triangle-free hypergraphs (Appendix \ref{trifreesec}). 

We now clarify what the results in this paper do and what they do not do.  Our main theorem, Theorem \ref{enumeration}, gives an enumeration theorem for a hereditary $\calL$-property in terms of its asymptotic density.  However, determining the asymptotic density of a specific hereditary $\calL$-property is often a hard combinatorial problem which this paper does not address.  Similarly, while Theorem \ref{stab} shows that a stability theorem and an understanding of extremal structure implies an approximate structure theorem, proving a specific family $\calH$ has a stability theorem and understanding its extremal structures are often difficult problems in practice.  These difficulties are not addressed by the results in this paper.   The role of this paper is to generalize how extremal results and stability theorems give rise to approximate structure and enumeration theorems.  

While our proofs are inspired by and modeled on those appearing in \cite{saxton-thomason, Baloghetal1, OKTZ, BW, Stegeretal}, our results are more than just straightforward generalizations of existing combinatorial theorems.  We use new tools called $\calL_{\calH}$-templates (see Section \ref{tildeLstructures}) and an application of the hyergraph containers theorem to a hypergraph whose vertices and edges correspond to certain atomic diagrams (see Theorem \ref{VERSION2}). These technical tools and their appearances in our results are non-obvious and of independent interest from a model theoretic perspective.  We also provide a simplified version of the generalized triangle removal lemma appearing in \cite{AroskarCummings}, by noting that a simpler notion of the distance between $\calL$-structures may be used.

\subsection{Conclusion}

The work in this paper is significant from the perspective of combinatorics for three main reasons.  First, problems in finite combinatorics are most often approached one by one, and techniques developed for specific structures often do not translate well into other contexts.  While this style of approach is necessary to solve problems, it creates the impression that generalization is not possible.  This work serves as an example that searching to generalize results and proofs within finite combinatorics can be highly fruitful.  Second, this work will save researchers time by allowing them to avoid repeating arguments which now appear here in a general context.  Third, we believe this paper gives the correct general framework in which to view these questions, which may aid in finding answers to open problems in the area.  

This work is also of significance from the model theoretic perspective due to the following connection to logical $0$-$1$ laws.  Suppose $\calL$ is a finite language, and for each $n$, $F(n)$ is a set of $\calL$-structures with domain $[n]$.  We say $F:=\bigcup_{n\in \mathbb{N}}F(n)$ has a \emph{$0$-$1$ law} if for every first-order $\calL$-sentence $\phi$, the limit
$$
\mu(\phi):=\lim_{n\rightarrow \infty}\frac{|\{G\in F(n): G\models \phi\}|}{|F(n)|}
$$
exists and is equal to $0$ or $1$.  If $F$ has a $0$-$1$ law, then $T_{as}(F):=\{\phi:  \mu(\phi)=1\}$ is a complete consistent first-order theory.  There are many interesting model theoretic questions related to $0$-$1$ laws and almost sure theories.  For instance, it is not well understood in general why some classes of finite structure have $0$-$1$ laws and why others do not.  One source of known $0$-$1$ laws are asymptotic structure results from extremal combinatorics.  For instance, fix $s \geq 3$ and suppose for each $n$, $F(n)$ is the set of all graphs with vertex set $[n]$ omitting the complete graph $K_s$ on $s$ vertices.  In \cite{KPR} Kolaitis, Pr\"{o}mel and Rothschild show $F:=\bigcup_{n\in \mathbb{N}}F(n)$ has a $0$-$1$ law.  Their proof relies crucially on first proving asymptotic structure and enumeration theorems for $F$.  In particular, they show that if $S(n)$ is the set of $(s-1)$-partite graphs on $[n]$, then $S(n)\subseteq F(n)$ for all $n$ and
\begin{align}\label{KPRfact}
\lim_{n\rightarrow \infty}\frac{|S(n)|}{|F(n)|}=1.
\end{align}
They then prove a $0$-$1$ law for $S:=\bigcup_{n\in \mathbb{N}}S(n)$ which combines with (\ref{KPRfact}) to imply $F$ has a $0$-$1$ law.  Other  asymptotic structure results which imply $0$-$1$ laws include \cite{BM2, OKTZ, PersonSchacht, MT} (for details on how these structure results imply $0$-$1$ laws, see \cite{Koponen}).  In these papers, \cite{BM2, OKTZ, PersonSchacht, MT}, the precise structure results (which are needed to prove the $0$-$1$ laws) are proven using \emph{approximate} structure and enumeration theorems as stepping stones.  This trend suggests that a systematic understanding of precise structure and enumeration will use some version of this ``approximate version" stepping stone.  Therefore, understanding approximate structure and enumeration results from a model theoretic perspective is a necessary step in gaining a general understanding of precise structure and enumeration results, and consequently of the logical $0$-$1$ laws which rely on them.

\section{Preliminaries}\label{prelims}
Our goal here is to include enough preliminaries so that anyone with a rudimentary knowledge of first-order logic will be able to read this paper.  Definitions we expect the reader to understand include: first-order languages, constant and relation symbols, formulas, variables, structures, substructures, satisfaction, and consistency.  We refer the reader to \cite{Dave} for these definitions.

\subsection{Notation and Setup}\label{notation}

In this subsection we fix some notational conventions and definitions.  We will use the word ``collection'' to denote either a set or a class.  Suppose $\ell \geq 1$ is an integer and $X$ is a set.  Let $Perm(\ell)$ be the set of permutations of $[\ell]$.  We let $\calP(X)$ or $2^X$ denote the power set of $X$.  Given a finite tuple $\xbar=(x_1,\ldots, x_{\ell})$ and $\mu\in Perm(\ell)$, let $\cup \xbar =\{x_1,\ldots, x_{\ell}\}$, $|\xbar|=\ell$, and $\mu(\xbar)=(x_{\mu(1)},\ldots, x_{\mu(\ell)})$.  An \emph{enumeration of $X$} is a tuple $\xbar=(x_1,\ldots, x_{|X|})$ such that $\cup \xbar =X$.  Given $x\neq y\in X$, we will write $xy$ as shorthand for the set $\{x,y\}$.  Set
$$
X^{\underline{\ell}}=\{(x_1,\ldots, x_{\ell}) \in X^{\ell}: x_i\neq x_j\text{ for each }i\neq j\}\quad \hbox{ and }\quad {X\choose \ell}=\{Y\subseteq X: |Y|=\ell\}.
$$
Suppose $\mathcal{L}$ is a finite relational first-order language. Let $r_{\calL}$ denote the maximum arity of any relation symbol in $\calL$.  Given a formula $\phi$ and a tuple of variables $\xbar$, we write $\phi(\xbar)$ to denote that the free variables in $\phi$ are all in the set $\cup \xbar$.  Similarly, if $p$ is a set of formulas, we will write $p(\xbar)$ if every formula in $p$ has free variables in the set $\cup \xbar$. We will sometimes abuse notation and write $\xbar$ instead of $\cup \xbar$ when it is clear from context what is meant.  

Suppose $M$ is an $\calL$-structure.  Then $dom(M)$ denotes the underlying set of $M$, and the \emph{size} of $M$ is $|dom(M)|$.  If $\mathcal{L'}\subseteq \mathcal{L}$, $M\upharpoonright_{\mathcal{L}'}$ is the $\mathcal{L}'$-structure with underlying set $dom(M)$ such that for all $\ell\geq 1$, if $\abar \in dom(M)^{\ell}$ and $R$ is an $\ell$-ary relation symbol from $\calL'$, then $M\upharpoonright_{\mathcal{L}'}\models R(\abar)$ if and only if $M\models R(\abar)$.  We call $M\upharpoonright_{\calL'}$ the \emph{reduct of $M$ to $\calL'$}.  Given $X\subseteq dom(M)$, $M[X]$ is the $\mathcal{L}$-structure with domain $X$ such that for all $\ell\geq 1$, if $\abar\in X^{\ell}$ and $R$ is an $\ell$-ary relation symbol from $\mathcal{L}$, then $M[X]\models R(\abar)$ if and only if $M\models R(\abar)$.  We call $M[X]$ the $\mathcal{L}$-structure induced by $M$ on $X$.  Given a tuple $\abar\in dom(M)^{\ell}$, the \emph{quantifier-free type} of $\abar$ is 
$$
qftp^M(\abar)=\{\phi(x_1,\ldots, x_{\ell}): \text{$\phi(x_1,\ldots, x_{\ell})$ is a quantifier-free $\mathcal{L}$-formula and $M\models \phi(\abar)\}$}.
$$
If $\xbar=(x_1,\ldots, x_{\ell})$ and $p(\xbar)$ is a set of quantifier-free $\mathcal{L}$-formulas, then $p$ is called a \emph{quantifier-free $\ell$-type} if there is some $\mathcal{L}$-structure $N$ and a tuple $\abar\in dom(N)^{\ell}$ such that $N\models \phi(\abar)$ for all $\phi(\xbar)\in p$.  In this case we say $\abar$ \emph{realizes $p$ in $N$}.  If there is some $\abar \in dom(N)^{\ell}$ realizing $p$ in $N$, we say $p$ \emph{is realized in $N$}.  A quantifier-free $\ell$-type $p(\xbar)$ is \emph{complete} if for every quantifier-free formula $\phi(\xbar)$, either $\phi(\xbar)$ or $\neg \phi(\xbar)$ is in $p(\xbar)$.  Note that any type of the form $qftp^M(\abar)$ is complete.  All types and formulas we consider will be quantifier-free, so for the rest of the paper, any use of the words type and formula means quantifier-free type and quantifier-free formula.   

If $X$ and $Y$ are both $\calL$-structures, let $X\subseteq_{\mathcal{L}} Y$ denote that $X$ is a $\mathcal{L}$-substructure of $Y$.  Given an $\mathcal{L}$-structure $H$, we say that $M$ is $H$-free if there is no $A\subseteq_{\mathcal{L}} M$ such that $A\cong_{\mathcal{L}}H$.  Suppose $\mathcal{H}$ is a collection of $\mathcal{L}$-structures. We say $M$ is \emph{$\mathcal{H}$-free} if $M$ is $H$-free for all $H\in \mathcal{H}$.  For each positive integer $n$, let $\calH(n)$ denote the collection of all elements in $\calH$ of size $\ell$, and let $\calH_n$ denote the set of elements in $\calH$ with domain $[n]$.  $\calH$ is \emph{trivial} if there is $N$ such that $\calH(n)=\emptyset$ for all $n\geq N$.  Otherwise $\calH$ is \emph{non-trivial}.

We now define a modified version of the traditional type space, which is appropriate for working with families of finite structures instead of with complete first-order theories.  Given $\xbar=(x_1,\ldots, x_{\ell})$, an $\ell$-type $p(\xbar)$ is \emph{proper} if it contains the formulas $x_i\neq x_j$ for each $i\neq j$. 

\begin{definition}
Suppose $\mathcal{F}$ is a collection of $\mathcal{L}$-structures and $\ell \geq 1$ is an integer.  Define $S_{\ell}(\mathcal{F})$ to be the set of all complete, proper, quantifier-free $\ell$-types which are realized in some element of $\mathcal{F}$.  Let $S_{\ell}(\calL)$ denote the set of all complete, proper, quantifier-free $\ell$-types.
\end{definition}

\noindent We would like to emphasize some important differences between this and the usual type space.  First, the elements of these type spaces are proper and contain only quantifier-free formulas. Second, these type spaces are defined relative to families of finite structures instead of complete first-order theories.  

It will at times be convenient to expand our languages to contain constant symbols naming elements of the structures under consideration.  If $V$ is a set, let $C_V$ denote the set of constant symbols $\{c_v: v\in V\}$. Given $\vbar=(v_1,\ldots, v_{\ell}) \in V^{\ell}$, let $c_{\vbar}=(c_{v_1},\ldots, c_{v_{\ell}})$.  Suppose $M$ is an $\calL$-structure.  The \emph{diagram of $M$}, denoted $Diag(M)$, is the following set of sentences in the language $\calL\cup C_{dom(M)}$.
$$
Diag(M)=\{\phi(c_{\abar}): \phi(\xbar) \text{ is a quantifier-free $\calL$-formula, } \cup \abar \subseteq dom(M)\text{, and }M\models \phi(\abar)\}.
$$
\noindent If $A\subseteq dom(M)$, the \emph{diagram of $A$ in $M$} is the following set of sentences in the language $\calL\cup C_{A}$.
$$
Diag^M(A)=\{\phi(c_{\abar}): \phi(\xbar) \text{ is a quantifier-free $\calL$-formula, } \cup \abar \subseteq A\text{, and }M\models \phi(\abar)\}.
$$
Observe that if $A=\{a_1,\ldots, a_r\}\subseteq dom(M)$ and $p(\xbar)\in S_r(\calL)$ is such that $p(\xbar)=qftp^M(a_1,\ldots, a_r)$, then $Diag^M(A)=p(c_{a_1},\ldots, c_{a_r})$.  Given a set of constants $C$, a collection of $\calL$-structures $\calF$, and $\ell\geq 1$, set
$$S_{\ell}(C)=\{p(\cbar): p(\xbar)\in S_{\ell}(\calL)\text{ and } \cbar\in C^{\underline{\ell}}\}\quad \hbox{ and }\quad S_{\ell}(C, \calF)=\{p(\cbar): p(\xbar)\in S_{\ell}(\calF)\text{ and } \cbar\in C^{\underline{\ell}}\}.
$$
We would like to emphasize that if $p(\cbar)\in S_{\ell}(C)$, then $\cbar\in C^{\underline{\ell}}$ is a tuple of $\ell$ \emph{distinct} constants. Note that by this definition, if $A\in {dom(M)\choose \ell}$, then $Diag^M(A)\in S_{\ell}(C_{dom(M)})$.

\subsection{Hypergraph Containers Theorem.}\label{containerssec}

In this section we state a version of the hypergraph containers theorem, which was independently developed by Balogh-Morris-Samotij in \cite{Baloghetal1} and by Saxton-Thomason in \cite{saxton-thomason}.  The particular statement we use, Theorem \ref{containers} below, is a simplified version of Corollary 3.6 in \cite{saxton-thomason}.  We will use Theorem \ref{containers} directly in Section \ref{VERSION2pf}.  We also think it will be useful for the reader to compare it to the versions for $\calL$-structures stated in Section \ref{mainresults} (Theorem \ref{version1} and Corollary \ref{COROLLARY2}).

We begin with some definitions.  Recall that if $s\geq 2$ is an integer, an \emph{$s$-uniform hypergraph} is a pair $(V,E)$ where $V$ is a set of \emph{vertices} and $E\subseteq {V\choose s}$ is a set of \emph{edges}.  Suppose $H$ is an $s$-uniform hypergraph.  Then $V(H)$ and $E(H)$ denote the vertex and edge sets of $G$ respectively.  We set $v(H)=|V(H)|$ and $e(H)=|E(H)|$.  Given $X\subseteq V(H)$, $H[X]$ is the hypergraph $(X,E\cap{V(H)\choose s})$.  If $v(H)$ is finite, then the \emph{average degree of $H$} is $d=e(H)s/v(H)$.

\begin{definition}\label{containersdef}
Suppose $s\geq 2$, $H$ is a finite $s$-uniform hypergraph with $n$ vertices and $\tau>0$.
\begin{itemize}
\item For every $\sigma \subseteq V(H)$, the \emph{degree of $\sigma$ in $H$} is $d(\sigma) = |\{e\in E(H): \sigma \subseteq e\}|$.
\item Given $v\in V(H)$ and $j\in [s]$, set $d^{(j)}(v)=\max \{ d(\sigma): v\in \sigma \subseteq V(H), |\sigma|=j\}$.
\item If $d>0$, then for each $j\in [s]$, define $\delta_j=\delta_j(\tau)$ to satisfy the equation
$$
\delta_j \tau^{j-1}nd=\sum_{v\in V(H)}d^{(j)}(v), \qquad \hbox{ and set }\qquad \delta(H,\tau)=2^{{s\choose 2}-1}\sum_{j=2}^{s} 2^{-{j-1\choose 2}}\delta_j.
$$
If $d=0$, set $\delta(H,\tau)=0$. $\delta(H,\tau)$ is called the \emph{co-degree function}.
\end{itemize}
\end{definition}

\noindent Unless otherwise stated, $n$ is always a positive integer.

\begin{theorem}[\bf{Saxton-Thomason \cite{saxton-thomason}}] \label{containers}
Let $H$ be an $\ell$-uniform hyptergraph with a vertex set $V$ of size $n$.  Suppose $0<\epsilon, \tau<\frac{1}{2}$ and $\tau$ satisfies $\delta(H,\tau)\leq \epsilon/12s!$.  Then there exists a constant $c=c(s)$ and a collection $\calC\subseteq \calP(V)$ such that the following hold.
\begin{enumerate}[(i)]
\item For every independent set $I$ in $H$, there exists $C\in \calC$ such that $I\subseteq C$.
\item For all $C\in \mathcal{C}$, we have $e(H[C])\leq \epsilon e(G)$, and
\item $\log |\mathcal{C}| \leq c\log(1/\epsilon) n\tau \log (1/\tau)$.
\end{enumerate}
\end{theorem}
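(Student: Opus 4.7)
The plan is to prove Theorem \ref{containers} by constructing, for each independent set $I \subseteq V(H)$, a small ``fingerprint'' $S(I) \subseteq I$ together with a container $C(S) \supseteq I$ which depends only on $S = S(I)$, and then taking $\mathcal{C} = \{C(S) : S \subseteq V(H),\ |S| \leq c' \tau n \log(1/\epsilon)\}$ for a suitable constant $c' = c'(s)$. Conditions (i) and (ii) then follow by construction, and condition (iii) follows from the crude bound $|\mathcal{C}| \leq \sum_{k \leq c'\tau n\log(1/\epsilon)} \binom{n}{k}$ together with the estimate $\binom{n}{\tau n} \leq (e/\tau)^{\tau n}$.

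The fingerprint and container are produced by a deterministic greedy procedure (the ``scythe algorithm'' of Saxton--Thomason). Fix a total ordering on $V(H)$. Starting from $A_0 = V(H)$ and $S_0 = \emptyset$, at step $t$ one examines the vertex $v_t \in A_{t-1}$ of maximum \emph{weighted degree} in the residual hypergraph $H[A_{t-1}]$ relative to the already-chosen fingerprint $S_{t-1}$, where this weighted degree tallies how many edges of $H$ the current state would still fail to cover. If $v_t \in I$, set $S_t = S_{t-1} \cup \{v_t\}$ and update $A_t$ by removing any vertex whose presence together with $S_t$ would already force an edge of $H$; otherwise remove $v_t$ and analogously forced vertices from $A_{t-1}$. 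The algorithm terminates as soon as $e(H[A_t]) \leq \epsilon e(H)$, and $C(S)$ is defined to be this terminal $A_t$. A crucial observation is that the rule defining $v_t$ and $A_t$ uses only $H$ and $S_{t-1}$, so $C$ genuinely depends on $S$ alone and the map is well-defined.

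The role of the hypothesis $\delta(H,\tau) \leq \epsilon/12 s!$ is to guarantee that each step either grows the fingerprint (at most $c'\tau n\log(1/\epsilon)$ times) or shrinks $e(H[A_t])$ by a definite multiplicative factor. Expanding the weighted degree as a sum over $j$-subsets of $S_{t-1}$ for $j=1,\dots,s-1$, one bounds the contribution at level $j$ by the quantity $\delta_j \tau^{j-1}$; the definition of $\delta(H,\tau)$ is precisely what sums these contributions across $j$ and balances them against $\epsilon$. Iterating this estimate yields both the length bound on $|S(I)|$ and the edge bound on $C(S)$.

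The main obstacle is the calibration in the previous paragraph: making the local ``gain per step'' quantitative so that the algorithm terminates after $O(\tau n \log(1/\epsilon))$ rounds, while ensuring the terminal residual has at most $\epsilon e(H)$ edges. This requires choosing the weighting of the degrees $d^{(j)}$ carefully (matching the coefficients $2^{-\binom{j-1}{2}}$ appearing in $\delta(H,\tau)$) and an inductive accounting across the $s$ levels of overlap between $v_t$ and $S_{t-1}$. Once this technical bookkeeping is in place, the three conclusions of Theorem \ref{containers} drop out, with the constant $c = c(s)$ absorbing the $s$-dependent losses from the expansion.
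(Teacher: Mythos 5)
First, note that the paper does not prove Theorem \ref{containers} at all: it is imported verbatim as a simplified form of Corollary 3.6 of \cite{saxton-thomason}, so there is no internal argument to compare yours against; within this paper the correct ``proof'' is simply the citation.

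Judged as a blind attempt to prove the containers theorem itself, your proposal is an accurate summary of the Saxton--Thomason strategy (fingerprints $S(I)\subseteq I$, containers $C(S)$ depending only on $S$, a deterministic algorithm so that the map is well defined, and the entropy count $\binom{n}{O(\tau n\log(1/\epsilon))}$ giving (iii)), but it is not a proof, because the entire quantitative content is deferred to ``technical bookkeeping.'' The statement that needs proving is precisely the calibration you postpone: that under the hypothesis $\delta(H,\tau)\leq \epsilon/12s!$ the algorithm produces fingerprints of size $O(s\tau n)$ whose containers satisfy a definite edge-density decrement. Moreover, the mechanism you describe is not quite how the actual argument runs. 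In \cite{saxton-thomason} the scythe algorithm does not terminate in one pass ``as soon as $e(H[A_t])\leq \epsilon e(H)$,'' and individual steps do not shrink $e(H[A_t])$ by a multiplicative factor; rather, a basic container lemma (proved by induction on the uniformity $s$, passing to auxiliary lower-uniformity hypergraphs built from the fingerprint --- an ingredient entirely absent from your sketch) yields containers with $e(H[C])\leq (1-c_s)e(H)$ for some constant $c_s>0$, and the $\epsilon$-version in Theorem \ref{containers} is obtained by iterating that lemma $O(\log(1/\epsilon))$ times, which is where the $\log(1/\epsilon)$ factors in (ii) and (iii) come from, with the co-degree condition $\delta(H,\tau)\leq \epsilon/12s!$ verified at each iteration. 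Without the basic lemma and its induction on $s$, the claims that the fingerprint has size $O(\tau n\log(1/\epsilon))$ and that the terminal container is $\epsilon$-sparse are assertions, not consequences of anything you have established; so as written the proposal has a genuine gap exactly at the theorem's core.
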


\subsection{Distance between first-order structures}

In this section we define a notion of distance between finite first-order structures.  The following is based on definitions in \cite{AroskarCummings}.
\begin{definition}
Suppose $\calL$ is a first-order language, $B$ is a finite $\calL$-structure of size $\ell$, and $M$ is a finite $\calL$-structure of size $L$.  
\begin{itemize}
\item The \emph{set of copies of $B$ in $M$} is $\cop(B,M)=\{A: A\subseteq_{\calL}M \text{ and }A\cong_{\calL}B\}$.
\item The \emph{induced structure density of $B$ in $M$} is $\prob(B,M)=|\cop(B,M)|/{L\choose \ell}$
\item If $\mathcal{B}$ is a set of finite $\calL$-structures, let 
$$
\cop(\mathcal{B}, M)=\bigcup_{B\in \mathcal{B}}\cop(B,M)\qquad \hbox{ and }\qquad \prob(\mathcal{B},M)=\max\{p(B,M): B\in \mathcal{B}\}.
$$
\end{itemize}
If $\calB$ is a \emph{class} of finite $\calL$-structures, define $\cop(\calB, M)=\cop(\calB',M)$ and $\prob(\calB,M)=\prob(\calB',M)$, where $\calB'$ is any set containing one representative of each isomorphism type in $B$. 
\end{definition}

We now state our definition for the distance between two finite first-order structures.  It is a simplified version of the distance notion appearing in \cite{AroskarCummings}.  We will discuss the relationship between the two notions in Section \ref{rphrem}.

\begin{definition}\label{deltaclosedef1}
Let $\mathcal{L}$ be a finite relational first-order language with $r_{\ell}=r\geq 2$.  Suppose $M$ and $N$ are two finite $\mathcal{L}$-structures with the same underlying set $V$ of size $n$.  Let
\begin{align*}
\diff(M,N)&=\{A\in {V\choose r}: \text{ for some enumeration $\abar$ of $A$, }qftp^M(\abar)\neq qftp^N(\abar)\}\text{ and }\\
\dist(M,N)&=\frac{|\diff(M,N)|}{{n\choose r}}
\end{align*}
We say that $M$ and $N$ are \emph{$\delta$-close} if $\dist(M,N)\leq \delta$.
\end{definition}

\noindent Observe that in the notation of Definition \ref{deltaclosedef1}, $\diff(M,N)=\{A\in {V\choose r}: Diag^M(A)\neq Diag^N(A)\}$.  

\subsection{Facts about hereditary properties}
Suppose $\calL$ is a finite relational language.  In this subsection we state some well known facts about hereditary $\calL$-properties.  First we recall that hereditary $\calL$-properties are the same as families of structures with forbidden configurations.  This fact will be used throughout the chapter.

\begin{definition}
If $\mathcal{F}$ is a collection of finite $\mathcal{L}$-structures, let $\Forb(\calF)$ be the class of all finite $\calL$-structures which are $\calF$-free.  
\end{definition}
It is easy to check that for any collection $\calF$ of finite $\calL$-structures, $\Forb(\calF)$ is a hereditary $\calL$-property.  The converse to this statement is also true in the sense of Observation \ref{HP} below.  This fact is standard, but we include a proof for completeness.  

\begin{observation}\label{HP}
If $\calH$ is a hereditary $\calL$-property, then there is a class of finite $\mathcal{L}$-structures $\mathcal{F}$ which is closed under isomorphism and such that $\calH=\Forb(\calF)$.
\end{observation}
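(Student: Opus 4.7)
The plan is to take $\calF$ to be precisely the collection of finite $\calL$-structures that fail to lie in $\calH$, and show that this $\calF$ witnesses the observation. More formally, I would define
\[
\calF = \{M : M \text{ is a finite $\calL$-structure and } M \notin \calH\}.
\]
Since $\calH$ is closed under isomorphism by hypothesis, its complement $\calF$ (inside the class of finite $\calL$-structures) is also closed under isomorphism, settling the first requirement.

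Next I would verify the two inclusions making up $\calH = \Forb(\calF)$. For $\calH \subseteq \Forb(\calF)$, take any $A \in \calH$ and suppose toward contradiction that $B \subseteq_{\calL} A$ with $B \cong_{\calL} H$ for some $H \in \calF$. The hereditary property applied to $A$ yields $B \in \calH$, and closure of $\calH$ under isomorphism then gives $H \in \calH$, contradicting $H \in \calF$. Hence $A$ is $\calF$-free and lies in $\Forb(\calF)$.

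For the reverse inclusion $\Forb(\calF) \subseteq \calH$, let $M \in \Forb(\calF)$. Since $M \subseteq_{\calL} M$ and $M \cong_{\calL} M$, the condition that $M$ is $\calF$-free forces $M \notin \calF$. By the definition of $\calF$, this means $M \in \calH$, as required.

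I do not anticipate any real obstacles: the argument is essentially definition-chasing once one recognizes that the complement of $\calH$ is the natural choice of forbidden family. The only minor point to be careful about is that $\Forb(\calF)$ detects isomorphic copies rather than equality, which is why closure of $\calH$ (and hence of $\calF$) under isomorphism is needed in the first direction.
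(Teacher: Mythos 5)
Your proof is correct. The only difference from the paper is the choice of witness: you take $\calF$ to be the complement of $\calH$ among finite $\calL$-structures, while the paper takes $\calF=\{F: \prob(F,\calH)=0\}$, i.e.\ the structures having no copy inside any member of $\calH$. For a hereditary class closed under isomorphism these two classes coincide, so the arguments are essentially interchangeable; the effect of your choice is just to shift where the hypotheses get used. With the complement, the inclusion $\Forb(\calF)\subseteq\calH$ is immediate (since $M\subseteq_{\calL}M$), and hereditarity plus isomorphism-closure are needed for $\calH\subseteq\Forb(\calF)$; with the paper's $\calF$, the inclusion $\calH\subseteq\Forb(\calF)$ is nearly definitional, and hereditarity plus isomorphism-closure are needed for the reverse inclusion. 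Both verifications are complete, and your handling of the "isomorphic copy versus equality" point is exactly the subtlety that matters here.
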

\begin{proof}
Let $\calF$ be the class of all finite $\calL$-structures $F$ such that $\prob(F, \calH)=0$.  Clearly $\calF$ is closed under $\calL$-isomorphism.  We show $\calH=\Forb(\calF)$.  Suppose $M\in \calH$ but $M\notin \Forb(\calF)$. Then there is some $F'\subseteq_{\calL}M$ and $F\in \calF$ such that $F\cong_{\calL}F'$.  Since $\calF$ is closed under $\calL$-isomorphism, $F'\in \calF$.  But then $M\in \calH$ and $F'\subseteq_{\calL}M$ implies $F'\notin \calF$ by definition of $\calF$, a contradiction. Conversely, suppose $M\in \\Forb(\calF)$ but $M\notin \calH$.  Because $M$ is $\calF$-free, we must have $M\notin \calF$.  By definition of $\calF$, this implies there is some $M'\in \calH$ such that $M\subseteq_{\calL}M'$.  Because $\calH$ has the hereditary property, this implies $M\in \calH$, a contradiction.
\end{proof}

A sentence $\phi$ is universal if it is of the form $\forall \xbar \psi(\xbar)$ where $\psi(\xbar)$ is quantifier-free.  The following well known fact is another reason hereditary $\calL$-properties are natural objects of study.

\begin{observation}
$\calH$ is a hereditary $\calL$-property if and only if there is a set of universal sentences $\Phi$ such that $\calH$ is the class of all finite models of $\Phi$.
\end{observation}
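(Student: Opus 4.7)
The plan is to prove both directions of the equivalence. The forward direction will leverage the preceding Observation \ref{HP}, which expresses any hereditary $\calL$-property as $\Forb(\calF)$ for some isomorphism-closed class $\calF$ of forbidden finite structures; we then translate each forbidden structure into a universal sentence. The reverse direction will use the standard fact that universal sentences are preserved under $\calL$-substructure.

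For the forward direction, suppose $\calH$ is a hereditary $\calL$-property and apply Observation \ref{HP} to obtain $\calF$ with $\calH=\Forb(\calF)$. Fix a set $\calF_0\subseteq \calF$ containing one representative of each isomorphism type in $\calF$. For each $F\in\calF_0$ of size $k$, fix an enumeration $\abar_F=(a_1,\ldots,a_k)$ of $\dom(F)$. Because $\calL$ is finite relational, the quantifier-free type $q_F(\xbar):=qftp^F(\abar_F)$ is determined by a finite conjunction $\psi_F(\xbar)$ of atomic and negated atomic $\calL$-formulas together with the inequalities $x_i\neq x_j$ for $i\neq j$. Then an $\calL$-structure $M$ contains an $\calL$-substructure isomorphic to $F$ if and only if $M\models \exists \xbar\, \psi_F(\xbar)$. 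Accordingly let
\[
\Phi=\{\forall \xbar\,\neg \psi_F(\xbar): F\in \calF_0\},
\]
which is a set of universal sentences. By construction, a finite $\calL$-structure $M$ satisfies $\Phi$ if and only if $M$ is $\calF$-free, i.e., $M\in \Forb(\calF)=\calH$.

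For the reverse direction, suppose $\Phi$ is a set of universal sentences and $\calH$ is the class of all finite models of $\Phi$. Closure under isomorphism is immediate from the fact that truth of first-order sentences is preserved under isomorphism. For closure under $\calL$-substructure, let $M\in \calH$, let $N\subseteq_\calL M$, and let $\forall\xbar\,\psi(\xbar)\in \Phi$ with $\psi$ quantifier-free. For any tuple $\abar$ from $\dom(N)\subseteq \dom(M)$, we have $M\models \psi(\abar)$; since quantifier-free formulas are decided purely by the atomic diagram and $N$ agrees with $M$ on atomic formulas with parameters in $\dom(N)$, it follows that $N\models\psi(\abar)$. Hence $N\models \forall\xbar\,\psi(\xbar)$, and as $\phi\in\Phi$ was arbitrary, $N\in\calH$.

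This argument is essentially bookkeeping rather than technically difficult; the only mild obstacle is writing down the universal sentence $\forall \xbar\, \neg \psi_F(\xbar)$ cleanly enough to make transparent the equivalence between \emph{containing a copy of $F$ as an $\calL$-substructure} and \emph{realizing $qftp^F(\abar_F)$}, which relies on $\calL$ being finite relational so that the quantifier-free type is captured by a single finite formula, and on the inclusion of the inequalities $x_i\neq x_j$ so that realizations correspond to injections onto $k$-element substructures.
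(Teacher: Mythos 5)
Your proof is correct and follows essentially the same route the paper indicates: the forward direction uses Observation \ref{HP} to write $\calH=\Forb(\calF)$ and axiomatizes $\calF$-freeness by universal sentences (your $\forall\xbar\,\neg\psi_F(\xbar)$ is just an explicit rendering of the paper's ``$\forall\xbar\,(\xbar\ncong_{\calL}F_i)$''), and the reverse direction uses preservation of universal sentences under substructures. The only shared gloss is that neither argument checks the nonemptiness required in the paper's definition of a hereditary $\calL$-property, which the paper itself also passes over.
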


The proof of this is straightforward using Observation \ref{HP}, the fact that $\Forb(\calF)$ can be axiomatized using universal sentences for any class $\calF$ of finite $\calL$-structures, and the fact that universal sentences are preserved under taking substructures (see the \L os-Tarski Theorem in \cite{hodges}).   

\section{$\calL_{\calH}$-structures}\label{tildeLstructures}
{\bf From now on, $\mathcal{L}$ is a fixed finite relational language and $r:=r_{\calL}\geq 2$}.   For this section, $\calH$ is a nonempty collection of finite $\calL$-structures.  In this section we introduce a language $\calL_{\calH}$ associated to $\mathcal{L}$ and $\calH$.  Structures in this new language play key roles in our main theorems.  

\begin{definition}\label{LHdef}
Define $\calL_{\calH}=\{R_p(\xbar): p(\xbar)\in S_r(\calH)\}$ to be the relational language with one $r$-ary relation for each $p(\xbar)$ in $S_r(\calH)$.  
\end{definition}

The goal of this section is to formalize how an $\calL_{\calH}$-structure $M$ with the right properties can serve as a ``template'' for building $\calL$-structures with the same underlying set as $M$. We now give an example of a hereditary property and its corresponding auxiliary language as in Definition \ref{LHdef}.

\begin{example}\label{ex1}
To avoid confusion, we will use $\calP$ to refer to specific hereditary properties in example settings.  Suppose $\calL=\{R_1(x,y), R_2(x,y), R_3(x,y)\}$.  Let $\calP$ be the class of all finite metric spaces with distances in $\{1,2,3\}$, considered as $\calL$-structures in the natural way (i.e. $R_i(x,y)$ if and only if $d(x,y)=i$).  It is easy to see that $\calP$ is a hereditary $\calL$-property.  Observe that since $r_{\calL}=2$, $\calL_{\calP}=\{R_p(x,y): p(x,y)\in S_2(\calP)\}$.  For each $i\in [3]$, set  
$$
q_i(x,y):=\{x\neq y\}\cup \{R_i(x,y), R_i(y,x)\}\cup \{\neg R_j(x,y), \neg R_j(y,x): j\neq i\},
$$
and let $p_i(x,y)$ be the unique quantifier-free $2$-type containing $q_i(x,y)$. Informally, the type $p_i(x,y)$ says ``the distance between $x$ and $y$ is equal to $i$.''  We leave it as an exercise to the reader that $S_2(\calP)=\{p_i(x,y): i\in [3]\}$ (recall $S_2(\calP)$ consists of \emph{proper} types).  Thus $\calL_{\calP}=\{R_{p_i}(x,y): i\in [3]\}$.   
\end{example}

Observe that in an arbitrary $\calL_{\calH}$-structure, the relation symbols in $\calL_{\calH}$ may have nothing to do with the properties of the type space $S_r(\calH)$.  For instance, in the notation of Example \ref{ex1}, we can easily build an $\calL_{\calP}$-structure $M$ so that for some $a,b\in dom(M)$, $M\models R_{p_1}(a,b) \wedge \neg R_{p_1}(b,a)$, even though $p_1(x,y)=p_1(y,x)$ in $S_2(\calP)$.  This kind of behavior will be undesirable for various technical reasons.  We now define the class of $\calL_{\calH}$-structures which are most nicely behaved for our purposes, and where in particular, this bad behavior does not happen.

\begin{definition}
An $\calL_{\calH}$-structure $M$ with domain $V$ is \emph{complete} if for all $A\in {V\choose r}$ there is an enumeration $\abar$ of $A$ and $R_p\in \calL_{\calH}$ such that $M\models R_p(\abar)$.
\end{definition}

\begin{definition}\label{templatedef}
An $\calL_{\calH}$-structure $M$ with domain $V$ is an \emph{$\calL_{\calH}$-template} if it is complete and the following hold.  
\begin{enumerate}
\item If $p(\xbar)\in S_r(\calH)$ and $\abar\in V^r\setminus V^{\underline{r}}$, then $M\models \neg R_p(\abar)$.  
\item If $p(\xbar), p'(\xbar)\in S_r(\calH)$ and $\mu\in Perm(r)$ are such that $p(\xbar)=p'(\mu(\xbar))$, then for every $\abar\in V^{\underline{r}}$, $M\models R_p(\abar)$ if and only if $M\models R_{p'}(\mu(\abar))$. 
\end{enumerate}
\end{definition}

\noindent The idea is that $\calL_{\calH}$-templates are the $\calL_{\calH}$-structures which most accurately reflect the properties of $S_r(\calH)$.

\begin{example}\label{ex2}
Let $\calL$ and $\calP$ be as in Example \ref{ex1}.  Let $G$ be the $\calL_{\calP}$-structure with domain $V=\{u,v,w\}$ such that $G\models \bigwedge_{i=1}^3 (R_{p_i}(u,v) \wedge R_{p_i}(v,u))$,
\begin{align*}
G&\models \neg R_{p_3}(w,v) \wedge \neg R_{p_3}(v,w) \wedge \bigwedge_{i=1}^2 (R_{p_i}(w,v) \wedge R_{p_i}(v,w)),\\
G&\models R_{p_1}(w,u) \wedge R_{p_1}(w,u) \wedge \bigwedge_{i=2}^3 (\neg R_{p_i}(w,u) \wedge \neg R_{p_i}(u,w)),
\end{align*}
and for all $x\in V$, $G\models \bigwedge_{i=1}^3 \neg R_{p_i}(x,x)$. We leave it to the reader to verify $G$ is a $\calL_{\calP}$-template.
\end{example}

While $\calL_{\calH}$-templates are important for the main results of this paper, many of the definitions and facts in the rest of this section will be presented for $\calL_{\calH}$-structures with weaker assumptions.  

\subsection{Choice functions and subpatterns}
In this subsection, we give crucial definitions for how we can use $\calL_{\calH}$-structures to build $\calL$-structures.

\begin{definition}\label{chdef}
Suppose $M$ is an $\calL_{\calH}$-structure with domain $V$. 
\begin{enumerate}
\item Given $A\in {V\choose r}$, the \emph{set of choices for $A$ in $M$} is
$$
Ch_M(A)=\{p(c_{a_1},\ldots, c_{a_r})\in S_r(C_V,\calH): \{a_1,\ldots, a_r\}=A \text{ and }M\models R_p(a_1,\ldots, a_r)\}.
$$
\item A \emph{choice function for $M$} is a function $\chi: {V\choose r}\rightarrow  S_r(C_V,\calH)$ such that for each $A\in {V\choose r}$, $\chi(A)\in Ch_M(A)$. Let $Ch(M)$ denote the set of all choice functions for $M$.
\end{enumerate}
\end{definition}

\noindent In the notation of Definition \ref{chdef}, note $Ch(M)\neq \emptyset$ if and only if $Ch_M(A)\neq \emptyset$ for all $A\in {V\choose r}$.  Observe that $Ch_M(A)\neq \emptyset$ for all $A\in {V\choose r}$ if and only if $M$ is complete.  Therefore $Ch(M)\neq \emptyset$ if and only if $M$ is complete.  

\begin{example}\label{ex3}
Recall that if $x$ and $y$ are distinct elements of a set, then $xy$ is shorthand for the set $\{x,y\}$.  Let $\calL$, $\calP$, $V$, and $G$ be as in Example \ref{ex2}.  Note that $C_V=\{c_u, c_v,c_w\}$ and  
$$
S_2(C_V, \calP)= \{p_i(c_u,c_v):  i\in [3]\}\cup \{p_i(c_v,c_w):  i\in [3]\}\cup \{p_i(c_u,c_w):  i\in [3]\}.
$$
By definition of $G$, $Ch_G(uv)=\{p_1(c_u,c_v), p_2(c_u,c_v), p_3(c_u,c_v)\}$, $Ch_G(vw)=\{p_1(c_v,c_w), p_2(c_v,c_w)\}$, and $Ch_G(uw)=\{p_1(c_u,c_w)\}$.  Therefore $Ch(G)$ is the set of functions 
$$
\chi:\{uv, vw, uw\}\rightarrow \{p_i(c_u,c_v):  i\in [3]\}\cup \{p_i(c_v,c_w):  i\in [3]\}\cup \{p_i(c_u,c_w):  i\in [3]\}
$$
with the properties that $\chi(uv)\in \{p_1(c_u,c_v), p_2(c_u,c_v), p_3(c_u,c_v)\}$, $\chi(vw)\in \{p_1(c_v,c_w), p_2(c_v,c_w)\}$ and $\chi(uw)=p_1(c_u,c_w)$.  Clearly this shows $|Ch(G)|=|Ch_G(uv)||Ch_G(vw)||Ch_G(uw)|=6$.
\end{example}

The following observation is immediate from the definition of $\calL_{\calH}$-template.

\begin{observation}\label{temob}
If $M$ is an $\calL_{\calH}$-template with domain $V$, then for all $\abar\in V^r$ and $R_p\in \calL_{\calH}$, $M\models R_p(\abar)$ if and only if $|\cup \abar|=r$ and $p(c_{\abar})\in Ch_M(\cup \abar)$.
\end{observation}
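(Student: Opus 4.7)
The proof splits into two directions and is essentially an unfolding of the definitions of $\calL_{\calH}$-template, $Ch_M$, and the type space $S_r(C_V,\calH)$. For the forward direction, I would assume $M\models R_p(\abar)$. Clause (1) of Definition \ref{templatedef}, applied contrapositively to the atomic statement $R_p(\abar)$, immediately rules out $\abar\in V^r\setminus V^{\underline r}$, so the entries of $\abar$ are distinct and $|\cup\abar|=r$. Writing $\abar=(a_1,\ldots,a_r)$, the membership $p(c_{\abar})\in Ch_M(\cup\abar)$ is then direct from the definition, using that $R_p\in \calL_{\calH}$ forces $p(\xbar)\in S_r(\calH)$ and hence $p(c_{\abar})\in S_r(C_V,\calH)$.

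The converse is the more delicate direction. I would start by supposing $|\cup\abar|=r$ and $p(c_{\abar})\in Ch_M(\cup\abar)$, and writing $\abar=(b_1,\ldots,b_r)$. Unfolding the definition of $Ch_M$ produces an enumeration $(a_1,\ldots,a_r)$ of $\cup\abar$ and a type $p'(\xbar)\in S_r(\calH)$ such that $p'(c_{a_1},\ldots,c_{a_r})$ and $p(c_{b_1},\ldots,c_{b_r})$ are the same element of $S_r(C_V,\calH)$, together with $M\models R_{p'}(a_1,\ldots,a_r)$. Since the $b_i$ are pairwise distinct and the $a_i$ enumerate the same $r$-element set, there is a unique $\mu\in Perm(r)$ with $(a_1,\ldots,a_r)=\mu(\abar)=(b_{\mu(1)},\ldots,b_{\mu(r)})$.

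The key step is then to translate this equality between substituted types into an equality of types on variables: because the constants $c_{b_1},\ldots,c_{b_r}$ are pairwise distinct, the substitution $x_i\mapsto c_{b_i}$ is injective on quantifier-free $\calL$-formulas in $\xbar$, so the identity $p'(c_{b_{\mu(1)}},\ldots,c_{b_{\mu(r)}})=p(c_{b_1},\ldots,c_{b_r})$ forces $p(\xbar)=p'(\mu(\xbar))$ in $S_r(\calH)$. Clause (2) of Definition \ref{templatedef}, applied with this $\mu$, then yields $M\models R_p(\abar)$ if and only if $M\models R_{p'}(\mu(\abar))$, and the right-hand side is $M\models R_{p'}(a_1,\ldots,a_r)$, which is already known. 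The main (mild) obstacle is the bookkeeping in this translation step: one has to keep straight the two enumerations of $\cup\abar$, and verify that the induced permutation appears on the correct side to match the form required by clause (2) of the template.
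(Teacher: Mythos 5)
Your proof is correct, and it is exactly the unfolding of Definition \ref{templatedef} (clause (1) for the forward direction, the $Ch_M$/permutation bookkeeping plus clause (2) for the converse) that the paper has in mind when it calls the observation ``immediate from the definition''; the paper itself supplies no proof, and later arguments (e.g.\ Lemma \ref{templatelem}) use precisely this reasoning. In particular your key translation step, that distinctness of the constants $c_{b_1},\ldots,c_{b_r}$ makes the substitution invertible and forces $p(\xbar)=p'(\mu(\xbar))$, is the right justification and matches the paper's intended reading.
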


\noindent The following fact is one reason why $\calL_{\calH}$-templates are convenient.

\begin{proposition}\label{nice}
Suppose $M_1$ and $M_2$ are $\calL_{\calH}$-templates with domain $V$ such that for all $A\in {V\choose r}$, $Ch_{M_1}(A)=Ch_{M_2}(A)$.  Then $M_1$ and $M_2$ are the same $\calL_{\calH}$-structure.
\end{proposition}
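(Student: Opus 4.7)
The plan is to show directly that $M_1$ and $M_2$ satisfy the same atomic $\calL_{\calH}$-formulas on $V$, since two $\calL_{\calH}$-structures on the same domain coincide precisely when they agree on every atomic formula $R_p(\abar)$ for $R_p \in \calL_{\calH}$ and $\abar \in V^r$. Because $\calL_{\calH}$ is purely relational with every symbol of arity exactly $r$, it is enough to fix an arbitrary $R_p \in \calL_{\calH}$ and $\abar \in V^r$ and show $M_1 \models R_p(\abar)$ iff $M_2 \models R_p(\abar)$.

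The argument splits into two cases depending on whether $\abar$ has repeated coordinates. First, if $\abar \in V^r \setminus V^{\underline{r}}$, then clause (1) of Definition \ref{templatedef} (applied to each of $M_1$ and $M_2$) gives $M_1 \models \neg R_p(\abar)$ and $M_2 \models \neg R_p(\abar)$, so both structures agree on this formula trivially. Second, if $\abar \in V^{\underline{r}}$, set $A = \cup \abar \in \binom{V}{r}$. Here I would invoke Observation \ref{temob}, which characterizes satisfaction in a template via the choice set: it says $M_i \models R_p(\abar)$ if and only if $p(c_{\abar}) \in Ch_{M_i}(A)$, for $i = 1,2$. By the hypothesis $Ch_{M_1}(A) = Ch_{M_2}(A)$, the two conditions are equivalent, so $M_1 \models R_p(\abar)$ iff $M_2 \models R_p(\abar)$.

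Combining the two cases, $M_1$ and $M_2$ have identical interpretations of every relation symbol of $\calL_{\calH}$ on $V$, so they are the same $\calL_{\calH}$-structure. There is no real obstacle here; the proposition is essentially a bookkeeping consequence of the definitions, and its main content is already captured by Observation \ref{temob}, which encodes the fact that in a template the atomic $R_p$-information on distinct tuples is recoverable from, and determined by, the sets $Ch_M(A)$. (Note that clause (2) of Definition \ref{templatedef} is not needed in this argument, since we are comparing $M_1$ and $M_2$ symbol-by-symbol on each tuple rather than tracking behavior under permutations; it appears implicitly in the well-definedness of Observation \ref{temob}.)
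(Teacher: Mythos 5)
Your proposal is correct and is essentially the paper's own proof: the same case split on whether $\abar$ has repeated coordinates, using clause (1) of Definition \ref{templatedef} for the degenerate case and Observation \ref{temob} together with the hypothesis $Ch_{M_1}(A)=Ch_{M_2}(A)$ for the injective case. No differences worth noting.
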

\begin{proof}
We show that for all $\abar\in V^r$ and $R_p\in \calL_{\calH}$, $M_1\models R_p(\abar)$ if and only if $M_2\models R_p(\abar)$.  Fix $\abar \in V^r$ and $R_p\in \calL_{\calH}$.  Suppose first that $|\cup \abar|<r$. By part (1) of Definition \ref{templatedef}, $M_1\models \neg R_p(\abar)$ and $M_2\models \neg R_p(\abar)$.  So assume $|\cup \abar|=r$.  By Observation \ref{temob}, $M_1\models R_p(\abar)$ if and only if $p(c_{\abar})\in Ch_{M_1}(\cup \abar)$ and $M_2\models R_p(\abar)$ if and only if $p(c_{\abar})\in Ch_{M_2}(\cup \abar)$.  Since $Ch_{M_1}(\cup \abar)=Ch_{M_2}(\cup \abar)$, this implies $M_1\models R_p(\abar)$ if and only if $M_2\models R_p(\abar)$.
\end{proof}

\noindent The next example shows Proposition \ref{nice} can fail when we are not dealing with $\calL_{\calH}$-templates.

\begin{example}\label{ex4}
Let $\calL$, $\calP$, $V$, and $G$ be as in Example \ref{ex3}.  Let $G'$ be the $\calL_{\calP}$-structure with domain $V$ which agrees with $G$ on $V^2\setminus \{(v,u), (w,v), (w,u)\}$ and where 
$$
G'\models \bigwedge_{i=1}^3 (\neg R_{p_i}(v,u)\wedge \neg R_{p_i}(w,v)\wedge \neg R_{p_i}(w,u)).
 $$
We leave it to the reader to check that for all $xy\in {V\choose 2}$, $Ch_{G'}(xy)=Ch_G(xy)$.  However, $G$ and $G'$ are distinct $\calL_{\calP}$-structures because, for instance, $G\models R_{p_1}(v,u)$ while $G'\models \neg R_{p_1}(v,u)$.  Observe that $G'$ is not an $\calL_{\calP}$-template because $G'\models R_{p_1}(u,v)\wedge \neg R_{p_1}(v,u)$ while $p_1(x,y)=p_1(y,x)$.
\end{example}

\noindent The next definition shows how choice functions give rise to $\calL$-structures.

\begin{definition}
Suppose $M$ is a complete $\calL_{\calH}$-structure with domain $V$, $N$ is an $\mathcal{L}$-structure such that $dom(N)\subseteq V$, and $\chi\in Ch(M)$.
\begin{enumerate}
\item $N$ is a \emph{$\chi$-subpattern} of $M$, denoted $N\leq_{\chi}M$, if for every $A\in {dom(N)\choose r}$, $\chi(A)=Diag^N(A)$.  
\item $N$ is a \emph{full $\chi$-subpattern of $M$}, denoted $N\unlhd_{\chi} M$, if $N\leq_{\chi}M$ and $dom(N)=V$.  
\end{enumerate}
When $N\unlhd_{\chi} M$, we say $\chi$ \emph{chooses $N$}.  We say $N$ is a \emph{subpattern of $M$}, denoted $N\leq_p M$, if $N\leq_{\chi}M$ for some choice function $\chi$ for $M$.  We say $N$ is a \emph{full subpattern of $M$}, denoted $N\unlhd_p M$, if $N\unlhd_{\chi}M$ for some choice function $\chi$ for $M$.  The subscript in $\leq_p$ and $\unlhd_p$ is for ``pattern.''
\end{definition}

\begin{observation}\label{ob00}
Suppose $M$ is a complete $\calL_{\calH}$-structure, $\chi\in Ch(M)$, and $G$ is an $\calL$-structure such that $G\unlhd_{\chi}M$.  If $G'$ is another $\calL$-structure such that $G'\unlhd_{\chi}M$, then $G$ and $G'$ are the same $\calL$-structure.  If $\chi'\in Ch(M)$ satisfies $G\unlhd_{\chi'}M$, then $\chi=\chi'$.
\end{observation}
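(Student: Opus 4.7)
My plan is to unpack the definition of $\unlhd_\chi$ and observe that both parts of the statement follow almost immediately. Recall that $N\unlhd_\chi M$ means $dom(N)=V$ together with $\chi(A)=Diag^N(A)$ for every $A\in{V\choose r}$, so the choice function records exactly the $r$-vertex diagrams of $N$. Both assertions will be deduced from this single equation without any further machinery.

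\textbf{Second assertion (uniqueness of $\chi$).} This will be the easier half. Given $\chi'\in Ch(M)$ with $G\unlhd_{\chi'}M$, I will apply the defining equation twice: for each $A\in{V\choose r}$, $\chi'(A)=Diag^G(A)=\chi(A)$, where the first equality uses $G\unlhd_{\chi'}M$ and the second uses $G\unlhd_\chi M$. Since elements of $Ch(M)$ are functions with common domain ${V\choose r}$, this will give $\chi'=\chi$.

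\textbf{First assertion (uniqueness of $G$).} Here the strategy is to show that the family of $r$-vertex diagrams $\{Diag^G(A):A\in{V\choose r}\}$ determines $G$ completely as an $\calL$-structure on $V$, and then observe that this family coincides for $G$ and $G'$ since both collections equal $\{\chi(A):A\in{V\choose r}\}$. Concretely, I will fix a relation symbol $R\in\calL$ of arity $k$ and a tuple $\bbar\in V^k$; since $r=r_\calL$ is the maximum arity, $k\le r$ and $|\cup\bbar|\le r$. Choosing any $A\in{V\choose r}$ with $\cup\bbar\subseteq A$, the definition of $Diag^G(A)$ places $R(c_\bbar)$ or its negation inside $Diag^G(A)$ exactly according to whether $G\models R(\bbar)$, and the same holds for $G'$. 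From $Diag^G(A)=\chi(A)=Diag^{G'}(A)$ I will conclude $G\models R(\bbar)\iff G'\models R(\bbar)$, and hence $G=G'$.

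\textbf{Main obstacle.} There is essentially no combinatorial or model-theoretic obstacle; the only delicacy is that a relation tuple $\bbar$ may have repeated entries or length strictly less than $r$, so I cannot index it directly by a set in ${V\choose r}$. This is handled by the fact that $Diag^G(A)$ includes every formula $\phi(c_\abar)$ with $\cup\abar\subseteq A$, including those with repeated constants, so every short or repetitive tuple is captured once a containing $r$-set $A$ is chosen. The only latent hypothesis is $|V|\ge r$; if $|V|<r$ then ${V\choose r}=\emptyset$, $Ch(M)$ contains only the empty function, and both claims hold vacuously.
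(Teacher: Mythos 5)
Your proof is correct and takes essentially the same route as the paper's: both assertions are read off from the defining equation $\chi(A)=Diag^N(A)$, the paper phrasing the first one as $Diag(G)=\bigcup_{A\in{V\choose r}}Diag^G(A)=\bigcup_{A\in{V\choose r}}Diag^{G'}(A)=Diag(G')$, while you verify it atomic formula by atomic formula after embedding each tuple's (at most $r$-element) support in some $A\in{V\choose r}$ — the same argument, just spelled out one level lower. The only blemish is your closing remark: when $|V|<r$ the first assertion is not vacuous (every $\calL$-structure on $V$ is then a full $\chi$-subpattern, so uniqueness of $G$ can fail if $\calL$ has relations of arity below $r$), but the paper's proof tacitly assumes $|V|\ge r$ as well, so this degenerate case does not affect the comparison.
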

\begin{proof}
By definition, $G\unlhd_{\chi}M$ and $G'\unlhd_{\chi}M$ imply that $Diag^G(A)=\chi(A)=Diag^{G'}(A)$ for all $A\in {V\choose r}$.  This implies 
$$
Diag(G)=\bigcup_{A\in {V\choose r}}Diag^G(A)=\bigcup_{A\in {V\choose r}}Diag^{G'}(A)=Diag(G'),
$$
which clearly implies $G$ and $G'$ are the same $\calL$-structure.  Similarly, $G\unlhd_{\chi}M$ and $G\unlhd_{\chi'}M$ imply that for all $A\in {V\choose r}$, $\chi(A)=Diag^G(A)=\chi'(A)$.  Thus $\chi=\chi'$.
\end{proof}

\begin{example}\label{ex3}
Let $\calL$, $\calP$, $V$ and $G$ be as in Example \ref{ex2}.  We give two examples of subpatterns of $G$.  Let $\chi$ be the function from ${V\choose 2}\rightarrow S_2(C_V,\calP)$ defined by $\chi(uv)=p_1(c_u,c_v)$, $\chi(vw)=p_2(c_v,c_w)$, and $\chi(uw)=p_1(c_u,c_w)$. Clearly $\chi$ is a choice function for $G$.  Let $H$ be the $\calL$-structure with domain $V$ such that $H\models p_1(u,v)\cup p_2(v,w)\cup p_1(u,w)$.  Then by definition of $H$, $Diag^H(uv)=p_1(c_u,c_v)$, $Diag^H(vw)=p_2(v,w)$ and $Diag^H(uw)=p_1(c_u,c_w)$.  In other words, $H\leq_{\chi}G$.  Since $dom(H)=dom(G)=V$, $H\unlhd_{\chi} G$.  Note that $H$ is a metric space, that is, $H\in \calP$.  

Let $\chi'$ be the function from ${V\choose 2}\rightarrow S_2(C_V,\calP)$ defined by $\chi'(uv)=p_3(c_u,c_v)$, $\chi'(vw)=p_1(c_v,c_w)$, and $\chi'(uw)=p_1(c_u,c_w)$.  Clearly $\chi'$ is a choice function for $G$.  Let $H'$ be the $\calL$-structure with domain $V$ such that $H'\models p_3(u,v)\cup p_1(v,w)\cup p_1(u,w)$.  Then as above, it is easy to see that $H'\leq_{\chi'} G$, and since $dom(H')=V$, $H'\unlhd_{\chi'} G$.  However, $H'$ is \emph{not} a metric space, that is, $H'\notin \calP$.
\end{example}

This example demonstrates although $\calL_{\calH}$-templates are well behaved in certain ways, an $\calL_{\calH}$-template can have full subpatterns that are not in $\calH$.  We will give further definitions to address this in Section \ref{temsec}.

\subsection{Errors and counting subpatterns}  In this subsection we characterize when an $\calL_{\calH}$-structure has the property that every choice function gives rise to a subpattern.  This will be important for counting subpatterns of $\calL_{\calH}$-structures.

\begin{definition}
Given $r< \ell < 2r$, an \emph{error of size $\ell$} is a complete $\calL_{\calH}$-structure $M$ of size $\ell$ with the following properties.  There are $\abar_1, \abar_2\in dom(M)^{\underline{r}}$ such that $dom(M)=\cup \abar_1\bigcup \cup \abar_2$ and for some $p_1(\xbar), p_2(\xbar)\in S_r(\calH)$, $M\models R_{p_1}(\abar_1)\wedge R_{p_2}(\abar_2)$ but $p_1(c_{\abar_1})\cup p_2(c_{\abar_2})$ is unsatisfiable.
\end{definition}

\begin{example}\label{errorex}
Let $\calL=\{E(x,y,z), R_1(x,y), R_2(x,y), R_3(x,y)\}$ consist of one ternary relation $E$ and three binary relations $R_1,R_2,R_3$.  Suppose $\calP$ is the class of all finite $\calL$-structures $M$ such that the restriction of $M$ to $\{R_1,R_2,R_3\}$ is a metric space with distances in $\{1,2,3\}$ (we put no restrictions on how $E$ must behave). For $i\in [3]$, let $p_i(x,y)$ be the quantifier-free $\{R_1,R_2,R_3\}$-type from Examples \ref{ex1}, \ref{ex2}, and \ref{ex3}.  Let $\xbar=(x_1,x_2,x_3)$ and set $q_0(\xbar)=\{E(x_i,x_j,x_k): \{x_i,x_j,x_k\}\subseteq \{x_1,x_2,x_3\}\}$.  Then set $q_1(\xbar)$ and $q_1(\xbar)$ to be the complete quantifier-free types satisfying the following.
\begin{align*}
q_0(\xbar)\cup p_1(x_1,x_2)\cup p_1(x_1,x_3)\cup p_1(x_2,x_3)&\subseteq q_1(\xbar)\text{ and }\\
q_0(\xbar)\cup p_2(x_1,x_2)\cup p_1(x_1,x_3)\cup p_1(x_2,x_3)&\subseteq q_2(\xbar).
\end{align*}
It is easy to check that $q_i(\xbar)\in S_3(\calP)$ for $i=1,2$.  Note $q_1$ and $q_2$ agree about how $E$ behaves, but disagree on how the binary relations in $\calL$ behave. Let $V=\{t,u,v,w\}$ be a set of size $4$.  Choose $G$ to be the $\calL_{\calP}$-structure which satisfies $G\models R_{q_1}(x,y,z)$ if and only if $x$, $y$ and $z$ are distinct, $G\models R_{q_2}(x,y,z)$ if and only if $(x,y,z)=(t,u,v)$, and $G\models \neg R_{q}(x,y,z)$ for all $q\in S_3(\calP)\setminus \{p_1,p_2\}$.  

By construction, $G$ is a complete $\calL_{\calP}$-structure.  Let $\abar_1=(u,v,w)$ and $\abar_2=(u,v,t)$.  Then $dom(G)=\cup \abar_1\bigcup \cup \abar_2$ and $G\models R_{q_1}(\abar_1)\wedge R_{q_2}(\abar_2)$.  However, $p_1(c_u,c_v)\subseteq q_1(c_u,c_v,c_w)=q_1(c_{\abar_1})$ implies $q_1(c_{\abar_1})$ contains the formula $R_1(c_u,c_v)$ while $p_2(c_u,c_v)\subseteq q_2(c_u,c_v,c_w)=q_2(c_{\abar_2})$ implies $q_2(c_{\abar_2})$ contains the formula $\neg R_1(c_u,c_v)$ .  Therefore $q_1(c_{\abar_1})\cup q_2(c_{\abar_2})$ is unsatisfiable, and $G$ is an error of size $4$.
\end{example}

\begin{definition}
Let $\mathcal{E}$ be the class of $\calL_{\calH}$-structures which are errors of size $\ell$ for some $r<\ell<2r$.  
\end{definition} 

An $\calL_{\calH}$-structure $M$ is \emph{error-free} if it is $\calE$-free.  Error-free $\calL_{\calH}$-structures will be important for the following reason.

\begin{proposition}\label{Lrandom}
Suppose $M$ is a complete $\calL_{\calH}$-structure with domain $V$.  Then $M$ is error-free if and only if for every $\chi\in Ch(M)$, there is an $\calL$-structure $N$ such that $N\unlhd_{\chi}M$. 
\end{proposition}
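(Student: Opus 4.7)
The plan is to construct $N$ directly from $\chi$ in the forward direction and to manufacture a violating $\chi$ from an error in the backward direction.

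For the $(\Rightarrow)$ direction, assume $M$ is error-free and fix $\chi\in Ch(M)$. When $|V|<r$ we have $\binom{V}{r}=\emptyset$, the condition $N\unlhd_\chi M$ is vacuous on any $\calL$-structure with domain $V$, and $M$ is automatically error-free, so I assume $|V|\geq r$. I would then define $N$ on $V$ by declaring, for each $k$-ary relation symbol $R\in\calL$ and each $\bbar\in V^k$, that $N\models R(\bbar)$ if and only if $\chi(A)$ contains the formula $R(c_{\bbar})$ for some $A\in\binom{V}{r}$ with $\cup\bbar\subseteq A$; such an $A$ exists because $|V|\geq r$. The crux is to verify that this rule does not depend on the auxiliary $A$. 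Suppose $A,A'\in\binom{V}{r}$ both contain $\cup\bbar$ with $A\neq A'$; then $\ell:=|A\cup A'|$ satisfies $r<\ell<2r$. Writing $\chi(A)=p_1(c_{\abar_1})$ and $\chi(A')=p_2(c_{\abar_2})$ with $\cup\abar_1=A$ and $\cup\abar_2=A'$, the definition of $Ch_M$ gives $M\models R_{p_1}(\abar_1)\wedge R_{p_2}(\abar_2)$, and this together with completeness passes to $M[A\cup A']$. Were $p_1(c_{\abar_1})\cup p_2(c_{\abar_2})$ unsatisfiable, $M[A\cup A']$ would be an error of size $\ell$, contradicting error-freeness. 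Since $\chi(A)$ and $\chi(A')$ are both complete, satisfiability of their union forces them to agree on every atomic formula in the shared constants $\{c_b : b\in A\cap A'\}$; in particular they agree on $R(c_{\bbar})$. Once $N$ is well-defined, taking $A'=A$ in the defining equivalence immediately gives $Diag^N(A)=\chi(A)$ for every $A\in\binom{V}{r}$, i.e., $N\unlhd_\chi M$.

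For the $(\Leftarrow)$ direction I would argue the contrapositive. Suppose $M$ is not error-free, so some $W\subseteq V$ with $r<|W|<2r$ gives an error witnessed by tuples $\abar_1,\abar_2$ and types $p_1,p_2\in S_r(\calH)$ with $M\models R_{p_1}(\abar_1)\wedge R_{p_2}(\abar_2)$ but $p_1(c_{\abar_1})\cup p_2(c_{\abar_2})$ unsatisfiable. Set $A_i=\cup\abar_i$ and define $\chi(A_1)=p_1(c_{\abar_1})$, $\chi(A_2)=p_2(c_{\abar_2})$; for every other $B\in\binom{V}{r}$, pick any element of $Ch_M(B)$, which is nonempty because $M$ is complete. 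This yields a genuine $\chi\in Ch(M)$. If some $N\unlhd_\chi M$ existed, then $Diag^N(A_1\cup A_2)$ would extend both $\chi(A_1)$ and $\chi(A_2)$, giving a realization of the supposedly unsatisfiable set $p_1(c_{\abar_1})\cup p_2(c_{\abar_2})$, a contradiction.

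The main obstacle is the well-definedness step in the forward direction. What makes the argument clean is that error-freeness is a condition on \emph{pairs} of $r$-tuples, while checking any single atomic assertion $R(c_{\bbar})$ only involves comparing two $r$-sets at a time; no higher-order consistency is needed.
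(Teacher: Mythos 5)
Your proof is correct. The backward direction is exactly the paper's argument: from an error of size $\ell$ you manufacture a choice function $\chi$ whose values on the two witnessing $r$-sets are jointly unsatisfiable, so no $N\unlhd_{\chi}M$ can exist. In the forward direction you take a slightly different route: the paper argues contrapositively, assuming some $\chi$ admits no $\chi$-subpattern, concluding that $\Gamma:=\bigcup_{A\in {V\choose r}}\chi(A)$ is unsatisfiable, extracting a complementary pair $\psi(\cbar),\neg\psi(\cbar)$ lying in $\chi(A_1)$ and $\chi(A_2)$ for distinct $A_1,A_2\in {V\choose r}$, and exhibiting $M[A_1\cup A_2]$ as an error; you instead construct $N$ directly from $\chi$ and verify that the definition is independent of the auxiliary $r$-set, using error-freeness to get satisfiability of $\chi(A)\cup\chi(A')$ (since two distinct overlapping $r$-sets span a set of size strictly between $r$ and $2r$, and induced substructures of complete structures are complete) and completeness of the types to upgrade satisfiability to agreement on shared atomic formulas. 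Both arguments pivot on the same observation---that a contradiction among the $\chi(A)$'s can only involve two $r$-sets at a time---but your version makes explicit the canonical-model step the paper leaves implicit in the inference ``an unsatisfiable union of complete atomic diagrams must contain a complementary atomic pair,'' at the cost of the extra well-definedness bookkeeping (including the degenerate case $|V|<r$, which you correctly dispatch).
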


\begin{proof}
Suppose first that there exists a choice function $\chi: {V\choose r}\rightarrow S_r(C_V,\calH)$ such that there are no $\chi$-subpatterns of $M$. This means $\Gamma :=\bigcup_{A\in {V\choose r}} \chi(A)$ is not satisfiable.  So there is an atomic formula $\psi(\xbar)$ and a tuple $\cbar\subseteq C_V^r$ such that $\psi(\cbar)\in \Gamma$ and $\neg \psi(\cbar)\in \Gamma$. For each $A\in {V\choose r}$, because $\chi(A)\in S_r(C_A,\calH)$, exactly one of $\psi(\cbar)$ or $\neg\psi(\cbar)$ is in $\chi(A)$.  This implies  there must be distinct $A_1$, $A_2 \in {V\choose r}$ such that $\cup \cbar\subseteq A_1\cap A_2$ and $\psi(\cbar)\in \chi(A_1)$ and $\neg \psi(\cbar)\in \chi(A_2)$.  Note $A_1\neq A_2$ and $A_1\cap A_2\neq \emptyset$ imply that if $\ell:=|A_1\cup A_2|$, then $r<\ell<2r$.  Let $N$ be the $\calL_{\calH}$-structure $M[A_1\cup A_2]$.  We show $N$ is an error of size $\ell$.  By definition of $\chi$ being a choice function, there are $p_1,p_2\in S_r(C_V,\calH)$ and $\abar_1$, $\abar_2$ such that $\cup \abar_1=A_1$, $\cup \abar_2=A_2$, $p_1(c_{\abar_1})=\chi(A_1)$, $p_2(c_{\abar_2})=\chi(A_2)$, $M\models R_{p_1}(\abar_1)$, and $M\models R_{p_2}(\abar_2)$.  By definition, $N\subseteq_{\calL_{\calH}}M$, thus $N\models R_{p_1}(\abar_1)\wedge R_{p_2}(\abar_2)$.  Note 
$$
\{\psi(\cbar),\neg\psi(\cbar)\}\subseteq \chi(A_1)\cup \chi(A_2)=p_1(c_{\abar_1})\cup p_2(c_{\abar_2})
$$
implies $p_1(c_{\abar_1})\cup p_2(c_{\abar_2})$ is unsatisfiable.  Thus $N\in \calE$ and $N\subseteq_{\calL_{\calH}}M$ implies $M$ is not error-free.

Suppose on the other hand that $M$ is not error-free.  Say $r< \ell< 2r$ and $N$ is an error of size $\ell$ in $M$.  Then $N\subseteq_{\calL_{\calH}}M$ and there are $\abar_1, \abar_2\in dom(N)^{\underline{r}}$ and types $p_1(\xbar), p_2(\xbar)\in S_r(\calH)$ such that $dom(N)=\cup \abar_1\bigcup \cup \abar_2$,  $N\models R_{p_1}(\abar_1)\wedge R_{p_2}(\abar_2)$, and $p_1(c_{\abar_1})\cup p_2(c_{\abar_2})$ is unsatisfiable.  We define a function $\chi: {V\choose r}\rightarrow S_r(C_V,\calH)$ as follows.  Set $\chi(\cup \abar_1)=p_1(c_{\abar_1})$ and $\chi(\cup\abar_2)=p_2(c_{\abar_2})$.  For every $A'\in {V\choose r}\setminus \{A_1,A_2\}$, choose $\chi(A')$ to be any element of $Ch_M(A')$ (note $Ch_M(A')$ is nonempty since $M$ is complete).  By construction, $\chi$ is a choice function for $M$.  Suppose there is $G\unlhd_{\chi} M$.  Then $p_1(c_{\abar_1})=Diag^G(\cup \abar_1)$ and $p_2(c_{\abar_2})=Diag^G(\cup \abar_2)$ implies $G\models p_1(\abar_1)\cup p_2(\abar_2)$, contradicting that $p_1(c_{\abar_1})\cup p_2(c_{\abar_2})$ is unsatisfiable.  Thus $\chi\in Ch(M)$ and there are no $\chi$-subpatterns of $M$.  This finishes the proof. 
\end{proof}

\begin{definition}
Given a finite $\calL_{\calH}$-structure $M$, let $sub(M)=|\{G: G\unlhd_pM\}|$ be the number of full subpatterns of $M$.  
\end{definition}
This definition and the following observation will be crucial to our enumeration theorem.
\begin{observation}\label{ob0}
If $M$ is a complete $\calL_{\calH}$-structure with finite domain $V$, then 
\begin{align*}
sub(M)\leq \prod_{A\in {V\choose r}}|Ch_M(A)|,
\end{align*}
and equality holds if and only if $M$ is error-free.
\end{observation}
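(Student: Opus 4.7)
The plan is to set up a natural bijection between full subpatterns of $M$ and a certain subset of $Ch(M)$, then count. First I would note that a choice function $\chi\in Ch(M)$ is precisely an assignment, independently for each $A\in\binom{V}{r}$, of an element of $Ch_M(A)$. Since $M$ is complete, $Ch_M(A)\neq\emptyset$ for all $A$, so the set of choice functions factors as a product, giving the identity
$$|Ch(M)|=\prod_{A\in\binom{V}{r}}|Ch_M(A)|.$$

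Next I would analyze the map $\chi\mapsto G_\chi$, where $G_\chi$ denotes the (unique, if it exists) $\calL$-structure with $G_\chi\unlhd_\chi M$. The two clauses of Observation \ref{ob00} tell us exactly what we need: for a given $\chi$ there is at most one such $G_\chi$, and distinct choice functions $\chi\neq\chi'$ can never choose the same $\calL$-structure. Thus the assignment is a well-defined injection from $Ch^*(M):=\{\chi\in Ch(M):\exists G,\ G\unlhd_\chi M\}$ into the set of full subpatterns of $M$, and by the definition of $\unlhd_p$ it is also surjective. This gives
$$sub(M)=|Ch^*(M)|\leq|Ch(M)|=\prod_{A\in\binom{V}{r}}|Ch_M(A)|,$$
which is the desired inequality.

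For the equality clause, equality above is equivalent to $Ch^*(M)=Ch(M)$, i.e.\ every choice function for $M$ actually chooses some $\calL$-structure. But this is exactly the condition characterized by Proposition \ref{Lrandom} as error-freeness of $M$. Stringing these equivalences together gives equality in the bound if and only if $M$ is error-free.

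There is essentially no obstacle here; the statement is a direct bookkeeping consequence of Observation \ref{ob00} (uniqueness and distinctness of the structures chosen by a choice function) together with Proposition \ref{Lrandom} (existence of a chosen structure for every $\chi$ is equivalent to error-freeness). The only thing to be slightly careful about is verifying that a choice function is genuinely a free product of local choices, which is immediate from Definition \ref{chdef} since the conditions on $\chi$ at different $A\in\binom{V}{r}$ are independent.
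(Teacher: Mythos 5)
Your proof is correct and is essentially the paper's argument viewed from the other side: the paper injects full subpatterns into $Ch(M)$ via $G\mapsto\chi_G$ and uses Proposition \ref{Lrandom} to characterize when that injection is onto, while you run the inverse map $\chi\mapsto G_\chi$ on the subset of choice functions that choose something; the ingredients (Observation \ref{ob00} for uniqueness/injectivity, Proposition \ref{Lrandom} for the equality criterion, and the product formula for $|Ch(M)|$) are identical. No gaps.
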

\begin{proof}
By definition of a choice function, $|Ch(M)|= \prod_{A\in {V\choose r}}|Ch_M(A)|$.  By definition of subpattern, for each $G\unlhd_pM$, there is $\chi_G\in Ch(M)$ which chooses $G$.  Observation \ref{ob00} implies the map $f:G\mapsto \chi_G$ is a well-defined injection from $\{G: G\unlhd_p M\}$ to $Ch(M)$. Thus 
$$
sub(M)=|\{G: G\unlhd_p M\}|\leq |Ch(M)|=\prod_{A\in {V\choose r}}|Ch_M(A)|.
$$
We now show equality holds if and only if $M$ is error-free.  Suppose first $M$ is error-free.  We claim $f$ is surjective.  Fix $\chi\in Ch(M)$.  Since $M$ is error-free, Lemma \ref{Lrandom} implies that there is an $\calL$-structure $G_{\chi}$ such that $G_{\chi}\unlhd_{\chi} M$.  So $G_{\chi}\in \{G: G\unlhd_pM\}$ implies $f(G_{\chi})$ exists.  By Observation \ref{ob00}, we must have $f(G_{\chi})=\chi$.  Thus $f$ is surjective, and consequently $sub(M)= |Ch(M)|=\prod_{A\in {V\choose r}}Ch_M(A)$.  Conversely, suppose equality holds.  Then $f$ is an injective map from a finite set to another finite set of the same size, thus it must be surjective.  This implies that for all $\chi\in Ch(M)$, there is an $\calL$-structure $G$ such that $G\unlhd_{\chi}M$.  By Lemma \ref{Lrandom}, this implies $M$ is error-free.
\end{proof}

\begin{remark}\label{specialob0}
Suppose $\calL$ contains no relations of arity less than $r$. If $\calM$ is a complete $\calL_{\calH}$-structure with finite domain $V$, then $sub(M)=\prod_{A\in {V\choose r}}|Ch_M(A)|$.
\end{remark}
\begin{proof}
Our assumption on $\calL$ implies $\calE=\emptyset$ by definition.  Thus, if $\calM$ is a complete $\calL_{\calH}$-structure with finite domain $V$, it is error-free, so Observation \ref{ob0} implies $sub(M)=\prod_{A\in {V\choose r}}|Ch_M(A)|$.
\end{proof}

\noindent Remark \ref{specialob0} applies to most examples we are interested in, including graphs, (colored) $k$-uniform hypergraphs for any $k\geq 2$, directed graphs, and discrete metric spaces.

\subsection{$\calH$-random $\calL_{\calH}$-structures and $\calL_{\calH}$-templates}\label{temsec}
In this subsection we consider $\calL_{\calH}$-structures with the property that all choice functions give rise to subpatterns in $\calH$.

\begin{definition}\label{Hrand}
 An $\calL_{\calH}$-structure $M$ is \emph{$\mathcal{H}$-random} if it is complete and for every $\chi\in Ch(M)$, there is an $\mathcal{L}$-structure $N\in \calH$ such that $N\unlhd_{\chi} M$.  
\end{definition}  

Observe that by Proposition \ref{Lrandom}, any $\calH$-random $\calL_{\calH}$-structure is error-free.  The difference between being error-free and being $\calH$-random is as follows.  If an $\calL_{\calH}$-structure is error-free, then it must have at least one full subpattern, however some or all its subpatterns may not be in $\calH$.  On the other hand, if an $\calL_{\calH}$-structure is $\calH$-random, then it must have at least one full subpattern, and further, all its full subpatterns must also be in $\calH$. 

\begin{example}\label{ex4}
Let $\calL$, $\calP$, $V$, $H'$ and $G$ be as in Example \ref{ex3}.  Observe that $G$ is \emph{not} $\calP$-random, since $H'\unlhd_pG$, but $H'\notin \calP$.  We now define an $\calL_{\calP}$-structure $G''$ which is $\calP$-random.  Let $G''$ be any $\calL_{\calP}$-structure with domain $V$ such that for all $(x,y)\in V^{\underline{2}}$,  
$$
G''\models R_{p_1}(x,y)\wedge R_{p_2}(x,y)\wedge \neg R_{p_3}(x,y)\wedge \bigwedge_{i=1}^3\neg R_{p_i}(x,x).
$$
It is easy to check that $G''$ is a $\calL_{\calP}$-template and for all $xy\in {V\choose 2}$, $Ch_{G''}(xy)=\{p_1(c_x,c_y), p_2(c_x,c_y)\}$.  Suppose $\chi''\in Ch(G'')$.  Then for all $xy\in {V\choose 2}$, $\chi''(xy)\in \{p_1(c_x,c_y), p_2(c_x,c_y)\}$.  Let $M$ be the $\calL$-structure such that $M\unlhd_{\chi''}G''$, that is, $dom(M)=V$ and for each $xy\in {V\choose 2}$, $M\models p_i(x,y)$ if and only if $\chi''(xy)=p_i(c_x,c_y)$.  Then for all $xy\in {V\choose 2}$, $M\models p_1(x,y)$ or $M\models p_2(x,y)$.  Since there is no way to violate the triangle inequality using distances in $\{1,2\}$, $M$ is a metric space.  Thus we have shown that for every $\chi''\in Ch(G'')$, there is an $\calL$-structure $M\in \calP$ such that $M\unlhd_{\chi''} G''$.  Thus $G''$ is $\calP$-random.  
\end{example}

The most important $\calL_{\calH}$-structures for the rest of the paper are $\calH$-random $\calL_{\calH}$-templates.  We now fix notation for these special $\calL_{\calH}$-structures.

\begin{definition}
Suppose $V$ is a set, and $n$ is an integer.  Then
\begin{itemize}
\item  $\calR(V,\calH)$ is the set of all $\calH$-random $\calL_{\calH}$-templates with domain $V$ and
\item $\calR(n,\calH)$ is the class of all $\calH$-random $\calL_{\calH}$-templates of size $n$.
\end{itemize}
\end{definition}

\noindent In the above notation, $\calR$ is for ``random.''   Note that if $\calH(n)=\emptyset$ for some $n$, then $\calR(n,\calH)=\emptyset$.  

\section{Main Results}\label{mainresults}

In this section we state the main results of this paper.  Recall that $\calL$ is a fixed finite relational language of maximum arity $r\geq 2$. We now define our generalization of extremal graphs.  By convention, set $\max \emptyset=0$.

\begin{definition}\label{genexdef}
Suppose $\mathcal{H}$ is a collection of finite $\mathcal{L}$-structures.  Given $n$, set
$$
ex(n,\calH)=\max\{sub(M): M\in \calR(n,\calH)\}.
$$
We say $M\in \calR(n,\calH)$ is \emph{extremal} if $sub(M)=ex(n,\mathcal{H})$.  If $V$ is a set and $n\in \mathbb{N}$, then
\begin{itemize}
\item $\calR_{ex}(V,\calH)$ is the set of extremal elements of $\calR(V,\calH)$ and
\item $\calR_{ex}(n,\calH)$ is the class of extremal elements of $\calR(n,\calH)$.  
\end{itemize}
\end{definition}

The main idea is that when $\calH$ is a hereditary $\calL$-property, $\ex(n,\calH)$ is the correct generalization of the extremal number of a graph, and elements of $\calR_{ex}(n,\calH)$ are the correct generalizations extremal graphs of size $n$.  

\begin{definition}\label{pidef}
Suppose $\mathcal{H}$ is a nonempty collection of finite $\mathcal{L}$-structures. When it exists, set
$$
\pi(\mathcal{H})=\lim_{n\rightarrow \infty}\ex(n,\calH)^{1/{n\choose r}}
$$
\end{definition}

\noindent Using techniques similar to those in \cite{BoTh1} we will show the following.

\begin{theorem}\label{densityexists}
If $\mathcal{H}$ is hereditary $\calL$-property, then $\pi(\mathcal{H})$ exists.
\end{theorem}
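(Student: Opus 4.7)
The plan is to generalize the classical Bollob\'as--Thomason argument \cite{BoTh1} to $\calL_{\calH}$-templates. Setting $a_n:=\log \ex(n,\calH)/{n\choose r}$ whenever $\ex(n,\calH)>0$, I will show $a_n$ is non-increasing in $n$ and bounded above; then $\ex(n,\calH)^{1/{n\choose r}}=e^{a_n}$ converges, so $\pi(\calH)$ exists.

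The central ingredient is a restriction lemma: if $M\in \calR(V,\calH)$ and $A\subseteq V$, then $M[A]\in \calR(A,\calH)$. Completeness and both clauses of Definition~\ref{templatedef} are inherited from $M$ since the relations of $M[A]$ agree with those of $M$ on tuples from $A$. For $\calH$-randomness, I extend any $\chi'\in Ch(M[A])$ to some $\chi\in Ch(M)$ by picking arbitrary elements of $Ch_M(B)$ for $B\in{V\choose r}\setminus{A\choose r}$, obtain a full subpattern $N\in\calH$ with $N\unlhd_{\chi}M$, and apply heredity of $\calH$ to conclude $N[A]\in\calH$ is a full $\chi'$-subpattern of $M[A]$. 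In particular $sub(M[A])\leq\ex(|A|,\calH)$.

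Now fix $m\leq n$ and take any extremal $M\in \calR_{ex}(n,\calH)$ with domain $[n]$. By Proposition~\ref{Lrandom}, $M$ and every $M[A]$ are error-free, so Observation~\ref{ob0} yields the exact product formulas $\log sub(M)=\sum_{B\in{[n]\choose r}}\log|Ch_M(B)|$ and $\log sub(M[A])=\sum_{B\in{A\choose r}}\log|Ch_M(B)|$ (the latter uses $Ch_{M[A]}(B)=Ch_M(B)$ for $B\subseteq A$, which is immediate from the definitions). Summing the second identity over $A\in{[n]\choose m}$ and observing that each $B\in{[n]\choose r}$ lies in exactly ${n-r\choose m-r}$ such $A$ gives
$$
{n-r\choose m-r}\log sub(M)\;=\;\sum_{A\in{[n]\choose m}}\log sub(M[A])\;\leq\;{n\choose m}\log \ex(m,\calH).
$$
The identity ${n\choose m}/{n-r\choose m-r}={n\choose r}/{m\choose r}$ simplifies this to $a_n\leq a_m$, so $(a_n)$ is non-increasing.

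For boundedness, $|Ch_M(B)|$ is bounded by a constant depending only on $\calL$ and $r$ (since $S_r(\calH)$ is finite), giving a uniform upper bound on $a_n$. If $\calH$ is non-trivial, the hereditary property forces $\calH(n)\neq\emptyset$ for every $n$; taking any $N\in\calH(n)$ and letting $M$ be the template on $[n]$ with $Ch_M(A)=\{Diag^N(A)\}$ for each $A\in{[n]\choose r}$ shows $\ex(n,\calH)\geq 1$, hence $a_n\geq 0$. A bounded monotone sequence converges, so $\pi(\calH)$ exists. If $\calH$ is trivial, then $\calR(n,\calH)=\emptyset$ eventually, so $\ex(n,\calH)=0$ and $\pi(\calH)=0$ exists trivially. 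The main obstacle is the restriction lemma, which must be proved directly from Definitions~\ref{templatedef} and~\ref{Hrand} and uses heredity of $\calH$ in an essential way; once it and the error-free product formula of Observation~\ref{ob0} are available, convergence follows from the standard Kruskal--Katona-style double count.
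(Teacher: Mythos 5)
Your proposal is correct and takes essentially the same route as the paper's own proof: both rest on the restriction lemma that induced substructures of an $\calH$-random $\calL_{\calH}$-template remain $\calH$-random templates (extend the choice function arbitrarily and use heredity), on the error-free product formula of Observation \ref{ob0}, and on the single-subpattern template $\tilde{N}$ giving $\ex(n,\calH)\geq 1$ in the non-trivial case. The only cosmetic difference is that you average $\log sub$ over all $m$-element subsets at once, while the paper deletes one vertex at a time to show $b_n\leq b_{n-1}$; this is the same Bollob\'as--Thomason monotonicity argument.
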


We now state our approximate enumeration theorem in terms of the asymptotic density. 

\begin{theorem}[Enumeration]\label{enumeration}
Suppose $\mathcal{H}$ is a hereditary $\calL$-property.  Then the following hold. 
\begin{enumerate}
\item If $\pi(\mathcal{H})>1$, then $|\mathcal{H}_n|= \pi(\mathcal{H})^{{n\choose r}+o(n^r)}$.
\item If $\pi(\mathcal{H})\leq 1$, then $|\mathcal{H}_n|=2^{o(n^r)}$.
\end{enumerate}
\end{theorem}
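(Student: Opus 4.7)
The plan is to adapt the containers-plus-supersaturation strategy for graph enumeration to the $\calL_\calH$-template framework developed in Section \ref{tildeLstructures}. Both parts of the conclusion will fall out from the two-sided estimate $ex(n,\calH) \leq |\calH_n| \leq ex(n,\calH)\cdot 2^{o(n^r)}$, combined with the elementary consequence of Theorem \ref{densityexists} that $ex(n,\calH) = \pi(\calH)^{{n\choose r}+o(n^r)}$ when $\pi(\calH)>1$ and $ex(n,\calH) = 2^{o(n^r)}$ when $\pi(\calH)\leq 1$.

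For the lower bound, I would fix $n$ with $\calH(n)\neq \emptyset$ and choose an extremal template $M\in \calR_{ex}([n],\calH)$. Since $M$ is $\calH$-random, each full subpattern of $M$ lies in $\calH_n$, and by Observation \ref{ob00} distinct full subpatterns are distinct $\calL$-structures, so $|\calH_n|\geq sub(M) = ex(n,\calH)$. Trivial bounds handle the (empty) $\pi(\calH)\leq 1$ case.

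For the upper bound, the task is to produce a family $\calC$ of $\calL_\calH$-templates on $[n]$ satisfying (a) $|\calC| = 2^{o(n^r)}$, (b) every $N\in \calH_n$ is a full subpattern of some $M\in \calC$, and (c) every $M\in \calC$ is close enough to $\calH$-random that $sub(M)\leq ex(n,\calH)\cdot 2^{o(n^r)}$. Summing over $\calC$ yields $|\calH_n|\leq |\calC|\cdot \max_{M\in \calC}sub(M)$, which is the desired bound. To build $\calC$, I would apply the $\calL$-structure containers theorem (Theorem \ref{version1}) to the auxiliary hypergraph with vertex set $S_r(C_{[n]},\calH)$ whose edges encode forbidden configurations: each forbidden $\calL$-structure of size $\ell$, realised on a chosen tuple in $[n]$, contributes an edge consisting of the $r$-subset diagrams it determines. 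The set $\{Diag^N(A) : A\in {[n]\choose r}\}$ attached to any $N\in\calH_n$ is then an independent set of this hypergraph. Each container produced by Theorem \ref{version1} is converted into an $\calL_\calH$-template by declaring $Ch_M(A)$ to be the set of types in the container supported on $A$; property (b) then holds by construction and (a) is the quantitative size bound in the container conclusion. Property (c) is where the general supersaturation theorem (Theorem \ref{GENSUPERSAT}) enters: a template that is far from $\calH$-random must contain many ``$\calH$-bad'' subpatterns, each of which induces a forbidden configuration and therefore an edge of the containers hypergraph inside the container, contradicting the container sparsity.

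The main obstacle will be calibrating the containers parameters $\tau,\epsilon$ against the quantitative supersaturation bound so that (a) and (c) can be enforced simultaneously; this requires checking the relevant co-degree function of the auxiliary hypergraph, which depends on how many ways a partial diagram on an $r$-set extends to a forbidden configuration. A subtler point is that $ex(n,\calH)$ is defined as a maximum over $\calH$-\emph{random} templates rather than over all complete templates, so one must argue that the templates extracted from containers are $\calH$-random (or can be trimmed to one that is) up to negligible error in $sub$. This is exactly the role of supersaturation — and, behind it, of the Aroskar--Cummings removal lemma (Theorem \ref{triangleremoval2}) — which converts near-independence in the auxiliary hypergraph into near-$\calH$-randomness in the associated template, bridging the gap between the naive bound $sub(M)\leq \prod_A |Ch_M(A)|$ from Observation \ref{ob0} and the much tighter bound $ex(n,\calH)\cdot 2^{o(n^r)}$ that we need.
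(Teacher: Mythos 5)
Your proposal is correct and follows essentially the same route as the paper: the lower bound $|\calH_n|\geq \ex(n,\calH)$ from an extremal $\calH$-random template, the upper bound by covering $\calH_n$ with the containers of Theorem \ref{version1} and bounding $sub$ of each container via the supersaturation theorem (Theorem \ref{GENSUPERSAT}), and the identification $\ex(n,\calH)=\pi(\calH)^{{n\choose r}+o(n^r)}$ (resp.\ $2^{o(n^r)}$) coming from the monotone sequence in Theorem \ref{densityexists}. The only cosmetic difference is that your description of the auxiliary hypergraph on $S_r(C_{[n]},\calH)$ and the conversion of containers into templates re-derives the internals of Theorems \ref{VERSION2} and \ref{version1}, which the paper's proof of this theorem simply invokes as black boxes.
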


The notion $\pi(\calH)$ is related to many existing notions of asymptotic density for various combinatorial structures, and Theorem \ref{enumeration} can be seen as generalizing many existing enumeration theorems.  Some of these connections will be discussed in Section \ref{end} and Appendices  \ref{coloredhg}, \ref{dgsec}, and \ref{trifreesec}.  We say a hereditary $\calL$-property $\calH$ is \emph{fast-growing} if $\pi(\calH)>1$.  In this case, we informally say $M\in \calR(n,\calH)$ is \emph{almost extremal} if $sub(M)\geq\ex(n,\calH)^{1-\epsilon}$ for some small $\epsilon$.  Our next theorem shows that almost all elements in a fast-growing hereditary $\calL$-property $\calH$ are close to subpatterns of almost extremal elements of $\calR(n,\calH)$.  Given $\epsilon>0$, $n$, and a collection $\calH$ of $\calL$-structures, let
\begin{align*}
E(n,\calH)&=\{G\in \calH_n: G\unlhd_pM\text{ for some $M\in \calR_{ex}(n,\calH)$}\}\text{ and }\\
E(\epsilon, n,\calH)&=\{G\in \calH_n: G\unlhd_pM\text{ for some $M\in \calR(n,\calH)$ with $sub(M)\geq\ex(n,\calH)^{1-\epsilon}$}\}.
\end{align*}
Given $\delta>0$, let $E^{\delta}(n,\calH)$ and $E^{\delta}(\epsilon, n,\calH)$ denote the set of $G\in \calH_n$ which are $\delta$-close to any element of $E(n,\calH)$ and $E(\epsilon, n,\calH)$, respectively.  

\begin{theorem}\label{b4stab}
Suppose $\calH$ is a fast-growing hereditary $\calL$-property.  For all $\epsilon, \delta>0$ there is $\beta>0$ such that for sufficiently large $n$, 
$$
\frac{|\calH_n\setminus E^{\delta}(\epsilon, n,\calH)|}{|\calH_n|} \leq 2^{-\beta {n\choose r}}.
$$
\end{theorem}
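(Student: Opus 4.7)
The plan is a containers-plus-counting argument tailored to $\calL_\calH$-templates. Fix $\epsilon,\delta>0$, set $s := \log_2\pi(\calH)$, which is positive because $\calH$ is fast-growing, and let $t := r!\,|S_r(\calH)|$, a finite upper bound on $|Ch_M(A)|$ for any $\calL_\calH$-template $M$ and $A \in {V \choose r}$. I will choose auxiliary parameters $\delta' \in (0,\delta]$ and $\alpha > 0$ at the end. First I apply the containers theorem for $\calL$-structures (Theorem \ref{COROLLARY2}) with closeness parameter $\delta'$ to obtain a collection $\calC$ of $\calL_\calH$-templates of size $n$ such that (a) $|\calC| \leq 2^{\alpha {n\choose r}}$; (b) every $G \in \calH_n$ is a full subpattern of some $M \in \calC$; and (c) each $M \in \calC$ is $\delta'$-close, in the sense of Definition \ref{deltaclosedef1}, to some $M^* \in \calR(n,\calH)$. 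I call $M \in \calC$ \emph{good} if its witness satisfies $sub(M^*) \geq \ex(n,\calH)^{1-\epsilon}$, and \emph{bad} otherwise.

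Next I verify that every full subpattern of a good container lies in $E^\delta(\epsilon,n,\calH)$. Fix a good $M$ with witness $M^*$ and let $G \unlhd_p M$. Let $B \subseteq {V \choose r}$ be the set of $r$-subsets on which $M$ and $M^*$ disagree; by (c), $|B| \leq \delta'{n \choose r}$, and the remark after Definition \ref{deltaclosedef1} gives $Diag^M(A) = Diag^{M^*}(A)$ and hence $Ch_M(A) = Ch_{M^*}(A)$ for $A \notin B$. Define $\chi^* \in Ch(M^*)$ by $\chi^*(A) := Diag^G(A)$ for $A \notin B$ (legal since $Diag^G(A) \in Ch_M(A) = Ch_{M^*}(A)$) and pick $\chi^*(A)$ arbitrarily from $Ch_{M^*}(A)$ for $A \in B$. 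Because $M^* \in \calR(n,\calH)$, the unique $G^*$ with $G^* \unlhd_{\chi^*} M^*$ lies in $\calH_n$, so $G^* \in E(\epsilon,n,\calH)$. By construction $\diff(G,G^*) \subseteq B$, so $\dist(G,G^*) \leq \delta' \leq \delta$, and therefore $G \in E^\delta(\epsilon,n,\calH)$.

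Consequently every $G \in \calH_n \setminus E^\delta(\epsilon,n,\calH)$ is a full subpattern of some bad container, and it remains to count. Using Observation \ref{ob0} and $Ch_M(A) = Ch_{M^*}(A)$ off $B$,
$$
sub(M) \leq \prod_{A \in {V\choose r}} |Ch_M(A)| \leq t^{\delta'{n \choose r}} \prod_A |Ch_{M^*}(A)| = t^{\delta'{n \choose r}} sub(M^*),
$$
where the last equality uses that $M^* \in \calR(n,\calH)$ is error-free by Proposition \ref{Lrandom} and Observation \ref{ob0}. Summing over bad containers,
$$
|\calH_n \setminus E^\delta(\epsilon,n,\calH)| \leq 2^{(\alpha + \delta'\log_2 t){n \choose r}} \ex(n,\calH)^{1-\epsilon}.
$$
By Theorem \ref{enumeration} and the definition of $\pi(\calH)$, $|\calH_n| \geq \pi(\calH)^{{n\choose r} - o(n^r)}$ and $\ex(n,\calH) \leq \pi(\calH)^{{n\choose r} + o(n^r)}$, so the ratio is at most $2^{(\alpha + \delta'\log_2 t - \epsilon s){n\choose r} + o(n^r)}$. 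Choosing $\alpha, \delta'$ small enough that $\alpha + \delta'\log_2 t < \epsilon s/2$, which is possible since Theorem \ref{COROLLARY2} permits $\alpha$ arbitrarily small, yields the conclusion with $\beta := \epsilon s/3$ for all sufficiently large $n$.

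The main obstacle is establishing property (c): Theorem \ref{COROLLARY2} must supply containers that are $\calL_\calH$-templates genuinely $\delta'$-close to elements of $\calR(n,\calH)$, not merely arbitrary $\calL_\calH$-structures. This closeness-to-random is the heart of the containers adaptation for $\calL$-structures and ultimately rests on the Aroskar--Cummings general removal lemma (Theorem \ref{triangleremoval2}); without it the construction of $G^*$ collapses. A secondary but indispensable point is that the defining property of $\calR(n,\calH)$ makes the construction of $G^*$ automatic: \emph{every} choice function on $M^*$ chooses a structure in $\calH$, so perturbing $G$ only on $B$ cannot escape $\calH_n$.
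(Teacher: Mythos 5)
Your proposal is correct and follows essentially the same route as the paper's proof: Theorem \ref{COROLLARY2} supplies templates close to elements of $\calR([n],\calH)$, the transfer of a full subpattern to a nearby subpattern of the $\calH$-random witness is exactly Lemma \ref{deltaclose2}, and your count $sub(M)\leq t^{\delta'{n\choose r}}sub(M^*)$ is the content of Lemma \ref{deltaclose1}, which you reprove inline. The only cosmetic differences are your good/bad container bookkeeping versus the paper's contradiction form, and your use of Theorem \ref{enumeration} for the lower bound on $|\calH_n|$ where the paper uses the simpler $|\calH_n|\geq \ex(n,\calH)\geq\pi(\calH)^{n\choose r}$.
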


We now define our generalization of a graph stability theorem.

\begin{definition}\label{stabdef}
Suppose $\mathcal{H}$ is a nontrivial collection of $\mathcal{L}$-structures.  We say $\mathcal{H}$ \emph{has a stability theorem} if for all $\delta>0$ there is $\epsilon>0$ and $N$ such that $n>N$ implies the following.  If $M\in \calR(n,\calH)$ satisfies $sub(M)\geq\ex(n,\mathcal{H})^{1-\epsilon}$, then $M$ is $\delta$-close to some $M'\in \calR_{ex}(n,\calH)$.
\end{definition}

Our next result, Theorem \ref{stab} below, shows that if a fast-growing hereditary $\calL$-property $\calH$ has a stability theorem, we can strengthen Theorem \ref{b4stab} to say that that almost all elements in $\calH_n$ are approximately subpatterns of elements of $\calR_{ex}(n,\calH)$. 
\begin{theorem}\label{stab}
Suppose $\calH$ is a fast growing hereditary $\calL$-property with a stability theorem.  Then for all $\delta>0$, there is a $\beta>0$ such that for sufficiently large $n$, 
$$
\frac{|\calH_n\setminus E^{\delta}(n,\calH)|}{|\calH_n|} \leq 2^{-\beta {n\choose r}}.
$$
\end{theorem}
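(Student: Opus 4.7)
The plan is to reduce Theorem \ref{stab} to Theorem \ref{b4stab} using the stability theorem as a bridge from ``almost extremal'' to ``extremal.''  Given $\delta>0$, first apply the stability theorem (Definition \ref{stabdef}) with parameter $\delta/2$ to obtain $\epsilon>0$ such that for all sufficiently large $n$, every $M\in \calR(n,\calH)$ satisfying $sub(M)\geq \ex(n,\calH)^{1-\epsilon}$ is $(\delta/2)$-close to some $M^*\in \calR_{ex}(n,\calH)$.  Then apply Theorem \ref{b4stab} with parameters $\epsilon$ and $\delta/2$ to produce $\beta>0$ such that $|\calH_n\setminus E^{\delta/2}(\epsilon,n,\calH)|/|\calH_n|\leq 2^{-\beta{n\choose r}}$ for all sufficiently large $n$.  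It then suffices to prove the inclusion $E^{\delta/2}(\epsilon,n,\calH)\subseteq E^{\delta}(n,\calH)$.

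The main step is the following ``transfer'' observation: if $M,M^*\in \calR(n,\calH)$ share a common domain $V$ and are $\eta$-close as $\calL_{\calH}$-structures, then every full subpattern $G\unlhd_p M$ admits some full subpattern $G^*\unlhd_p M^*$ with $\dist(G,G^*)\leq \eta$.  To see this, let $G\unlhd_{\chi} M$ and put $D=\diff(M,M^*)\subseteq {V\choose r}$, so $|D|\leq \eta{n\choose r}$.  For every $A\in {V\choose r}\setminus D$, the agreement of quantifier-free $\calL_{\calH}$-types on all enumerations of $A$ forces $Ch_{M^*}(A)=Ch_M(A)$ directly from Definition \ref{chdef}, so we may set $\chi^*(A)=\chi(A)$; for $A\in D$ pick any $\chi^*(A)\in Ch_{M^*}(A)$, which is nonempty because $M^*$ is complete.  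Since $M^*$ is $\calH$-random, there is $G^*\in \calH$ with $G^*\unlhd_{\chi^*} M^*$, and by construction $Diag^{G^*}(A)=\chi^*(A)=\chi(A)=Diag^G(A)$ for every $A\notin D$; hence $\diff(G,G^*)\subseteq D$ and $\dist(G,G^*)\leq \eta$.

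The inclusion $E^{\delta/2}(\epsilon,n,\calH)\subseteq E^{\delta}(n,\calH)$ now follows.  Given $G\in E^{\delta/2}(\epsilon,n,\calH)$, pick $G'\in E(\epsilon,n,\calH)$ with $\dist(G,G')\leq \delta/2$ and $M\in \calR(n,\calH)$ with $G'\unlhd_p M$ and $sub(M)\geq \ex(n,\calH)^{1-\epsilon}$.  Stability yields $M^*\in \calR_{ex}(n,\calH)$ that is $(\delta/2)$-close to $M$, and the transfer observation yields $G^*\unlhd_p M^*$ with $\dist(G',G^*)\leq \delta/2$.  Then $G^*\in E(n,\calH)$ and the set-inclusion $\diff(G,G^*)\subseteq \diff(G,G')\cup \diff(G',G^*)$ gives $\dist(G,G^*)\leq \delta$, so $G\in E^{\delta}(n,\calH)$.

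I expect the main obstacle to be the bookkeeping around the transfer observation, specifically verifying that closeness of $\calL_{\calH}$-templates propagates to closeness of their full subpatterns while preserving $\calH$-membership.  The crucial input for that step is completeness combined with $\calH$-randomness of $M^*$: completeness keeps each $Ch_{M^*}(A)$ nonempty so $\chi^*$ can be defined everywhere on ${V\choose r}$, and $\calH$-randomness then forces the resulting full subpattern $G^*$ to lie in $\calH_n$, which is precisely what puts it into $E(n,\calH)$.
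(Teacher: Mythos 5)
Your proposal is correct and follows essentially the same route as the paper: reduce to Theorem \ref{b4stab} by proving the inclusion $E^{\delta/2}(\epsilon,n,\calH)\subseteq E^{\delta}(n,\calH)$, where $\epsilon$ comes from the stability theorem applied with parameter $\delta/2$. Your ``transfer observation'' is exactly the paper's Lemma \ref{deltaclose2} (proved there via Lemma \ref{templatelem}), which you re-derive inline correctly, so the argument matches the paper's proof in substance.
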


When one has a good understanding of the structure of elements in $\calR_{ex}(n, \calH)$, Theorem \ref{stab} gives us a good description of the approximate structure of most elements in $\calH_n$, when $n$ is large.  The main new tool we will use to prove our main theorems is Theorem \ref{version1} below, which is an adaptation of the hypergraph containers theorem to the setting of $\calL$-structures.  

\begin{definition}
If $F$ is an $\mathcal{L}$-structure, let $\tilde{F}$ be the set of $\calL_{\calH}$-structures $M$ such that $F\unlhd_pM$.  If $\mathcal{F}$ is a collection of $\mathcal{L}$-structures, let $\tilde{\mathcal{F}}=\bigcup_{F\in \calF}\tilde{F}$.   
\end{definition}

\begin{theorem}\label{version1}
Suppose $0<\epsilon<1$ and $k\geq r$ is an integer.  Then there exist positive constants $c=c(k,r, \calL,\epsilon)$ and $m=m(k,r)>1$ such that for all sufficiently large $n$ the following holds.   Assume $\mathcal{F}$ is a collection of finite $\mathcal{L}$-structures each of size at most $k$ and $\calB:=\Forb(\calF)\neq \emptyset$.  For any $n$-element set $W$, there is a collection $\mathcal{C}$ of $\calL_{\calB}$-templates with domain $W$ such that 
\begin{enumerate}
\item For all $\mathcal{F}$-free $\mathcal{L}$-structures $M$ with domain $W$, there is $C\in \mathcal{C}$ such that $M\unlhd_pC$,
\item For all $C\in \mathcal{C}$, $\prob(\tilde{\mathcal{F}},C)\leq \epsilon$ and $\prob(\calE,C)\leq \epsilon$.
\item $\log |\mathcal{C}|\leq cn^{r-\frac{1}{m}}\log n$.
\end{enumerate}
\end{theorem}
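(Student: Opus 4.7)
The plan is to encode the problem as an instance of Theorem~\ref{containers} applied to an auxiliary hypergraph $H$ on $W$, whose vertices and edges are chosen so that $\calF$-free $\calL$-structures on $W$ correspond to independent sets and containers convert back into $\calL_{\calB}$-templates. For each $A\in\binom{W}{r}$ let $D_A=\{Diag^N(A):N\in\calB,\,dom(N)=A\}$, where two diagrams are identified whenever they differ by a coordinate permutation (cf.\ Definition~\ref{templatedef}(2)), and set $V(H)=\bigcup_{A\in\binom{W}{r}}D_A$; then $|V(H)|=\Theta(n^r)$ with $n=|W|$. Edges of $H$ come in two kinds. Copy-edges: for every $F\in\calF$ of size $k'\in\{r,\ldots,k\}$ and every $B\in\binom{W}{k'}$ equipped with a diagram $D$ on $B$ such that $(B,D)\cong_{\calL}F$, include $\{(A,D|_A):A\in\binom{B}{r}\}$, which has size $\binom{k'}{r}$. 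Error-edges: for every pair of distinct $r$-subsets $A_1,A_2\subseteq W$ with $|A_1\cup A_2|<2r$ and every $(p_1,p_2)\in D_{A_1}\times D_{A_2}$ such that $p_1\cup p_2$ is unsatisfiable, include $\{(A_1,p_1),(A_2,p_2)\}$, of size~$2$. Since edges have varying sizes, we split $H$ into uniform subhypergraphs $H_s$ for $s\in\{2\}\cup\{\binom{k'}{r}:r\le k'\le k\}$, apply Theorem~\ref{containers} to each separately, and intersect the resulting container families; there are only $O(1)$ such values of $s$.

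The link between $H$ and the theorem is that any $\calF$-free $\calL$-structure $M$ on $W$ yields the independent set $I_M=\{(A,Diag^M(A)):A\in\binom{W}{r}\}$, which misses every copy-edge (because $M$ is $\calF$-free) and every error-edge (because diagrams of a genuine structure are jointly consistent). Conversely, from any container $C\subseteq V(H)$ I build an $\calL_{\calB}$-template $T_C$ by setting $Ch_{T_C}(A):=C\cap D_A$ for every $A\in\binom{W}{r}$ and extending the relations to $W^r$ by enforcing clauses (1) and (2) of Definition~\ref{templatedef}; the symmetry axiom is automatic because vertices of $H$ are orbit-equivalence classes of diagrams, and $T_C$ is complete whenever $C$ meets every $D_A$, which holds once $C$ contains some $I_M$. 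Containment $I_M\subseteq C$ then immediately gives $M\unlhd_p T_C$ via the choice function $A\mapsto Diag^M(A)$, establishing~(1). The sparsity conclusion $e(H_s[C])\le \epsilon\,e(H_s)$ of containers translates, by counting the copy-edges and error-edges of $H_s$ lying inside $C$, into the bounds $\prob(\tilde{\calF},T_C)=O(\epsilon)$ and $\prob(\calE,T_C)=O(\epsilon)$, which is~(2) after rescaling~$\epsilon$ by bounded constants.

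The co-degree analysis for $H_s$ with $s=\binom{k'}{r}$ is routine. One has $|V(H_s)|=\Theta(n^r)$, $|E(H_s)|=O(n^{k'})$, and average degree $d=\Theta(n^{k'-r})$. For $j\ge 2$, any $j$ distinct vertices of $H_s$ correspond to $j$ distinct $r$-subsets whose union has at least $m_j$ elements, where $m_j$ is the smallest integer with $\binom{m_j}{r}\ge j$, so the maximum $j$-codegree is $O(n^{k'-m_j})$ and hence $\delta_j=O(n^{r-m_j}\tau^{-(j-1)})$. Since $m_j\ge r+1$ for every $j\ge 2$, the exponent $(m_j-r)/(j-1)$ is uniformly bounded below by $1/(\binom{k}{r}-1)>0$, so choosing $\tau=n^{-1/m}$ with $m=m(k,r)$ sufficiently large forces $\delta(H_s,\tau)\le\epsilon/(12\,s!)$ for every relevant $s$. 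Theorem~\ref{containers} then produces container families with $\log|\calC_s|=O(n^r\cdot n^{-1/m}\log n)=O(n^{r-1/m}\log n)$, and intersecting across the $O(1)$ uniformities preserves this bound, giving~(3). For the error hypergraph $H_2$ (which is $2$-uniform), $\delta_2=O(1/(\tau n^{r-1}))$ is actually much smaller than needed, so it imposes no new constraint on $m$.

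The main technical obstacle is the bookkeeping that links container edges in each $H_s$ back to copies of $F$ and to errors inside the output $T_C$, and that simultaneously forces the two clauses of Definition~\ref{templatedef}. Both issues are handled cleanly by the choice of vertex set: because $V(H)$ consists of orbit-equivalence classes of diagrams on $r$-subsets, the relation-symbol symmetry axiom of Definition~\ref{templatedef}(2) holds in $T_C$ automatically, and the assignment $Ch_{T_C}(A):=C\cap D_A$ converts the graph-theoretic containment $I_M\subseteq C$ verbatim into the $\calL_{\calB}$-template containment $M\unlhd_p T_C$. The co-degree computation is then routine once one observes the uniform lower bound $m_j\ge r+1$, which yields a single usable exponent $1/m=1/m(k,r)$.
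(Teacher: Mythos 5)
Your encoding is essentially the paper's: vertices are realized $r$-types on $r$-subsets of $W$, forbidden copies and inconsistencies become hyperedges, and a container $C$ is turned back into an $\calL_{\calB}$-template via $Ch_{T_C}(A)=C\cap D_A$ (the paper's $D_\sigma$ construction), with conclusion (1) coming from $I_M\subseteq C$ and conclusion (3) from the co-degree computation; your treatment of the copy-edges (one uniformity per size of $F$, intersect the $O(1)$ container families, rescale $\epsilon$ by $|S_r(\calL)|^{\binom{k}{r}}$) and your co-degree bounds with $m_j=\min\{u:\binom{u}{r}\ge j\}$ and $\tau=n^{-1/m}$ are sound. The genuine gap is in deriving conclusion (2) for the errors. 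You put every error-edge into a single $2$-uniform hypergraph $H_2$, regardless of $|A_1\cup A_2|$, and the only guarantee containers give is $e(H_2[C])\le \epsilon\, e(H_2)$: a bound on the \emph{total} number of error-pairs inside $C$ relative to the \emph{total} number in $H_2$. But $\prob(\calE,C)\le\epsilon$ requires, for each $\ell'\in\{r+1,\dots,2r-1\}$ separately, at most $\epsilon\binom{n}{\ell'}$ induced errors of size $\ell'$, and error-edges with $|A_1\cup A_2|=\ell''$ number $\Theta(n^{\ell''})$ when they occur. So when $r\ge 3$ and two types of $S_r(\calB)$ can already conflict on an overlap smaller than $r-1$ (e.g.\ in a language with relations of several arities as in Example \ref{errorex}, where two $3$-types may disagree on a formula in a single variable), $e(H_2)=\Theta(n^{2r-1})$ is dominated by the large-union errors, and $\epsilon\,e(H_2)$ says nothing about the $O(n^{r+1})$ error-pairs of union size $r+1$: a container meeting your guarantee could contain essentially all of them, and $\prob(\calE(r+1),T_C)$ need not be small. (Your remark that $H_2$ ``imposes no new constraint'' concerns only the co-degree condition, which is indeed harmless; the failure is in translating the sparsity conclusion. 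A separate, minor imprecision: $j$ distinct vertices need not give $j$ distinct $r$-subsets, but such configurations have co-degree $0$, so your bound stands.)

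The repair is either to split the error-edges further, one $2$-uniform hypergraph per value of $|A_1\cup A_2|$ (only $r-1$ more families to intersect, each with $e=\Theta(n^{\ell''})$, so the rescaling constants stay bounded), or to do what the paper does: force a single uniformity $\binom{k}{r}$ by lifting both forbidden copies and inconsistencies to syntactic $k$-diagrams, via $cl_k(\calF)$ and $Err_k(C_W)$, and then transfer the sparsity at level $k$ down to every level $\ell\le k$ by the averaging argument of Lemma \ref{lemma**}, which uses completeness of the container to extend each bad $\ell$-diagram to at least $\binom{n-\ell}{k-\ell}$ bad $k$-diagrams. That downward-transfer step supplies exactly the per-scale control your single $H_2$ lacks; with it (or with the finer splitting) the rest of your argument goes through and matches the paper's quantitative conclusions.
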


We will combine Theorem \ref{version1} with a general version of the graph removal lemma proved by Aroskar and Cummings in \cite{AroskarCummings} to prove a supersaturation theorem for hereditary $\calL$-properties (Theorem \ref{GENSUPERSAT} below), and a version of the hypergraph containers theorem for hereditary $\calL$-properties (Theorem \ref{COROLLARY2}  below).

\begin{theorem}[Supersaturation]\label{GENSUPERSAT}
Suppose $\calH$ is a non-trivial hereditary $\calL$-property and $\calF$ is as in Observation \ref{HP} so that $\calH=\Forb(\calF)$.  Then for all $\delta>0$ there are $\epsilon>0$ and $K$ such that for all sufficiently large $n$, if $M$ is an $\calL_{\calH}$-template of size $n$ such that $\prob(\tilde{\calF}(K)\cup \calE(K),M)<\epsilon$, then 
\begin{enumerate}
\item If $\pi(\calH)>1$, then $sub(M)\leq \ex(n,\calH)^{1+\delta}$.
\item If $\pi(\calH)\leq 1$, then $sub(M)\leq 2^{\delta {n\choose r}}$.
\end{enumerate}
\end{theorem}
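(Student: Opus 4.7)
The plan is to combine the Aroskar--Cummings removal lemma (Theorem \ref{triangleremoval2}), applied inside the auxiliary language $\calL_{\calH}$, with the counting identity of Observation \ref{ob0}. The strategy is to ``clean'' $M$ to a nearby template $M'\in \calR(n,\calH)$ differing from $M$ on only a tiny fraction of the $r$-subsets, and then charge each differing subset by a bounded multiplicative penalty.

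Given $\delta>0$, fix a small parameter $\epsilon'>0$ to be specified later in terms of $\delta$ and $|S_r(\calH)|$. Apply Theorem \ref{triangleremoval2} in the language $\calL_{\calH}$ to the hereditary property $\Forb(\tilde\calF\cup \calE)$ with tolerance $\epsilon'$, obtaining $\epsilon>0$ and $K\geq 2r$ such that for all sufficiently large $n$, every $\calL_{\calH}$-structure $M$ of size $n$ satisfying $\prob(\tilde\calF(K)\cup \calE(K),M)<\epsilon$ is $\epsilon'$-close, in the sense of Definition \ref{deltaclosedef1}, to some $\calL_{\calH}$-structure $M^{*}$ which is \emph{globally} $(\tilde\calF\cup \calE)$-free. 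Because full subpatterns, errors, and membership in each $\tilde F$ are determined entirely by the $r$-set diagrams via the choice sets $Ch_{\cdot}(A)$, I may assume $M^{*}$ is itself an $\calL_{\calH}$-template (replacing it by the unique template $M'$ with the same choice sets if necessary) without increasing $\dist(M,M^{*})$ and without losing $(\tilde\calF\cup \calE)$-freeness.

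I next verify that $M'\in \calR(n,\calH)$. Since $M'$ is $\calE$-free it is error-free, so by Proposition \ref{Lrandom} every $\chi\in Ch(M')$ chooses an $\calL$-structure full subpattern $G$. If some such $G$ failed to be in $\calH=\Forb(\calF)$, then $G$ would contain an $\calL$-copy $F'$ of some $F\in \calF$; letting $X=dom(F')$ we would have $F'\unlhd_p M'[X]$, so $M'[X]\in \tilde F\subseteq \tilde\calF$, contradicting $\tilde\calF$-freeness of $M'$. Hence $M'$ is $\calH$-random, giving $sub(M')\leq \ex(n,\calH)$. Writing $D=\diff(M,M')$, so $|D|\leq \epsilon'{n\choose r}$, Observation \ref{ob0} then yields
\[
sub(M)\leq \prod_{A\in D}|Ch_M(A)|\cdot \prod_{A\notin D}|Ch_{M'}(A)|\leq |S_r(\calH)|^{|D|}\, sub(M')\leq |S_r(\calH)|^{\epsilon'{n\choose r}}\, \ex(n,\calH),
\]
using the trivial bound $|Ch_M(A)|\leq |S_r(\calH)|$ on $D$ and the equality case of Observation \ref{ob0} for the error-free template $M'$.

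Choosing $\epsilon'$ small enough closes both cases: when $\pi(\calH)>1$, the estimate $\ex(n,\calH)\geq \pi(\calH)^{{n\choose r}(1-o(1))}$ (from Theorem \ref{densityexists}) lets the prefactor $|S_r(\calH)|^{\epsilon'{n\choose r}}$ be absorbed into $\ex(n,\calH)^{\delta}$; when $\pi(\calH)\leq 1$, the bound $\ex(n,\calH)=2^{o(n^{r})}$ makes the whole right-hand side at most $2^{\delta{n\choose r}}$. The main obstacle is the passage from controlled density of small forbidden witnesses to genuine freeness from the entire hereditary family $\tilde\calF\cup \calE$, which may contain structures of unbounded size: this upgrade is precisely the strength of the Aroskar--Cummings generalization of triangle removal, and applying it cleanly in the auxiliary language $\calL_{\calH}$ is the technical heart of the argument. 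The remaining bookkeeping (template replacement, error-free counting, and selection of $\epsilon'$) is routine by comparison.
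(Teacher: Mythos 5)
Your overall route is the paper's route: apply the Aroskar--Cummings removal lemma (Theorem \ref{triangleremoval2}) in the language $\calL_{\calH}$ to pass from ``few copies'' to a nearby structure with \emph{no} copies, identify that structure as an element of $\calR(n,\calH)$, and then charge each $r$-set in the difference with a factor $|S_r(\calH)|$ (this last step is exactly the paper's Lemma \ref{deltaclose1}). However, there is a genuine gap at the template-replacement step. Theorem \ref{triangleremoval2} only produces an $\calL_{\calH}$-structure $M^{*}$ with $\prob(\tilde{\calF}\cup\calE,M^{*})=0$ and $\dist(M,M^{*})$ small; it gives no guarantee that $M^{*}$ is complete, let alone an $\calL_{\calH}$-template. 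If $Ch_{M^{*}}(A)=\emptyset$ for some $A$, then there is \emph{no} template ``with the same choice sets,'' so you are forced to add choices at such $A$'s, and adding a choice can create an error (an added $p\in Ch(A)$ may be incompatible with an existing choice at an overlapping $r$-set) or complete a copy of some $\tilde F$. Note that $\calE$-freeness of $M^{*}$ does not rule this out: an error is by definition a \emph{complete} structure, so $M^{*}$ can be $\calE$-free while carrying an unsatisfiable pair of choices hidden by an incomplete $r$-subset, and your completed $M'$ would then contain a genuine error. Your justification that everything is ``determined by the choice sets'' is correct only on $r$-sets where $M^{*}$ is already complete; it does not address the incomplete ones, which is exactly where the construction can fail.

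The fix is small and is what the paper does: apply Theorem \ref{triangleremoval2} to the family $\calA=\tilde{\calF}\cup\calE\cup\FLAW$ rather than to $\tilde{\calF}\cup\calE$. Since the hypothesis says $M$ is an $\calL_{\calH}$-template, Lemma \ref{flaw} gives $\prob(\FLAW,M)=0$, so the hypothesis $\prob(\calA(K),M)<\epsilon$ still holds; the removal lemma then outputs $M'$ with $\prob(\calA,M')=0$, and $\FLAW$-freeness forces $M'$ to be a template (Lemma \ref{flaw}), whence Corollary \ref{charR} gives $M'\in\calR(n,\calH)$ directly. With that change, the remainder of your argument (the verification of $\calH$-randomness, the inequality $sub(M)\leq |S_r(\calH)|^{|\diff(M,M')|}\,sub(M')$ via Observation \ref{ob0}, and the choice of $\epsilon'$ in the two cases $\pi(\calH)>1$ and $\pi(\calH)=1$) is correct and matches the paper's proof.
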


\begin{theorem}\label{COROLLARY2}
Suppose $\calH$ is a hereditary $\calL$-property.  Then there is $m=m(\calH, r_{\calL})>1$ such that the following holds.  For every $\delta>0$ there is a constant $c=c(\calH,\calL,\delta)$ such that for all sufficiently large $n$ there is a set of $\calL_{\calH}$-templates $\mathcal{C}$ with domain $[n]$ satisfying the following properties.  
\begin{enumerate}
\item For every $H\in \mathcal{H}_n$, there is $C\in \mathcal{C}$ such that $H\unlhd_pC$.  
\item For every $C\in \mathcal{C}$, there is $C'\in \calR([n],\calH)$ such that $dist(C,C')\leq \delta$.
\item $\log |\mathcal{C}|\leq cn^{r-\frac{1}{m}}\log n$.
\end{enumerate}
\end{theorem}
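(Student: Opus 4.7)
The plan is to combine Theorem~\ref{version1} with the generalized removal lemma of Aroskar and Cummings (Theorem~\ref{triangleremoval2}) applied in the auxiliary language $\calL_{\calH}$, so that each container produced by Theorem~\ref{version1} can be perturbed by a small amount into an $\calH$-random template.

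First, I would invoke Observation~\ref{HP} to write $\calH = \Forb(\calF)$ for a family $\calF$ of finite $\calL$-structures closed under isomorphism, and fix $\delta > 0$. Applying Theorem~\ref{triangleremoval2} in the language $\calL_{\calH}$ with forbidden collection consisting of $\calE$ together with $\{M : F \unlhd_p M \text{ for some } F \in \calF\}$ restricted to structures of size at most some threshold $K$, yields constants $\epsilon_0 > 0$ and $K$ such that any complete $\calL_{\calH}$-structure $C$ of size $n$ whose $\prob$-density of this combined error-and-subpattern collection is below $\epsilon_0$ admits a complete $\calL_{\calH}$-structure $C^{\ast}$ with $\dist(C, C^{\ast}) \leq \delta$ and with zero density of the patterns. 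An averaging or symmetrization argument, respecting the coherence condition (2) of Definition~\ref{templatedef}, ensures $C^{\ast}$ can be taken to be an $\calL_{\calH}$-template; Proposition~\ref{Lrandom} combined with the fact that no $F \in \calF$ can appear as a subpattern then forces $C^{\ast} \in \calR([n], \calH)$.

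Next, I would apply Theorem~\ref{version1} with parameters $\epsilon = \epsilon_0$, $k = K$, and forbidden family consisting of the isomorphism types in $\calF$ of size at most $K$, obtaining a collection of $\calL_{\calB}$-templates with $\calB = \Forb(\calF_{\leq K})$ satisfying the density bounds $\prob(\tilde{\calF_{\leq K}}, C) \leq \epsilon_0$ and $\prob(\calE, C) \leq \epsilon_0$. By restricting the relations to the sublanguage $\calL_{\calH} \subseteq \calL_{\calB}$ (and performing a small completion adjustment on any $r$-subset whose chosen type in $\calL_{\calB}$ does not already lie in $S_r(\calH)$), I obtain a collection $\mathcal{C}$ of $\calL_{\calH}$-templates of size $n$ satisfying $\log |\mathcal{C}| \leq c n^{r-1/m} \log n$, such that every $H \in \calH_n \subseteq \calB_n$ remains a full subpattern of some $C \in \mathcal{C}$; this gives property (1) and property (3). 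Property (2) then follows by applying the first paragraph's removal statement to each $C \in \mathcal{C}$, producing $C' \in \calR([n], \calH)$ with $\dist(C, C') \leq \delta$.

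The main obstacle will be the language mismatch between the output of Theorem~\ref{version1}, which naturally lives in the richer language $\calL_{\calB}$ with $\calB \supseteq \calH$, and the conclusion, which demands an $\calL_{\calH}$-template collection. Passing from $\calL_{\calB}$ to $\calL_{\calH}$ must preserve subpattern-containment for every $H \in \calH_n$ and must not inflate the distance to any $\calH$-random template beyond the budget $\delta$, and the removal-lemma output must ultimately be realized as a genuine template rather than a bare $\calL_{\calH}$-structure. The key model-theoretic lever that resolves both difficulties is Proposition~\ref{Lrandom}: it converts the local conditions (no errors, no forbidden subpattern) produced by the removal lemma into the global condition of $\calH$-randomness required by the definition of $\calR([n], \calH)$.
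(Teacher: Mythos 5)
Your overall strategy is the paper's: run Theorem~\ref{version1} to get a small family of templates containing every member of $\calH_n$, then use the Aroskar--Cummings removal lemma (Theorem~\ref{triangleremoval2}) in the language $\calL_{\calH}$ to push each container within distance $\delta$ of an element of $\calR([n],\calH)$. But there is a genuine gap at the decisive step. The removal lemma only outputs a bare $\calL_{\calH}$-structure $C^{\ast}$ with zero density of whatever family you fed it; if you feed it only $\calE\cup\tilde{\calF}$, nothing forces $C^{\ast}$ to be complete, let alone an $\calL_{\calH}$-template, and your ``averaging or symmetrization argument, respecting the coherence condition (2) of Definition~\ref{templatedef}'' is not an argument: symmetrizing by closing under the permutation coherence \emph{adds} relations, hence enlarges the choice sets, and can create new copies of $\tilde{\calF}$ or new errors, destroying exactly the zero-density conclusion you just bought from the removal lemma (and deleting relations instead can destroy completeness); neither operation is obviously compatible with the distance budget. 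The paper avoids this entirely by enlarging the forbidden family to $\calA=\tilde{\calF}\cup\calE\cup\FLAW$ before invoking Theorem~\ref{triangleremoval2}: the containers are templates, so they have $\FLAW$-density zero for free (Lemma~\ref{flaw}), the density hypothesis of the removal lemma for $\calA(K)$ is therefore met, and the output $C'$ with $\prob(\calA,C')=0$ is automatically $\FLAW$-free (hence a complete $\calL_{\calH}$-template), $\tilde{\calF}$-free and error-free, so $C'\in\calR([n],\calH)$ by Corollary~\ref{charR}. Relatedly, the ``key lever'' you cite, Proposition~\ref{Lrandom}, only characterizes error-freeness; the implication you actually need (template $+$ $\tilde{\calF}$-free $+$ error-free $\Rightarrow$ $\calH$-random template) is Proposition~\ref{random}/Corollary~\ref{charR}, which again requires the template property you have not secured.

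The ``language mismatch'' you flag in the second paragraph is a non-issue and your proposed fix is both unnecessary and underspecified. Since the removal-lemma constant $K$ can be taken $\geq r$, every $r$-type realized in $\calB=\Forb(\calF(K))$ is already realized in $\calH$, so $S_r(\calB)=S_r(\calH)$ and the containers produced by Theorem~\ref{version1} are literally $\calL_{\calH}$-templates; no restriction of relations or ``completion adjustment'' is needed. As written, your adjustment would also need an argument that it preserves completeness (an $r$-set whose only choices lay outside $S_r(\calH)$ loses all choices), preserves property (1), and does not inflate the $\tilde{\calF}$- and $\calE$-densities that the subsequent removal step relies on. So the architecture is right, but the proof as proposed does not close: the missing idea is to forbid $\FLAW$ in the removal lemma so that template-ness comes out of the same application, rather than being retrofitted afterwards.
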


\section{Proofs of Main Theorems}


In this section we prove our main results using Theorems \ref{version1}, \ref{GENSUPERSAT}, and \ref{COROLLARY2}.  For the rest of the section, $\calH$ is a fixed hereditary $\calL$-property.

\begin{lemma}\label{templatelem2}
Suppose $N$ is an $\calL$-structure and $\tilde{N}$ is the $\calL_{\calH}$-structure such that $dom(\tilde{N})=dom(N)$ and for each $\abar \in dom(\tilde{N})^{r}$ and $p(\xbar)\in S_r(\calH)$, $\tilde{N}\models R_p(\abar)$ if and only if $N\models p(\abar)$.  Then $\tilde{N}$ is an $\calL_{\calH}$-template and $N$ is the unique full subpattern of $\tilde{N}$.  
\end{lemma}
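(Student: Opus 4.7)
The plan is to verify the three clauses of Definition \ref{templatedef} for $\tilde{N}$, then exhibit $N$ as a full subpattern via an explicit choice function, then derive uniqueness from the rigidity of that choice data. Implicit in the statement is that $N\in\calH$, so by heredity every $qftp^N(\abar)$ with $\abar\in dom(N)^{\underline{r}}$ lies in $S_r(\calH)$; without this, $\tilde{N}$ need not even be complete.

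Set $V=dom(N)=dom(\tilde{N})$. For completeness, given $A\in\binom{V}{r}$, pick an enumeration $\abar$ and let $p=qftp^N(\abar)\in S_r(\calH)$; then $\tilde{N}\models R_p(\abar)$ by definition. For Definition \ref{templatedef}(1), any $\abar\in V^r\setminus V^{\underline{r}}$ has a repeat $a_i=a_j$, while every $p\in S_r(\calH)$ is proper and contains $x_i\ne x_j$; hence $N\not\models p(\abar)$, which gives $\tilde{N}\models\neg R_p(\abar)$. For Definition \ref{templatedef}(2), if $p(\xbar)=p'(\mu(\xbar))$ as sets of formulas, then substituting $\abar$ for $\xbar$ yields $p(\abar)=p'(\mu(\abar))$ as sets of $\calL$-formulas over $V$, so $N\models p(\abar)\iff N\models p'(\mu(\abar))$, which passes through the defining biconditional of $\tilde{N}$ to $\tilde{N}\models R_p(\abar)\iff\tilde{N}\models R_{p'}(\mu(\abar))$.

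To exhibit $N$ as a full subpattern, define $\chi:\binom{V}{r}\to S_r(C_V,\calH)$ by $\chi(A)=Diag^N(A)$. For any enumeration $\abar$ of $A$ and $p=qftp^N(\abar)$, we have $Diag^N(A)=p(c_{\abar})$ and $\tilde{N}\models R_p(\abar)$, so $\chi(A)\in Ch_{\tilde{N}}(A)$; thus $\chi$ is a choice function. Together with $Diag^N(A)=\chi(A)$ and $dom(N)=V$, this gives $N\unlhd_{\chi}\tilde{N}$.

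For uniqueness I would argue that $Ch_{\tilde{N}}(A)$ is the singleton $\{Diag^N(A)\}$ for every $A\in\binom{V}{r}$. Any member has the form $q(c_{\abar})$ with $\abar$ an enumeration of $A$ and $N\models q(\abar)$; the only complete proper quantifier-free $r$-type realized by $\abar$ in $N$ is $qftp^N(\abar)$, and its substitution $qftp^N(\abar)(c_{\abar})$ coincides with $Diag^N(A)$ independently of the enumeration. Therefore any $N'\unlhd_{\chi'}\tilde{N}$ satisfies $Diag^{N'}(A)=\chi'(A)=Diag^N(A)$ for every $A\in\binom{V}{r}$; since $\calL$ is relational of maximum arity $r$ (and $|V|\ge r$ in the non-vacuous case), every atomic $(\calL\cup C_V)$-sentence is pinned down by the diagram of some $A\in\binom{V}{r}$, by enlarging its parameter set to size $r$ when needed, which forces $N'=N$. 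The only real bookkeeping obstacle is the enumeration-independence of $Diag^N(A)$, which reduces to a routine variable-renaming argument.
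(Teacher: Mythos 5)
Your proof is correct and follows essentially the same route as the paper: verify completeness and both clauses of Definition \ref{templatedef} directly from the defining biconditional, exhibit $N$ via the choice function $\chi(A)=Diag^N(A)$, and get uniqueness from the fact that every $Ch_{\tilde{N}}(A)$ is the singleton $\{Diag^N(A)\}$, so $\chi$ is the only choice function. The only cosmetic difference is that where the paper cites Observation \ref{ob00} to conclude that $\chi$ chooses at most one structure, you inline the same diagram-union argument (and your remark that the statement implicitly assumes $N\in\calH$, so that its $r$-types lie in $S_r(\calH)$, is an accurate observation about how the lemma is used).
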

\begin{proof}
Let $V=dom(N)=dom(\tilde{N})$.  We first verify $\tilde{N}$ is an $\calL_{\calH}$-template.  By the definition of $\tilde{N}$, for all $A\in {V\choose r}$, $Ch_{\tilde{N}}(A)=\{Diag^N(A)\}$.  Therefore $\tilde{N}$ is complete.  If $\abar\in V^r\setminus V^{\underline{r}}$ and $p\in S_r(\calH)$, then because $p$ is a proper type, $N\nvDash p(\abar)$.  Thus by definition of $\tilde{N}$, $\tilde{N}\models \neg R_p(\abar)$ and $\tilde{N}$ satisfies part (1) of Definition \ref{templatedef}.  Suppose $p(\xbar), p'(\xbar)\in S_r(\calH)$ and $\mu\in Perm(r)$ are such that $p(\xbar)=p'(\mu(\xbar))$.  Then for all $\abar\in V^r$, $\tilde{N}\models R_p(\abar)$ if and only if $N\models p(\abar)$ if and only if $N\models p'(\mu(\abar))$ if and only if $\tilde{N}\models R_{p'}(\mu(\abar))$.  Thus $\tilde{N}$ satisfies part (2) of Definition \ref{templatedef}, so $\tilde{N}$ is an $\calL_{\calH}$-template. Define $\chi: {V\choose r}\rightarrow S_r(C_V,\calH)$ by setting $\chi(A)=Diag^N(A)$ for each $A\in {V\choose r}$.  It is clear that $\chi\in Ch(\tilde{N})$ and $N\unlhd_{\chi}\tilde{N}$.  By definition of $\tilde{N}$,  $\chi$ is the \emph{only} choice function for $\tilde{N}$, so any full subpattern of $\tilde{N}$ must be chosen by $\chi$.  By Observation \ref{ob00}, $\chi$ chooses at most one $\calL$-structure, so $N$ is the unique full subpattern of $\tilde{N}$.
\end{proof}

\noindent We now prove Theorem \ref{enumeration}.  The proof is based on the method of proof in \cite{BoTh1}.

\vspace{3mm}
\noindent {\bf Proof of Theorem \ref{densityexists}.}
Let $b_n=ex(n,\calH)^{1/{n\choose r}}$.  If $\calH$ is trivial, then for sufficiently large $n$, $\calR(n,\calH)=\emptyset$ so by convention, $\ex(n,\calH)=0$.  Thus, for sufficiently large $n$,  $b_n=0$ and $\pi(\mathcal{H})$ exists and is equal to zero.  

Assume now $\mathcal{H}$ is nontrivial.   We show that the sequence $b_n$ is bounded below and non-increasing.  Since $\mathcal{H}$ is non-trivial and has the hereditary property, $\calH_n\neq \emptyset$ for all $n$.  Fix $n\geq 1$ and choose any $N\in \mathcal{H}_n$.  Let $\tilde{N}$ be the $\calL_{\calH}$-structure defined as in Lemma \ref{templatelem2} for $N$.  Then $\tilde{N}$ is an $\calL_{\calH}$-template, and its only full subpattern is $N$.  Since $N\in \calH$, this implies $\tilde{N}\in \calR(n,\calH)$ and $sub(\tilde{N})=1$. So we have shown $b_n\geq 1$ for all $n\geq 1$.

We now show the $b_n$ are non-increasing.   Fix $n\geq 2$. Let $M\in \calR(n,\calH)$ be such that $sub(M)\geq 1$ and let $V=dom(M)$.  Fix $a\in V$ and set $V_a=V\setminus \{a\}$ and $M_a=M[V_a]$.  We claim $M_a\in \calR(n-1,\calH)$.  Because $M$ is an $\calL_{\calH}$-template, the definition of $M_a$ implies $M_a$ is also an $\calL_{\calH}$-template.  Suppose $\chi\in Ch(M_a)$.  We want to show there exists $N_a\in \calH$ with $N_a\unlhd_{\chi}M_a$.  We define a function $\chi':{V\choose r}\rightarrow S_r(C_V,\calH)$ as follows.  For $A\in {V_a\choose r}$, set $\chi'(A)=\chi(A)$, and for $A\in {V\choose r}\setminus A\in {V_a\choose r}$, choose $\chi'(A)$ to be any element of $Ch_{M_a}(A)=Ch_M(A)$ (this is possible since $M$ is complete).  Note that for each $A\in {V_a\choose r}$, $\chi(A)\in Ch_M(A)$, so $\chi'\in Ch(M)$.  Because $M$ is $\calH$-random, there is $N\in \calH$ such that $N\unlhd_{\chi'}M$.  Let $N_a=N[V_a]$.  Because $\calH$ has the hereditary property and $N_a\subseteq_{\calL}N$, $N_a\in \calH$.  For each $A\in {V_a\choose r}$, $Diag^{N_a}(A)=Diag^N(A)=\chi'(A)=\chi(A)$, so $N_a\unlhd_{\chi}M_a$.  Thus we have verified that $M_a\in \calR(n-1,\calH)$.  By definition of $b_{n-1}$, this implies $sub(M_a)^{1/{n-1\choose r}}\leq b_{n-1}$. Because $M_a$ is $\calH$-random, Lemma \ref{Lrandom} implies it is error-free, so Observation \ref{ob0} implies $sub(M_a)=\prod_{A\in {V_a\choose r}} |Ch_{M_a}(A)|$.  Then observe that
$$
sub(M)=\Big(\prod_{a\in V} \prod_{A\in {V_a\choose r}} |Ch_{M_a}(A)|\Big)^{1/(n-r)}= \Big(\prod_{a\in V} sub(M_a)\Big)^{1/(n-r)}.
$$
Since $sub(M_a)\leq b_{n-1}^{n-1\choose r}$, this implies
$$
sub(M)\leq \Big(\prod_{a\in V} b_{n-1}^{n-1\choose r}\Big)^{1/(n-r)} = b_{n-1}^{n{n-1\choose r}/(n-r)}= b_{n-1}^{n\choose r}.
$$
Thus for all $M\in \calR(n,\calH)$, $sub(M)^{1/{n\choose r}}\leq b_{n-1}$. So by definition, $b_n\leq b_{n-1}$. 
\qed

\vspace{3mm}

\noindent The following observations follow from the proof of Theorem \ref{densityexists}.
\begin{observation}\label{ob5}
Assume $\calH$ is a hereditary $\calL$-property.  
\begin{enumerate}[(a)]
\item For all $n$, $\ex(n,\calH)^{1/{n\choose r}}\geq \pi(\calH)$ (since $(b_n)_{n\in \mathbb{N}}$ is non-increasing and converges to $\pi(\calH)$).  
\item Either $\calH$ is trivial and $\pi(\calH)=0$ or $\calH$ is non-trivial and $\pi(\calH)\geq 1$.   
\end{enumerate}
\end{observation}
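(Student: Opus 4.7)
The plan is to observe that both parts are essentially read off from the proof of Theorem \ref{densityexists}, which establishes the behavior of the sequence $b_n = \ex(n,\calH)^{1/{n\choose r}}$. Specifically, that proof shows three things: (i) if $\calH$ is trivial, then $\calR(n,\calH) = \emptyset$ for all sufficiently large $n$, hence $b_n = 0$ eventually; (ii) if $\calH$ is non-trivial, then $b_n \geq 1$ for all $n \geq 1$; and (iii) in the non-trivial case, $(b_n)$ is non-increasing. Since $\pi(\calH) = \lim_{n\to\infty} b_n$ whenever the limit exists (which Theorem \ref{densityexists} guarantees), the conclusions follow almost directly.

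For part (a), I would split into cases. In the trivial case, $b_n = 0$ eventually, so $\pi(\calH) = 0$, and the inequality $b_n \geq \pi(\calH) = 0$ is automatic because $\ex(n,\calH) \geq 0$ by convention. In the non-trivial case, because $(b_n)$ is non-increasing and converges to $\pi(\calH)$, we have $b_n \geq \lim_{m\to\infty} b_m = \pi(\calH)$ for every $n$. Combining these gives the desired inequality in all cases.

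For part (b), the trivial case is handled exactly as in (i) above: $b_n = 0$ for large $n$ forces $\pi(\calH) = 0$. In the non-trivial case, I would appeal to the witness constructed in the proof of Theorem \ref{densityexists}: for any $n \geq 1$, pick $N \in \calH_n$ (which exists because $\calH$ is non-trivial and hereditary), form the canonical $\calL_{\calH}$-template $\tilde{N}$ via Lemma \ref{templatelem2}, and note that $\tilde{N} \in \calR(n,\calH)$ with $\operatorname{sub}(\tilde{N}) = 1$. This gives $\ex(n,\calH) \geq 1$, hence $b_n \geq 1$ for every $n \geq 1$. Passing to the limit yields $\pi(\calH) \geq 1$.

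There is no real obstacle here; the observation is a clean corollary consolidating facts that were already proved in Theorem \ref{densityexists}. The only care needed is to be explicit about the convention $\max\emptyset = 0$ used in Definition \ref{genexdef} when handling the trivial case, so that $\ex(n,\calH) = 0$ and hence $b_n = 0$ is consistent with $\pi(\calH) = 0$ in the limit.
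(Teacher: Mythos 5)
Your proposal is correct and matches the paper's treatment: the paper simply notes that both parts follow from the proof of Theorem \ref{densityexists} (monotonicity and the lower bound $b_n\geq 1$ via the template $\tilde{N}$ with $\operatorname{sub}(\tilde{N})=1$, plus the convention $\max\emptyset=0$ in the trivial case), which is exactly what you spell out. No gaps.
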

\vspace{3mm}

\noindent{\bf Proof of Theorem \ref{enumeration}.}
Assume $\calH$ is a hereditary $\calL$-property.  Recall we want to show the following.
\begin{enumerate}
\item If $\pi(\mathcal{H})>1$, then $|\mathcal{H}_n|= \pi(\mathcal{H})^{{n\choose r}+o(n^r)}$.
\item If $\pi(\mathcal{H})\leq 1$, then $|\mathcal{H}_n|=2^{o(n^r)}$.
\end{enumerate}
Assume first that $\calH$ is trivial.  Then by Observation \ref{ob5}(b), $\pi(\calH)=0\leq 1$, so we are in case 2. Since $|\calH_n|=0$ for all sufficiently large $n$, $|\calH_n|=2^{o(n^2)}$ holds, as desired.  Assume now $\calH$ is non-trivial, so $\pi(\calH)\geq 1$ by Observation \ref{ob5}(b).  We show that for all $0<\eta<1$, either $\pi(\calH)=1$ and $|\calH_n|\leq 2^{\eta n^r}$ or $\pi(\calH)>1$ and $\pi(\calH)^{n\choose 2}\leq |\calH_n|\leq \pi(\mathcal{H})^{{n\choose r}+\eta n^r}$.  Fix $0<\eta<1$.  Let $\calF$ be as in Observation \ref{HP} for $\calH$ so that $\calH=\Forb(\calF)$.  Choose $\epsilon>0$ and $K$ as in Theorem \ref{GENSUPERSAT} for $\delta=\eta/4$.   Replacing $K$ if necessary, assume $K\geq r$.  Apply Theorem \ref{version1} to $\epsilon$ and $\calF(K)$ to obtain $m=m(K,r)>1$ and $c=c(K,r, \calL, \epsilon)$.  Assume $n$ is sufficiently large.  Theorem \ref{version1} with $W=[n]$ and $\calB:=\Forb(\calF(K))$ implies there is a collection $\calC$ of $\calL_{\calB}$-templates with domain $[n]$ such that the following hold.
\begin{enumerate}[(i)]
\item For all $\mathcal{F}(K)$-free $\mathcal{L}$-structures $M$ with domain $[n]$, there is $C\in \mathcal{C}$ such that $M\unlhd_pC$,
\item For all $C\in \mathcal{C}$, $\prob(\widetilde{\mathcal{F}(K)},C)\leq \epsilon$ and $\prob(\calE,C)\leq \epsilon$.
\item $\log |\mathcal{C}|\leq cn^{r-\frac{1}{m}}\log n$.
\end{enumerate}
Note that because $K\geq r$, $\calH=\Forb(\calF)$ and $\calB=\Forb(\calF(K))$ imply we must have $S_r(\calH)=S_r(\calB)$. Consequently all $\calL_{\calB}$-templates are also $\calL_{\calH}$-templates.  In particular the elements in $\calC$ are all $\calL_{\calH}$-templates.  Therefore, (ii) and Theorem \ref{GENSUPERSAT} imply that for all $C\in \calC$, either $sub(C)\leq \ex(n,\calH)^{1+\eta/4}$ (case $\pi(\calH)>1$) or $sub(C)\leq 2^{\eta{n\choose r}/4}$ (case $\pi(\calH)=1$).   Note every element in $\calH_n$ is $\calF$-free, so is also $\calF(K)$-free.  This implies by (i) that every element of $\calH_n$ is a full subpattern of some $C\in \calC$.  Therefore we can construct every element in $\calH_n$ as follows.
\begin{enumerate}[$\bullet$]
\item Choose a $C\in \mathcal{C}$.  There are at most $|\calC|\leq 2^{cn^{r-\frac{1}{m}} \log n}$ choices.
\item Choose a full subpattern of $C$.  There are at most $sub(C)\leq \ex(n,\calH)^{1+\eta/4}$ choices if $\pi(\calH)>1$ and at most $sub(C)\leq 2^{\eta{n\choose r}/4}$ choices if $\pi(\calH)=1$.
\end{enumerate}
This implies
\begin{eqnarray}\label{AP}
|\calH_n|\leq \begin{cases} 2^{cn^{r-\frac{1}{m}}\log n}\ex(n,\calH)^{1+\eta/4} & \text{ if }\pi(\calH)>1\\
2^{cn^{r-\frac{1}{m}}\log n}2^{\eta{n\choose r}/4} & \text{ if }\pi(\calH)=1.
\end{cases}\end{eqnarray}
If $\pi(\calH)>1$, then we may assume $n$ is sufficiently large so that $\ex(n,\calH)\leq \pi(\calH)^{(1+\eta/4){n\choose r}}$ (see Observation \ref{ob5}(a)).  Combining this with (\ref{AP}), we have that when $\pi(\calH)>1$, 
$$
|\calH_n|\leq 2^{cn^{r-\frac{1}{m}}\log n}\pi(\mathcal{H})^{(1+\eta/4)^2{n\choose r}}\leq \pi(\mathcal{H})^{{n\choose r}+\eta {n\choose r}},
$$
where the last inequality is because $\pi(\calH)>1$, $(1+\eta/4)^2<1+\eta$, and $n$ is sufficiently large.  If $\pi(\calH)=1$, then (\ref{AP}) implies
$$
|\calH_n|\leq 2^{cn^{r-\frac{1}{m}}\log n}2^{\eta{n\choose r}/2}\leq 2^{\eta{n\choose r}},
$$
where the last inequality is because $n$ is sufficiently large. Thus, we have shown $|\calH_n|\leq 2^{\eta n^r}$ when $\pi(\calH)=1$ and $|\calH_n|\leq  \pi(\mathcal{H})^{{n\choose r}+\eta n^r}$ when $\pi(\calH)>1$.  We just have left to show that when $\pi(\calH)>1$, then $|\calH_n|\geq \pi(\calH)^{n\choose r}$.  This holds because for any $M\in \calR_{ex}([n],\calH)$, all $\ex(n,\calH)$ many full subpatterns of $M$ are in $\calH_n$.  Thus $|\calH_n|\geq \ex(n,\calH)\geq \pi(\calH)^{n\choose r}$, where the second inequality is by Observation \ref{ob5}(a). This finishes the proof.
\qed

\vspace{3mm}

\noindent We now prove a few lemmas needed for Theorems \ref{b4stab} and \ref{stab}.
\begin{lemma}\label{templatelem}
Suppose $C$ and $C'$ are $\calL_{\calH}$-templates with the same domain $V$.  Then for all $A\in {V\choose r}$, $A\in \diff(C,C')$ if and only if $Ch_C(A)\neq Ch_{C'}(A)$.
\end{lemma}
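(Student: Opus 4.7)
The plan is to reduce everything to Observation \ref{temob}, which for a template $M$ says that $M \models R_p(\abar)$ iff $|\cup\abar| = r$ and $p(c_{\abar}) \in Ch_M(\cup\abar)$. In effect, the sets $\{Ch_M(A) : A \in \binom{V}{r}\}$ are a faithful transcription of the atomic $\calL_{\calH}$-diagram of $M$: knowing which tuples satisfy which $R_p$ is exactly the same data as knowing the choice sets at each $r$-subset. Once this is appreciated, the lemma is essentially an unpacking of definitions, combined with the observation immediately following Definition \ref{deltaclosedef1} that $\diff(C,C') = \{A \in \binom{V}{r} : Diag^C(A) \neq Diag^{C'}(A)\}$, where here the diagrams are taken in the language $\calL_{\calH}$.

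For the forward direction, I would assume $A \in \diff(C,C')$ and produce a witness atomic formula in $Diag^C(A) \triangle Diag^{C'}(A)$. Since $\calL_{\calH}$ is purely relational and every equality formula $c_a = c_b$ between distinct constants of $c_A$ is negated in both diagrams, the witness must be of the form $R_p(c_{\abar})$ for some $R_p \in \calL_{\calH}$ and some tuple $\abar$ with $\cup\abar \subseteq A$. Part (1) of Definition \ref{templatedef} forces $C \models \neg R_p(\abar) \wedge C' \models \neg R_p(\abar)$ whenever $|\cup\abar| < r$, so in fact $\abar$ must be an enumeration of $A$. Applying Observation \ref{temob} to each of $C$ and $C'$ then converts the asymmetry $C \models R_p(\abar)$ versus $C' \not\models R_p(\abar)$ into the asymmetry $p(c_{\abar}) \in Ch_C(A)$ versus $p(c_{\abar}) \notin Ch_{C'}(A)$, which gives $Ch_C(A) \neq Ch_{C'}(A)$.

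For the backward direction, I would take $p(c_{\abar})$ in the symmetric difference of $Ch_C(A)$ and $Ch_{C'}(A)$, say $p(c_{\abar}) \in Ch_C(A) \setminus Ch_{C'}(A)$. By the definition of $Ch$, the tuple $\abar$ is an enumeration of $A$, so $|\cup\abar| = r$, and Observation \ref{temob} in reverse yields $C \models R_p(\abar)$ but $C' \not\models R_p(\abar)$. Hence $R_p(c_{\abar}) \in Diag^C(A) \setminus Diag^{C'}(A)$, and consequently $qftp^C(\abar) \neq qftp^{C'}(\abar)$, so $A \in \diff(C,C')$.

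There is no real obstacle; the lemma is the cleanest possible reflection of the defining property of templates. The only subtlety worth flagging in the write-up is the initial reduction from an arbitrary witnessing atomic $\calL_{\calH}$-formula to one of the form $R_p(c_{\abar})$ with $\abar$ an enumeration of $A$, which is handled by part (1) of Definition \ref{templatedef} together with the trivial observation that equality facts among distinct constants agree in any two structures on the same domain.
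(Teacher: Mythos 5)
Your proof is correct and takes essentially the same route as the paper's: both directions translate between satisfaction of the relations $R_p(\abar)$ and membership of $p(c_{\abar})$ in the choice sets $Ch_C(A)$, $Ch_{C'}(A)$ — you by citing Observation \ref{temob}, the paper by unfolding part (2) of Definition \ref{templatedef} directly, which is the same content. Your explicit reduction of the witnessing atomic formula to one of the form $R_p(c_{\abar})$ with $\abar$ an enumeration of $A$ (via part (1) of Definition \ref{templatedef} and agreement on equalities) is if anything slightly more careful than the paper, which asserts that step without comment.
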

\begin{proof}
Fix $A\in {V\choose r}$.  Suppose first $A\in \diff(C,C')$.  Then there is $p\in S_r(\calH)$ and an enumeration $\abar$ of $A$ such that $C\models R_p(\abar)$ and $C'\models \neg R_p(\abar)$.  This implies $p(c_{\abar})\in Ch_C(A)$.  Suppose by contradiction $p(c_{\abar})$ were in $Ch_{C'}(A)$.  Then there is $p'(\xbar)\in S_r(\calH)$ and $\mu\in Perm(r)$ such that $p'(\mu(\xbar))= p(\xbar)$ and $C'\models R_{p'}(\mu(\abar))$.  Because $C'$ is an $\calL_{\calH}$-template, this implies $C'\models R_p(\abar)$, a contradiction.  

Suppose now $Ch_C(A)\neq Ch_{C'}(A)$.  Then there is $p(\xbar)\in S_r(\calH)$ and an enumeration $\abar$ of $A$ such that $p(c_{\abar})\in Ch_C(A)$ and $p(c_{\abar})\notin Ch_{C'}(A)$.  Since $p(c_{\abar})\in Ch_C(A)$, by definition there is $p'(\xbar)$ and $\mu\in Perm(r)$ such that $p'(\mu(\xbar))= p(\xbar)$ and $C\models R_{p'}(\mu(\abar))$.  Since $p(c_{\abar})\notin Ch_{C'}(A)$ and $C'$ is an $\calL_{\calH}$-template, $C'\models \neg R_{p'}(\mu(\abar))$.  This shows $qftp^C(\abar) \neq qftp^{C'}(\abar)$, so $A\in \diff(C,C')$, as desired.
\end{proof}

\begin{lemma}\label{deltaclose1}
Suppose $\calH$ is a non-trivial hereditary $\calL$-property.  Then there is $\gamma=\gamma(\calH)>0$ such that for all $\delta>0$ and $n\geq r$, if $C$ and $C'$ are $\calL_{\calH}$-templates with domain $[n]$ such that $C'$ is error-free and $\dist(C,C')\leq \delta$, then the following holds.  
\begin{enumerate}
\item If $\pi(\calH)>1$, then $sub(C)\leq sub(C')\ex(n,\calH)^{\gamma \delta}$.
\item If $\pi(\calH)=1$, then $sub(C)\leq sub(C')2^{\gamma \delta {n\choose r}}$.
\end{enumerate}
\end{lemma}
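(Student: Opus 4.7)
The plan is to reduce everything to a direct product estimate on the number of choices $|Ch_C(A)|$, using the hypotheses on $C'$ to convert the upper bound from Observation \ref{ob0} into an equality. Specifically, since $C$ is complete, Observation \ref{ob0} gives $sub(C) \leq \prod_{A \in \binom{[n]}{r}} |Ch_C(A)|$; since $C'$ is both complete and error-free, the same observation gives the equality $sub(C') = \prod_{A \in \binom{[n]}{r}} |Ch_{C'}(A)|$. Dividing, I obtain
\[
\frac{sub(C)}{sub(C')} \;\leq\; \prod_{A \in \binom{[n]}{r}} \frac{|Ch_C(A)|}{|Ch_{C'}(A)|}.
\]

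Next I would isolate the factors where the two templates disagree. Let $D = \diff(C,C')$, so $|D| \leq \delta\binom{n}{r}$ by hypothesis. Since both $C$ and $C'$ are $\calL_{\calH}$-templates, Lemma \ref{templatelem} shows that $Ch_C(A) = Ch_{C'}(A)$ for every $A \notin D$, so the corresponding factors are $1$. Since $C'$ is complete, $|Ch_{C'}(A)| \geq 1$ for every $A \in D$, so each surviving factor is at most $|Ch_C(A)|$. Finally, every element of $Ch_C(A)$ has the form $p(c_{\abar})$ for some enumeration $\abar$ of $A$ and some $p \in S_r(\calH)$, and because $\calL$ is finite and relational with $r_{\calL} = r$, the quantity $T := r!\,|S_r(\calH)|$ is a finite constant depending only on $\calH$. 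This gives the uniform bound $|Ch_C(A)| \leq T$ and therefore
\[
\frac{sub(C)}{sub(C')} \;\leq\; T^{|D|} \;\leq\; T^{\delta \binom{n}{r}}.
\]

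To finish I just have to choose $\gamma = \gamma(\calH)$ to absorb the constant $T$ into the right-hand side in each of the two cases. In case (2), where $\pi(\calH) = 1$, setting $\gamma := \log_2 T$ immediately yields $T^{\delta\binom{n}{r}} = 2^{\gamma \delta \binom{n}{r}}$. In case (1), where $\pi(\calH) > 1$, Observation \ref{ob5}(a) gives $\ex(n,\calH) \geq \pi(\calH)^{\binom{n}{r}}$, so taking $\gamma := (\log T)/\log \pi(\calH)$ yields
\[
T^{\delta \binom{n}{r}} \;=\; \pi(\calH)^{\gamma \delta \binom{n}{r}} \;\leq\; \ex(n,\calH)^{\gamma \delta}.
\]
Since the two cases are mutually exclusive for a fixed $\calH$, defining $\gamma$ by the appropriate formula (or taking the maximum of the two) gives a single constant $\gamma(\calH)>0$ that works.

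There is no real obstacle here beyond bookkeeping: the key input is the \emph{equality} in Observation \ref{ob0} for error-free templates (without error-freeness of $C'$ we would only have an inequality in the wrong direction), together with the template-level translation between $\diff(C,C')$ and disagreement of choice sets supplied by Lemma \ref{templatelem}. The only point where one must be slightly careful is ensuring that $T$ depends only on $\calH$ and not on $n$, which follows from finiteness of $\calL$ and of $r$.
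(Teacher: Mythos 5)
Your proof is correct and follows essentially the same route as the paper: split the product of $|Ch_C(A)|$ over $\diff(C,C')$ via Lemma \ref{templatelem}, use the equality case of Observation \ref{ob0} for the error-free template $C'$, bound each disagreeing factor by a constant depending only on $\calH$, and absorb that constant into $\gamma$ using Observation \ref{ob5}(a). The only cosmetic differences are that you divide by $sub(C')$ rather than multiplying, and you use the slightly weaker (but still valid and sufficient) bound $r!\,|S_r(\calH)|$ where the paper uses $|S_r(\calH)|$.
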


\begin{proof}
Fix $n\geq r$ and assume $C$ and $C'$ are $\calL_{\calH}$-templates with domain $[n]$ such that $C'$ is error-free and $\dist(C,C')\leq \delta$.  Then by definition of $\dist(C,C')$, $|\diff(C,C')|\leq \delta {n\choose r}$. By Lemma \ref{templatelem}, 
\begin{align}\label{ch}
\diff(C,C')=\{A\in {V\choose r}: Ch_C(A)\neq Ch_{C'}(A)\}.
\end{align}
Note that for every $A\in {V\choose r}$, $|Ch_C(A)|\leq |S_r(\calH)|$ (by definition of $Ch_C(A)$) and $1\leq |Ch_{C'}(A)|$ (since $C'$ is complete).  Thus $\frac{|Ch_{C}(A)|}{|Ch_{C'}(A)|}\leq |S_r(\calH)|$.  By Observation \ref{ob0} and (\ref{ch}),
\begin{align*}
sub(C)\leq \prod_{A\in {V\choose r}}|Ch_C(A)|&=\Big(\prod_{A\notin \diff(C,C')}|Ch_{C'}(A)|\Big)\Big(\prod_{A\in \diff(C,C')}|Ch_{C}(A)|\Big)\\
&=\Big(\prod_{A\in {V\choose r}}|Ch_{C'}(A)|\Big)\Big(\prod_{A\in \diff(C,C')}\frac{|Ch_{C}(A)|}{|Ch_{C'}(A)|}\Big).
\end{align*}
Combining this with $\frac{|Ch_{C}(A)|}{|Ch_{C'}(A)|}\leq |S_r(\calH)|$ and $|\diff(C,C')|\leq \delta {n\choose r}$ yields
\begin{align}\label{ineq1}
sub(C)\leq \Big(\prod_{A\in {V\choose r}}|Ch_{C'}(A)|\Big) |S_r(\calH)|^{\delta{n\choose r}}=sub(C')|S_r(\calH)|^{\delta {n\choose r}},
\end{align}
where the equality is by Observation \ref{ob0} and because $C'$ is error-free.  If $\pi(\calH)>1$, choose $\gamma>0$ such that $|S_r(\calH)|=\pi(\calH)^{\gamma}$ (this is possible since $\pi(\calH)>1$ implies $S_r(\calH)>1$).  Recall from Observation \ref{ob5}(a) that for all $n$, $\ex(n,\calH)\geq \pi(\calH)^{n\choose r}$. Combining this with our choice of $\gamma$ and (\ref{ineq1}), we have 
$$
sub(C)\leq sub(C')|S_r(\calH)|^{\delta {n\choose r}}=sub(C')\pi(\calH)^{\gamma \delta {n\choose r}}\leq sub(C')\ex(n,\calH)^{\gamma \delta}.
$$
If $\pi(\calH)=1$, choose $\gamma>0$ such that $|S_r(\calH)|\leq 2^{\gamma}$ (this is possible since $\calH$ nontrivial implies $|S_r(\calH)|\geq 1)$.  Combining our choice of $\gamma$ with (\ref{ineq1}) implies 
$$
sub(C)\leq sub(C')|S_r(\calH)|^{\delta {n\choose r}}\leq sub(C')2^{\gamma \delta {n\choose r}}.
$$
\end{proof}

\begin{lemma}\label{deltaclose2}
Suppose $C$ is an $\calL_{\calH}$-template with domain $W$ of size $n\geq r$ and $G\unlhd_pC$.  If $D\in \calR(W,\calH)$ is such that $\dist(C,D)\leq \delta$, then there is $G'\in \calH$ such that $G'\unlhd_pD$ and $\dist(G,G')\leq \delta$.
\end{lemma}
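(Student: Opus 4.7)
The plan is to define a choice function $\chi'$ for $D$ that agrees with the choice function picking $G$ out of $C$ wherever possible, and then use the $\calH$-randomness of $D$ to pull down a full subpattern $G' \in \calH$ from $\chi'$. The distance bound will then follow because $G$ and $G'$ can only disagree on $r$-subsets where $C$ and $D$ themselves disagree.

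More concretely, since $G\unlhd_p C$, fix $\chi \in Ch(C)$ with $G\unlhd_{\chi} C$, so $\chi(A)=\mathrm{Diag}^G(A)$ for every $A\in \binom{W}{r}$. Define $\chi':\binom{W}{r}\to S_r(C_W,\calH)$ as follows. For $A\in \binom{W}{r}\setminus \mathrm{diff}(C,D)$, Lemma \ref{templatelem} (which is stated for $\calL_{\calH}$-templates, and applies here since both $C$ and $D$ are templates) gives $Ch_C(A)=Ch_D(A)$, so $\chi(A)\in Ch_D(A)$ and we set $\chi'(A)=\chi(A)$. For $A\in \mathrm{diff}(C,D)$, use that $D$ is complete to pick any $\chi'(A)\in Ch_D(A)$, which is nonempty. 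By construction $\chi'\in Ch(D)$.

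Since $D\in \calR(W,\calH)$ is $\calH$-random, Definition \ref{Hrand} yields some $G'\in \calH$ with $G'\unlhd_{\chi'} D$. It remains to bound $\mathrm{dist}(G,G')$. For every $A\in \binom{W}{r}\setminus \mathrm{diff}(C,D)$,
\[
\mathrm{Diag}^{G'}(A)=\chi'(A)=\chi(A)=\mathrm{Diag}^{G}(A),
\]
and by the observation after Definition \ref{deltaclosedef1}, this means $A\notin \mathrm{diff}(G,G')$. Therefore $\mathrm{diff}(G,G')\subseteq \mathrm{diff}(C,D)$, and dividing by $\binom{n}{r}$ gives $\mathrm{dist}(G,G')\leq \mathrm{dist}(C,D)\leq \delta$.

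There is no real obstacle here; the only subtlety is checking that $\chi'$ is well defined on all of $\binom{W}{r}$ (handled by completeness of $D$), and that the equality $Ch_C(A)=Ch_D(A)$ off of $\mathrm{diff}(C,D)$ is available — which is exactly the content of Lemma \ref{templatelem} and is the reason the argument has been set up in the category of $\calL_{\calH}$-templates rather than arbitrary complete $\calL_{\calH}$-structures.
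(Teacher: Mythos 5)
Your proof is correct and follows essentially the same route as the paper's: define a choice function for $D$ that equals $\mathrm{Diag}^G(A)$ off $\mathrm{diff}(C,D)$ (using Lemma \ref{templatelem} to identify $\mathrm{diff}(C,D)$ with the sets where $Ch_C$ and $Ch_D$ differ, and completeness of $D$ elsewhere), invoke $\calH$-randomness of $D$ to obtain $G'$, and conclude from $\mathrm{diff}(G,G')\subseteq \mathrm{diff}(C,D)$. The only cosmetic difference is that you first fix a choice function $\chi$ with $G\unlhd_{\chi}C$, whereas the paper works directly with $\mathrm{Diag}^G(A)\in Ch_C(A)$; this changes nothing.
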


\begin{proof}
Fix $C$ and $D$ satisfying the hypotheses  Because $\dist(C,D)\leq \delta$, we have $|\diff(C,D)|\leq \delta{n\choose r}$. By Lemma \ref{templatelem}, 
\begin{align}\label{ch1}
\diff(C,D)=\{A\in {W\choose r}: Ch_C(A)\neq Ch_D(A)\}.
\end{align}
 Define a function $\chi: {W\choose r}\rightarrow S_r(C_W)$ as follows.  For $A\in {W\choose r}\setminus \diff(C,D)$, set $\chi(A)=Diag^G(A)$.  For each $A\in \diff(C,D)$, choose $\chi(A)$ to be any element of $Ch_D(A)$ (which is nonempty because $D$ is an $\calL_{\calH}$-template).  Since $G\unlhd_pC$, for all $A\in {W\choose r}$, $Diag^G(A)\in Ch_C(A)$.  Thus, by definition of $\chi$ and (\ref{ch1}), for all $A\in {W\choose r}\setminus \diff(C,D)$, $\chi(A)=Diag^G(A)\in Ch_C(A)=Ch_D(A)$.  For $A\in \diff(C,D)$, $\chi(A)\in Ch_D(A)$ by assumption.  Thus $\chi\in Ch(D)$.  Because $D$ is $\calH$-random, there is $G'\in \calH$ such that $G'\unlhd_{\chi}D$.  We show $\dist(G,G')\leq \delta$.  By definition of $\chi$ and since $G'\unlhd_{\chi}D$, we have that for all $A\in {W\choose r}$, if $A \notin \diff(C,D)$, then $Diag^{G'}(A)=\chi(A)=Diag^G(A)$, which implies $A\notin \diff(G,G')$.  Thus $\diff(G,G')\subseteq \diff(C,D)$ so $|\diff(G,G')|\leq \delta {n\choose r}$ and $\dist(G,G')\leq \delta$ by definition.  
\end{proof}

\noindent{\bf Proof of Theorem \ref{b4stab}}.  Let $\calH$ be a fast-growing hereditary $\calL$-property.  Fix $\epsilon$ and $\delta>0$.  Given $n$, let $A(n, \epsilon, \delta)=\calH_n\setminus E^{\delta}(\epsilon, n,\calH)$.  Recall, we want to show there is $\beta>0$ such that for sufficiently large $n$,
\begin{align}\label{m}
\frac{|A(n,\epsilon, \delta)|}{|\calH_n|}\leq 2^{-\beta {n\choose r}}.
\end{align}
Let $\gamma>0$ be as in Lemma \ref{deltaclose1} for $\calH$.  Choose $K>2r$ sufficiently large so that $1-\epsilon +\gamma \delta /K<1-\epsilon/2$.  
Apply Theorem \ref{COROLLARY2} to $\frac{\delta}{K}$ to obtain constants $c$ and $m>1$.  Assume $n$ is sufficiently large.  Then Theorem \ref{COROLLARY2} implies there is a collection $\calC$ of $\calL_{\calH}$-templates with domain $[n]$ such that the following hold.
\begin{enumerate}[(i)]
\item For every $H\in \mathcal{H}_n$, there is $C\in \mathcal{C}$ such that $H\unlhd_pC$.  
\item For every $C\in \mathcal{C}$, there is $C'\in \calR([n],\calH)$ such that $dist(C,C')\leq \delta$.
\item $\log |\mathcal{C}|\leq cn^{r-\frac{1}{m}}\log n$.
\end{enumerate}
Suppose $G\in A(n,\epsilon,\delta)$.  By (i), there is $C\in \calC_n$ such that $G\unlhd_pC$.   By (ii), there is $M_C\in \calR([n],\calH)$ such that $\dist(C,M_C)\leq \frac{\delta}{K}$.   By Lemma \ref{deltaclose2}, there is $G'\unlhd_p M_C$ with $\dist(G,G')\leq \frac{\delta}{K}\leq \delta$.  If $sub(M_C)\geq \ex(n,\calH)^{1-\epsilon}$, then by definition of $E^{\delta}(\epsilon, n,\calH)$, $\dist(G,G')\leq \delta$ and $G'\unlhd_pM_C$ would imply $G\in E^{\delta}(\epsilon, n,\calH)$, contradicting our assumption that $G\in A(n,\epsilon, \delta)=\calH_n\setminus E^{\delta}(\epsilon, n,\calH)$.  Therefore, we must have $sub(M_C)<\ex(n,\calH)^{1-\epsilon}$.  Note $M_C\in \calR([n],\calH)$ implies $M_C$ is error-free, so Lemma \ref{deltaclose1} and the fact that $\dist(C,M_C)\leq \delta/K$ imply $sub(C)\leq sub(M_C)\ex(n,\calH)^{\gamma\delta/K}$.  Combining this with the fact that $sub(M_C)<\ex(n,\calH)^{1-\epsilon}$ we have that 
$$
sub(C)<\ex(n,\calH)^{1-\epsilon}\ex(n,\calH)^{\gamma\delta/K}=\ex(n,\calH)^{1-\epsilon +\gamma \delta/K}\leq\ex(n,\calH)^{1-\epsilon/2},
$$
where the second inequality is by assumption on $K$.  Therefore every $G\in A(n,\epsilon, \delta)$ can be constructed as follows.
\begin{itemize}
\item Choose $C\in \calC_n$ with $sub(C)<\ex(n,\calH)^{1-\epsilon/2}$.  There are at most $|\calC_n|\leq 2^{cn^{r-\frac{1}{m}}\log n}$ ways to do this, where the bound is by (iii).  Since $n$ is large and $\pi(\calH)>1$, we may assume $2^{cn^{r-\frac{1}{m}}\log n}\leq \pi(\calH)^{\epsilon{n\choose r}/4}$.
\item Choose a full subpattern of $C$.  There are at most $sub(C)<\ex(n,\calH)^{1-\epsilon/2}$ ways to do this.
\end{itemize}
Combining these bounds yields $|A(n,\epsilon,\delta)|\leq \pi(\calH)^{\epsilon{n\choose r}/4}\ex(n,\calH)^{1-\epsilon/2}$.  Recall that $|\calH_n|\geq\ex(n,\calH)$ holds, since for any $M\in \calR_{ex}([n],\calH)$, all $\ex(n,\calH)$-many full subpatterns of $M$ are all in $\calH_n$.  Therefore
\begin{align}\label{l}
\frac{|A(n,\epsilon, \delta)|}{|\calH_n|}\leq \frac{\pi(\calH)^{\epsilon{n\choose r}/4}\ex(n,\calH)^{1-\epsilon/2}}{\ex(n,\calH)}=\pi(\calH)^{\epsilon{n\choose r}/4}\ex(n,\calH)^{-\epsilon/2}\leq \pi(\calH)^{-\epsilon {n\choose r}/4},
\end{align}
where the last inequality is because $\pi(\calH)^{n\choose r}\leq\ex(n,\calH)$.  Therefore we have $\frac{|A(n,\epsilon, \delta)|}{|\calH_n|}\leq 2^{-\beta{n\choose r}}$, where  $\beta = \frac{\epsilon \log \pi(\calH)}{4\log 2}$.  Note $\beta>0$ since $\pi(\calH)>1$.
\qed

\vspace{3mm}

\noindent {\bf Proof of Theorem \ref{stab}.}
Suppose $\calH$ is a fast growing hereditary $\calL$-property with a stability theorem.  Fix $\delta>0$.  Recall we want to show there is $\beta>0$ such that for sufficiently large $n$,
\begin{align*}
\frac{|\calH_n\setminus E^{\delta}(n,\calH)|}{|\calH_n|}\leq 2^{-\beta {n\choose r}}
\end{align*}
By Theorem \ref{b4stab}, it suffices to show that there are $\epsilon_1, \delta_1>0$ such that for all sufficiently large $n$, $E^{\delta_1}(\epsilon_1, n,\calH)\subseteq E^{\delta}(n,\calH)$.  

Because $\calH$ has a stability theorem, there is $\epsilon$ such that for all sufficiently large $n$, if $H\in \calR([n],\calH)$ satisfies $sub(H)\geq\ex(n,\calH)^{1-\epsilon}$, then there is $H'\in \calR_{ex}([n],\calH)$ with $\dist(H,H')\leq \frac{\delta}{2}$.  Fix $n$ sufficiently large.  We claim $E^{\delta/2}(\epsilon, n,\calH)\subseteq E^{\delta}(n,\calH)$.  Suppose $G\in E^{\delta/2}(\epsilon, n,\calH)$.  Then by definition, $G$ is $\delta/2$-close to some $G'$ such that $G'\unlhd_pH$, for some $H\in \calR([n],\calH)$ satisfying $sub(H)\geq\ex(n,\calH)^{1-\epsilon}$.  By choice of $\epsilon$ and because $n$ is sufficiently large, there is $H'\in \calR_{ex}([n],\calH)$ such that $\dist(H,H')\leq \frac{\delta}{2}$.  Lemma \ref{deltaclose2} implies there is some $G''\unlhd_pH'$ such that $\dist(G',G'')\leq \frac{\delta}{2}$.  Observe that $G''\in E(n,\calH)$ and  
$$
\dist(G, G'')\leq \dist(G,G')+\dist(G', G'') \leq \frac{\delta}{2}+\frac{\delta}{2}=\delta.
$$
This implies that $G\in E^{\delta}(n,\calH)$, as desired.
\qed

\section{Characterization of $\calH$-random $\calL_{\calH}$-templates}\label{Hrandom}
In this section we give an equivalent characterization for when an $\calL_{\calH}$-structure is an $\calH$-random $\calL_{\calH}$-template, where $\calH$ is a hereditary $\calL$-property.  The results in this section will be used in the proofs of our remaining results, Theorems \ref{version1}, \ref{GENSUPERSAT}, and \ref{COROLLARY2}.  For the rest of this section, $\calH$ is a fixed nonempty collection of finite $\calL$-structures.

\begin{definition}
Define $\FLAW$ to be the class of all $\calL_{\calH}$-structures of size $r$ which are not $\calL_{\calH}$-templates.  Elements of $\FLAW$ are called flaws.
\end{definition}

\begin{lemma}\label{flaw}
An $\calL_{\calH}$-structure $M$ is an $\calL_{\calH}$-template if and only if it is $\FLAW$-free.
\end{lemma}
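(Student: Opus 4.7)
The plan is to prove both implications by exploiting the fact that the three template conditions are \emph{local}: each is governed by what happens on $r$-element subsets of the domain, and any induced $r$-element substructure of an $\calL_{\calH}$-structure of size $r$ is either a template or, by definition, a flaw.

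For the forward direction, I would fix an $\calL_{\calH}$-template $M$ with domain $V$ and a substructure $N \subseteq_{\calL_{\calH}} M$ of size $r$. Since $\calL_{\calH}$ is purely relational, $N$ is the induced substructure $M[A]$ for $A := \dom(N) \in \binom{V}{r}$. I would then check that $N$ inherits each of the three template properties: completeness of $N$ follows from completeness of $M$ applied to $A$, since the witnessing atomic formula $R_p(\abar)$ is preserved when passing to $M[A]$; condition (1) for $N$ follows because any $\abar \in A^r \setminus A^{\underline{r}}$ lies in $V^r \setminus V^{\underline{r}}$, so $M \models \neg R_p(\abar)$ and the same holds in $M[A]$; condition (2) for $N$ follows similarly since $A^{\underline{r}} \subseteq V^{\underline{r}}$. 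Thus $N$ is a template, contradicting $N \in \FLAW$, and so no substructure of $M$ of size $r$ is a flaw.

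For the backward direction, I would fix $M$ with domain $V$ and assume $M$ is $\FLAW$-free. For every $A \in \binom{V}{r}$, the induced structure $M[A]$ is a substructure of $M$ of size $r$, so by $\FLAW$-freeness it cannot be a flaw, hence must be an $\calL_{\calH}$-template. I would then verify each template condition for $M$ by reducing it to the corresponding condition on some $M[A]$, using the fact that atomic formulas are preserved both ways between $M$ and its induced substructures. Concretely: completeness of $M$ is immediate from the completeness of each $M[A]$; for condition (1), given $\abar \in V^r \setminus V^{\underline{r}}$ and $p \in S_r(\calH)$, I would choose any $A \in \binom{V}{r}$ containing $\cup \abar$ and apply condition (1) in the template $M[A]$; for condition (2), given $\abar \in V^{\underline{r}}$, the set $A := \cup \abar$ has size $r$ and applying condition (2) in $M[A]$ gives the required equivalence in $M$.

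The main subtlety, rather than a deep obstacle, is the degenerate case $|V| < r$: then $M$ is vacuously $\FLAW$-free (no $r$-element substructures exist), but verifying condition (1) requires padding $\cup \abar$ up to an $r$-element subset of $V$, which is impossible. I expect this case to be handled either by an implicit convention that $|V| \geq r$ (which is the setting in which the lemma is used in Section~\ref{mainresults}), or by a brief separate argument. All remaining steps reduce to the standard observation that for relational languages, atomic formulas agree between a structure and its induced substructures.
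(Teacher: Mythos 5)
Your proof is correct and is essentially the paper's own argument: the paper's proof just asserts that being an $\calL_{\calH}$-template is a local condition, equivalent to every induced $r$-element substructure $M[A]$ being a template, which by definition is exactly $\FLAW$-freeness, and you simply spell out that locality check in both directions. Your observation about the degenerate case $|\dom(M)|<r$ (where $\FLAW$-freeness is vacuous but condition (1) of Definition \ref{templatedef} can fail, e.g.\ if some relation holds on a tuple with repeated entries) is a genuine edge case the paper silently passes over; in all of the paper's applications the structures in question have size at least $r$, so it is harmless.
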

\begin{proof}
Let $dom(M)=V$.  It is straightforward from Definition \ref{templatedef} to check that $M$ is an $\calL_{\calH}$-template if and only if for all $A\in {V\choose r}$, $M[A]$ is an $\calL_{\calH}$-template.  By definition of $\FLAW$, $M$ is $\FLAW$-free if and only if for all $A\in {V\choose r}$, $M[A]$ is an $\calL_{\calH}$-template.  This finishes the proof.
\end{proof}

\noindent We are now ready to prove the main result of this section.

\begin{proposition}\label{random}
Suppose $\calH$ is a hereditary $\calL$-property, and $\mathcal{F}$ is the class of finite $\mathcal{L}$-structures from Observation \ref{HP} such that $\Forb(\calF)=\calH$. Then a complete $\calL_{\calH}$-structure $M$ is $\calH$-random if and only if $M$ is $\tilde{\calF}$-free and error-free.
\end{proposition}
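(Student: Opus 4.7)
The plan is to prove both directions of the equivalence separately, using Proposition \ref{Lrandom} to handle error-freeness essentially for free in both directions, and then transferring between $\calL$-subpatterns and $\calL_{\calH}$-substructures via the appropriate isomorphisms.

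For the forward direction, assume $M$ is $\calH$-random. Since $\calH$-random entails that every choice function on $M$ gives rise to a full subpattern (dropping the requirement that the subpattern be in $\calH$), Proposition \ref{Lrandom} immediately yields that $M$ is error-free. For $\tilde{\calF}$-freeness, I would argue by contradiction. Suppose some $\calL_{\calH}$-substructure $M'\subseteq_{\calL_{\calH}}M$ is $\calL_{\calH}$-isomorphic to some $H\in \tilde{F}$ for some $F\in \calF$. By definition of $\tilde{F}$, there is a choice function $\chi_H\in Ch(H)$ with $F\unlhd_{\chi_H}H$; transferring via the $\calL_{\calH}$-isomorphism $H\cong_{\calL_{\calH}}M'$, one obtains a choice function $\chi'\in Ch(M')$ and an $\calL$-structure $F^{*}\cong_{\calL}F$ with $\dom(F^{*})=\dom(M')$ and $F^{*}\unlhd_{\chi'}M'$. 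Extend $\chi'$ arbitrarily to a choice function $\chi\in Ch(M)$ by picking any element of $Ch_M(A)$ (nonempty by completeness) for each $A\in \binom{\dom(M)}{r}\setminus \binom{\dom(M')}{r}$. Since $M$ is $\calH$-random, there is $N\in \calH$ with $N\unlhd_{\chi}M$. Then for every $A\in \binom{\dom(M')}{r}$, $Diag^{N[\dom(M')]}(A)=\chi(A)=\chi'(A)=Diag^{F^{*}}(A)$. Because $\calL$ is relational of arity at most $r$, the $r$-element diagrams determine the entire structure, so $N[\dom(M')]=F^{*}$. Hence $F^{*}\subseteq_{\calL}N$ and $F^{*}\cong_{\calL}F\in \calF$, contradicting $N\in \calH=\Forb(\calF)$.

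For the backward direction, assume $M$ is complete, $\tilde{\calF}$-free, and error-free. Fix any $\chi\in Ch(M)$. By Proposition \ref{Lrandom} and error-freeness, there is some $\calL$-structure $N$ with $N\unlhd_{\chi}M$. I need to show $N\in \calH$. Suppose not; since $\calH=\Forb(\calF)$, there exist $F\in \calF$ and $A\subseteq \dom(N)$ with $N[A]\cong_{\calL}F$. Let $F':=N[A]$; then $F'\unlhd_{\chi\upharpoonright}M[A]$, where $\chi\upharpoonright$ denotes the restriction of $\chi$ to $\binom{A}{r}$, so $F'\unlhd_p M[A]$. Using an $\calL$-isomorphism $\sigma\colon \dom(F)\to A$ witnessing $F\cong_{\calL}F'$, transfer the $\calL_{\calH}$-structure $M[A]$ to an isomorphic copy $P$ on $\dom(F)$; under $\sigma^{-1}$, the full subpattern $F'$ of $M[A]$ corresponds to $F$ as a full subpattern of $P$. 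Hence $P\in \tilde{F}\subseteq \tilde{\calF}$, while $M[A]\cong_{\calL_{\calH}}P$ and $M[A]\subseteq_{\calL_{\calH}}M$, contradicting that $M$ is $\tilde{\calF}$-free.

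The main technical obstacle in both directions is the careful bookkeeping involved in transferring subpatterns along $\calL_{\calH}$-isomorphisms (whose domains differ from that of the reference structure $F$), and ensuring that the interactions between choice functions on a substructure and on the full structure are correctly compatible. The key observation making this work is that a choice function $\chi$ on $M$ that happens to agree with a prescribed choice function $\chi'$ on a sub-domain forces any $\calL$-structure chosen by $\chi$ to restrict correctly, because the relevant diagrams $Diag^{N}(A)$ are literally read off from $\chi(A)$. The edge case in which $F$ has size smaller than $r$ is degenerate (the subpattern conditions on $\binom{\dom(F)}{r}=\emptyset$ become vacuous), but can be absorbed into the main argument by using the hereditary property of $\calH$ together with the fact that $\calF$ in Observation \ref{HP} excludes any $F$ which is a substructure of an element of $\calH$.
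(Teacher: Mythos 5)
Your proof is correct and follows essentially the same route as the paper: error-freeness via Proposition \ref{Lrandom}, forward $\tilde{\calF}$-freeness by extending a choice function witnessing a copy of some $\tilde{F}$ to a choice function on all of $M$ and contradicting $\calH$-randomness through the resulting $N\in\calH$, and the converse by restricting the full subpattern supplied by Proposition \ref{Lrandom} to exhibit a member of $\tilde{\calF}$ inside $M$. The only cosmetic difference is that the paper bypasses your explicit isomorphism bookkeeping by using that the class $\calF$ of Observation \ref{HP} is closed under isomorphism (so the copy inside $M$ can be taken to be $M[B]\in\tilde{F}$ for some $F\in\calF$ with $\dom(F)=B$ directly), and, like you, it implicitly works only in the non-degenerate case $|\dom(F)|\geq r$, so your flagged edge case is treated no more carefully in the original.
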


\begin{proof}
By Observation \ref{HP}, $\calF$ is closed under isomorphism.  Fix a complete $\calL_{\calH}$-structure $M$ and let $V=dom(M)$.  Suppose first that $M$ is $\calH$-random.  Then $M$ is complete and for every $\chi\in Ch(M)$, there is $N\in \calH$ such that $N\unlhd_{\chi}M$.  This implies by Proposition \ref{Lrandom} that $M$ is error-free.  Suppose by contradiction $M$ is not $\tilde{\calF}$-free.  Combining the assumption that $\calF$ is closed under isomorphism and the definition of $\tilde{\calF}$, this implies there is $B\subseteq V$ and $F\in \calF$ such that $M[B]\in \tilde{F}$.  By definition of $\tilde{F}$, there is $\chi_B\in Ch(M[B])$ such that $F\unlhd_{\chi_B}M[B]$.  Define a function $\chi:{V\choose r}\rightarrow S_r(C_V,\calH)$ as follows.  For each $A\in {B\choose r}$, set $\chi(A)=\chi_B(A)$.  Clearly, $\chi_B\in Ch(M[B])$ implies that for all $A\in {B\choose r}$, $\chi_B(A)\in Ch_M(A)$.  For each $A\in {V\choose r}\setminus {B\choose r}$, define $\chi(A)$ to be any element of $Ch_M(A)$ (this is possible since $M$ is complete by assumption).   By construction, $\chi\in Ch(M)$.  Because $M$ is $\calH$-random, there is $D\in \calH$ such that $D\unlhd_{\chi}M$.  By choice of $\calF$, since $D\in \calH$, we have that $D$ is $\calF$-free, which implies $D$ is $F$-free since $F\in \calF$.  We claim $D[B]\cong_{\calL}F$, a contradiction.  For each $A\in {B\choose r}$, $D\unlhd_{\chi}M$, $F\unlhd_{\chi_B}M[B]$, and the definition of $\chi$ imply
$$
Diag^D(A)=\chi(A)=\chi_B(A)=Diag^F(A).
$$
Thus $Diag(D[B])=\bigcup_{A\in {B\choose r}}Diag^D(A)=\bigcup_{A\in {B\choose r}}Diag^F(A)=Diag(F)$ implies $D[B]\cong_{\calL}F$.

For the converse, suppose $M$ is a complete $\calL_{\calH}$-structure which is $\tilde{\calF}$-free and error-free.  Suppose by contradiction $M$ is not $\calH$-random.  Then there is $\chi\in Ch(M)$ such that there is no $N\in \calH$ with $N\unlhd_{\chi}M$.  Since $M$ is error-free, Proposition \ref{Lrandom} implies there is some $\calL$-structure $N$ such that $N\unlhd_{\chi}M$.  Thus we must have $N\notin \calH$.  By choice of $\calF$ from Observation \ref{HP}, $N$ is not $\calF$-free.  This along with the fact that $\calF$ is closed under isomorphism implies there is $B\subseteq V$ such that $N[B]\in \calF$.  But $N\unlhd_{p}M$ implies $N[B]\unlhd_{p}M[B]$ (this is straightforward to check).  Since $N[B]\in \calF$, this implies $M[B]\in \tilde{\calF}$ by definition of $\tilde{\calF}$, contradicting that $M$ is $\tilde{\calF}$-free.
\end{proof}

\begin{corollary}\label{charR}
Suppose $\calH$ is a hereditary $\calL$-property, and $\mathcal{F}$ is the class of finite $\mathcal{L}$-structures from Observation \ref{HP} such that $\Forb(\calF)=\calH$. Let $M$ be an $\calL_{\calH}$-structure.  Then $M\in \calR(dom(M),\calH)$ if and only if $M$ is $\tilde{\calF}$-free, error-free, and $\FLAW$-free.
\end{corollary}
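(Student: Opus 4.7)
The plan is to observe that this corollary is essentially a direct packaging of Lemma \ref{flaw} and Proposition \ref{random}, once one unpacks the definition of $\calR(dom(M),\calH)$. Recall that $M \in \calR(dom(M),\calH)$ means exactly that $M$ is both an $\calL_{\calH}$-template (in particular complete, by Definition \ref{templatedef}) and $\calH$-random.

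For the forward direction, I would assume $M \in \calR(dom(M),\calH)$ and split the conclusion into the three assertions. Since $M$ is an $\calL_{\calH}$-template, Lemma \ref{flaw} immediately gives that $M$ is $\FLAW$-free. Since $M$ is an $\calL_{\calH}$-template it is also complete, and being $\calH$-random, Proposition \ref{random} applies and yields that $M$ is $\tilde{\calF}$-free and error-free.

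For the reverse direction, I would assume $M$ is $\tilde{\calF}$-free, error-free, and $\FLAW$-free. By Lemma \ref{flaw}, the $\FLAW$-freeness implies $M$ is an $\calL_{\calH}$-template, and in particular $M$ is complete. Now with $M$ complete, $\tilde{\calF}$-free, and error-free, Proposition \ref{random} gives that $M$ is $\calH$-random. Combining: $M$ is an $\calH$-random $\calL_{\calH}$-template, i.e.\ $M \in \calR(dom(M),\calH)$.

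There is no real obstacle here: the entire content of the corollary lies in Lemma \ref{flaw} and Proposition \ref{random}, and the only thing to watch is that the completeness hypothesis required to apply Proposition \ref{random} in the reverse direction is not assumed separately but is extracted from $\FLAW$-freeness via Lemma \ref{flaw}. Thus the proof is essentially one or two sentences chaining these two previously-established facts.
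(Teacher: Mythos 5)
Your proposal is correct and matches the paper's own proof, which likewise unpacks the definition of $\calR(dom(M),\calH)$ and chains Lemma \ref{flaw} with Proposition \ref{random}. Your extra remark that completeness in the reverse direction comes from $\FLAW$-freeness (since templates are complete by definition) is exactly the implicit step the paper leaves unstated.
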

\begin{proof}
By definition, $M\in \calR(dom(M),\calH)$ if and only if $M$ is an $\calH$-random $\calL_{\calH}$-template.  By Lemma \ref{flaw} and Proposition \ref{random}, this holds if and only if $M$ is $\tilde{\calF}$-free, error-free, and $FLAW$-free.
\end{proof}

\section{Graph Removal and Proofs of Theorems \ref{GENSUPERSAT} and \ref{COROLLARY2}.}\label{rphrem}


In this section we will use a version of the graph removal lemma from \cite{AroskarCummings} to prove Theorem \ref{GENSUPERSAT} and to prove Theorem \ref{COROLLARY2} from Theorem \ref{version1}.  We now state definitions required to quote the graph removal lemma from \cite{AroskarCummings}. Throughout the rest of this section, $\mathcal{L}_0$ is a fixed finite relational language with $r_{\calL_0}=r$.  Note $\calL_0$ is not necessarily the same as $\calL$, although we are assuming $r_{\calL_0}=r_{\calL}=r$.  Given a partition $p$ of a finite set $X$, let $||p||$ denote the number of parts in  $p$.

\begin{definition} Let $Index=\{(R,p): R\in \mathcal{L}_0$ and $p$ is a partition of $[\ell]$ where $\ell$ is the arity of $R\}$.  Suppose $(R,p)\in Index$ and $R$ has arity $\ell$. 
\begin{enumerate}
\item $C_p(x_1,\ldots, x_{\ell})$ is the subtuple of $(x_1,\ldots, x_{\ell})$ obtained by replacing each $x_i$ with $x_{p(i)}$ where $p(i)=\min \{j: x_j$ is in the same part of $p$ as $i\}$, then deleting all but the first occurance of each variable in the tuple $(x_{p(1)},\ldots, x_{p(\ell)})$.
\item $R_p(C(\xbar))$ is the $||p||$-ary relation obtained from $R(x_1, \ldots, x_{\ell})$ by replacing each $x_i$ with $x_{p(i)}$ where $p(i)=\min \{j: x_j$ is in the same part of $p$ as $i\}$.
\item If $N$ is an $\mathcal{L}_0$-structure, define $DH^{R}_p(N)=\{\abar\in dom(N)^{\underline{||p||}}: N\models R_p(\abar)\}$.
\end{enumerate}
\end{definition}
Now we can define the notion of distance between two $\mathcal{L}_0$-structures from \cite{AroskarCummings}.    
\begin{definition}
Given $(R,p)\in Index$ and $M$, $N$ two finite $\mathcal{L}_0$-structures with the same universe $W$, set
$$
d_p^R(M,N)=\frac{|DH_p^R(M)\Delta DH_p^R(N)|}{|W|^{||p||}}\qquad \hbox{ and set }\qquad d(M,N)=\sum_{(R,p)\in Index}d^R_p(M,N).
$$
\end{definition}

\noindent We will see below in Lemma \ref{distlem} that this notion of distance, $d(M,N)$, is related to our notion of distance, $\dist(M,N)$. We first state the graph removal lemma of Aroskar and Cummings as it appears in their paper (Theorem 2 from \cite{AroskarCummings}).

\begin{theorem}[{\bf Aroskar-Cummings \cite{AroskarCummings}}]\label{triangleremoval}
Suppose $\mathcal{A}$ is a collection of finite $\mathcal{L}_0$-structures.  For every $\delta>0$ there exists $\epsilon>0$ and $K$ such that the following holds.  For all sufficiently large finite $\mathcal{L}_0$-structures $M$, if $\prob(\calA(K),M)<\epsilon$, then there is an $\calL_0$-structure $M'$ with  $dom(M')=dom(M)$ such that $d(M',M)<\delta$ and $\prob(\calA,M')=0$.
\end{theorem}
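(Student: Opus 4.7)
The plan is to prove this by combining a hypergraph regularity lemma with a counting lemma, in the style of the R\"odl--Schacht proof of the hypergraph removal lemma, adapted to handle multiple relations of varying arities simultaneously. First, I would apply a regularity lemma to the family of hypergraphs $\{DH_p^R(M) : (R,p) \in Index\}$ at once. Since $|Index|$ is bounded (depending only on $\calL_0$ and $r$), one can simultaneously refine to a common partition $W = V_1 \cup \cdots \cup V_t$, with $t$ depending on the chosen regularity parameters but not on $|W|$, such that all but a negligible fraction of product blocks $V_{i_1} \times \cdots \times V_{i_{\|p\|}}$ are quasi-random for $DH_p^R$ at some density.

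Next I would define $M'$ by a cleaning step: on any product block that is either irregular for some $(R,p)$ or has density below a threshold $\eta$ (chosen in terms of $\delta$ and the regularity parameters), I set the corresponding atomic facts for $DH_p^R$ to false. Standard estimates then give $\sum_{(R,p)\in Index} d^R_p(M,M') < \delta$, hence $d(M,M')<\delta$.

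The final step is a counting lemma. Suppose some $A\in \calA$ embedded into $M'$. The embedding would lie on a tuple of parts $(V_{i_1},\ldots,V_{i_{|A|}})$ on which every relation it uses is quasi-random with density bounded below by $\eta$. A multi-relational counting lemma then forces $\Omega(|W|^{|A|})$ embeddings of $A$ into $M'$, and since $M'$ and $M$ differ only on cleaned tuples, essentially all of them are also embeddings into $M$. For $|A|\le K$ this directly contradicts $\prob(\calA(K),M)<\epsilon$ once $\epsilon$ is chosen small compared with the counting constant; for $|A|>K$ one chooses $K$ large enough (in terms of the regularity parameters) that projecting the block-sequence of $A$ onto at most $K$ parts still yields a witness in $\calA(K)$ whose copies in $M$ are similarly abundant, again contradicting the hypothesis.

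The principal obstacle is the joint counting step: unlike the single-relation setting, a copy of $A$ must simultaneously realize atomic facts in every $(R,p)\in Index$ on overlapping tuples of vertices, so one needs a counting lemma whose quasi-randomness hypotheses on each $DH_p^R$ can be leveraged together. The secondary, more subtle obstacle is passing from $\calA(K)$-free to fully $\calA$-free, since $\calA$ is not assumed to be closed under substructures; this is exactly where a careful choice of $K$ in terms of the regularity constants is needed, and it is the reason the quantifiers in the statement put $K$ after $\delta$ and before $M$.
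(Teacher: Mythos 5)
A point of context first: the paper does not prove this statement at all. It is quoted as Theorem 2 of Aroskar--Cummings \cite{AroskarCummings} and used as a black box; the only thing proved in the paper is the restatement (Theorem \ref{triangleremoval2}) obtained by comparing the metrics $d$ and $\dist$ via Lemma \ref{distlem}. So your proposal has to be judged against what a correct proof of the Aroskar--Cummings removal lemma requires, and as it stands it has two genuine gaps.

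First, your regularity step partitions only the vertex set and asks each product block $V_{i_1}\times\cdots\times V_{i_{\|p\|}}$ to be quasirandom for every $DH_p^R$. For relations of arity at least $3$ this weak, vertex-partition quasirandomness does not support a counting lemma at all: this is exactly the failure that forced the development of the hypergraph regularity method, where one must regularize $j$-tuples relative to partitions of lower-arity tuples for every $j\leq r$ and then invoke the matching counting/embedding lemma. The multi-relational bookkeeping you flag as the principal obstacle is comparatively routine; the arity issue is the one your sketch does not survive. Second, and more seriously, $\prob(\calA,M')=0$ in this statement refers to \emph{induced} copies ($\cop(B,M)$ is defined through model-theoretic substructure), and $\calA$ is an arbitrary, possibly infinite collection not closed under substructures. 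Your cleaning step only deletes atomic facts, and deleting facts can create new induced copies of members of $\calA$, so "no embeddings survive on dense regular blocks" does not yield $\prob(\calA,M')=0$. Removing all induced copies of an infinite family is precisely the content of the Alon--Shapira/R\"odl--Schacht infinite induced removal lemmas: one must choose, block by block in a strong (functionally iterated) regularity partition, which complete type to install so that no induced copy is created, and the constant $K$ emerges from that strong-regularity/compactness argument. Your substitute --- projecting the block-pattern of a large $A\in\calA$ onto at most $K$ parts to get a witness in $\calA(K)$ --- does not work, because that projected structure need not belong to $\calA$ (as you yourself note, $\calA$ is not substructure-closed), so there is nothing in $\calA(K)$ to contradict the hypothesis $\prob(\calA(K),M)<\epsilon$.
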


The following relationship between $d(M,N)$ and $\dist(M,N)$ will allow us to restate this graph removal lemma.  Given a tuple $\xbar=(x_1,\ldots, x_{\ell})$, a \emph{subtuple} of $\xbar$ is any tuple $\xbar'=(x_{i_1},\ldots, x_{i_{\ell'}})$ where $1\leq i_1<\ldots<i_{\ell'}\leq \ell$.  If $\ell'<\ell$, we say $\xbar'$ is a \emph{proper subtuple} of $\xbar$, denoted $\xbar'\subsetneq \xbar$.

\begin{lemma}\label{distlem}
If $M$ and $N$ are $\calL_0$-structures with the same finite domain $W$ of size at least $2r$, then 
$$
\dist(M,N)\leq (r!)^22^r d(M,N).
$$
\end{lemma}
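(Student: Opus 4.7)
The plan is to charge each $A \in \diff(M,N)$ to a ``witness'' tuple lying in some symmetric difference $DH_p^R(M) \Delta DH_p^R(N)$, and then bound the number of $A$'s that can share a given witness. The key input is that because any enumeration of $A$ consists of $r$ distinct elements, every equality statement agrees in $M$ and $N$, so each type-disagreement must come from an atomic relation statement.

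Concretely, given $A \in \diff(M,N)$, I would fix an enumeration $\abar = (a_1,\ldots,a_r)$ of $A$ with $qftp^M(\abar) \neq qftp^N(\abar)$. The disagreement yields some $R \in \calL_0$ of arity $\ell \leq r$ and indices $i_1,\ldots,i_\ell \in [r]$ on which $M$ and $N$ disagree about $R(a_{i_1},\ldots,a_{i_\ell})$. Let $p$ be the partition of $[\ell]$ whose blocks are the fibers of $j \mapsto a_{i_j}$---i.e.\ the equality pattern of $(a_{i_1},\ldots,a_{i_\ell})$---and let $\abar' := C_p(a_{i_1},\ldots,a_{i_\ell})$ be the resulting $||p||$-tuple of distinct entries. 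By the very definitions of $R_p$ and $DH_p^R$, one has $(R,p) \in Index$, $\abar' \in DH_p^R(M) \Delta DH_p^R(N)$, and $\cup \abar' \subseteq A$.

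Now I count. For each fixed $(R,p) \in Index$ and each $\abar' \in DH_p^R(M) \Delta DH_p^R(N)$, writing $k := ||p||$, the number of $A \in \binom{W}{r}$ containing the $k$-element set $\cup \abar'$ is exactly $\binom{n-k}{r-k}$. Since each $A \in \diff(M,N)$ is accounted for by at least one such witness, and since $|DH_p^R(M) \Delta DH_p^R(N)| = n^{||p||}\, d_p^R(M,N)$,
\begin{equation*}
|\diff(M,N)| \;\leq\; \sum_{(R,p) \in Index} \binom{n-||p||}{r-||p||}\, n^{||p||}\, d_p^R(M,N).
\end{equation*}
It remains only to bound the coefficient after dividing by $\binom{n}{r}$. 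For $1 \leq k \leq r$ and $n \geq 2r$, each factor $n-i$ with $0 \leq i < k$ satisfies $n - i \geq n - r + 1 > n/2$, so
\begin{equation*}
\frac{\binom{n-k}{r-k}\, n^k}{\binom{n}{r}} \;=\; \frac{r!}{(r-k)!} \cdot \frac{n^k}{n(n-1)\cdots(n-k+1)} \;\leq\; r!\cdot 2^k \;\leq\; (r!)^2\, 2^r.
\end{equation*}
Substituting and summing over $(R,p) \in Index$ gives $\dist(M,N) \leq (r!)^2 2^r\, d(M,N)$, as desired.

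I do not anticipate any real obstacle. The only delicate point is the correspondence between the equality pattern of $(a_{i_1},\ldots,a_{i_\ell})$ and the partition $p$ used to define $R_p$ and $DH_p^R$, but this is essentially tautological once the definitions are unpacked. In fact the bound $(r!)^2 2^r$ is comfortably loose---even the cruder estimate $\binom{n-k}{r-k}\, n^k \leq r!\, 2^r \binom{n}{r}$ suffices---so there is plenty of slack in the final constant.
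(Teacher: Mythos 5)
Your proof is correct and follows essentially the same route as the paper: each $A\in\diff(M,N)$ is charged to a witness tuple in some $DH_p^R(M)\Delta DH_p^R(N)$ whose support lies in $A$, and the conclusion follows from the same counting estimate $n(n-1)\cdots(n-\ell+1)\geq n^{\ell}/2^{\ell}$ valid for $n\geq 2r$. The only difference is bookkeeping: the paper first passes to minimal disagreement tuples $\diff^{\ell}(M,N)$ and composes two fiber-bounded maps (picking up an extra factor $\ell!$), whereas you read off the equality-pattern partition directly from a disagreeing atomic instance and do a covering count, which is a mild streamlining rather than a different method.
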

\begin{proof}
Let $n=|W|$.  Note that $n\geq 2r$ implies for all $1\leq \ell \leq r$, 
\begin{align}\label{d}
\frac{n!}{(n-\ell)!}=n\cdot (n-1)\cdots (n-\ell+1)\geq (n-\ell +1)^{\ell}\geq (n/2)^{\ell}=n^{\ell}/2^{\ell}.
\end{align}
Given $1\leq \ell\leq r$, define 
$$
\diff^{\ell}(M,N)=\{\abar\in W^{\underline{\ell}}: qftp^M(\abar)\neq qftp^N(\abar)\text{ and for all $\abar'\subsetneq \abar$, $qftp^M(\abar')= qftp^N(\abar')$}\}.
$$
Observe that elements in $\diff(M,N)$ are \emph{sets} of elements from $W$, while elements in $\diff^{\ell}(M,N)$ are \emph{tuples} of elements of $W$.  Clearly if $A\in \diff(M,N)$, there is some $\ell\in [r]$ and a tuple $\abar\in A^{\underline{\ell}}$ such that $\abar \in \diff^{\ell}(M,N)$.  Define $\Psi:\diff(M,N)\rightarrow \bigcup_{\ell\in [r]}\diff^{\ell}(M,N)$ to be any map which sends each $A\in \diff(M,N)$ to some such tuple.  Given $\ell\in [r]$ and $\abar=(a_1,\ldots, a_{\ell})\in \diff^{\ell}(M,N)$, note that 
$$
\Psi^{-1}(\abar)\subseteq \{A\in {W\choose r}: \cup \abar \subseteq A\}.
$$
Since the right hand side has size ${n-\ell \choose r-\ell}$, we have that for all $\abar \in \diff^{\ell}(M,N)$, $|\Psi^{-1}(\abar)|\leq {n-\ell \choose r-\ell}$.

For each $\ell\in [r]$, we now define a map $f_{\ell}:\diff^{\ell}(M,N)\rightarrow \bigcup_{(R,p)\in Index, ||p||=\ell}DH_p^R(M)\Delta DH_p^R(N)$.  Let $\abar\in \diff^{\ell}(M,N)$.  Since $\abar\in \diff^{\ell}(M,N)$, there is a relation $R(x_1,\ldots, x_t)\in \calL_0$ and a map $h:[\ell]\rightarrow [t]$ such that $M\models R(a_{h(1)},\ldots, a_{h(t)})$ and $N\models \neg R(a_{h(1)},\ldots, a_{h(t)})$ or vice versa.  If $h$ is not surjective, then some permutation of $C_p(a_{h(1)},\ldots, a_{h(t)})$ is a proper subtuple $\abar'$ of $\abar$ such that $qftp^M(\abar')\neq qftp^N(\abar')$.  But this contradicts that $\abar \in \diff^{\ell}(M,N)$.  Thus $h$ is surjective.  Let $p$ be the partition of $[t]$ with parts $h^{-1}(\{1\}), \ldots, h^{-1}(\{\ell\})$.  Since $h$ is surjective, the parts are all nonempty, so $||p||=\ell$.  Then by definition, $C_p(a_{h(1)},\ldots, a_{h(t)}) \in DH^R_p(M)\Delta DH^R_p(N)$.  Define $f_{\ell}(\abar)=C_p(a_{h(1)},\ldots, a_{h(t)})$.  Observe that $\cup C_p(a_{h(1)},\ldots, a_{h(t)})=\cup \abar$ implies 
$$
f^{-1}_{\ell}(f_{\ell}(\abar))\subseteq \{\bbar\in W^{\ell}: \cup \bbar = \cup \abar\},
$$
so $|f^{-1}_{\ell}(f_{\ell}(\abar))|\leq \ell !$.  Thus $f_{\ell}: \diff^{\ell}(M,N)\rightarrow \bigcup_{(R,p)\in Index, ||p||=\ell}DH_p^R(M)\Delta DH_p^R(N)$ and 
\begin{align}\label{t}
\text{ for all }\cbar \in \bigcup_{(R,p)\in Index, ||p||=\ell}DH_p^R(M)\Delta DH_p^R(N),\quad |f_{\ell}^{-1}(\cbar)|\leq \ell!.
\end{align}  
Define a map $\beta: \diff(M,N)\rightarrow \bigcup_{(R,p)\in Index}DH_p^R(M)\Delta DH_p^R(N)$ as follows.  Given $A\in \diff(M,N)$, apply $\Psi$ to obtain $\Psi(A)\in \diff^{\ell}(M,N)$ for some $\ell\in [r]$.  Then define 
$$
\beta(\abar):=f_{\ell}(\Psi(\abar))\in \bigcup_{(R,p)\in Index, ||p||=\ell}DH_p^R(M)\Delta DH_p^R(N).
$$
Suppose $\cbar \in \bigcup_{(R,p)\in Index}DH_p^R(M)\Delta DH_p^R(N)$ and $\ell:=|\cbar|$.  Then $\cbar\in DH_p^R(M)\Delta DH_p^R(N)$ for some $(R,p)\in Index$ with $||p||=\ell$.  By definition of $\beta$, $\beta^{-1}(\cbar)=\Psi^{-1}(f^{-1}_{\ell}(\cbar))$.  Combining (\ref{t}) and the fact that $|\Psi^{-1}(\abar)|\leq {n-\ell\choose r-\ell}$ for all $\abar \in \diff^{\ell}(M,N)$, we have that 
$$
|\beta^{-1}(\cbar)|=|\Psi^{-1}(f^{-1}_{\ell}(\cbar))|\leq {n-\ell\choose r-\ell}\ell!.
$$
This shows that $|\diff(M,N)|\leq \sum_{\ell\in[r]} \sum_{(R,p)\in Index,||p||=\ell}{n-\ell\choose r-\ell}\ell!|DH_p^R(M)\Delta DH_p^R(N)|$.  Dividing both sides of this by ${n\choose r}$, we obtain the following.
\begin{align}\label{dd}
\dist(M,N)&\leq \sum_{\ell\in[r]} \sum_{(R,p)\in Index,||p||=\ell}\frac{{n-\ell\choose r-\ell}\ell!}{{n\choose r}}|DH_p^R(M)\Delta DH_p^R(N)|.
\end{align}
Note that for all $1\leq \ell <r$, 
$$
\frac{{n-\ell\choose r-\ell}\ell!}{{n\choose r}} = \frac{(n-\ell)!}{n!}\frac{\ell! r!}{(r-\ell)!}\leq \frac{2^{\ell}}{n^{\ell}}\frac{\ell! r!}{(r-\ell)!}<  \frac{(r!)^22^r}{n^{\ell}},
$$
where the first inequality is by (\ref{d}) and the last is because $\ell<r$.  If $\ell=r$, then 
$$
\frac{{n-\ell\choose r-\ell}\ell!}{{n\choose r}} =\frac{r!}{{n\choose r}}=\frac{(r!)^2(n-r)!}{n!} \leq \frac{(r!)^22^r}{n^r},
$$
where the inequality is by (\ref{d}).  Thus for all $\ell\in [r]$, $\frac{{n-\ell\choose r-\ell}\ell!}{{n\choose r}}\leq \frac{(r!)^22^r}{n^{\ell}}$.  Combining this with (\ref{dd}) yields
$$
\dist(M,N)\leq (r!)^22^r \sum_{\ell\in[r]} \sum_{(R,p)\in Index,||p||=\ell}\frac{|DH_p^R(M)\Delta DH_p^R(N)|}{n^\ell}=(r!)^22^rd(M,N).
$$
\end{proof}

\noindent We will use the following version of Theorem \ref{triangleremoval}, now adapted to our notation.  

\begin{theorem}\label{triangleremoval2}
Suppose $\mathcal{A}$ is a collection of finite $\mathcal{L}_0$-structures.  For every $\delta>0$ there exists $\epsilon>0$ and $K$ such that the following holds.  For all sufficiently large finite $\mathcal{L}_0$-structures $M$, if $\prob(\calA(K),M)<\epsilon$, then there is an $\calL_0$-structure $M'$ with $dom(M')=dom(M)$ such that $\dist(M',M)<\delta$ and $\prob(\calA,M')=0$.
\end{theorem}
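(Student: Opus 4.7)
The plan is to deduce Theorem \ref{triangleremoval2} directly from the original Aroskar--Cummings removal lemma (Theorem \ref{triangleremoval}) by using Lemma \ref{distlem} as a translation device between the two distance notions $d(\cdot,\cdot)$ and $\dist(\cdot,\cdot)$. Since Theorem \ref{triangleremoval} produces an $\mathcal{L}_0$-structure $M'$ that is close to $M$ in $d$-distance and omits every element of $\calA$, and since $\dist$ is bounded by a constant multiple of $d$ (once the domain is large enough), the same conclusion holds for $\dist$ up to rescaling $\delta$.

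More precisely, fix $\delta > 0$. Set
\[
\delta' = \frac{\delta}{(r!)^2 2^r},
\]
and apply Theorem \ref{triangleremoval} with parameter $\delta'$ to the collection $\calA$. This yields constants $\epsilon > 0$ and $K$ such that for all sufficiently large finite $\mathcal{L}_0$-structures $M$, if $\prob(\calA(K), M) < \epsilon$, then there exists an $\mathcal{L}_0$-structure $M'$ with $dom(M') = dom(M)$, $d(M', M) < \delta'$, and $\prob(\calA, M') = 0$. We claim these same $\epsilon$ and $K$ work for Theorem \ref{triangleremoval2} with parameter $\delta$.

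Indeed, assume $M$ is large enough that $|dom(M)| \geq 2r$ (absorbing this into ``sufficiently large'') and that $\prob(\calA(K), M) < \epsilon$. Let $M'$ be the structure produced by Theorem \ref{triangleremoval}. Then $M'$ and $M$ share the domain $dom(M)$, which has size at least $2r$, so Lemma \ref{distlem} applies and gives
\[
\dist(M', M) \leq (r!)^2 2^r \cdot d(M', M) < (r!)^2 2^r \cdot \delta' = \delta.
\]
Combined with $\prob(\calA, M') = 0$, this is exactly the conclusion of Theorem \ref{triangleremoval2}.

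There is no real obstacle here: Lemma \ref{distlem} does all the technical work, and the argument is simply a rescaling. The only minor point to check is that the largeness hypothesis in Theorem \ref{triangleremoval} can be strengthened to also require $|dom(M)| \geq 2r$, which is automatic since the threshold for ``sufficiently large'' can be taken to be the maximum of the threshold from Theorem \ref{triangleremoval} and $2r$.
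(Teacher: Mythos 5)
Your proof is correct and is essentially identical to the paper's: both rescale $\delta$ by $(r!)^2 2^r$, apply the Aroskar--Cummings removal lemma (Theorem \ref{triangleremoval}) with $\delta'$, and convert the $d$-distance bound to a $\dist$ bound via Lemma \ref{distlem}. Your extra remark about ensuring $|dom(M)| \geq 2r$ is a small point the paper leaves implicit, but it does not change the argument.
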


\begin{proof}
Fix $\delta>0$.  Let $\delta'=\frac{\delta}{(r!)^22^r }$ and choose $K=K(\delta')$ and $\epsilon=\epsilon(\delta')$ by applying Theorem \ref{triangleremoval} to $\delta'$ and $\calA$.  Suppose $n$ is sufficiently large so that Theorem \ref{triangleremoval} applies to structures of size $n$.  Suppose $M$ is an $\mathcal{L}_0$-structure of size $n$ such that $\prob(\calA(K),M)<\epsilon$.  Then Theorem \ref{triangleremoval} implies there is an $\mathcal{L}_0$-structure $M'$ with $dom(M')=dom(M)$ such that $d(M',M)<\delta'$ and $\prob(\calA,M')=0$.  Combining this with Lemma \ref{distlem}, we have $\dist(M',M)\leq (r!)^22^r d(M',M)<(r!)^22^r \delta' = \delta$.
\end{proof}

\noindent{\bf Proof of Theorem \ref{GENSUPERSAT}.}
Let $\calH$ be a nontrivial hereditary $\calL$-property and let $\calF$ be as in Observation \ref{HP} so that $\calH=\Forb(F)$.  Recall we want to show that for all $\delta>0$, there are $\epsilon>0$ and $K$ such that for sufficiently large $n$, for any $\calL_{\calH}$-template $M$ of size $n$, if $\prob(\tilde{\calF}(K) \cup \calE(K),M)\leq \epsilon$ then
\begin{enumerate}
\item If $\pi(\calH)>1$, then $sub(M)\leq \ex(n,\calH)^{1+\delta}$.
\item If $\pi(\calH)\leq 1$, then $sub(M)\leq 2^{\delta {n\choose r}}$.
\end{enumerate}
Fix $\delta>0$.  Apply Lemma \ref{deltaclose1} to $\calH$ to obtain $\gamma>0$.  Let $\mathcal{A}=\tilde{\calF}\cup \calE\cup \FLAW$.  Apply Theorem \ref{triangleremoval2} to obtain $K$ and $\epsilon$ for $\delta/2\gamma$ and $\calA$.  Suppose $n$ is sufficiently large and $M$ is an $\calL_{\calH}$-template of size $n$ satsifying $\prob(\tilde{\calF}(K)\cup \calE(K), M)<\epsilon$.  Because $M$ is an $\calL_{\calH}$-template, Lemmas \ref{flaw} implies for all $B\in \FLAW$, $\prob(B,M)=0$. Therefore $\prob(\calA(K), M)<\epsilon$, so by Theorem \ref{triangleremoval2}, there is an $\calL_{\calH}$-structure $M'$ with $dom(M)=dom(M')$ such that $\prob(\calA,M')=0$ and $\dist(M,M')\leq \delta/2\gamma$.  Since $\prob(\calA,M')=0$, Corollary \ref{charR} implies $M'\in \calR(n,\calH)$.  Thus $sub(M')\leq \ex(n,\calH)$ holds by definition of $\ex(n,\calH)$.  Combining this with Lemma \ref{deltaclose1} (note $M'\in \calR(n,\calH)$ implies $M'$ is error-free), we have the following.
\begin{enumerate}
\item If $\pi(\calH)>1$, then $sub(M)\leq sub(M')\ex(n,\calH)^{\gamma (\delta/2\gamma)}=sub(M')\ex(n,\calH)^{\delta/2}\leq \ex(n,\calH)^{1+\delta/2}$.
\item If $\pi(\calH)=1$, then $sub(M)\leq sub(M')2^{\gamma (\delta/2\gamma) {n\choose r}}=sub(M')2^{\delta{n\choose r}/2}\leq \ex(n,\calH)2^{\delta/2  {n\choose r}}$.
\end{enumerate}
We are done in the case where $\pi(\calH)>1$.  If $\pi(\calH)=1$, assume $n$ is sufficiently large so that $\ex(n,\calH)\leq 2^{\delta/2{n\choose r}}$.  Then (2) implies $sub(M)\leq 2^{\delta{n\choose r}}$, as desired.
\qed

\vspace{3mm}

\noindent {\bf Proof of Theorem \ref{COROLLARY2} from Theorem \ref{version1}.} Suppose $\calH$ is a hereditary $\calL$-property.  Let $\calF$ be the class of finite $\calL$-structures from Observation \ref{HP} so that $\calH=\Forb(\calF)$.  Then for each $n$, $\mathcal{H}_n$ is the set of all $\calF$-free $\calL$-structures with domain $[n]$.  Let $\mathcal{A}=\tilde{\calF}\cup \calE\cup \FLAW$.  Fix $\delta>0$ and choose $K$ and $\epsilon$ as in Theorem \ref{triangleremoval2} for $\delta$ and the family $\mathcal{A}$. By replacing $K$ if necessary, assume $K\geq r$.  Apply Theorem \ref{version1} to $\calB:=\mathcal{F}(K)$ to obtain $c=c(K,r,\calL,\epsilon)$, $m=m(K,r)$.  Observe the choice of $K$ depended on $\calH$ and $r=r_{\calL}$, so $m=m(\calH, r_{\calL})$.  Since $r_{\calL}$ depends on $\calL$, $c=c(\calH,\calL,\epsilon)$.  Let $n$ be sufficiently large.  Then Theorem \ref{version1} applied to $W=[n]$ implies there is a collection $\calC$ of $\calL_{\calB}$-templates with domain $[n]$ such that the following hold.
\begin{enumerate}[(i)]
\item For all $\calF(K)$-free $\calL$-structures $M$ with domain $[n]$, there is $C\in \calC$ such that $M\unlhd_pC$.
\item For all $C\in \mathcal{C}$, $\prob(\widetilde{\mathcal{F(K)}},C)\leq \epsilon$ and $\prob(\calE,C)\leq \epsilon$.
\item $\log |\mathcal{C}|\leq cn^{r-\frac{1}{m}}\log n$.
\end{enumerate}
We show this $\calC$ satisfies the conclusions of Theorem \ref{COROLLARY2} with $c$, $m$ and $\delta$.  Note that because $K\geq r$, $S_r(\calH)=S_r(\calB)$, so all $\calL_{\calB}$-templates are also $\calL_{\calH}$-templates. In particular the elements in $\calC$ are all $\calL_{\calH}$-templates.  Clearly (iii) implies part (3) of Theorem \ref{COROLLARY2} holds.  For part (1), since any $H\in \calH_n$ is $\calF$-free, it is also $\calF(K)$-free, so (i) implies there is $C\in \calC$ such that $H\unlhd_pC$.  This shows part (1) of Theorem \ref{COROLLARY2} holds.  For part (2), fix $C\in \calC$.  Since $C$ is an $\calL_{\calH}$-template, Lemma \ref{flaw} implies $\prob(G,C)=0$ for all $G\in \FLAW$.  Then (ii) implies that for all $G\in \widetilde{\calF(K)}\cup \calE$, $\prob(G,C)\leq \epsilon$.  Since $\widetilde{\calF(K)}=\tilde{\calF}(K)$, these facts imply that for all $G\in \mathcal{A}(K)$, $\prob(G,C)\leq \epsilon$.   Thus Theorem \ref{triangleremoval2} implies there is an $\calL_{\calH}$-structure $C'$ with $dom(C)=dom(C')=[n]$ such that $\dist(C,C')\leq \delta$ and $\prob(\calA,C')=0$.  Since $\prob(\calA,C')=0$, $C'$ is a $\FLAW$-free, $\tilde{\calF}$-free, and error-free $\calL_{\calH}$-structure with domain $[n]$, so by Corollary \ref{charR}, $C'\in \calR([n],\calH)$. This finishes the proof.
\qed

\section{A Reduction}\label{section2thm1}
We have now proved all the results in this paper except Theorem \ref{version1}.  In this section we prove Theorem \ref{version1} by reducing it to another result, Theorem \ref{VERSION2} (which is proved in Section \ref{VERSION2pf}).  

\subsection{Preliminaries}\label{prelims} 
In this subsection we give preliminaries necessary for the statement of Theorem \ref{VERSION2}.  Many of these notions are similar to definitions from Section \ref{tildeLstructures}.  However, we will see that our proofs necessitate this more syntactic treatment.

\begin{definition}\label{chd2}
Suppose $C$ is a set of constants and $\sigma \subseteq S_r(C)$.
\begin{enumerate}[$\bullet$]
\item $V(\sigma)=\{c\in C: c$ appears in some $p(\cbar)\in \sigma\}$.
\item Given $A\in {V(\sigma)\choose r}$, let $Ch_{\sigma}(A)=\{p(\cbar) \in \sigma: \cup \cbar =A\}$.  Elements of $Ch_{\sigma}(A)$ are \emph{choices for $A$}.
\item We say $\sigma$ is \emph{complete} if $Ch_{\sigma}(A)\neq \emptyset$, for all $A\in {V(\sigma)\choose r}$.
\end{enumerate}
\end{definition}

\begin{example}\label{ex6}
Let $\calL$ and $\calP$ be as in Example \ref{ex1} (i.e. metric spaces with distances in $\{1,2,3\}$).  Let $W=\{u,v,w\}$ and $\sigma=\{p_1(c_u,c_v), p_2(c_u,c_v), p_2(c_u,c_w)\}\subseteq S_2(C_W,\calP)$.  Then $V(\sigma)=\{c_u,c_v,c_w\}$ and it is easy to check that $Ch_{\sigma}(c_uc_v)=\{p_1(c_u,c_v), p_2(c_u,c_v)\}$, $Ch_{\sigma}(c_uc_w)=\{p_2(c_u,c_w)\}$, and $Ch_{\sigma}(c_vc_w)=\emptyset$.  Observe, this $\sigma$ is not complete.
\end{example}

\begin{definition}\label{syndiag}
Suppose $C$ is a set of $n$ constants and $\sigma \subseteq S_r(C)$.  Given $m\leq n$, $\sigma$ is a \emph{syntactic $m$-diagram} if $|V(\sigma)|=m$ and for all $A\in {V(\sigma)\choose r}$, $|Ch_{\sigma}(A)|=1$.  
\end{definition}
We will say $\sigma\subseteq S_r(C)$ is a \emph{syntactic type diagram} if it is a syntactic $|V(\sigma)|$-diagram.

\begin{example}\label{ex7}
Let $\calL$ and $\calP$ be as in Example \ref{ex6}, and let $W=\{t,u,v,w\}$ be a set of size $4$.  Set $\sigma'=\{p_1(c_u,c_v), p_2(c_u,c_w), p_3(c_v,c_w)\}\subseteq S_2(C_W,\calP)$.  Then $V(\sigma')=\{c_u,c_v,c_w\}$ and we have that $Ch_{\sigma}(c_uc_v)=\{p_1(c_u,c_v)\}$, $Ch_{\sigma}(c_uc_w)=\{p_2(c_u,c_w)\}$, and $Ch_{\sigma}(c_vc_w)=\{ p_3(c_v,c_w)\}$.  This shows $\sigma'$ is a syntactic $3$-diagram. 
\end{example}

Observe that if $\sigma$ is a syntactic $m$-diagram, then by definition, $|V(\sigma)|=m$ and $|\sigma|={m\choose r}$.  Given a tuple of constants $\cbar=(c_1,\ldots, c_k)$, a first-order language $\calL_0$ containing $\{c_1,\ldots, c_k\}$, and an $\calL_0$-structure $M$, let $\cbar^M$ denote the tuple $(c_1^M,\ldots, c_k^M)\in dom(M)^k$.  

\begin{definition}\label{tpd}
Suppose $C$ is a set of constants and $\sigma \subseteq S_r(C)$.
\begin{enumerate}
\item If $M$ is an $\calL\cup V(\sigma)$-structure, write $M\models \sigma^M$ if $M\models p(\cbar^M)$ for all $p(\cbar)\in \sigma$.   Call $\sigma$ \emph{satisfiable} if there exists an $\calL\cup V(\sigma)$-structure $M$ such that $M\models \sigma^M$.
\item If $M$ is an $\calL\cup C$-structure, the \emph{type-diagram} of $M$ is the set 
$$
Diag^{tp}(M, C)=\{p(\cbar)\in S_r(C): M\models p(\cbar^M)\}.
$$
\end{enumerate}
\end{definition}
Suppose that $M$ is an $\calL$-structure with $dom(M)=W$.  The \emph{canonical type-diagram of $M$} is 
$$
Diag^{tp}(M)=\{p(c_{\abar})\in S_r(C_W): M\models p(\abar)\}.
$$
In other words, $Diag^{tp}(M)=Diag^{tp}(M,C_W)$ where $M$ is considered with its natural $\calL\cup C_W$-structure.  Observe that $Diatg^{tp}(M)$ is always a syntactic $|dom(M)|$-diagram.  The difference between $Diag^{tp}(M)$ and $Diag(M)$ is that elements of $Diag^{tp}(M)$ are types (with constants plugged in for the variables) while the elements of $Diag(M)$ are formulas (with constants plugged in for the variables).  Clearly $Diag(M)$ and $Diag^{tp}(M)$ contain the same information.

\begin{example}
Let $\calL$ and $\calP$ be as in Example \ref{ex7}.  Let $W=\{u,v,w\}$ and let $M$ be the $\calL$-structure with domain $W$ satisfying  $M\models p_1(u,v)\cup p_2(u,w)\cup p_3(v,w)$.  Then $Diag^{tp}(M)$ is the set $\{p_1(c_u,c_v), p_2(c_u,c_w), p_3(c_v,c_w)\}$, while $Diag(M)$ is the set of all $\calL\cup C_W$-sentences implied by $p_1(c_u,c_v)\cup p_2(c_v,c_w)\cup p_3(c_u,c_w)$.
\end{example}

We now make a few observations which will be used in the remainder of the chapter.

\begin{observation}\label{ob1}
Suppose $M$ is an $\calL$-structure with domain $W$ of size $n$.  Then the following hold.
\begin{enumerate}
\item Suppose $m\leq n$, $\sigma\subseteq S_r(C_W)$ is a syntactic $m$-diagram, and $N$ is an $\calL\cup V(\sigma)$-structure of size $m$.  Then $N\models \sigma^N$ if and only if $\sigma=Diag^{tp}(N, V(\sigma))$.    
\item Suppose $N$ is an $\mathcal{L}\cup C_W$-structure of size $n$ and $N\models Diag^{tp}(M)$.  Then $M\cong_{\calL}N$.
\item If $\sigma \subseteq S_r(C_W)$ and $Diag^{tp}(M)\subseteq \sigma$, then $\sigma$ is complete.
\end{enumerate}
\end{observation}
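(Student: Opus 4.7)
Each of the three parts follows by unwinding the definitions, the recurring technical ingredient being the remark (already established for $Diag^{tp}(M)$) that any set $Diag^{tp}(N, C)$ arising from a structure is automatically a syntactic type-diagram, which pins down its cardinality.

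For Part 1, the backward direction is immediate: if $\sigma = Diag^{tp}(N, V(\sigma))$ then by definition each $p(\cbar)\in \sigma$ satisfies $N\models p(\cbar^N)$. For the forward direction, assume $N\models \sigma^N$. The inclusion $\sigma \subseteq Diag^{tp}(N, V(\sigma))$ is immediate from the definitions. To upgrade this to equality, I would first show (in the nontrivial case $m\geq r$) that distinct constants $c, c'\in V(\sigma)$ have distinct interpretations in $N$: there exists $p(\cbar)\in\sigma$ with $c, c'\in\cup\cbar$, say $c = c_i$ and $c' = c_j$, and propriety of $p$ combined with $N\models p(\cbar^N)$ forces $c^N \neq (c')^N$. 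Consequently the same argument that made $Diag^{tp}(M)$ a syntactic $|dom(M)|$-diagram shows $Diag^{tp}(N, V(\sigma))$ is a syntactic $m$-diagram with constant-set $V(\sigma)$, so both $\sigma$ and $Diag^{tp}(N, V(\sigma))$ have cardinality ${m\choose r}$. The inclusion is then an equality.

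For Part 2, define $f:W\to dom(N)$ by $f(a) = c_a^N$. Injectivity follows from the propriety argument from Part 1 applied to $Diag^{tp}(M)$: for distinct $a, a'\in W$, extend $\{a, a'\}$ to some $\abar\in W^{\underline{r}}$ (using $n\geq r$), and $N\models qftp^M(\abar)(c_{\abar}^N)$ forces $f(a) \neq f(a')$. Since $|dom(M)| = |dom(N)| = n$, $f$ is then a bijection of finite sets. For preservation of structure, fix $R\in \calL$ of arity $\ell\leq r$ and $\bbar\in W^{\ell}$; choose any $\abar\in W^{\underline{r}}$ with $\cup\bbar\subseteq \cup\abar$, writing $b_i = a_{h(i)}$. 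The quantifier-free formula $\phi(\xbar) := R(x_{h(1)}, \ldots, x_{h(\ell)})$ belongs to $p := qftp^M(\abar)$ iff $M\models R(\bbar)$. Since $p(c_{\abar})\in Diag^{tp}(M)$ gives $N\models p(c_{\abar}^N)$, completeness of $p$ yields $\phi\in p$ iff $N\models \phi(c_{\abar}^N) = R(f(\bbar))$. Chaining the two biconditionals yields $M\models R(\bbar) \iff N\models R(f(\bbar))$.

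For Part 3, the inclusion $Diag^{tp}(M)\subseteq \sigma$ gives $V(Diag^{tp}(M))\subseteq V(\sigma)$, and when $n\geq r$ the left-hand side equals $C_W$, forcing $V(\sigma) = C_W$. Fix $A\in{V(\sigma)\choose r}$: since $Diag^{tp}(M)$ is a syntactic $n$-diagram, $Ch_{Diag^{tp}(M)}(A)$ is a singleton, and $Ch_\sigma(A)\supseteq Ch_{Diag^{tp}(M)}(A)\neq\emptyset$, so $\sigma$ is complete. The remaining degenerate case $n<r$ is vacuous. The only substantive wrinkle, modest as it is, lies in Part 2, where one must carefully encode atomic instances of relations with repeated variables or of arity $\ell<r$ as quantifier-free formulas inside a single $qftp^M(\abar)$; the $\abar$-extension described above handles this uniformly and is essentially the only step that requires more than pure bookkeeping.
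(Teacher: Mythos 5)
Your proposal is correct and takes essentially the same approach as the paper: parts (2) and (3) are the same definitional unwinding (your part (2) is simply a more detailed version of the paper's terse bijective-homomorphism argument, including the propriety-based injectivity and completeness-based reflection of relations). In part (1) your cardinality comparison of two syntactic $m$-diagrams is only a cosmetic variant of the paper's per-element argument, which uses $|Ch_{\sigma}(A)|=1$ directly to force each $p(\cbar)\in Diag^{tp}(N,V(\sigma))$ into $\sigma$; both hinge on exactly the same uniqueness property, so there is no gap.
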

\begin{proof}
(1): Suppose first $\sigma=Diag^{tp}(N,V(\sigma))$.  Then by Definition \ref{tpd}, $N\models \sigma^{N}$.  Converesly, suppose $N\models \sigma^N$. By Definition \ref{tpd}, this implies $\sigma\subseteq Diag^{tp}(N,V(\sigma))$.  To show the reverse inclusion, suppose $p(c_{\abar})\in Diag^{tp}(N,V(\sigma))$.  By Definition \ref{tpd}, $N\models p(\cbar^N)$.  Let $A=\cup \cbar^N\in {dom(N)\choose r}$ (since $p\in S_r(\calL)$ is proper and $N\models p(\cbar^N)$, the coordinates of $\cbar^N$ must all be distinct). Since $\sigma$ is a syntactic $m$-diagram, $|Ch_{\sigma}(A)|=1$, so there is $p'(\xbar)\in S_r(\calL)$ and $\mu\in Perm(r)$ such that $p'(\mu(\cbar))\in \sigma$.  Since $N\models \sigma^N$, this implies $N\models p'(\mu(\cbar^N))$.  Clearly $N\models p(\cbar^N)$ and $N\models p'(\mu(\cbar^N))$ implies $p(\cbar^N)= p'(\mu(\cbar^N))$.  So we have $p(\cbar)=p'(\mu(\cbar))\in \sigma$ as desired.

(2): Clearly the map $f:W\rightarrow dom(N)$ sending $a\mapsto c_a^N$ is an $\calL$-homomorphism of $M$ into $N$.  Since by assumption, $M$ and $N$ both have size $n$, it must be a bijection, and thus an $\calL$-isomorphism.  

(3): For each $A\in {C_W\choose r}$, $Diag^M(A)\in Ch_{\sigma}(A)$ implies $Ch_{\sigma}(A)\neq \emptyset$.
\end{proof}

\begin{definition}
Suppose $\mathcal{A}$ is a collection of finite $\mathcal{L}$-structures and $C$ is a set of constant symbols.
\begin{enumerate}
\item We say $\sigma\subseteq S_r(C)$ is $\mathcal{A}$-satisfiable if $M\models \sigma^M$ and there is an $\mathcal{L}\cup V(\sigma)$-structure $M$ such that $M\upharpoonright_{\mathcal{L}}\in \mathcal{A}$.
\item Define $Diag^{tp}(\mathcal{A},C)= \{\sigma \subseteq S_r(C): \sigma $ is a syntactic type diagram which is $\mathcal{A}$-satisfiable$\}$.
\item Given $\sigma\subseteq S_r(C)$, set $Span(\sigma)=\{\sigma'\subseteq \sigma: \sigma'\text{ is a syntactic type diagram}\}$.
\end{enumerate}
\end{definition}

\begin{example}\label{ex8}
Let $\calL$ and $\calP$ be as in Example \ref{ex7}.  Let $C=\{c_1, c_2, c_3\}$ be a set of three constant symbols.  Then $\sigma\subseteq S_2(C, \calP)$ is a syntactic $3$-diagram if and only if $\sigma=\{p_i(c_1,c_2), p_j(c_1,c_3), p_k(c_2, c_3)\}$ for some $i,j,k\in [3]$.  Clearly such a $\sigma$ is $\calP$-satisfiable if and only if $|i-j|\leq k\leq i+j$, that is, if and only if the numbers $i,j,k$ do not violate the triangle inequality.  Thus $Diag^{tp}(\calP, C)$ consists of sets of the form $\sigma=\{p_i(c_1,c_2), p_j(c_1,c_3), p_k(c_2, c_3)\}$ where $i,j,k\in [3]$ satisfy $|i-j|\leq k\leq i+j$.

Suppose now that $\sigma=\{p_1(c_1,c_2), p_2(c_1,c_2), p_3(c_1,c_2), p_1(c_2,c_3), p_1(c_1,c_3)\}$.  Then $Span(\sigma)$ consists of the following syntactic $3$-diagrams.
\begin{enumerate}
\item $\{p_1(c_1,c_2), p_1(c_2,c_3), p_1(c_1,c_3)\}$.
\item $\{p_2(c_1,c_2), p_1(c_2,c_3), p_1(c_1,c_3)\}$.
\item $\{p_3(c_1,c_2), p_1(c_2,c_3), p_1(c_1,c_3)\}$.
\end{enumerate}
Observe that (1) and (2) are $\calP$-satisfiable, while (3) is not.
\end{example}

\noindent For the rest of this subsection, $\calH$ is a fixed collection of finite $\calL$-structures.

\begin{lemma}\label{lem0}
Suppose $X\subseteq W$ are finite sets, $M$ is a complete $\calL_{\calH}$-structure with domain $X$, and $\chi\in Ch(M)$.  Set $\sigma :=\{\chi(A): A\in {X\choose r}\}\subseteq S_r(C_W, \calH)$.  Then 
\begin{enumerate}
\item $\sigma$ is a syntactic $|X|$-diagram.
\item If $F\unlhd_{\chi}M$ then $\sigma=Diag^{tp}(F)$. 
\end{enumerate}
\end{lemma}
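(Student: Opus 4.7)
The plan is to unwind the definitions of $Ch(M)$, syntactic $m$-diagrams, and $Diag^{tp}$, using the natural bijection between $r$-subsets of $X$ and $r$-subsets of $C_X$. I will tacitly assume $|X|\geq r$, since otherwise ${X\choose r}=\emptyset$ and both $\sigma$ and $Diag^{tp}(F)$ are empty, making the conclusions vacuous. Part (1) is essentially counting choices, while part (2) reduces to the identity $Diag^F(A)=p(c_{a_1},\ldots,c_{a_r})$ whenever $p=qftp^F(a_1,\ldots,a_r)$, recorded just after the Notation section.

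For part (1), I would first compute $V(\sigma)$. By Definition \ref{chdef}, each $\chi(A)\in Ch_M(A)$ has the form $p(c_{a_1},\ldots,c_{a_r})$ with $\{a_1,\ldots,a_r\}=A$, so $V(\chi(A))=C_A$. Taking the union over $A\in {X\choose r}$ gives $V(\sigma)=C_X$, hence $|V(\sigma)|=|X|$. Next, for any $B\in {V(\sigma)\choose r}={C_X\choose r}$, set $B'=\{a: c_a\in B\}\in {X\choose r}$; then $\chi(B')\in \sigma$ with $V(\chi(B'))=C_{B'}=B$ witnesses that $\chi(B')\in Ch_{\sigma}(B)$. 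Uniqueness will follow because any $\chi(A'')\in \sigma$ with $V(\chi(A''))=B$ must satisfy $C_{A''}=B$, forcing $A''=B'$; so $|Ch_{\sigma}(B)|=1$ and $\sigma$ is a syntactic $|X|$-diagram.

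For part (2), assume $F\unlhd_{\chi}M$, so $dom(F)=X$ and $\chi(A)=Diag^F(A)$ for every $A\in {X\choose r}$. I would verify the two inclusions separately. For $\sigma\subseteq Diag^{tp}(F)$, given $\chi(A)\in \sigma$, pick any enumeration $\abar$ of $A$ and set $p=qftp^F(\abar)$; the identity quoted above gives $\chi(A)=Diag^F(A)=p(c_{\abar})$, and since $F\models p(\abar)$ by definition of $qftp^F$, $\chi(A)\in Diag^{tp}(F)$. Conversely, given $p(c_{\abar})\in Diag^{tp}(F)$, properness of $p$ forces the coordinates of $\abar$ to be distinct, so $A:=\cup \abar\in {X\choose r}$, and $F\models p(\abar)$ yields $p=qftp^F(\abar)$, whence $p(c_{\abar})=Diag^F(A)=\chi(A)\in \sigma$. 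No real obstacle should arise; the proof is purely bookkeeping with the notational conventions in Sections \ref{notation} and \ref{prelims}.
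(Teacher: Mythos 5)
Your proof is correct and takes essentially the same route as the paper: part (1) is the same bookkeeping showing $V(\sigma)=C_X$ and $Ch_{\sigma}(C_A)=\{\chi(A)\}$ for each $A\in {X\choose r}$. For part (2), your direct double-inclusion simply inlines the argument the paper delegates to Observation \ref{ob1}(1) after noting that $F\models \sigma^F$, so the content is the same.
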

\begin{proof}
Clearly $V(\sigma)=C_X$.  Let $m=|C_X|$.  Note ${C_X\choose r}=\{C_A: A\in {X\choose r}\}$.  By definition of $\sigma$, for each $A\in {X\choose r}$, $\{\chi(A)\}=Ch_{\sigma}(C_A)$.  Thus $|Ch_{\sigma}(C_A)|=1$ for all $A\in {X\choose r}$ and $\sigma$ is a syntactic $m$-diagram.  This shows 1 holds.  For 2, suppose $F\unlhd_{\chi}M$. This means $dom(F)=X$ and for all $A\in {X\choose r}$, $Diag^F(A)=\chi(A)$.  Clearly this implies $F\models \sigma^F$, where $F$ is considered with its natural $C_{X}$-structure.  Part 1 of Observation \ref{ob1} then implies $\sigma=Diag^{tp}(F)$.
\end{proof}

\begin{definition}
Given an integer $\ell$ and a set of constants $C$, set 
$$
Err_{\ell}(C)=\{\sigma\subseteq S_r(C): \text{$\sigma$ is an unsatisfiable syntactic $\ell$-diagram}\}.
$$
We call the elements of $Err_{\ell}(C)$ \emph{syntactic $C$-errors of size $\ell$}.
\end{definition}

\begin{example}\label{ex9}
Let $\calL=\{R_1,R_2,R_3,E\}$ and $\calP$ be as in Example \ref{errorex}.  Let $C=\{c_1,c_2,c_3,c_4\}$ be a set of constants.  Recall from Example \ref{ex9}, that $q_1(c_1,c_2,c_3)\cup q_2(c_1,c_2,c_4)$ is unsatisfiable.  Therefore an example of a syntactic $C$-error of size $4$ is the set $\{q_1(c_2,c_3,c_4), q_2(c_1,c_2,c_3), q_1(c_1,c_3,c_4), q_1(c_1,c_2,c_4) \}$.
\end{example}

\begin{lemma}\label{lem1}
Suppose $W$ is finite a set, $r+1\leq \ell<2r$, and $M$ is a complete $\calL_{\calH}$-structure which is an error of size $\ell$ and with domain $X\subseteq W$.  Then there is a choice function $\chi\in Ch(M)$ such that $\{\chi(A): A\in {X\choose r}\}$ is a syntactic $C_W$-error of size $\ell$.
\end{lemma}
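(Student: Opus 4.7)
The plan is to construct $\chi$ directly from the tuples and types packaged in the definition of an error, and then to read off unsatisfiability of the resulting syntactic diagram from that same definition. First, unpack: since $M$ is an error of size $\ell$ with $dom(M)=X$, there exist $\abar_1,\abar_2\in X^{\underline{r}}$ with $X=\cup\abar_1\cup\cup\abar_2$ and $p_1(\xbar),p_2(\xbar)\in S_r(\calH)$ such that $M\models R_{p_1}(\abar_1)\wedge R_{p_2}(\abar_2)$ while $p_1(c_{\abar_1})\cup p_2(c_{\abar_2})$ is unsatisfiable. Let $A_i=\cup\abar_i$; since $\ell>r$ and $|A_1|=|A_2|=r$, $A_1=A_2$ would force $|X|=r<\ell$, so $A_1\ne A_2$ and $\chi$ can be specified on $A_1$ and $A_2$ independently.

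Next, I would define $\chi:{X\choose r}\to S_r(C_X,\calH)$ by setting $\chi(A_1):=p_1(c_{\abar_1})$, $\chi(A_2):=p_2(c_{\abar_2})$, and, for every remaining $A\in{X\choose r}$, by choosing $\chi(A)$ to be any element of $Ch_M(A)$. The inclusions $p_i(c_{\abar_i})\in Ch_M(A_i)$ hold immediately from $M\models R_{p_i}(\abar_i)$ and the definition of $Ch_M$, while the remaining $Ch_M(A)$ are nonempty because $M$ is complete; hence $\chi\in Ch(M)$. Setting $\sigma:=\{\chi(A):A\in{X\choose r}\}$, Lemma \ref{lem0}(1) shows that $\sigma$ is a syntactic $|X|$-diagram contained in $S_r(C_X,\calH)\subseteq S_r(C_W)$, i.e., a syntactic $\ell$-diagram over $C_W$.

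It remains to verify that $\sigma$ is unsatisfiable. Arguing by contradiction, suppose there were an $\calL\cup V(\sigma)$-structure $N$ with $N\models\sigma^N$; since $\chi(A_1),\chi(A_2)\in\sigma$, Definition \ref{tpd} forces $N\models p_1((c_{\abar_1})^N)\wedge p_2((c_{\abar_2})^N)$, so the reduct of $N$ to $\calL$ together with the interpretations of the constants appearing in $c_{\abar_1}\cup c_{\abar_2}$ witnesses satisfiability of $p_1(c_{\abar_1})\cup p_2(c_{\abar_2})$, contradicting the hypothesis that $M$ is an error. I do not foresee any real obstacle: the whole argument is a careful unpacking of Definitions \ref{chdef}, \ref{syndiag}, and \ref{tpd} together with Lemma \ref{lem0}. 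The only point worth handling explicitly is the cosmetic passage between the set-theoretic membership ``$p(c_{\abar})\in\sigma$'' and the semantic condition ``$N\models p((c_{\abar})^N)$'', governed by Definition \ref{tpd}(1).
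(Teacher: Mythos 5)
Your proposal is correct and follows essentially the same route as the paper: define $\chi$ by assigning $p_1(c_{\abar_1})$ and $p_2(c_{\abar_2})$ to $\cup\abar_1$ and $\cup\abar_2$, fill in the remaining $r$-sets using completeness of $M$, invoke part 1 of Lemma \ref{lem0} to get a syntactic $\ell$-diagram, and conclude unsatisfiability because the diagram contains $p_1(c_{\abar_1})\cup p_2(c_{\abar_2})$. Your explicit check that $\cup\abar_1\neq\cup\abar_2$ and the careful passage from $N\models\sigma^N$ to satisfiability of $p_1(c_{\abar_1})\cup p_2(c_{\abar_2})$ are minor elaborations of points the paper leaves implicit, not a different argument.
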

\begin{proof}
Since $M$ is an error of size $\ell$ then there are $\abar_1$, $\abar_2 \in X^{\underline{r}}$ such that $\cup \abar_1\bigcup \cup \abar_2=X$ and $p_1(\xbar), p_2(\xbar)\in S_r(\calH)$ such that $M\models R_{p_1}(\abar_1) \wedge R_{p_2}(\abar_2)$ but $p_1(c_{\abar_1})\cup p_2(c_{\abar_2})$ is unsatisfiable.  Define a function $\chi:{X\choose r}\rightarrow S_r(C_W,\calH)$ as follows.  Set $\chi(\cup \abar_1)=p(c_{\abar_1})$ and $\chi(\cup \abar_2)=p(c_{\abar_2})$.  For all other $A\in {X\choose r}$ choose any $\chi(A)\in Ch_M(A)$ (this is possible because $M$ is a complete).  By construction, $\chi\in Ch(M)$.  By part 1 of Lemma \ref{lem0}, $\sigma:=\{\chi(A): A\in {X\choose r}\}$ is a syntactic $\ell$-diagram.  Because $\sigma$ contains $p_1(c_{\abar_1})\cup p_2(c_{\abar_2})$, it is unsatisfiable.  By definition, $\sigma$ is a syntactic $C_W$-error of size $\ell$.
\end{proof}

\subsection{Proof of Theorem \ref{version1}}\label{section3thm1}

In this section we state Theorem \ref{VERSION2} and use it to prove Theorem \ref{version1}. 

\begin{theorem}\label{VERSION2}
Let $0<\epsilon<1$.  For all $k\geq r$, there is a positive constant $c=c(k,r,\calL,\epsilon)$ and $m=m(k,r)>1$ such that for all sufficiently large $n$ the following holds.  Suppose $\mathcal{F}$ is a collection of finite $\mathcal{L}$-structures, each of size at most $k$, and $\calH:=\Forb(\calF)\neq\emptyset$.  For any set $W$ of size $n$, there is a set $\Sigma \subseteq \mathcal{P}(S_r(C_W,\calH))$ such that the following hold.
\begin{enumerate}
\item For all $\mathcal{F}$-free $\mathcal{L}$-structures $M$ with domain $W$, there is $\sigma\in \Sigma$ such that $Diag^{tp}(M)\subseteq \sigma$.
\item For all $\sigma\in \Sigma$ the following hold.  For each $1\leq \ell\leq k$,  $|Diag^{tp}(\calF(\ell), C_W)\cap Span(\sigma)|\leq \epsilon{n\choose \ell}$, 
and for each $r+1\leq \ell \leq 2r$, $|Err_{\ell}(C_W)\cap Span(\sigma)|\leq \epsilon{n\choose \ell}$.
\item $\log |\Sigma|\leq cn^{r-\frac{1}{m}}\log n$.
\end{enumerate}
\end{theorem}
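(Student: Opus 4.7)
The plan is to construct, for each relevant size $\ell$, a uniform hypergraph on the vertex set $V = S_r(C_W, \calH)$ encoding the ``bad'' syntactic $\ell$-diagrams, apply the standard hypergraph containers theorem (Theorem \ref{containers}) to each, and combine the resulting container families by intersection. Precisely, fix $W$ of size $n$ and set $V = S_r(C_W, \calH)$; since $|S_r(\calH)|$ is bounded in terms of $\calL$ and $k$, we have $|V| = \Theta(n^r)$. For each integer $\ell$ with $r + 1 \leq \ell \leq \max(k, 2r - 1)$, let $H_\ell$ be the $\binom{\ell}{r}$-uniform hypergraph on $V$ whose hyperedges are exactly the syntactic $\ell$-diagrams $\sigma' \subseteq V$ lying in $Diag^{tp}(\calF(\ell), C_W)$ (when $\ell \leq k$) or in $Err_\ell(C_W)$ (when $r + 1 \leq \ell < 2r$); the degenerate case $\ell = r$, where hyperedges would be singleton $r$-types, is handled by simply deleting the $\calF$-bad $r$-types and needs no containers argument. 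The key observation, which follows from Observation \ref{ob1} together with the definition of the type-diagram, is that whenever $M$ is an $\calF$-free $\calL$-structure with domain $W$, the set $Diag^{tp}(M)$ is independent in every $H_\ell$: any contained forbidden type-diagram would produce an $\calF$-substructure of $M$, while any contained error would contradict the satisfiability of $Diag^{tp}(M)$ by $M$ itself.

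Next I verify the hypotheses of Theorem \ref{containers} for each $H_\ell$. Since each $\ell$-subset of $C_W$ supports at most a constant number (depending on $\calL$ and $k$) of bad syntactic $\ell$-diagrams, we have $|E(H_\ell)| = O(n^\ell)$ and average degree $d = \Theta(n^{\ell - r})$. The crucial co-degree estimate is the combinatorial claim that $j$ distinct vertices of $V$ lying in a common syntactic $\ell$-diagram correspond to $j$ distinct $r$-subsets of $V(\sigma')$, whose union has cardinality at least $s_{\min}(j) = \min\{s : \binom{s}{r} \geq j\}$. This yields $d^{(j)}(v) = O(n^{\ell - s_{\min}(j)})$ and hence $\delta_j = O(n^{r - s_{\min}(j)}/\tau^{j-1})$. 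The binding constraint occurs at $j = r + 1$, where $s_{\min}(r + 1) = r + 1$, and it forces $\tau \geq C_0 n^{-1/r}$ for a constant $C_0$ depending on $\epsilon, k, r, \calL$; for larger $j$ the growth of $s_{\min}(j)$ makes the required lower bound on $\tau$ smaller. Taking $\tau = C_0 n^{-1/r}$ and rescaling $\epsilon$ in Theorem \ref{containers} by the universal constant bounding bad diagrams per $\ell$-set, I obtain a container family $\mathcal{C}_\ell \subseteq \mathcal{P}(V)$ with $\log |\mathcal{C}_\ell| \leq c|V|\tau\log(1/\tau) = O(n^{r - 1/r}\log n)$, such that every independent set in $H_\ell$ is covered and each $C \in \mathcal{C}_\ell$ satisfies $|Diag^{tp}(\calF(\ell), C_W) \cap Span(C)| \leq \epsilon\binom{n}{\ell}$ and $|Err_\ell(C_W) \cap Span(C)| \leq \epsilon\binom{n}{\ell}$.

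Finally I set $\Sigma = \{\bigcap_\ell C_\ell : (C_\ell) \in \prod_\ell \mathcal{C}_\ell\}$. Property (1) holds because each $Diag^{tp}(M)$ for $\calF$-free $M$ is independent in every $H_\ell$ and therefore lies in some $C_\ell \in \mathcal{C}_\ell$, hence in the intersection. Property (2) follows from the monotonicity $Span(\sigma) \subseteq Span(C_\ell)$ whenever $\sigma \subseteq C_\ell$, combined with the container-induced bound on each $\mathcal{C}_\ell$. Property (3) follows from $\log |\Sigma| \leq \sum_\ell \log |\mathcal{C}_\ell| = O(n^{r - 1/r}\log n)$, which is at most $c n^{r - 1/m}\log n$ with $m = r \geq 2$ (so $m > 1$) and $c$ depending on $k, r, \calL, \epsilon$ as required. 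The main technical hurdle is the co-degree computation: proving rigorously the combinatorial bound $d^{(j)}(v) = O(n^{\ell - s_{\min}(j)})$, and then checking that a single choice $\tau = \Theta(n^{-1/r})$ simultaneously forces every $\delta_j$ (for $j \in \{2, \ldots, \binom{\ell}{r}\}$) below the threshold $\epsilon/(12 s!)$ required by Theorem \ref{containers}, so that containers can be invoked uniformly across all $j$ and all $\ell$.
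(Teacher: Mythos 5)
Your skeleton differs from the paper's: you run one containers argument per level $\ell$ (one $\binom{\ell}{r}$-uniform hypergraph for forbidden type-diagrams of each size and one for errors of each size) and intersect the container families, whereas the paper applies containers once, to a single $\binom{k}{r}$-uniform hypergraph whose edges are $Diag^{tp}(cl_k(\calF),C_W)\cup Err_k(C_W)$ (after reducing to $k\geq 2r$), and then transfers the level-$k$ sparseness down to every $\ell\leq k$ by a double-counting lemma (Lemma \ref{lemma**}) that exploits completeness of the containers. Your multi-level scheme is a legitimate alternative and your independence claim for $Diag^{tp}(M)$ matches the paper's Proposition \ref{part1prop}. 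However, there is a genuine error in your verification of the hypotheses of Theorem \ref{containers}, and it invalidates your choice of $\tau$ and the claimed value $m=r$.

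With your (correct) estimates $d^{(j)}(v)=O(n^{\ell-s_{\min}(j)})$ and $d=\Theta(n^{\ell-r})$, the condition $\delta_j\leq \epsilon'/(12 s!)$ amounts to $\tau\gtrsim n^{-(s_{\min}(j)-r)/(j-1)}$. The binding $j$ is the one \emph{minimizing} the exponent $(s_{\min}(j)-r)/(j-1)$, equivalently maximizing $(j-1)/(s_{\min}(j)-r)$; since $s_{\min}(j)\approx (r!\,j)^{1/r}$ grows far more slowly than $j$, this ratio increases with $j$, so the constraint is binding at the top uniformity $j=\binom{\ell}{r}$, not at $j=r+1$ — your monotonicity claim is backwards. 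Concretely, take $r=2$ and a forbidden structure of size $\ell\geq 4$ (allowed whenever $k\geq 4$): for the level-$\ell$ hypergraph and $j=\binom{\ell}{2}$, the choice $\tau=\Theta(n^{-1/2})$ gives $\delta_j=\Theta\bigl(n^{2-\ell+(\binom{\ell}{2}-1)/2}\bigr)=\Theta\bigl(n^{(\ell-2)(\ell-3)/4}\bigr)\rightarrow\infty$, so Theorem \ref{containers} cannot be invoked. The repair is to take $\tau_\ell=\Theta(n^{-1/m_\ell})$ with $m_\ell=\max\{(\binom{s}{r}-1)/(s-r):r<s\leq \ell\}$, which gives $\log|\mathcal{C}_\ell|=O(n^{r-1/m_\ell}\log n)$ and forces $m=\max_\ell m_\ell=m(k,r)$ exactly as in the paper; in particular your stronger bound $n^{r-1/r}\log n$ is not delivered by this argument (for $k\geq r+2$ it exceeds what the method yields). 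Two smaller points: part (2) of the statement also requires the bound for $Err_{2r}(C_W)$, while your error hypergraphs stop at $\ell=2r-1$; and you should say a word about $\ell<r$ (vacuous, since any nonempty subset of $S_r(C_W)$ involves at least $r$ constants) to cover the full range $1\leq\ell\leq k$.
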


Given a collection $\calH$ of finite $\calL$-structures, we now define a way of building an $\calL_{\calH}$-template from a complete subset of $S_r(C_W,\calH)$.

\begin{definition}\label{D_Cdef}
Suppose $\calH$ is a nonempty collection of $\calL$-structures, $W$ is a set, and $\sigma\subseteq S_r(C_W,\calH)$ is such that $V(\sigma)=C_W$.  Define an $\calL_{\calH}$-structure $D_{\sigma}$ as follows.  Set $dom(D_\sigma)=W$ and for each $\abar \in W^{r}$, define $D_{\sigma}\models R_p(\abar)$ if and only if $p(c_{\abar})\in Ch_{\sigma}(\cup \abar)$.
\end{definition}

In the notation of Definition \ref{D_Cdef}, note that for all $A\in {W\choose r}$, $Ch_{D_{\sigma}}(A)=Ch_{\sigma}(A)$ (here $Ch_{D_{\sigma}}(A)$ is in the sense of Definition \ref{chdef} and $Ch_{\sigma}(A)$ is in the sense of Definition \ref{chd2}).  We now prove two lemmas.

\begin{lemma}\label{templateclaim}
Suppose $\calF$ is collection of finite $\calL$-structures and $\calH=\Forb(\calF)\neq \emptyset$.  For any set $W$ and complete $\sigma\subseteq S_r(C_W,\calH)$, $D_{\sigma}$ is an $\calL_{\calH}$-template.
\end{lemma}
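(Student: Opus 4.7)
The plan is to verify directly that $D_\sigma$ satisfies each clause of Definition \ref{templatedef}: completeness, condition (1), and condition (2). Throughout, the key observation is that Definition \ref{D_Cdef} was engineered so that $Ch_{D_\sigma}(A)=Ch_\sigma(A)$ for every $A\in {W\choose r}$, which lets us translate statements about $D_\sigma$ into statements about $\sigma$.

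First I would check completeness of $D_\sigma$ as an $\calL_{\calH}$-structure. Fix $A\in{W\choose r}$; since $\sigma$ is complete in the sense of Definition \ref{chd2}, $Ch_\sigma(A)\neq\emptyset$, so pick $p(\cbar)\in Ch_\sigma(A)$. Then $\cbar=c_{\abar}$ for some enumeration $\abar$ of $A$, and by Definition \ref{D_Cdef}, $D_\sigma\models R_p(\abar)$, which is exactly what completeness of an $\calL_{\calH}$-structure requires.

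Next I would verify clause (1) of Definition \ref{templatedef}. Suppose $\abar\in W^r\setminus W^{\underline r}$; then $|\cup\abar|<r$. Every type in $S_r(\calH)$ is proper, so any formula $p(\cbar)\in\sigma$ involves a tuple $\cbar$ of $r$ distinct constants, whence $\cup\cbar$ has size $r$. Consequently no $p\in S_r(\calH)$ satisfies $p(c_{\abar})\in Ch_\sigma(\cup\abar)$, so by Definition \ref{D_Cdef}, $D_\sigma\models\neg R_p(\abar)$ for every $p\in S_r(\calH)$.

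Finally I would verify clause (2). Given $p,p'\in S_r(\calH)$ and $\mu\in\mathrm{Perm}(r)$ with $p(\xbar)=p'(\mu(\xbar))$, and given $\abar\in W^{\underline r}$, substituting $c_{\abar}$ for $\xbar$ yields $p(c_{\abar})=p'(\mu(c_{\abar}))=p'(c_{\mu(\abar)})$. Since $\cup\abar=\cup\mu(\abar)$, the element $p(c_{\abar})$ lies in $Ch_\sigma(\cup\abar)$ if and only if $p'(c_{\mu(\abar)})$ lies in $Ch_\sigma(\cup\mu(\abar))$. Applying Definition \ref{D_Cdef} to both sides gives $D_\sigma\models R_p(\abar)$ iff $D_\sigma\models R_{p'}(\mu(\abar))$, which is clause (2). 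All three conditions are thus met, so $D_\sigma$ is an $\calL_{\calH}$-template. There is no real obstacle here: the lemma is essentially a bookkeeping check, and the hypotheses $\calH=\Forb(\calF)$ and $\calH\neq\emptyset$ enter only insofar as they ensure $S_r(C_W,\calH)$ is well-defined so that $D_\sigma$ is genuinely an $\calL_{\calH}$-structure.
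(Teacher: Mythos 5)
Your proof is correct and follows essentially the same route as the paper's: completeness of $D_\sigma$ from $Ch_{D_\sigma}(A)=Ch_\sigma(A)\neq\emptyset$, clause (1) from the fact that types in $S_r(C_W,\calH)$ are proper (so no $p(c_{\abar})$ with repeated constants lies in $\sigma$), and clause (2) from the identity $p(c_{\abar})=p'(c_{\mu(\abar)})$ together with Definition \ref{D_Cdef}. No gaps; nothing further is needed.
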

\begin{proof} First, observe that $D_{\sigma}$ is a complete $\calL_{\calH}$-structure since for each $A\in {W\choose r}$, $Ch_{D_{\sigma}}(A)=Ch_{\sigma}(C_A)$, and $Ch_{\sigma}(C_A)\neq \emptyset$ because $\sigma$ is complete by assumption (in the sense of Definition \ref{chd2}).  Suppose now $\abar\in W^r\setminus W^{\underline{r}}$.  Then because $S_r(\calH)$ contains only proper types, there is no $p(\xbar)\in S_r(\calH)$ such that $p(c_{\abar})\in S_r(C_W,\calH)$.  Thus $D_{\sigma}\models \neg R_p(\abar)$ for all $p(\xbar)\in S_r(\calH)$, so $D_{\sigma}$ satisfies part (1) of Definition \ref{templatedef}.    Suppose $p(\xbar), p'(\xbar)\in S_r(\calH)$ and $\mu\in Perm(r)$ are such that $p(\xbar)=p'(\mu(\xbar))$.  Suppose $a\in W^{\underline{r}}$.  Then by definition of $D_{\sigma}$, $D_{\sigma}\models R_p(\abar)$ if and only if $p(c_{\abar})\in \sigma$.  Since $p(c_{\abar})=p'(c_{\mu(\abar)})$, $p(c_{\abar})\in \sigma$ if and only if $p'(c_{\mu(\abar)})\in \sigma$. By definition of $D_{\sigma}$, $p'(c_{\mu(\abar)})\in \sigma$ if and only if $D_{\sigma}\models R_{p'}(\mu(\abar))$.  Thus we've shown $D_{\sigma}\models R_p(\abar)$ if and only if $D_{\sigma}\models R_{p'}(\mu(\abar))$, so $D_{\sigma}$ satisfies part (2) of Definition \ref{templatedef}.  This finishes the verification that $D_{\sigma}$ is an $\calL_{\calH}$-template.
\end{proof}

\begin{lemma}\label{boundinglem}
Suppose $k\geq r$, $W$ is a finite set, $\calH$ is a nonempty collection of finite $\calL$-structures, and $\sigma \subseteq S_r(C_W,\calH)$ is complete.  Suppose $\mathcal{F}$ is a collection of finite $\calL$-structures, each of size at most $k$.  Then for each $1\leq \ell \leq k$, there is an injection 
$$
\Phi: \cop(\tilde{\mathcal{F}}(\ell), D_{\sigma})\rightarrow Diag^{tp}(\mathcal{F}(\ell), C_W)\cap Span (\sigma).
$$
and for each $r+1\leq \ell\leq 2r$, there is an injection
$$
\Theta: \cop(\calE(\ell), D_{\sigma})\rightarrow Err_{\ell}(C_W)\cap Span (\sigma).
$$
\end{lemma}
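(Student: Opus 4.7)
The plan is to construct both injections explicitly by passing each $\calL_{\calH}$-copy sitting inside $D_{\sigma}$ to the syntactic type diagram recording the types of its $r$-subsets. First I would record the key preliminary observation: for every $A\subseteq W$, $D_{\sigma}[A]$ is a complete $\calL_{\calH}$-substructure, and Definition \ref{D_Cdef} directly gives $Ch_{D_{\sigma}[A]}(A')=Ch_{\sigma}(C_{A'})$ for every $A'\in{A\choose r}$. Consequently, every value $\chi(A')$ of every $\chi\in Ch(D_{\sigma}[A])$ already lies in $\sigma$.

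For $\Phi$: given $A\in\cop(\tilde{\calF}(\ell),D_{\sigma})$, I would fix (using the axiom of choice) some $F_A\in\calF(\ell)$ with $dom(F_A)=A$ and some $\chi_A\in Ch(D_{\sigma}[A])$ such that $F_A\unlhd_{\chi_A}D_{\sigma}[A]$. This is possible because $D_{\sigma}[A]$ is $\calL_{\calH}$-isomorphic to a member of $\tilde{\calF}(\ell)$, and one can transport the witness $F$ back along this isomorphism (using as standard that $\calF$ may be taken closed under $\calL$-isomorphism, cf.\ Observation \ref{HP}). Then set $\Phi(A):=Diag^{tp}(F_A)$. Lemma \ref{lem0}(2) identifies this with $\{\chi_A(A'):A'\in{A\choose r}\}$, a syntactic $\ell$-diagram in $S_r(C_W)$; the preliminary observation gives $\Phi(A)\subseteq\sigma$, so $\Phi(A)\in Span(\sigma)$; and $F_A$ itself witnesses $\calF(\ell)$-satisfiability, so $\Phi(A)\in Diag^{tp}(\calF(\ell),C_W)$. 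For injectivity I would use that $V(Diag^{tp}(F_A))=C_{dom(F_A)}=C_A$, so $\Phi(A)=\Phi(A')$ forces $C_A=C_{A'}$, hence $A=A'$.

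For $\Theta$ the argument runs in parallel, but uses Lemma \ref{lem1} in place of Lemma \ref{lem0}: given $A\in\cop(\calE(\ell),D_{\sigma})$, the substructure $D_{\sigma}[A]$ is an error of size $\ell$, and Lemma \ref{lem1} produces a choice function $\chi_A\in Ch(D_{\sigma}[A])$ such that $\Theta(A):=\{\chi_A(A'):A'\in{A\choose r}\}$ is a syntactic $C_W$-error of size $\ell$, i.e.\ an element of $Err_{\ell}(C_W)$. The preliminary observation again gives $\Theta(A)\subseteq\sigma$ and hence $\Theta(A)\in Span(\sigma)$, and $V(\Theta(A))=C_A$ yields injectivity exactly as before. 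I do not expect any real obstacle here: the two cited lemmas are tailor-made for this construction, and the only piece of bookkeeping requiring attention is ensuring that the resulting type diagrams are viewed inside $S_r(C_W)$ (so that membership in $Span(\sigma)$ makes sense), which is immediate from $S_r(C_A)\subseteq S_r(C_W)$.
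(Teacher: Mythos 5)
Your proposal is correct and takes essentially the same route as the paper: assign to each copy the syntactic type diagram $\{\chi(A'): A'\in {A\choose r}\}$ coming from a choice function supplied by Lemma \ref{lem0} (resp.\ Lemma \ref{lem1}), deduce containment in $\sigma$ from the definition of $D_{\sigma}$, and get injectivity from $V(\Phi(\cdot))=C_A$. The only cosmetic difference is that you index copies by their underlying sets $A$ and work with $D_{\sigma}[A]$, while the paper works directly with the substructures $G\subseteq_{\calL_{\calH}}D_{\sigma}$; since induced substructures are determined by their domains, this is the same argument.
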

\begin{proof}
Without loss of generality, assume $\mathcal{F}$ is closed under isomorphism (we can do this because it does not change either of the sets $\cop(\tilde{\mathcal{F}}(\ell), D_{\sigma})$ or $Diag^{tp}(\mathcal{F}(\ell), C_W)\cap Span (\sigma)$).  Suppose $1\leq \ell \leq k$ and $G\in \cop(\tilde{\calF}(\ell), D_{\sigma})$.  Then $G\subseteq_{\calL_{\calH}}D_{\sigma}$ and $G\cong_{\calL_{\calH}}B$, for some $B\in \tilde{\calF}(\ell)$. It is straightforward to check that since $\mathcal{F}$ is closed under isomorphism, this implies $G\in \tilde{\calF}(\ell)$.  So without loss of generality we may assume that $B=G$.  Then there is some $F\in \mathcal{F}(\ell)$ such that $F\unlhd_pG$.  Choose any such $F$ and let $\chi\in Ch(G)$ be such that $F\unlhd_{\chi}G$.  Define $\Phi(G)=\{\chi(A):  A\in {dom(G)\choose r}\}$.  By part 2 of Lemma \ref{lem0}, $\Phi(G)=Diag^{tp}(F)$.  Thus by definition, $\Phi(G)\in Diag^{tp}(\calF(\ell), C_W)$. By definition of $D_{\sigma}$ and because $\chi\in Ch(G)$, $G\subseteq_{\calL_{\calH}} D_{\sigma}$ implies $\Phi(G)\subseteq \sigma$, so $\Phi(G)\in Diag^{tp}(\calF(\ell),C_W)\cap Span (\sigma)$, as desired.  To see that $\Phi$ is injective, note that for all $G\in \cop(\tilde{\mathcal{F}}(\ell), D_{\sigma})$, $V(\Phi(G))=dom(G)$.  Therefore if $G_1\neq G_2\in \cop(\tilde{\mathcal{F}}(\ell), D_{\sigma})$, $dom(G_1)\neq dom(G_2)$ implies $V(\Phi(G_1))\neq V(\Phi(G_2))$, so $\Phi(G_1)\neq \Phi(G_2)$.

Suppose now $r+1\leq \ell\leq 2r$ and $G\in \cop(\calE(\ell), D_{\sigma})$.  Then $G$ is a complete $\calL_{\calH}$-structure which is an error of size $\ell$.  Lemma \ref{lem1} implies there is $\chi\in Ch(G)$ such that $\{\chi(A): A\in {dom(G)\choose r}\}$ is a syntactic $C_W$-error of size $\ell$.  Set $\Theta(G)=\{\chi(A): A\in {dom(G)\choose r}\}$.  Then this shows $\Theta(G)\in Err_{\ell}(C_W)$.  By definition of $D_{\sigma}$ and because $\chi\in Ch(G)$, $G\subseteq_{\calL_{\calH}}D_{\sigma}$ implies $\Theta(G)\subseteq \sigma$, so $\Theta(G)\in Err_{\ell}(C_W)\cap Span(\sigma)$, as desired.  To see that $\Theta$ is injective, note that for all $G\in \cop(\calE(\ell), D_{\sigma})$, $V(\Theta(G))=dom(G)$.  Therefore if $G_1\neq G_2\in \cop(\mathcal{E}(\ell), D_{\sigma})$, $dom(G_1)\neq dom(G_2)$ implies $V(\Theta(G_1))\neq V(\Theta(G_2))$, so $\Theta(G_1)\neq \Theta(G_2)$.\end{proof}

\noindent {\bf Proof of Theorem \ref{version1} from Theorem \ref{VERSION2}.}
Let $0<\epsilon<1$ and let $k\geq r$ be an integer.  Choose the constants $c=c(k,r,\calL,\epsilon)$ and $m=m(k,r)$ to be the ones given by Theorem \ref{VERSION2}.  Suppose $\calF$ is a collection of finite $\calL$-structures, each of size at most $k$, and $\calB:=\Forb(\calF)\neq \emptyset$.  Suppose $n$ is sufficiently large and $W$ is a set of size $n$.  Theorem \ref{VERSION2} applied to $\calB$ implies there exists a set $\Sigma\subseteq \mathcal{P}(S_r(C_W,\calB))$ such that the following hold.
\begin{enumerate}[(i)]
\item For all $\mathcal{F}$-free $\mathcal{L}$-structures $M$ with domain $W$, there is $\sigma\in \Sigma$ such that $Diag^{tp}(M)\subseteq \sigma$.
\item For all $\sigma\in \Sigma$ the following hold.  For each $1\leq \ell\leq k$,  $|Diag^{tp}(\calF(\ell), C_W)\cap Span(\sigma)|\leq \epsilon{n\choose \ell}$, 
and for each $r+1\leq \ell \leq 2r$, $|Err_{\ell}(C_W)\cap Span(\sigma)|\leq \epsilon{n\choose \ell}$.
\item $\log |\Sigma|\leq cn^{r-\frac{1}{m}}\log n$.
\end{enumerate}
Set $\mathcal{D}=\{D_{\sigma}: \sigma \in \Sigma\}$, where for each $\sigma\in \Sigma$, $D_{\sigma}$ is the $\calL_{\calB}$-structure from Definition \ref{D_Cdef}.  We claim this $\mathcal{D}$ satisfies conclusions of Theorem \ref{version1}.  First note (i) and part 3 of Observation \ref{ob1} imply that every $\sigma\in \Sigma$ is complete in the sense of Definition \ref{chd2}.  Therefore Lemma \ref{templateclaim} implies each $D_{\sigma}\in \calD$ is an $\calL_{\calB}$-template.  We now verify parts (1)-(3) of Theorem \ref{version1} hold for this $\calD$.

Clearly $|\mathcal{D}|\leq |\Sigma|$, so (iii) implies part (3) of Theorem \ref{version1} is satisfied.  Suppose now $M$ is an $\mathcal{F}$-free $\mathcal{L}$-structure with $dom(M)=W$. By (i), there is $\sigma\in \Sigma$ such that $Diag^{tp}(M)\subseteq \sigma$.  We claim that $M\unlhd_pD_{\sigma}$.  Let $A \in {W\choose r}$ and suppose $p(\xbar)\in S_r(\calH)$ is such that $M\models p(\abar)$ for some enumeration $\abar$ of $A$.  Then $Diag^M(A)=p(c_{\abar})\in Diag^{tp}(M)\subseteq \sigma$ implies by definition of $D_{\sigma}$, $D_{\sigma}\models R_p(\abar)$, so $p(c_{\abar})\in Ch_{D_{\sigma}}(A)$.  This shows $M\leq_p D_{\sigma}$.  Then $M\unlhd_pD_{\sigma}$ holds because by assumption $dom(M)=dom(D_{\sigma})=W$.  Thus part (1) of Theorem \ref{version1} is satisfied.  

We now verify part (2) of Theorem \ref{version1}.  Let $D_{\sigma}\in \mathcal{D}$.  We need to show $\prob(\tilde{\mathcal{F}},D_{\sigma})\leq \epsilon$ and $\prob(\calE, D_{\sigma})\leq \epsilon$.  For each $1\leq \ell \leq k$, we have
$$
|\cop(\tilde{\mathcal{F}}(\ell), D_{\sigma})|\leq |Diag^{tp}(\mathcal{F}(\ell),C_W)\cap Span(\sigma)|\leq \epsilon{n\choose \ell},
$$
where the first inequality is because of Lemma \ref{boundinglem} and the second inequality is by (ii).  This implies that for all $1\leq \ell \leq k$, $|\cop(\tilde{\mathcal{F}}(\ell), D_{\sigma})|\leq \epsilon {n\choose \ell}$, so $\prob(\tilde{\calF}(\ell),D_{\sigma})\leq \epsilon$.  Since every element in $\tilde{\calF}$ has size at most $k$, we have $\prob(\tilde{\calF}, D_{\sigma})\leq \epsilon$.  Similarly, for each $r+1\leq \ell \leq 2r$, 
$$
|\cop(\calE(\ell), D_{\sigma})|\leq|Err_{\ell}(C_W)\cap Span (\sigma)|\leq \epsilon {n\choose \ell},
$$
where the first inequality is by Lemma \ref{boundinglem} and the second inequality is by (ii).  This implies for all $r+1\leq \ell \leq 2r$, $|\cop(\calE(\ell), D_{\sigma})|\leq \epsilon{n\choose \ell}$, so $\prob(\calE(\ell), D_{\sigma})\leq \epsilon$.  Since every element in $\calE$ has size at least $r+1$ and at most $2r$, we have $\prob(\calE, D_{\sigma})\leq \epsilon$. This finishes the proof.
\qed

\section{Applying Hypergraph Containers to Prove Theorem \ref{VERSION2}}\label{VERSION2pf}

In this section we prove Theorem \ref{VERSION2}.  We will use the hypergraph containers theorem.  We begin with a definition. 

\begin{definition}
Suppose $K$ is a positive integer and $\mathcal{A}$ is a collection of finite $\mathcal{L}$-structures each of size at most $K$.  Set 
$$
cl_K(\mathcal{A})=\{M:\text{$M$ is an $\calL$-structure of size $K$ such that $prob(\calA, M)>0\}$}.
$$
\end{definition}
Observe that in the above notation, an $\mathcal{L}$-structure of size at least $K$ is $\mathcal{A}$-free if and only if it is $cl_K(\mathcal{A})$-free.  We  now state a key lemma.

\begin{lemma}\label{lemma**}
Assume $n\geq k\geq r$ and $\calF$ is a nonempty collection of $\calL$-structures, each of size at most $k$.  Suppose $\calH:=\Forb(\calF)\neq \emptyset$ and $W$ is a set of size $n$.  Fix $0<\epsilon<1/2$.  Suppose $\sigma\subseteq S_r(C_W,\calH)$ is complete and satisfies $V(\sigma)=C_W$.  If 
$$
|(Diag^{tp}(cl_k(\calF),C_W)\cup Err_k(C_W))\cap Span(\sigma)|\leq \epsilon {n\choose k}
$$
holds, then for all $1\leq \ell\leq k$, $|(Diag^{tp}(\calF(\ell), C_W) \cup Err_{\ell}(C_W))\cap Span(\sigma)|\leq \epsilon {n\choose \ell}$.
\end{lemma}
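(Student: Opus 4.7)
The plan is to handle three ranges of $\ell$ separately. Since every type in $S_r(C_W)$ involves exactly $r$ distinct constants, any nonempty syntactic $\ell$-diagram $\tau$ must satisfy $|V(\tau)| \geq r$, so for $1 \leq \ell < r$ both $Err_\ell(C_W)$ and $Diag^{tp}(\calF(\ell), C_W)$ are empty and the bound is trivial. For $\ell = k$, every $F \in \calF(k)$ contains a copy of itself, giving $\calF(k) \subseteq cl_k(\calF)$ and hence $Diag^{tp}(\calF(k), C_W) \subseteq Diag^{tp}(cl_k(\calF), C_W)$, so the conclusion is exactly the hypothesis. The real work is in the range $r \leq \ell < k$.

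For this range I will use an extend-and-count argument. Let $L$ and $U$ denote the sets appearing on the LHS and RHS of the conclusion, with parameters $\ell$ and $k$ respectively. Given $\tau \in L$ and a $k$-element subset $X \subseteq C_W$ with $V(\tau) \subseteq X$, completeness of $\sigma$ allows me to pick, for each $A \in \binom{X}{r} \setminus \binom{V(\tau)}{r}$, a single element of $Ch_\sigma(A)$, and I adjoin these choices to $\tau$ to produce a syntactic $k$-diagram $\tau' \subseteq \sigma$, so $\tau' \in Span(\sigma)$. I then need $\tau' \in U$. Since unsatisfiability is preserved under extensions, whenever $\tau'$ is unsatisfiable (in particular whenever $\tau \in Err_\ell(C_W)$), we have $\tau' \in Err_k(C_W)$ directly. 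Otherwise $\tau$ is $\calF(\ell)$-satisfied by some $F$, and I take a size-$k$ model $M$ of $\tau'$ (which exists by properness of the types together with $|V(\tau')| = k$), and consider the substructure $M''$ of $M$ on the set $\{c^M : c \in V(\tau)\}$, which has underlying set of size $\ell$ and satisfies $\tau^{M''}$.

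The main obstacle, and the most delicate step, is showing that $M\upharpoonright_\calL$ contains an $\calL$-copy of $F\upharpoonright_\calL$. The key point is that since $r = r_\calL$ and $\ell \geq r$, a complete $r$-type diagram on an $\ell$-element set determines the entire $\calL$-structure: the truth of any relation of arity $s \leq r$ on an $s$-tuple can be read off from the $r$-type of any $r$-tuple containing that $s$-tuple. Consequently, the map $c^F \mapsto c^{M''}$ is an $\calL$-isomorphism from $F\upharpoonright_\calL$ onto $M''$, so $M\upharpoonright_\calL$ contains an $\calL$-copy of some element of $\calF$ and therefore lies in $cl_k(\calF)$, giving $\tau' \in Diag^{tp}(cl_k(\calF), C_W) \subseteq U$. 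A double-counting argument then closes the proof: the number of pairs $(\tau, X)$ with $\tau \in L$ is $|L|\binom{n-\ell}{k-\ell}$, and the assignment $(\tau, X) \mapsto \tau'$ is at most $\binom{k}{\ell}$-to-one, since $X = V(\tau')$ is forced and $\tau$ must be the restriction of $\tau'$ to one of the $\binom{k}{\ell}$ many $\ell$-element subsets of $V(\tau')$. Combining this with the identity $\binom{n-\ell}{k-\ell}\binom{n}{\ell} = \binom{k}{\ell}\binom{n}{k}$ and the hypothesis $|U| \leq \epsilon\binom{n}{k}$ yields $|L| \leq \epsilon\binom{n}{\ell}$, as required.
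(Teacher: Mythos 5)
Your proof is correct and takes essentially the same route as the paper's: you extend each $\ell$-level diagram in $Span(\sigma)$ to a syntactic $k$-diagram inside $\sigma$ via completeness, show the extension lies in $(Diag^{tp}(cl_k(\calF),C_W)\cup Err_k(C_W))\cap Span(\sigma)$ by splitting into the unsatisfiable case and the satisfiable case (where your ``complete $r$-type diagram determines the $\calL$-structure'' argument is exactly the content of Observation \ref{ob1}), and close with the same $\binom{k}{\ell}$-to-one double count using $\binom{n}{k}\binom{k}{\ell}=\binom{n}{\ell}\binom{n-\ell}{k-\ell}$. Your explicit splitting off of the ranges $1\leq \ell<r$ and $\ell=k$ is only a cosmetic difference from the paper, which handles these cases implicitly.
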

\begin{proof}
For $1\leq \ell<k$, set $\Gamma(\ell)=(Diag^{tp}(\calF(\ell), C_W) \cup Err_{\ell}(C_W))\cap Span(\sigma)$ and let 
$$
\Gamma(k)=(Diag^{tp}(cl_k(\calF),C_W)\cup Err_k(C_W))\cap Span(\sigma).
$$
We want to show that $|\Gamma(k)|\leq \epsilon {n\choose k}$ implies that for all $\ell\in [k]$, $|\Gamma(\ell)|\leq \epsilon {n\choose \ell}$.  If $\ell=k$, this is immediate.  Fix $1\leq \ell<k$.  We claim the following holds.
\begin{align}\label{fact}
\text{For all $S_0\in \Gamma(\ell)$, $|\{S_1\in \Gamma(k): S_0\subseteq S_1\}|\geq {n-\ell \choose r-\ell}$.}
\end{align}
Suppose $S_0\in \Gamma(\ell)$.  Consider the following procedure for constructing a set $S_1\subseteq S_r(C_W,\calH)$.
\begin{itemize}
\item Choose $X\in {C_W\choose k}$ such that $V(S_0)\subseteq X$.  There are ${n-\ell\choose k-\ell}$ choices.
\item For each $A\in {X\choose r}\setminus {V(S_0)\choose r}$, choose some $p_A\in Ch_{\sigma}(A)$ (this is possible since $\sigma$ is complete). 
\item Set $S_1=S_0\cup \{p_A: A\in {X\choose r}\setminus {V(S_0)\choose r}\}$. 
\end{itemize}
Suppose $S_1$ is constructed from $S_0$ in this way.  We claim $S_1\in \Gamma(k)$.  By construction and because $S_0$ is a syntactic $\ell$-diagram, $S_1$ is a syntactic $k$-diagram.  Also by construction, $S_1\subseteq \sigma$, so by definition, $S_1 \in Span(\sigma)$.  If $S_1$ is unsatisfiable, then by definition $S_1\in Err_k(C_W)\cap Span (\sigma)\subseteq \Gamma(k)$, so we are done.  Suppose now $S_1$ is satisfiable and $M$ is an $\calL\cup V(S_1)$-structure such that $M\models S_1^M$.  Let $N=M[V(S_0)^M]$.  Then considered as an $\calL\cup V(S_0)$-structure, $N\models S_0^N$, so part 1 of Observation \ref{ob1} implies $Diag^{tp}(N)=S_0$.  On the other hand, $S_0\in Diag^{tp}(\calF(\ell), C_W)$ implies there is $F\in F(\ell)$ which can be made into an $\calL\cup V(S_0)$-structure such that $F\models S_0^F$.  Part 2 of Observation \ref{ob1} then implies $N\cong_{\calL}F$.  Since $\calF$ is closed under isomorphism, $N\in \calF$.  Since $N\subseteq_{\calL}M$ and $M$ has size $k$, this implies by definition that $M\in cl_k(\calF)$.  Since $S_1=Diag^{tp}(M,V(S_1))$, we have $S_1\in Diag^{tp}(cl_k(\calF),C_W)$ by definition.  Thus we have shown that $S_1$ is in $\Gamma(k)$.  Observe that every distinct choice of $X$ produces a distinct $S_1$, so this construction produces at least ${n-\ell\choose k-\ell}$ distinct $S_1\in \Gamma(k)$ such that $S_0\subseteq S_1$.  We we have proved (\ref{fact}) holds for all $1\leq \ell <k$.  Consider the following procedure for constructing element in $S_0\in \Gamma(\ell)$:
\begin{itemize}
\item Choose $S_1\in \Gamma(k)$.  There are $|\Gamma(k)|$ choices.
\item Choose $S_0\subseteq S_1$ such that $S_0\in \Gamma(\ell)$ (if one exists).  There are at most ${V(S_1)\choose \ell}={k\choose \ell}$ choices.
\end{itemize}
By (\ref{fact}), this construction produces every element $S_0\in \Gamma(\ell)$ at least ${n-\ell\choose k-\ell}$ times.  This shows
$$
|\Gamma(\ell)|\leq |\Gamma(k)|{k\choose \ell}\Big/{n-\ell\choose k-\ell}\leq \epsilon {n\choose k}{k\choose \ell}\Big/{n-\ell \choose k-\ell}=\epsilon {n\choose \ell},
$$
where the second inequality is because $|\Gamma(k)|\leq \epsilon {n\choose k}$ by assumption.
\end{proof}

We now present a computational lemma which will be used in the proof of Theorem \ref{VERSION2}.
\begin{lemma}\label{m}
For all integers $2\leq x<y$, $m(y,x):=\max\Big\{ \frac{{\ell\choose x}-1}{\ell-x}: x< \ell \leq y \Big\}> 1$.
\end{lemma}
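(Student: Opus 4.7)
The plan is to exhibit a single value of $\ell$ in the allowed range $x < \ell \leq y$ whose associated ratio already exceeds $1$, which immediately forces the maximum to do so. The natural candidate is the smallest admissible value, $\ell = x+1$, since it is easy to evaluate there and the denominator $\ell - x$ becomes $1$, which is as small as possible and hence makes the ratio as large as possible for small $\ell$.

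First I would check that the set over which the maximum is taken is nonempty: the hypothesis $x < y$ guarantees that $\ell = x+1$ satisfies $x < \ell \leq y$, so the set contains at least one element and the max is well-defined. Then I would plug $\ell = x+1$ into the expression $\frac{\binom{\ell}{x} - 1}{\ell - x}$: the numerator becomes $\binom{x+1}{x} - 1 = (x+1) - 1 = x$, and the denominator becomes $(x+1) - x = 1$. Thus the ratio at $\ell = x+1$ equals $x$, and since $x \geq 2$ by hypothesis, this ratio is at least $2$. Consequently $m(y,x) \geq x \geq 2 > 1$.

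There is no real obstacle here: the lemma is purely computational, and the hypothesis $x \geq 2$ is precisely what is needed to ensure that the value $x$ achieved at $\ell = x+1$ is strictly greater than $1$. (Indeed, the statement would fail if one allowed $x=1$, since then $\binom{\ell}{1} - 1 = \ell - 1 = \ell - x$ gives ratio exactly $1$ for every $\ell$.) So the entire proof reduces to the single evaluation at the smallest admissible $\ell$, with no further estimation or case analysis required.
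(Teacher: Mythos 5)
Your proof is correct, and it is in fact simpler than the one in the paper. You establish $m(y,x)>1$ by evaluating the ratio at the smallest admissible index $\ell=x+1$, where it equals $\frac{\binom{x+1}{x}-1}{1}=x\geq 2$; since $y>x$ guarantees $x+1$ lies in the range, the maximum is at least $x>1$ and nothing more is needed. The paper instead works with the largest index $\ell=y$: it proves the inequality $\binom{y}{x}>y-x+1$ by induction on $y-x$ and concludes from $m(y,x)\geq \frac{\binom{y}{x}-1}{y-x}$. Both arguments are valid, but yours avoids the induction entirely and isolates exactly where the hypothesis $x\geq 2$ enters (your remark that $x=1$ gives ratio exactly $1$ for every $\ell$ is a nice sanity check); the paper's computation, on the other hand, incidentally yields the stronger lower bound $m(y,x)\geq\frac{\binom{y}{x}-1}{y-x}$, which grows with $y$, though the lemma as stated and its use later in the paper only require the bare inequality $m(y,x)>1$, so nothing is lost by your shorter route.
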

\begin{proof}
We show that for all $2\leq x<y$, ${y\choose x}>y-x+1$.  Since by definition, $m(y,x)\geq \frac{{y\choose x}-1}{y-x}$, this will imply $m(y,x)> 1$, as desired.  Fix $x\geq 2$.  We induct on $t$ where $y=x+t$.  Suppose first $y=x+1$.  Then ${y\choose x}=\frac{(x+1)!}{x!}=x+1$.  By assumption on $x$, $x+1\geq 3>2=y-x+1$.  Assume now that $y>x+1$ and suppose by induction the claim holds for $y-1$.  Then ${y\choose x}=\frac{(y-1)!y}{x!(y-x-1)!(y-x)}={y-1\choose x}\frac{y}{y-x}$.  By our induction hypothesis, 
$$
{y-1\choose x}\frac{y}{y-x}\geq ((y-1-x)+1)\Big(\frac{y}{y-x}\Big)=(y-x)\frac{y}{y-x}=y>y-x+1,
$$
where the last inequality is because $x\geq 2$.  Thus ${y\choose x}>y-x+1$, as desired.
\end{proof}

\vspace{2mm}
\noindent {\bf Defining the Hypergraph.}

We now give a procedure for defining a special hypergraph given a finite set and a collection of $\calL$-structures satisfying certain properties.  Assume we are given the following.
\begin{enumerate}[$\bullet$] 
\item A nonempty collection, $\calF$, of finite $\calL$-structures, each of size at most $k$, where $k\geq r$ is an integer.
\item A set $W$ of size $n$, where $n\geq k$ is an integer.
\end{enumerate}

Let $\mathcal{H}$ be the class of all finite $cl_k(\mathcal{F})$-free $\mathcal{L}$-structures.  Define the hypergraph $H=H(\calF, W)$ as follows.  
\begin{align*}
V(H)&=S_r(C_W,\calH)\text{ and }\\
E(H)&=Diag^{tp}(cl_k(\mathcal{F}), C_W)\cup Err_k(C_W).
\end{align*}
We now make a few observations about $H$.  First, note that the edges of $H$ are syntactic $k$-diagrams, so $H$ is a ${k\choose r}$-uniform hypergraph.  By definition $|V(H)|={n\choose r}|S_r(\mathcal{H})|$.  If $X$ and $X'$ are both in ${C_W\choose k}$, then since relabeling constants does not change the satisfiability properties of a collection of $\calL\cup C_W$-sentences, we must have $|Diag^{tp}(Cl_k(\calF), X)\cup Err_k(X)|=|Diag^{tp}(Cl_k(\calF), X')\cup Err_k(X')|$.  Therefore, the following is well defined.

\begin{definition}\label{alpha}
Let $\alpha=\alpha(\calF)$ be such that for all $X\in {C_W\choose k}$, $|Diag^{tp}(Cl_k(\calF), X)\cup Err_k(X)|=\alpha$.
\end{definition}

We claim that $|E(H)|=\alpha {n\choose k}$.  Indeed, any $\sigma\in E(H)$ can be constructed as follows. 
\begin{itemize}
\item Choose $X\in {C_W\choose k}$.  There are ${n\choose k}$ choices.
\item Choose an element $\sigma \in Diag^{tp}(Cl_k(\calF), C_W)\cup Err_k(C_W)$ such that $V(\sigma)=X$, i.e. choose an element $\sigma \in Diag^{tp}(Cl_k(\calF), X)\cup Err_k(X)$.  There are $\alpha$ choices. 
\end{itemize}
Each of these choices lead to distinct subsets $\sigma\in E(H)$, so this shows $|E(H)|=\alpha {n\choose k}$.  Note that because $\calF\neq \emptyset$, $\alpha\geq 1$.  On the other hand, there are at most $|S_r(\calH)|^{k\choose r}$ syntactic $k$-diagrams $\sigma$ with $V(\sigma)=X$, so $\alpha\leq |S_r(\calH)|^{k\choose r}\leq |S_r(\calL)|^{k\choose r}$.  We now make a key observation about this hypergraph.

\begin{proposition}\label{part1prop}
For any $\mathcal{F}$-free $\mathcal{L}$-structure $M$ with domain $W$, $Diag^{tp}(M)$ is an independent subset of $V(H)$.
\end{proposition}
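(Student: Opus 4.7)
The plan is to argue by contradiction: assume some hyperedge $\sigma \in E(H)$ is contained in $Diag^{tp}(M)$ and derive that $M$ contains a copy of some $F \in \calF$, contradicting $\calF$-freeness. Before doing this, I would verify that $Diag^{tp}(M) \subseteq V(H)$: since $|W| = n \geq k$, the observation preceding Lemma \ref{lemma**} says $M$ is $\calF$-free iff it is $cl_k(\calF)$-free, so $M \in \calH$, and every complete proper quantifier-free $r$-type realized in $M$ lies in $S_r(\calH)$. Identifying each $c_w \in C_W$ with $w \in W$, this shows $Diag^{tp}(M) \subseteq S_r(C_W,\calH) = V(H)$, so it makes sense to ask whether it is independent in $H$.

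Next I would split the contradiction according to which family $\sigma$ comes from. In either case, the containment $\sigma \subseteq Diag^{tp}(M)$ says that for every $p(\cbar) \in \sigma$, $M \models p(\cbar^M)$ under the natural interpretation, so viewing $M$ as an $\calL \cup V(\sigma)$-structure we have $M \models \sigma^M$. If $\sigma \in Err_k(C_W)$, then by definition $\sigma$ is unsatisfiable, immediately contradicting $M \models \sigma^M$.

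The main case is $\sigma \in Diag^{tp}(cl_k(\calF), C_W)$. Here I would pass to the induced $\calL$-substructure $N := M[\{c^M : c \in V(\sigma)\}]$, which has exactly $k$ elements because $\sigma$ is a syntactic $k$-diagram built from proper types. Equipped with the inherited $\calL \cup V(\sigma)$-structure $N$ still satisfies $\sigma^N$, so Observation \ref{ob1}(1) gives $\sigma = Diag^{tp}(N, V(\sigma))$. On the other hand, $\calL\cup V(\sigma)$-satisfiability of $\sigma$ by some $N'$ with $N'\upharpoonright_\calL \in cl_k(\calF)$ gives, via the same observation, $\sigma = Diag^{tp}(N', V(\sigma))$. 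Then Observation \ref{ob1}(2), applied with $W$ replaced by $V(\sigma)$ and $n$ by $k$, yields $N \cong_\calL N'\upharpoonright_\calL$; since $cl_k(\calF)$ is clearly closed under $\calL$-isomorphism, $N \in cl_k(\calF)$, and $N \subseteq_\calL M$ then exhibits a copy of some $F \in \calF$ inside $M$, contradicting $\calF$-freeness.

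The only real obstacle is notational bookkeeping, namely being careful to interpret constants in $V(\sigma)$ consistently when passing between types over $C_W$, realizations in $M$, and the induced substructure $N$; once the natural identification $c_w^M = w$ is fixed, Observation \ref{ob1} does all the work, and no combinatorics beyond ``restrict to the $k$ elements named by $V(\sigma)$'' is needed.
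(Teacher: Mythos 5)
Your proposal is correct and follows essentially the same route as the paper's proof: the same contradiction setup, the same case split between $Err_k(C_W)$ and $Diag^{tp}(cl_k(\calF),C_W)$, restriction to the induced substructure $N$ on the elements named by $V(\sigma)$, and the same use of both parts of Observation \ref{ob1} to conclude $N\cong_{\calL}$ an element of $cl_k(\calF)$, contradicting $\calF$-freeness. The preliminary check that $Diag^{tp}(M)\subseteq V(H)$ is a harmless addition that the paper leaves implicit.
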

\begin{proof}
Suppose towards a contradiction that $Diag^{tp}(M)$ contains an edge $\sigma\in E(H)$.  Then $\sigma$ is a syntactic $k$-diagram which is either in $Err_k(C_W)$ or $Diag^{tp}(cl_k(\calF), C_W)$. Clearly $\sigma\notin Err_k(C_W)$, since $M\models \sigma^M$ implies $\sigma$ is satisfiable.   Thus $\sigma\in Diag^{tp}(cl_k(\calF), C_W)$.  So there is an $\mathcal{L}\cup V(\sigma)$-structure $B$ such that $B\upharpoonright_{\mathcal{L}}\in cl_k(\mathcal{F})$ and $Diag^{tp}(B, V(\sigma))=\sigma$.  Let $A=\{a: c_a\in V(\sigma)\}\subseteq W$ and let $N=M[A]$.  Suppose $p(c_{\abar})\in \sigma$.  Since $\sigma\subseteq Diag^{tp}(M)$, $M\models p(\abar)$.  Since $N\subseteq_{\calL}M$ and $\cup\abar \subseteq A=dom(N)$, we have $N\models p(\abar)$.  This shows that with its canonical $\calL\cup V(\sigma)$-structure, $N\models \sigma^N$.  Since $\sigma$ is a syntactic $k$-diagram and $N$ has size $k$, part 1 of Observation \ref{ob1} implies $\sigma=Diag^{tp}(N)$. Now $Diag^{tp}(N)=\sigma=Diag^{tp}(B, V(\sigma))$ implies by part 2 of Observation \ref{ob1}, that $N\cong_{\calL}B$.  But now $N$ is an $\mathcal{L}$- substructure of $M$ isomorphic to an element of $cl_k(\mathcal{F})$, contradicting our assumption that $M$ is $\mathcal{F}$-free (recall $|dom(M)|=n\geq k$ implies $M$ is $cl_k(\calF)$-free if and only if $M$ if $\calF$-free).
\end{proof}

\noindent Observe that by definition of $H$, if $S\subseteq V(H)$, then
\begin{align}\label{ob2}
E(H[S])= \Big(Diag^{tp}(cl_k(\mathcal{F}), C_W)\cup Err_k(C_W)\Big)\cap Span(S).
\end{align}

We are now ready to prove Theorem \ref{VERSION2}.  At this point the reader may wish to review Theorem \ref{containers} and its corresponding notation in  Subsection \ref{containerssec}, as Theorem \ref{VERSION2} is the key tool used in this proof.

\vspace{3mm}

\noindent {\bf Proof of Theorem \ref{VERSION2}.}  Clearly it suffices to show the theorem holds for all $0<\epsilon<1/2$.  We claim that further, it suffices to show the theorem holds for any  $k\geq 2r$.  Indeed, suppose $k<2r$ and Theorem \ref{VERSION2} holds for all $k'\geq 2r$.  Suppose $\mathcal{F}$ is a nonempty collection of $\mathcal{L}$-structures, each of size at most $k$ and $\calH:=\Forb(\calF)\neq \emptyset$.  Then $\mathcal{F}$ is also a collection of $\mathcal{L}$-structures, each of size at most $k'=2r$.  Apply Theorem \ref{VERSION2} to $k'=2r$ to obtain constants $c=c(2r,r,\calL, \epsilon)$ and $m=m(2r, r)$.  Since $k<2r$, it is clear the conclusions of Theorem \ref{VERSION2} for $\calH$ and $2r$ imply the conclusions of Theorem \ref{VERSION2} for $\calH$ and $k$.  Thus we may take $c(k,r,\epsilon)=c(2r,r,\epsilon)$ and $m(k,r)=m(2r,r)$.  We now prove the theorem holds for all $0<\epsilon<1/2$ and $k\geq 2r$.

Fix $0<\epsilon<1/2$ and $k\geq 2r$.  Apply Theorem \ref{containers} to $s={k\choose r}$ to obtain the constant $c_0=c_0({k\choose r})$ and set 
$$
m=m(k,r)=\max\Big\{ \frac{{\ell\choose r}-1}{\ell-r}: r< \ell \leq k \Big\}.
$$
By Lemma \ref{m}, since $2\leq r<k$, $m> 1$.  Set $\epsilon'=\epsilon/|S_r(\calL)|^{k\choose r}$ and choose $0<\gamma<1$ sufficiently small so that 
\begin{align}\label{gamma}
2^{{{k\choose r}\choose 2}+1}|S_r(\calL)|r!(k-r)^{k-r} \gamma &\leq \frac{\epsilon'}{12{k\choose 2}!}.
\end{align}
Now set $c=c(k,r,\calL,\epsilon)=(c_0\log(\frac{1}{\epsilon'})|S_r(\calL)|)/\gamma m$.  Observe that $c$ actually depends on $\calL$, not just $r_{\calL}$.  Let $M\geq k$ be such that $n\geq M$ implies $(n-r)^{k-r}\geq n^{k-r}/2$, and $n^{-\frac{1}{m}}\gamma^{-1}<1/2$.  We show Theorem \ref{VERSION2} holds for this $c$ and $m$ for all $n\geq M$.  

Fix $n\geq M$.  Suppose $\calF$ a nonempty collection of finite $\calL$-structures, each of size at most $k$, such that $\calH:=\Forb(\calF)\neq \emptyset$.  Let $W$ be a set of size $n$ and let $H=H(\calF, W)$ be the ${k\choose r}$-uniform hypergraph described above.  Set $\tau=n^{\frac{-1}{m}}\gamma^{-1}$.  By our assumptions we have that $0<\epsilon', \tau<\frac{1}{2}$.  We show that $\delta(H,\tau)\leq \frac{\epsilon'}{12{k\choose r}!}$ so that we may apply Theorem \ref{containers} to $H$.  Let $\alpha=\alpha(\calF)$ be as in Definition \ref{alpha} so that $E(H)=\alpha{n\choose k}$ and let $N=|V(H)|$.  Given $2\leq j\leq {k\choose r}$, set
\begin{align}\label{fdef}
f(j)=\min\{ \ell:  {\ell \choose r}\geq j\}.
\end{align}
Observe that for each $2\leq j\leq {k\choose r}$, $r<f(j)\leq k$.  Indeed, $r<f(j)$ holds since ${f(j)\choose r}\geq j\geq 2$, and $f(j)\leq k$ holds since $k\in \{\ell: {\ell \choose r}\geq j\}$.  Thus by definition of $m$, for each $2\leq j\leq {k\choose r}$, 
\begin{align}\label{minequality}
m\geq  \frac{{f(j)\choose r}-1}{f(j)-r} \geq \frac{j-1}{f(j)-r},
\end{align}
where the inequality is because by (\ref{fdef}), ${f(j)\choose r}\geq j$.  We now show that for all $\sigma \subseteq V(H)$ with $2\leq |\sigma|\leq {k\choose r}$, $d(\sigma)\leq \alpha n^{k-f(|\sigma|)}$.  Fix $\sigma\subseteq V(H)$ so that $2\leq |\sigma|\leq {k\choose r}$.  

Observe that if $|V(\sigma)|>k$, then $\{e\in E(H): \sigma\subseteq e\}=\emptyset$ since every $e\in E(H)$ is a syntactic $k$-diagram, so must satisfy $|V(e)|=k$. So in this case $d(\sigma)=0\leq \alpha n^{k-f(|\sigma|)}$.  Similarly, if there is $A\in {V(\sigma)\choose r}$ such that $|Ch_{\sigma}(A)|\geq 2$, then $\{e\in E(H): \sigma\subseteq e\}=\emptyset$, since every $e\in E(H)$ is a syntactic $k$-diagram, so must satisfy $|Ch_e(A)|=1$.  So in this case, $d(\sigma)=0\leq \alpha n^{k-f(|\sigma|)}$.  Suppose now $|V(\sigma)|\leq k$ and $|Ch_{\sigma}(A)|\leq 1$ for all $A\in {V(\sigma)\choose r}$.  This implies $|\sigma|\leq {|V(\sigma)|\choose r}$, so by (\ref{fdef}), $f(|\sigma|)\leq |V(\sigma)|$.  Every edge in $H$ containing $\sigma$ can be constructed as follows.
\begin{itemize}
\item Choose a set $X\in {C_W\choose k}$ such that $V(\sigma)\subseteq X$ (this makes sense since $|V(\sigma)|\leq k$).  There are ${n-|V(\sigma)|\choose k-|V(\sigma)|}$ ways to do this.
\item Choose an element of $Diag^{tp}(cl_k(\calF), X) \cup Err_k(X)$ containing $\sigma$.  By definition of $\alpha$, there are at most $\alpha$ choices for this.
\end{itemize}
Therefore, $d(\sigma)=|\{ e\in E(H): \sigma\subseteq e\}| \leq \alpha {n-|V(\sigma)|\choose k-|V(\sigma)|}\leq \alpha n^{k-|V(\sigma)|}\leq \alpha n^{k-f(|\sigma|)}$, where the last inequality is because $f(|\sigma|)\leq |V(\sigma)|$.  Thus we have shown that for any $2\leq j\leq {k\choose r}$ and $\sigma\subseteq V(H)$ with $|\sigma|=j$, $d(\sigma)\leq \alpha n^{k-f(j)}$.  Thus given $2\leq j\leq {k\choose r}$ and a vertex $v\in V(H)$, 
$$
d^{(j)}(v)=\max \{ d(\sigma): v\in \sigma, |\sigma|=j\}\leq \alpha n^{k-f(j)}.
$$
Note the average degree of $H$ is
$$
d={k\choose r}|E(H)|/|V(H)| = \frac{{k\choose r}\alpha {n\choose k}}{|S_r(\calH)|{n\choose r}}=\frac{\alpha}{|S_r(\calH)|}{n-r\choose k-r}\geq \frac{\alpha}{|S_r(\calH)|}\Big(\frac{n-r}{k-r}\Big)^{k-r}.
$$
Combining this with our assumption $n$, we obtain the following inequality.
\begin{align}\label{dbound}
d\geq \frac{\alpha}{|S_r(\calH)|}\Big(\frac{n-r}{k-r}\Big)^{k-r}=\frac{\alpha}{|S_r(\calH)|(k-r)^{k-r}} (n-r)^{k-r}\geq \frac{\alpha}{2|S_r(\calH)|(k-r)^{k-r}} n^{k-r}.
\end{align}
Combining all of these computations we have the following.
$$
\delta_j=\frac{\sum_{v\in V(H)} d^{(j)}(v)}{Nd\tau^{j-1}}\leq \frac{Nn^{k-f(j)}}{Nd\tau^{j-1}}= \frac{n^{k-f(j)+(j-1)\frac{1}{m}}\gamma^{j-1}}{d}.
$$
Using our lower bound for $d$ from (\ref{dbound}), this implies
\begin{align*}
\delta_j \leq 2|S_r(\calH)|(k-r)^{k-r} \gamma^{j-1}\alpha^{-1} n^{k-f(j)+\frac{j-1}{m}-k+r}=2|S_r(\calH)|(k-r)^{k-r} \gamma^{j-1}\alpha^{-1} n^{r-f(j)+\frac{j-1}{m}}.
\end{align*}
By (\ref{minequality}), $\frac{j-1}{m}\leq f(j)-r$, so this implies $\delta_j$ is at most
$$
2|S_r(\calH)|(k-r)^{k-r} \gamma^{j-1}\alpha^{-1} n^{r-f(j)+f(j)-r}= 2|S_r(\calH)|(k-r)^{k-r} \gamma^{j-1}\alpha^{-1}\leq 2|S_r(\calH)|(k-r)^{k-r} \gamma,
$$
where the last inequality is because $j\geq 2$ and $\gamma<1$ implies $\gamma^{j-1}<\gamma$ and $\calF\neq \emptyset$ implies $\alpha^{-1}\leq 1$.  Therefore
\begin{align}\label{delta1}
\delta(H,\tau)&=2^{{{k\choose r}\choose 2}-1}\sum_{j=2}^{{k\choose r}}2^{-{j-1\choose 2}} \delta_j\leq 2^{{{k\choose r}\choose 2}-1}2|S_r(\calH)|(k-r)^{k-r} \gamma \sum_{j=2}^{{k\choose r}}2^{-{j-1\choose 2}}.
\end{align}
If $t={{k\choose r}\choose 2}$, then $\sum_{j=2}^{{k\choose r}}2^{-{j-1\choose 2}}\leq\sum_{j=0}^t2^{-t}$.  Using the formula for summing finite geometric series, $\sum_{j=0}^t2^{-t}=\frac{1-2^{-t-1}}{1-2^{-1}}=2(1-2^{-t-1})<2$.  Plugging this into (\ref{delta1}) yields 
\begin{align*}
\delta(H,\tau)&\leq 2^{{{k\choose r}\choose 2}-1}2|S_r(\calH)|(k-r)^{k-r} \gamma 2=2^{{{k\choose r}\choose 2}+1}|S_r(\calH)|(k-r)^{k-r} \gamma\leq 2^{{{k\choose r}\choose 2}+1}|S_r(\calL)|(k-r)^{k-r} \gamma.
\end{align*}
By (\ref{gamma}), the right hand side above is at most $\frac{\epsilon'}{12{k\choose r}!}$, so we have shown that $\delta(H,\tau) \leq \frac{\epsilon'}{12{k\choose r}!}$.  Thus Theorem \ref{containers} implies there is $\Sigma_0\subseteq \mathcal{P}(V(H))$ with the following properties.
\begin{enumerate}[(i)]
\item For every independent set $I\subseteq V(H)$, there is $\sigma \in \Sigma_0$ such that $I \subseteq \sigma$.
\item For every $\sigma \in \Sigma_0$, $e(H[\sigma])\leq \epsilon' e(H)$.
\item $\log |\Sigma_0| \leq c_0\log(1/\epsilon') N\tau \log (1/\tau)$.
\end{enumerate}
Define $\Sigma=\{\sigma \in \Sigma_0: \exists \text{ an $\calF$-free $\mathcal{L}$-structure $M$ with domain $W$ such that $Diag^{tp}(M)\subseteq \sigma \}$}$.  Observe that every $\sigma \in \Sigma$ is complete by part 3 of Observation \ref{ob1}.  We show $\Sigma$ satisfies the conclusions (1)-(3) of Theorem \ref{VERSION2}. Suppose $M$ is an $\mathcal{F}$-free $\mathcal{L}$-structure with domain $W$.  Proposition \ref{part1prop} implies $Diag^{tp}(M)$ is an independent subset of $V(H)$, so by (i) and definition of $\Sigma$, there is $\sigma\in \Sigma$ such that $Diag^{tp}(M)\subseteq \sigma$.  Thus part (1) of Theorem \ref{VERSION2} holds.  We now show part (2) holds.  Fix $\sigma\in \Sigma$.  By (\ref{ob2}), $(Diag^{tp}(cl_k(\calF), C_W)\cup Err_k(C_W))\cap Span(\sigma)=E(H[\sigma])$.  So (ii) implies
$$
e(H[\sigma])=|(Diag^{tp}(cl_k(\calF), C_W)\cup Err_k(C_W))\cap Span(\sigma)|\leq \epsilon' e(H). 
$$
By definition of $\epsilon'$ and because $\alpha \leq |S_r(\calL)|^{k\choose r}$, 
$$
\epsilon'e(H)=\epsilon'\alpha{n\choose k}=\frac{\epsilon}{|S_r(\calL)|^{k\choose r}}\alpha {n\choose k}\leq \epsilon {n\choose k}.
$$
Thus $|(Diag^{tp}(cl_k(\calF), C_W)\cup Err_k(C_W))\cap Span(\sigma)|\leq \epsilon{n\choose k}$.  By Lemma \ref{lemma**}, for all $1\leq \ell \leq k$, 
$$
|(Diag^{tp}(\calF(\ell),C_W)\cup Err_{\ell}(C_W))\cap Span (\sigma)|\leq \epsilon {n\choose \ell}.
$$
Since $k\geq 2r$, this immediately implies part (2) of Theorem \ref{VERSION2} holds.  By (iii), definition of $c$, and because $\Sigma\subseteq \Sigma_0$ we have that
\begin{align*}
|\Sigma|\leq |\Sigma_0|\leq c_0\log(1/\epsilon') N\tau \log (1/\tau)&=c_0\log(1/\epsilon') |S_r(\calH)|{n\choose r}\tau \log (1/\tau) \\
&\leq c_0\log(1/\epsilon') |S_r(\calL)|{n\choose r}\tau \log (1/\tau)=c\gamma m{n\choose r}\tau \log\Big(\frac{1}{\tau}\Big).
\end{align*}
This shows $|\Sigma|\leq c\gamma mn^r\tau \log(\frac{1}{\tau})$.  By definition of $\tau$,
$$
c\gamma mn^r\tau \log\Big(\frac{1}{\tau}\Big)=c mn^{r-\frac{1}{m}}\Big(\frac{\log n}{m}+\log\gamma\Big)= c n^{r-\frac{1}{m}}\Big(\log n+m\log\gamma\Big)\leq cn^{r-\frac{1}{m}}\log n,
$$
where the last inequality is because $\gamma \leq 1\leq m$ implies $m\log \gamma \leq 0$.  This shows part (3) of Theorem \ref{VERSION2} holds, so we are done.
\qed

\section{Metric Spaces}\label{ms}

Given integers $r\geq 3$ and $n\geq 1$, let $M_r(n)$ be the set of metric spaces with distances all in $[r]$ and underlying set $[n]$.  In this section we will reprove certain structure and enumeration theorems about $M_r(n)$ from \cite{MT} using the machinery of this paper, along with combinatorial ingredients from \cite{MT}.  We include this example because it demonstrates interesting behavior with regards to the existence of a stability theorem.  In particular, we will prove that when $r$ is even, the hereditary property associated to $\bigcup_{n\in \mathbb{N}}M_r(n)$ has a stability theorem in the sense of Definition \ref{stabdef}, but when $r$ is odd, this is not the case.  

We would like to point out that in order to be consistent with \cite{MT}, $r$ will always denote the largest distance appearing in our metric spaces.  This $r$ has nothing to do with our use of the letter $r$ as shorthand for $r_{\calL}$ throughout the rest of the paper.  In this section all languages will binary (i.e. $r_{\calL}=2$), so we do not think any confusion will arise.

\subsection{Results from \cite{MT}.}
In this subsection we state results from \cite{MT}.  {\bf For the rest of this section, $r\geq 3$ is a fixed integer}. We require some notation and definitions in order to state the relevant theorems from \cite{MT}.   An \emph{$[r]$-graph} (respectively an $2^{[r]}$-graph) is a pair $(V,c)$ such that $V$ is a set of vertices and $c:{V\choose 2}\rightarrow [r]$ (respectively $c:{V\choose 2}\rightarrow 2^{[r]}$) is a function.

\begin{definition} Given an $[r]$-graph $G=(V,d)$ and a $2^{[r]}$-graph $R=(V',c)$, we say $G$ is a \emph{sub-$[r]$-graph} of $R$, denoted $G\subseteq_{[r]} R$, if $V'\subseteq V$ and for each $xy\in {V'\choose r}$, $d(xy)\in c(xy)$.  We say $G$ is a \emph{full sub-$[r]$-graph} of $R$, denote $G\subseteqq_{[r]}R$, if moreover, $V'=V$.  
\end{definition}

A $2^{[r]}$-graph $(V,c)$ is \emph{complete} if for all $xy\in {V\choose 2}$, $|c(xy)|\geq 1$.  Two $[r]$-graphs (respectively $2^{[r]}$-graphs) $(V,d)$ and $(V',d')$ are \emph{isomorphic} if there is a bijection $f:V\rightarrow V'$ such that for all $xy\in {V\choose 2}$, $d(xy)=d'(f(x)f(y))$.  A \emph{violating triple} is a tuple $(i,j,k)\in \mathbb{N}^3$ such that $|i-j|\leq k\leq i+j$ is false.  An $[r]$-graph $G=(V,d)$ is a \emph{metric $[r]$-graph} if for all $\{x,y,z\}\in {V\choose 3}$, $(d(xy), d(yz),d(xz))$ is not a violating triple.  A $2^{[r]}$-graph $G=(V,c)$ is a \emph{metric $2^{[r]}$-graph} for all $\{x,y,z\}\in {V\choose 3}$, no $(i,j,k)\in c(xy)\times c(yz)\times c(xz)$ is a violating triple.  Observe that if $R$ is a complete $2^{[r]}$-graph, then $R$ is a metric $2^{[r]}$-graph if and only if all its full sub-$[r]$-graphs are metric $[r]$-graphs.  Given integers $i<j$, set $[i,j]=\{i,i+1, \ldots, j\}$.  Set 
$$
m(r)=\Bigg\lceil \frac{r+1}{2}\Bigg\rceil.
$$
If $r$ is odd, let $L_r=[ \frac{r-1}{2}, r-1]$ and $U_r=[\frac{r+1}{2}, r]$.  Observe that if $r$ is odd, then $m(r)=|U_r|=|L_r|$ and if $r$ is even, then $m(r)=|[\frac{r}{2},r]|$.  We now define a special subfamily of $M_r(n)$.

\begin{definition}\label{Crdefeven}
Suppose $n\geq 3$ is an integer.
\begin{enumerate}
\item If $r\geq 4$ is even, define $C_r(n)$ be the set of $[r]$-graphs $G=([n],d)$ such that $d(e)\in [\frac{r}{2}, r ]$ for all $e\in {[n]\choose 2}$.
\item If $r\geq 3$ is odd, define $C_r(n)$ to be the set of $[r]$-graphs $G=([n],d)$ such that there is a partition $V_1\cup \ldots\cup  V_t$ of $[n]$ so that for every $xy\in {[n]\choose 2}$, 
\[
d(xy) \in \begin{cases}
L_r& \text{if } xy\in {V_i\choose 2}\textnormal{ for some }i\in [t] \\
U_r & \text{if } x\in V_i, y\in V_j \textnormal{ for some }i\neq j\in [t].
\end{cases}
\]
\end{enumerate}
\end{definition}

It is straightforward to check that in both even and odd cases, $C_r(n)\subseteq M_r(n)$. 

\begin{definition}\label{newdeltaclose} Let $G=(V,c)$ and $G=(V,c')$ be finite $C$-graphs where $C=[r]$ or $C=2^{[r]}$. Set
$$
\Delta(G,G')=\{xy\in {V\choose 2}: c(xy)\neq c'(xy)\}.
$$
Given $\delta>0$, we say $G$ and $G'$ are \emph{$\delta$-close}\footnote{In \cite{MT}, $\delta$-closeness is instead defines as $|\Delta(G,G')|\leq \delta n^2$.  For large $n$, this is basically the same as Definition \ref{newdeltaclose} up to a factor of $2$, and therefore doesn't change the statements of the results from \cite{MT}.  We have chosen to use Definition \ref{newdeltaclose} for convenience.} if $|\Delta(G,G')|\leq \delta{|V|\choose 2}$.
\end{definition}

Set $C_r^{\delta}(n)=\{G\in M_r(n): \textnormal{there is some }G' \in C_r(n)\textnormal{ such that }|\Delta(G,G')|\leq \delta{|V|\choose 2} \}$.  We can now state the approximate structure theorem from \cite{MT}. Informally, it states that for all $\delta>0$, most members of $M_r(n)$ are in $C_r^{\delta}(n)$ when $n$ is sufficiently large depending on $\delta$.

\begin{theorem}[{\bf Mubayi-Terry \cite{MT}}]\label{DELTACLOSETHM}
Let $r\geq 3$ be an integer.   Then for all $\delta>0$, there exists an $M$ and $\beta >0$ such that $n>M$ implies
$$
\frac{|M_r(n)\setminus C_r^{\delta}(n)|}{|M_r(n)|}\leq \frac{|M_r(n)\setminus C_r^{\delta}(n)|}{m(r)^{n\choose 2}}\leq 2^{-\beta {n\choose 2}}.
$$
\end{theorem}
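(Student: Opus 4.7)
The plan is to apply the structural machinery of this paper—specifically Theorems \ref{stab} and \ref{b4stab}—to the hereditary $\calL$-property $\calP_r := \bigcup_n M_r(n)$, where $\calL = \{R_1,\ldots,R_r\}$ consists of binary relation symbols with $R_i(x,y)$ encoding $d(x,y)=i$. Since $r_{\calL}=2$ and $\calL$ contains no unary relations, Remark \ref{specialob0} gives $sub(M) = \prod_{xy \in \binom{V}{2}}|Ch_M(\{x,y\})|$ for every complete $\calL_{\calP_r}$-template. Identifying $p_i \in S_2(\calP_r)$ with distance $i$, a choice-set assignment $Ch_M$ is precisely a metric $2^{[r]}$-graph in the terminology of \cite{MT}. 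A direct analysis of the triangle inequality shows that the maximum-size metric-closed subsets of $[r]$ have size $m(r) = \lceil (r+1)/2 \rceil$—namely $[r/2,r]$ when $r$ is even, and $L_r$, $U_r$ when $r$ is odd—and any extremal $\calL_{\calP_r}$-template uses exactly these sets as its choice sets. Consequently $\ex(n,\calP_r) = m(r)^{\binom{n}{2}}$ and $\pi(\calP_r) = m(r)$.

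The second step is to identify $\calR_{ex}(n,\calP_r)$ and its full subpatterns. For $r$ even the unique (up to $\calL_{\calP_r}$-isomorphism) extremal template assigns $\{p_i : i \in [r/2,r]\}$ to every pair, and its full subpatterns are exactly the elements of $C_r(n)$. For $r$ odd the triangle constraint ``an $L_r$-$L_r$-$U_r$ triangle with the minimum $L_r$ distances and the maximum $U_r$ distance violates the triangle inequality'' forces the set of pairs receiving choice $L_r$ to form an equivalence relation on $[n]$, so the extremal templates are parametrized by partitions of $[n]$ (assigning $L_r$ within parts and $U_r$ between parts), and their full subpatterns are again precisely the elements of $C_r(n)$. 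Thus in the notation preceding Theorem \ref{b4stab} we have $E(n,\calP_r) = C_r(n)$ and $E^{\delta}(n,\calP_r) = C_r^{\delta}(n)$, and the denominator bound $|M_r(n)| \geq m(r)^{\binom{n}{2}}$ is immediate since each extremal template contributes that many metric spaces on $[n]$.

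For the even case I would verify Definition \ref{stabdef} using a quantitative supersaturation lemma from \cite{MT}: any $\calP_r$-random template with an $\eta$-fraction of pairs whose choice set differs from $\{p_i : i \in [r/2,r]\}$ satisfies $sub(M) \leq \ex(n,\calP_r)^{1-c\eta}$ for some absolute $c > 0$. Theorem \ref{stab} then delivers Theorem \ref{DELTACLOSETHM} directly. For the odd case Definition \ref{stabdef} fails (as the paper promises to prove), so in its place I would apply Theorem \ref{b4stab} together with a combinatorial structure lemma from \cite{MT} asserting that for every $\delta > 0$ there exists $\epsilon > 0$ such that every $\calP_r$-random template $M$ with $sub(M) \geq \ex(n,\calP_r)^{1-\epsilon}$ is $\delta$-close to some partition-based extremal template. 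Lemma \ref{deltaclose2} then transfers this closeness from templates to their full subpatterns, yielding the odd case after a constant rescaling of $\delta$.

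The main obstacle is the combinatorial ``approximate rigidity'' statement for near-extremal templates, which is the essential input borrowed from \cite{MT} rather than from the general framework developed here. The odd case is especially delicate because the family of exactly-extremal templates is already large (one per partition of $[n]$), so the rigidity must produce an \emph{approximating} partition from an \emph{approximately} partition-like template; this is a genuine combinatorial problem about stability of metric $2^{[r]}$-graphs, handled in \cite{MT}, whereas the even case is significantly easier thanks to the uniqueness of the extremal template up to isomorphism.
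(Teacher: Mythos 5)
Your identification of the extremal objects is wrong in the odd case, and this is exactly the delicate point of this example. For odd $r$ the product-extremal metric $2^{[r]}$-graphs are \emph{not} the partition-based ones of Definition \ref{tildeCrdef}: assigning $L_r\cup U_r$ (size $m(r)+1$) to the pairs of a perfect matching and $U_r$ (size $m(r)$) to all other pairs is still metric, so $\ex(n,\calP)=m(r)^{{n\choose 2}}\left(\frac{m(r)+1}{m(r)}\right)^{\lfloor n/2\rfloor}>m(r)^{{n\choose 2}}$, and the extremal templates correspond to the matching-based family $\tildeE_r(n)$, not to $\tildeC_r(n)$ (Lemma \ref{extlemodd}, Corollary \ref{mspi}); only the limiting value $\pi(\calP)=m(r)$ survives your computation. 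Consequently your claims that every extremal template uses exactly $L_r$, $U_r$ as choice sets, that $\ex(n,\calP)=m(r)^{{n\choose 2}}$, and that $E(n,\calP)=C_r(n)$ and $E^{\delta}(n,\calP)=C_r^{\delta}(n)$ all fail for odd $r$: the full subpatterns of the genuinely extremal templates are metric spaces built over a perfect matching, not the members of $C_r(n)$. Indeed it is precisely this discrepancy (the all-$L_r$ template is almost extremal but at distance $1$ from every element of $\calR_{ex}([n],\calP)$, cf. Corollary \ref{msstabodd}) that makes Definition \ref{stabdef} fail for odd $r$, so your step 2 is inconsistent with the even/odd dichotomy you yourself invoke later.

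The saving grace is that the argument you actually run for odd $r$ does not need that identification, and it is essentially the paper's own proof, which handles both parities uniformly: apply Theorem \ref{b4stab}, note that $sub(G')\geq\ex(n,\calP)^{1-\epsilon}\geq m(r)^{(1-\epsilon){n\choose 2}}$ (using only $\ex(n,\calP)\geq\pi(\calP)^{n\choose 2}$, so the corrected value of $\ex$ does no harm), invoke Mubayi--Terry's stability theorem for metric $2^{[r]}$-graphs (Theorem \ref{stabthm}), whose target is $\tildeC_r(n)$ rather than the extremal family, and then transfer closeness from the template to a full subpattern --- the paper does this transfer by an explicit modification producing $H''\subseteqq_{[r]}M$, where you cite Lemma \ref{deltaclose2}; either works, and the lower bound $|M_r(n)|\geq m(r)^{n\choose 2}$ is unaffected. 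Your separate even-case route through Definition \ref{stabdef} and Theorem \ref{stab} is also viable (the paper establishes the needed stability statement as Theorem \ref{msstabeven}, directly from Theorem \ref{stabthm} rather than from a quantitative supersaturation lemma of \cite{MT}), but it is superfluous: the \ref{b4stab}-plus-\ref{stabthm} chain you use for odd $r$ already covers even $r$. So the fix is simply to delete or correct the characterization of the odd extremal templates; after that your proof collapses to the paper's.
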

\noindent We now state the approximate enumeration theorem from \cite{MT}.
\begin{corollary}[{\bf Mubayi-Terry \cite{MT}}]\label{oddcor}
Let $r\geq3$ be an integer.  Then $|M_r(n)|=m(r)^{{n\choose 2}+o(n^2)}$.
\end{corollary}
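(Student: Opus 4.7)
The plan is to apply Theorem \ref{enumeration} to the hereditary $\calL$-property of metric spaces with distances in $[r]$. I take $\calL = \{R_1,\ldots,R_r\}$ to be the binary language from Example \ref{ex1} (so $r_{\calL}=2$), with $R_i(x,y)$ encoding $d(x,y)=i$, and let $\calH_r$ be the hereditary $\calL$-property whose $n$-th layer is the $\calL$-coded version of $M_r(n)$. The hereditary property is immediate since submetrics of metric spaces are metric spaces, and $|(\calH_r)_n|=|M_r(n)|$. Because $r\geq 3$ gives $m(r)\geq 2>1$, once I establish $\pi(\calH_r)=m(r)$, Theorem \ref{enumeration}(1) yields $|M_r(n)|=m(r)^{{n\choose 2}+o(n^2)}$, which is exactly the statement.

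For the lower bound $\pi(\calH_r)\geq m(r)$, I would construct an explicit $\calH_r$-random template. Set $D=[r/2,r]$ if $r$ is even and $D=U_r=[(r+1)/2,r]$ if $r$ is odd; in both cases $|D|=m(r)$ and $2\min D\geq \max D$, so any triple of distances chosen from $D$ satisfies the triangle inequality. Define the $\calL_{\calH_r}$-template $T_n$ on $[n]$ by declaring $Ch_{T_n}(\{a,b\})=\{p_i(c_a,c_b):i\in D\}$ for every pair, where $p_i$ is the type from Example \ref{ex1}. Any choice function for $T_n$ realises a metric space, so $T_n\in \calR([n],\calH_r)$. Being $\calH_r$-random, $T_n$ is error-free by Proposition \ref{Lrandom}, and then Observation \ref{ob0} gives $sub(T_n)=\prod_{\{a,b\}}|Ch_{T_n}(\{a,b\})|=m(r)^{n\choose 2}$. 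Hence $\ex(n,\calH_r)\geq m(r)^{n\choose 2}$ and $\pi(\calH_r)\geq m(r)$.

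For the upper bound $\pi(\calH_r)\leq m(r)$, I would combine the approximate structure theorem (Theorem \ref{DELTACLOSETHM}) with a direct count of $|C_r^{\delta}(n)|$. Fix a small $\delta>0$. A direct enumeration gives $|C_r(n)|\leq 2^{O(n\log n)}m(r)^{n\choose 2}$, where the extra factor counts partitions of $[n]$ and only appears in the odd case. Allowing a $\delta$-fraction of edges to be altered inflates this by at most $\binom{{n\choose 2}}{\delta{n\choose 2}}r^{\delta{n\choose 2}}=m(r)^{g(\delta){n\choose 2}}$ with $g(\delta)\to 0$ as $\delta\to 0$. Combining with Theorem \ref{DELTACLOSETHM}, whose error term $m(r)^{n\choose 2}2^{-\beta{n\choose 2}}$ is negligible, I obtain $|M_r(n)|\leq m(r)^{{n\choose 2}(1+o(1))}$ upon letting $\delta\to 0$ appropriately. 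Since $\ex(n,\calH_r)\leq |(\calH_r)_n|=|M_r(n)|$ (extremal templates produce distinct full subpatterns all lying in $(\calH_r)_n$), this gives $\pi(\calH_r)\leq m(r)$.

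Putting both bounds together yields $\pi(\calH_r)=m(r)$, and Theorem \ref{enumeration}(1) concludes. The main subtle point is the count of $|C_r^{\delta}(n)|$ in the odd case: the partition parameter inflates $|C_r(n)|$ by a Bell-number factor, but this is only $2^{O(n\log n)}=m(r)^{o(n^2)}$ and is absorbed into the $o(n^2)$ error term. Thus the machinery of this paper reduces the corollary to the combination of the template construction (giving the lower bound) and the structure theorem (giving the upper bound).
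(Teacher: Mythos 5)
Your proposal is correct, but it reaches the upper bound by a genuinely different route than the paper. Both arguments set up the same hereditary property $\calP=\calM_r$ and finish with Theorem \ref{enumeration}, and your lower bound (the template with every pair's choice set equal to $\{p_i: i\in D\}$ for $D=[\frac r2,r]$ or $U_r$, which is $\calP$-random since $2\min D\geq \max D$, error-free, and has $m(r)^{n\choose 2}$ full subpatterns) is exactly the kind of construction the paper's machinery produces. The divergence is in how $\pi(\calP)\leq m(r)$ is obtained. The paper never touches Theorem \ref{DELTACLOSETHM} at this stage: it computes $\ex(n,\calP)$ \emph{exactly} via the dictionary between $\calL_{\calP}$-templates and complete metric $2^{[r]}$-graphs (Propositions \ref{0} and \ref{1}) and the characterization of product-extremal elements of $\tilde M_r(n)$ (Lemmas \ref{extlemeven} and \ref{extlemodd}, resting on the multigraph Theorem \ref{mgthm} and Lemma 3.2 of \cite{BW}), which gives $\pi(\calP)=m(r)$ in Corollary \ref{mspi} and then the enumeration in two lines. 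You instead import the MT approximate structure theorem (Theorem \ref{DELTACLOSETHM}) and count $|C_r^{\delta}(n)|$ directly; the Bell-number bookkeeping in the odd case and the $\binom{{n\choose 2}}{\delta{n\choose 2}}r^{\delta{n\choose 2}}$ inflation are fine. This is logically sound since \ref{DELTACLOSETHM} has its own proof in \cite{MT}, but note two costs: (i) once you have $|M_r(n)|\leq m(r)^{(1+o(1)){n\choose 2}}$ from that count, the corollary already follows with the trivial lower bound, so the pass through $\ex(n,\calH_r)\leq |M_r(n)|$, $\pi$, and Theorem \ref{enumeration} is redundant; and (ii) within this paper's architecture \ref{DELTACLOSETHM} is itself one of the MT results being rederived (later, from Theorem \ref{b4stab} and Theorem \ref{stabthm}), so using it as a black box leans on a heavier input than the paper's route, which only needs the lower-level extremal ingredients and additionally yields the exact value of $\ex(n,\calP)$ and the extremal characterization that the stability discussion (Theorem \ref{msstabeven}, Corollary \ref{msstabodd}) relies on.
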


In fact, \cite{MT} contains much more precise structural and enumerative results for $M_r(n)$ in the case when \emph{$r$ is even}.  Finding similar refinements of Theorem \ref{DELTACLOSETHM} and Corollary \ref{oddcor} in the case when $r$ is odd is still open.  This suggests there is something ``nicer'' about the even case than the odd case.  We will show in this section that one candidate for what makes the even case ``nice'' is that when $r$ is even, the hereditary property corresponding to $M_r(n)$ has a stability theorem in the sense of Definition \ref{stabdef}, while in the odd case it does not.

\subsection{Translation to hereditary $\calL$-property.}
In this subsection we translate some of the combinatorial notions used in \cite{MT} to the setup used in this paper.  Recall $r\geq 3$ is a fixed integer.  Let $\mathcal{L}_r=\{R_1(x,y),\ldots, R_r(x,y)\}$ consist of $r$ binary relation symbols.  We consider elements $(V,d)$ of $M_r(n)$ as $\mathcal{L}_r$-structures by interpreting $R_i(x,y)$ if and only if $d(xy)=i$, for each $(x,y)\in V^2$.  Let $\mathcal{M}_r$ denote the class of $\calL_r$-structures obtained by closing $M_r=\bigcup_{n\in \mathbb{N}}M_r(n)$ under isomorphism.  Clearly $\mathcal{M}_r$ is a hereditary $\mathcal{L}_r$-property, and $(\mathcal{M}_r)_n=M_r(n)$.  For the rest of the section, $\calP=\calM_r$, and $\calL=\calL_r$.  Observe that since $r_{\calL}=2$, $\calL_{\calP}=\{R_p(x,y): p(x,y)\in S_2(\calP)\}$.  For each $i\in [r]$, set  
$$
q_i(x,y):=\{x\neq y\}\cup \{R_i(x,y), R_i(y,x)\}\cup \{\neg R_j(x,y), \neg R_j(y,x): j\in [r]\setminus \{i\}\},
$$
and let $p_i(x,y)$ be the unique quantifier-free $2$-type containing $q_i(x,y)$. In other words, $p_i(x,y)$ is the type saying the distance between $x$ and $y$ is $i$.  Clearly  $S_2(\calP)=\{p_i(x,y): i\in [r]\}$, so $\calL_{\calP}=\{R_{p_i}(x,y): i\in [r]\}$.   The following observation will be useful and is obvious from the definition of a choice function.

\begin{observation}\label{ob7}
If $G$ is an $\calL_{\cal}$-template with domain $V$ and $\chi\in Ch(G)$, then for all $uv\in {V\choose 2}$, $Ch_G(uv)=\{p_j(c_u,c_v): G\models R_{p_j}(u,v)\vee R_{p_j}(v,u)\}$.
\end{observation}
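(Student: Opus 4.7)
The plan is to verify the claim by unfolding the definition of $Ch_G(uv)$ and then invoking the symmetry of the types $p_j$. By Definition \ref{chdef} specialized to $r_{\calL}=2$, the set $Ch_G(uv)$ consists of all $p(c_a,c_b)\in S_2(C_V,\calP)$ such that $\{a,b\}=\{u,v\}$ and $G\models R_p(a,b)$. The only two enumerations of $\{u,v\}$ are $(u,v)$ and $(v,u)$, so
$$
Ch_G(uv)=\{p(c_u,c_v)\in S_2(C_V,\calP): G\models R_p(u,v)\}\cup\{p(c_v,c_u)\in S_2(C_V,\calP): G\models R_p(v,u)\}.
$$

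Next I would use the explicit description $S_2(\calP)=\{p_j(x,y):j\in[r]\}$ from earlier in the section, together with the fact that each $p_j$ is symmetric in its variables. Indeed, inspecting the definition of $q_i(x,y)$ one sees that $q_i(x,y)=q_i(y,x)$ as sets of formulas, hence $p_i(x,y)=p_i(y,x)$ for every $i\in[r]$. In particular $p_j(c_v,c_u)=p_j(c_u,c_v)$, so every element appearing in the second clause of the display above can be rewritten as $p_j(c_u,c_v)$ for some $j\in[r]$. Combining both clauses then gives
$$
Ch_G(uv)=\{p_j(c_u,c_v): j\in[r],\ G\models R_{p_j}(u,v)\text{ or }G\models R_{p_j}(v,u)\},
$$
which is exactly the stated equality.

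I do not expect any real obstacle; the verification is essentially bookkeeping once one notices the symmetry $p_j(x,y)=p_j(y,x)$, which is special to the binary metric-space language. As an aside (not needed for the proof), part (2) of Definition \ref{templatedef} applied to the swap permutation of $[2]$ shows that under the template hypothesis one in fact has $G\models R_{p_j}(u,v)$ iff $G\models R_{p_j}(v,u)$, so the disjunction in the claim is redundant; but this stronger equivalence plays no role in establishing the set equality. The hypothesis $\chi\in Ch(G)$ in the statement is not used directly either, beyond implicitly guaranteeing that $Ch_G(uv)\neq\emptyset$, which is automatic from $G$ being a (complete) $\calL_{\calP}$-template.
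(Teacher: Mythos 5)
Your proof is correct and is exactly the argument the paper has in mind: the paper gives no proof of Observation \ref{ob7}, declaring it ``obvious from the definition of a choice function,'' and your writeup simply unfolds Definition \ref{chdef} over the two enumerations of $\{u,v\}$ and uses the symmetry $p_j(x,y)=p_j(y,x)$ of the metric-space types. Your side remarks (that the template hypothesis makes the disjunction redundant and that $\chi\in Ch(G)$ is not actually used) are also accurate.
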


 \begin{definition}
 Suppose $G$ is an $\calL_{\calP}$-structure with underlying set $V$.  The \emph{$2^{[r]}$-graph associated to $G$} is $\Psi(G):=(V, c)$, where for each $xy\in {V\choose 2}$, $c(xy)=\{i: G\models R_{p_i}(x,y)\vee R_{p_i}(y,x)\}$ (in other words, $i\in c(x,y)$ if and only if $p_i(c_x,c_y)\in Ch_G(xy)$).  
 \end{definition}
 
 Given a $2^{[r]}$-graph $(V,c)$, define $\Psi^{-1}(V,c)$ to be the $\calL_{\calP}$-structure $G$ with domain $V$ such that for each $(x,y)\in V^2$ and $i\in [r]$, $G\models R_{p_i}(x,y)$ if and only if $x\neq y$ and $i\in c(xy)$.  We leave the following observations to the reader.
 
 \begin{observation}\label{obms}
Suppose $(V,c)$ is a complete $2^{[r]}$-graph and $G$ is an $\calL_{\calP}$-template. Then 
\begin{enumerate}[(a)]
\item $\Psi^{-1}(V,c)$ is an $\calL_{\calP}$-template and $\Psi(\Psi^{-1}(V,c))=(V,c)$.  
\item $\Psi(G)$ is a complete $2^{[r]}$-graph and $\Psi^{-1}(\Psi(G))=G$.
\end{enumerate}
\end{observation}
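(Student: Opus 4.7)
The plan is to verify both statements by direct unpacking of the definitions; the core observation is that every type in $S_2(\calP) = \{p_i(x,y) : i \in [r]\}$ is symmetric in its arguments (since a metric space satisfies $d(x,y) = d(y,x)$), which makes the two template axioms from Definition \ref{templatedef} translate cleanly into the set-valued edge labels of a $2^{[r]}$-graph.

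For part (a), given a complete $2^{[r]}$-graph $(V,c)$, set $G := \Psi^{-1}(V,c)$. Completeness of $G$ as an $\calL_{\calP}$-structure is immediate: for each $xy \in {V \choose 2}$, completeness of $(V,c)$ yields some $i \in c(xy)$, whence the defining rule of $\Psi^{-1}$ gives $G \models R_{p_i}(x,y)$. Condition (1) of Definition \ref{templatedef} is built into the definition of $\Psi^{-1}$, which only places relations on pairs of distinct elements, so $G \models \neg R_{p_i}(a,a)$ for every $a$ and $i$. For condition (2), the only nontrivial instance to check is $p = p' = p_i$ with $\mu$ the transposition in $Perm(2)$ (since $p_i(x,y) = p_i(y,x)$ forces $p = p'$ whenever $p(\xbar) = p'(\mu(\xbar))$); the required equivalence $G \models R_{p_i}(x,y) \Leftrightarrow G \models R_{p_i}(y,x)$ then holds because the definition of $\Psi^{-1}$ depends only on the unordered pair $\{x,y\}$ and the set $c(xy)$. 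The round-trip identity $\Psi(\Psi^{-1}(V,c)) = (V,c)$ is then a one-line unwinding: the label on $xy$ in $\Psi(G)$ is $\{i : G \models R_{p_i}(x,y) \vee R_{p_i}(y,x)\}$, which by construction of $G$ equals $c(xy)$.

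For part (b), let $G$ be an $\calL_{\calP}$-template and write $(V,c) := \Psi(G)$. Completeness of $(V,c)$ follows from completeness of $G$: for any $xy \in {V \choose 2}$ there exist an enumeration $\abar$ of $\{x,y\}$ and some $R_{p_i} \in \calL_{\calP}$ with $G \models R_{p_i}(\abar)$, which places $i$ in $c(xy)$. To prove $\Psi^{-1}(\Psi(G)) = G$, let $H := \Psi^{-1}(\Psi(G))$ and compare $G$ and $H$ on each tuple. On diagonal tuples $(a,a)$ both structures satisfy $\neg R_{p_i}(a,a)$: for $G$ this is template condition (1), and for $H$ this is the distinctness requirement in the definition of $\Psi^{-1}$. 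For distinct $x,y$, the decisive step is template condition (2) applied with $p = p' = p_i$ and $\mu$ the transposition, which yields $G \models R_{p_i}(x,y) \Leftrightarrow G \models R_{p_i}(y,x)$; combining this with the definitions of $\Psi$ and $\Psi^{-1}$ gives $H \models R_{p_i}(x,y) \Leftrightarrow i \in c(xy) \Leftrightarrow G \models R_{p_i}(x,y)$, as required.

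I do not expect any genuine obstacle here: both parts are bookkeeping once one uses symmetry of the $p_i$ together with the template axioms. The hypothesis that $G$ is a template (rather than an arbitrary $\calL_{\calP}$-structure) is used precisely in the step $\Psi^{-1}(\Psi(G)) = G$, where it rules out the asymmetric pathologies exhibited in Example \ref{ex4}; without it, passage to the $2^{[r]}$-graph would lose the information distinguishing $R_{p_i}(x,y)$ from $R_{p_i}(y,x)$ and the inverse identity would fail.
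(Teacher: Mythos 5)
Your verification is correct: the paper states this observation without proof ("We leave the following observations to the reader"), and your direct unpacking of the definitions of $\Psi$, $\Psi^{-1}$, and Definition \ref{templatedef} is exactly the intended routine check. In particular, you correctly isolate the one substantive point — symmetry $p_i(x,y)=p_i(y,x)$ together with template condition (2) is what makes $R_{p_i}(x,y)$ and $R_{p_i}(y,x)$ agree, so that passing to unordered edge labels and back loses no information, which is precisely why the template hypothesis is needed for $\Psi^{-1}(\Psi(G))=G$.
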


Suppose $G\in M_r(n)$ is the $[r]$-graph $(V,d)$ considered as an $\calL$-structure.  We will often abuse notation by conflating $G$ and $(V,d)$.  For instance if $R$ is a $2^{[r]}$-graph, we will write $G\subseteqq_{[r]}R$ to mean $(V,d)\subseteqq_{[r]}R$.  Similarly if $R$ is an $\calL_{\calP}$-template, we will write $(V,d)\unlhd_pR$ to mean $G\unlhd_pR$.
 
\begin{proposition}\label{translation}
Suppose $G$ is an $\calL_{\calP}$-template with domain $V$ and $\Psi(G)=(V,c)$.  Then $G'\unlhd_pG$ if and only if $G'\subseteqq_{[r]}\Psi(G)$.
\end{proposition}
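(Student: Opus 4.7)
The plan is to prove both directions by unpacking the definitions of $\unlhd_p$ and $\subseteqq_{[r]}$, with the key bridge being the identification of $Ch_G(xy)$ with the color set at $xy$ in $\Psi(G)$. Write $(V,c)=\Psi(G)$. First I would record the following point, which I expect to carry most of the content of the proof: combining Observation \ref{ob7} with the definition of $\Psi(G)$, we have
\[
Ch_G(xy)=\{p_i(c_x,c_y):i\in c(xy)\}\qquad \text{for every } xy\in \binom{V}{2}.
\]
This is because $p_i(c_x,c_y)\in Ch_G(xy)$ iff $G\models R_{p_i}(x,y)\vee R_{p_i}(y,x)$ iff $i\in c(xy)$, where the first equivalence uses that $G$ is an $\calL_{\calP}$-template (so $p_i(x,y)=p_i(y,x)$ is honored) and the second is the definition of $\Psi(G)$.

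For the forward direction, assume $G'\unlhd_\chi G$ for some $\chi\in Ch(G)$. By definition $dom(G')=V$ and $\chi(xy)=Diag^{G'}(xy)$ for every $xy\in \binom{V}{2}$. Since $\chi(xy)\in Ch_G(xy)\subseteq S_2(C_V,\calP)$, it must be one of the types $p_i(c_x,c_y)$. Therefore the $\calL$-structure $G'$ is (naturally identified with) an $[r]$-graph $(V,d)$, where $d(xy)$ is the unique $i\in[r]$ with $\chi(xy)=p_i(c_x,c_y)$. The displayed formula above then forces $d(xy)\in c(xy)$, which by definition means $G'\subseteqq_{[r]}\Psi(G)$.

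For the reverse direction, assume $G'=(V,d)\subseteqq_{[r]}\Psi(G)$, so $d(xy)\in c(xy)$ for every $xy$. Define
\[
\chi:\binom{V}{2}\to S_2(C_V,\calP),\qquad \chi(xy)=p_{d(xy)}(c_x,c_y).
\]
By the displayed formula, $\chi(xy)\in Ch_G(xy)$, so $\chi\in Ch(G)$. Viewing $G'$ as an $\calL$-structure in the natural way gives $Diag^{G'}(xy)=p_{d(xy)}(c_x,c_y)=\chi(xy)$, witnessing $G'\unlhd_\chi G$ and hence $G'\unlhd_p G$.

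I do not expect any real obstacle. The only step that deserves any care is the preliminary identification of $Ch_G(xy)$, since it is here that the $\calL_{\calP}$-template hypothesis enters (to justify that $Ch_G(xy)$ genuinely records pair-colors, not order-dependent pieces of data). Once that is in place, the proof is purely a matter of matching $\chi$-values with $d$-values in both directions.
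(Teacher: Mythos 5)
Your proposal is correct and follows essentially the same route as the paper's proof: both directions hinge on Observation \ref{ob7} together with the definition of $\Psi(G)$, reading off $d(xy)$ from $\chi(xy)$ in one direction and defining $\chi(xy)=p_{d(xy)}(c_x,c_y)$ in the other. The only cosmetic difference is that you isolate the identity $Ch_G(xy)=\{p_i(c_x,c_y):i\in c(xy)\}$ as a preliminary step, which the paper carries out inline.
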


\begin{proof}
Suppose first $G'\unlhd_pG$.  Let $\chi\in Ch(G)$ such that $G'\unlhd_{\chi}G$.  Let $(V,d)$ be the $[r]$-graph such that for each $uv\in {V\choose 2}$, $d(uv)=i$  is the unique element of $[r]$ such that $\chi(uv)= p_i(c_u,c_v)$.  Observe that $G'=(V,d)$, considered as an $\calL$-structure. We show $(V,d)\subseteqq_{[r]}\Psi(G)$.  Fix $uv\in {V\choose 2}$.  We want to show $d(uv)\in c(uv)$.  Since $\chi\in Ch(G)$, by definition of $(V,d)$, and by Observation \ref{ob7}, if $i=d(uv)$, then 
$$
p_i(c_u,c_v)=\chi(uv)\in Ch_G(uv)=\{p_j(c_u,c_v): G\models R_{p_j}(u,v)\vee R_{p_j}(v,u)\}.
$$
This implies $d(uv)=i\in \{j: G\models R_{p_j}(u,v)\vee R_{p_j}(v,u)\}=c(uv)$, where the last equality is by definition of $\Psi(G)=(V,c)$. Thus $d(uv)\in c(uv)$ for all $uv\in {V\choose 2}$, so $(V,d)\subseteqq_{[r]}(V,c)$, as desired.

Suppose on the other hand that $G'=(V,d)\subseteqq_{[r]}\Psi(G)$.  We want to show that considered as an $\calL$-structure, $G'\unlhd_pG$.  Define a function $\chi:{V\choose 2}\rightarrow S_2(C_V,\calP)$ as follows.  For each $uv\in {V\choose 2}$, if $d(uv)=i$, set $\chi(uv)=p_i(c_u,c_v)$.  Since $(V,d)\subseteqq_{[r]}(V,c)$ and by definition of $\Psi(G)=(V,c)$, we have 
$$
d(uv)\in c(u,v)=\{j: G\models R_{p_j}(x,y)\vee R_{p_j}(y,x)\}.
$$
Thus $\chi(uv)=p_i(c_u,c_v)\in \{p_j(c_u,c_v): G\models R_{p_j}(u,v)\vee R_{p_j}(v,u)\}=Ch_G(uv)$, where the last equality is by Observation \ref{ob7}. This verifies that $\chi\in Ch(G)$.  By definition of $\chi$, for each $uv\in {V\choose 2}$, $\chi(uv)=Diag^{G'}(uv)$, so $G'\unlhd_{\chi}G$.  
\end{proof}

Given $n$, let $\tilde{M}_r(n)$ be the set of complete metric $2^{[r]}$-graphs on $[n]$.  

\begin{corollary}\label{cor1}
If $([n],c)\in \tilde{M}_r(n)$, then $\Psi^{-1}(V,c)\in \calR([n],\calP)$.  If $G\in \calR([n],\calP)$, then $\Psi(G)\in \tilde{M}_r(n)$.
\end{corollary}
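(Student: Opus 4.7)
The plan is to combine Observation \ref{obms} with Proposition \ref{translation} to translate the ``complete metric $2^{[r]}$-graph'' condition and the ``$\calP$-random $\calL_{\calP}$-template'' condition into one another, so that the corollary falls out by direct comparison.

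For the first implication, I would set $G = \Psi^{-1}([n], c)$. By Observation \ref{obms}(a), $G$ is an $\calL_{\calP}$-template with $\Psi(G) = ([n], c)$, and completeness of $([n], c)$ as a $2^{[r]}$-graph (i.e.\ $c(uv) \neq \emptyset$ for every $uv$) forces $Ch_G(uv) \neq \emptyset$ for every $uv$, so $G$ is complete. To check $\calP$-randomness, fix $\chi \in Ch(G)$. Each $\chi(uv) \in Ch_G(uv)$ is of the form $p_i(c_u, c_v)$ for a unique $i \in [r]$ (using $p_i(x,y) = p_i(y,x)$), so $\chi$ defines an $[r]$-graph $N = ([n], d)$ by $d(uv) = i$ when $\chi(uv) = p_i(c_u, c_v)$. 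By Observation \ref{ob7} and the definition of $\Psi$, we have $i \in c(uv)$, hence $N \subseteqq_{[r]} \Psi(G)$; because $\Psi(G)$ is a metric $2^{[r]}$-graph, every full sub-$[r]$-graph is a metric $[r]$-graph, so $N \in M_r(n) = \calP_n$. Finally, $Diag^N(uv) = p_{d(uv)}(c_u, c_v) = \chi(uv)$ for every edge, which is exactly $N \unlhd_\chi G$.

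For the second implication, let $G \in \calR([n], \calP)$. Then $G$ is an $\calL_{\calP}$-template, so Observation \ref{obms}(b) gives that $\Psi(G)$ is a complete $2^{[r]}$-graph on $[n]$. Suppose for contradiction that $\Psi(G) = ([n], c)$ fails to be metric, so that some triple $\{x,y,z\}$ admits a violating triple $(i,j,k) \in c(xy) \times c(yz) \times c(xz)$. I would then build a choice function $\chi \in Ch(G)$ by setting $\chi(xy) = p_i(c_x, c_y)$, $\chi(yz) = p_j(c_y, c_z)$, $\chi(xz) = p_k(c_x, c_z)$ (all of which lie in the respective $Ch_G$ sets because $i \in c(xy)$, etc.) and making an arbitrary choice from the nonempty $Ch_G(uv)$ for every other edge. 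Since $G$ is $\calP$-random, there is $N \in \calP$ with $N \unlhd_\chi G$, and then $N$ assigns distances $d(xy) = i$, $d(yz) = j$, $d(xz) = k$, contradicting $N \in \calM_r$.

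The only potentially fiddly point is keeping the bookkeeping straight between the symmetric types $p_i$, tuples of constants, and the edge labels of $[r]$- and $2^{[r]}$-graphs; but Observations \ref{obms} and \ref{ob7} together with Proposition \ref{translation} handle this, so in both directions the argument really amounts to the observation that a choice function for $G$ is the same data as a full sub-$[r]$-graph of $\Psi(G)$, and the $\calP$-random condition then says exactly that every such full sub-$[r]$-graph is a metric $[r]$-graph, which is exactly the metric condition on $\Psi(G)$.
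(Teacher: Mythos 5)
Your argument is correct and follows essentially the same route as the paper: both directions rest on Observation \ref{obms} together with the correspondence between choice functions/full subpatterns of $G$ and full sub-$[r]$-graphs of $\Psi(G)$, which is exactly Proposition \ref{translation}. The only cosmetic differences are that you partially re-derive that correspondence (explicitly building the $[r]$-graph $N$ from a choice function, which also makes explicit that every $\chi$ does choose a full subpattern) and that you argue the second direction by contradiction via a violating triple, whereas the paper applies Proposition \ref{translation} directly to an arbitrary full sub-$[r]$-graph.
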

\begin{proof}
Suppose $([n],c)\in \tilde{M}_r(n)$.  By Observation \ref{obms}(a), $\Psi^{-1}([n],c)$ is an $\calL_{\calP}$-template with domain $[n]$.  Since $([n],c)$ is a complete metric $2^{[r]}$-graph, all full sub-$[r]$-graphs of $([n],c)$ are metric spaces.  By Proposition \ref{translation}, this implies all full subpatterns of $\Psi^{-1}([n],c)$ are metric spaces, which implies $\Psi^{-1}([n],c)$ is $\calP$-random by definition.  Suppose now $G\in \calR([n],\calP)$.  Let $\Psi(G)=([n],c)$.  By Observation \ref{obms}(b), $([n],c)$ is a complete $2^{[r]}$-graph.  To show $([n],c)$ is a metric $2^{[r]}$-graph, let $([n],d)\subseteqq_{[r]}([n],c)$.  By Proposition \ref{translation}, $([n],d)\unlhd_pG$, so since $G$ is $\calP$-random, $([n],d)$ is a metric space.  Thus all full sub-$[r]$-graphs of $([n],c)$ are metric $[r]$-graphs.  This implies $([n],c)$ is a metric $2^{[r]}$-graph.  Thus we have shown $([n],c)\in \tildeM_r(n)$.
\end{proof}

We now recall a definition from \cite{MT}.  If $G=(V,c)$ is a $2^{[r]}$-graph, then $W(R)=\prod_{xy\in {V\choose 2}}|c(xy)|$.

\begin{proposition}\label{0}
Suppose $G$ is a finite $\calL_{\calP}$-template. Then $sub(G)=W(\Psi(G))$.
\end{proposition}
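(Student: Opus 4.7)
The plan is to identify $sub(G)$ with the product $\prod_{xy \in {V \choose 2}} |Ch_G(xy)|$ of the local choice counts, then identify each local factor $|Ch_G(xy)|$ with the label set size $|c(xy)|$ of the associated $2^{[r]}$-graph $\Psi(G) = (V,c)$. Since the two products are termwise equal, they agree.

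First I would note that the language $\calL = \calL_r = \{R_1,\ldots,R_r\}$ consists only of binary relations, so in particular $\calL$ contains no relations of arity strictly less than $r_{\calL} = 2$. Therefore Remark \ref{specialob0} applies directly to the finite complete $\calL_{\calP}$-template $G$ with domain $V$, yielding the identity
\[
sub(G) = \prod_{A \in {V \choose 2}} |Ch_G(A)|.
\]
(Equivalently, one could observe that $\calE = \emptyset$ in this setting and invoke Observation \ref{ob0}.)

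Next I would translate each factor $|Ch_G(xy)|$ into the corresponding factor of $W(\Psi(G))$. Applying Observation \ref{ob7} to the $\calL_{\calP}$-template $G$ gives
\[
Ch_G(xy) = \{p_j(c_x,c_y) : G \models R_{p_j}(x,y) \vee R_{p_j}(y,x)\}
\]
for each $xy \in {V \choose 2}$. By the definition of $\Psi(G) = (V,c)$, the set on the right is in bijection with $c(xy) = \{j \in [r] : G \models R_{p_j}(x,y) \vee R_{p_j}(y,x)\}$ via $p_j(c_x,c_y) \mapsto j$. Hence $|Ch_G(xy)| = |c(xy)|$.

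Putting these two steps together, $sub(G) = \prod_{xy \in {V \choose 2}} |Ch_G(xy)| = \prod_{xy \in {V \choose 2}} |c(xy)| = W(\Psi(G))$, as desired. There is no serious obstacle here; the main thing to verify carefully is that the hypothesis of Remark \ref{specialob0} is satisfied (which it is, since $r = r_{\calL} = 2$ and all symbols of $\calL$ are binary), so the equality in Observation \ref{ob0} holds without an inequality.
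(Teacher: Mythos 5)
Your proof is correct, but it takes a different route from the one in the paper. The paper proves Proposition \ref{0} as a corollary of Proposition \ref{translation}: the full subpatterns of $G$ are exactly the full sub-$[r]$-graphs of $\Psi(G)=(V,c)$, and the number of such sub-$[r]$-graphs is visibly $\prod_{uv\in{V\choose 2}}|c(uv)|=W(\Psi(G))$. You instead bypass Proposition \ref{translation} and use the general counting machinery of Section \ref{tildeLstructures}: since $\calL_r$ has only binary relations and $r_{\calL_r}=2$, Remark \ref{specialob0} gives $sub(G)=\prod_{A\in{V\choose 2}}|Ch_G(A)|$, and then Observation \ref{ob7} together with the definition of $\Psi$ identifies each factor $|Ch_G(xy)|$ with $|c(xy)|$ (indeed the definition of $\Psi$ already records that $i\in c(xy)$ iff $p_i(c_x,c_y)\in Ch_G(xy)$, so the termwise bijection is immediate). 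Both arguments are short and both hypotheses you invoke (completeness of the template for Remark \ref{specialob0}, absence of relations of arity $<2$) are verified correctly. What the paper's route buys is economy within Section \ref{ms}: Proposition \ref{translation} is needed anyway for Corollary \ref{cor1} and Proposition \ref{1}, so deriving the count from it costs nothing extra and makes the subpattern/sub-$[r]$-graph dictionary explicit. What your route buys is independence from that translation lemma and a cleaner illustration that the metric-space case is just a specialization of the general error-free product formula; it also generalizes verbatim to any binary language, not just this one.
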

\begin{proof}
Let $\Psi(G)=(V,c)$.  By Proposition \ref{translation}, the full subpatterns of $G$ are exactly the full sub-$[r]$-graphs of $(V,c)$, considered as $\calL$-structures.  Clearly the number of full sub-$[r]$-graphs of $(V,c)$ is $\prod_{uv\in {V\choose 2}}|c(uv)|$.  This shows $sub(G)=\prod_{uv\in {V\choose 2}}|c(uv)|=W(\Psi(G))$. 
\end{proof}

If $G\in \tildeM_r(n)$, say that $G$ is \emph{product-extremal} if 
$$
W(G)=\max\{W(G'): G'\in \tildeM_r(n)\}.
$$

\begin{proposition}\label{1}
Suppose $G$ is an $\calL_{\calP}$-template with domain $[n]$.  Then $G\in \calR_{ex}([n],\calP)$ if and only if $\Psi(G)$ is a product-extremal element of $\tildeM_r(n)$.
\end{proposition}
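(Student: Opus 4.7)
The plan is that the proposition is an immediate formal consequence of the translation results that have already been set up, namely Corollary \ref{cor1}, Observation \ref{obms}, and Proposition \ref{0}. The content of these three results, packaged together, is that $\Psi$ restricts to a bijection between $\calR([n],\calP)$ and $\tildeM_r(n)$ which intertwines the invariants $sub(\cdot)$ and $W(\cdot)$. Once this is stated cleanly, the extremality equivalence is automatic.

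First I would verify that $\Psi : \calR([n],\calP) \to \tildeM_r(n)$ is a bijection. Corollary \ref{cor1} gives that $G \in \calR([n],\calP)$ implies $\Psi(G) \in \tildeM_r(n)$ and that $(V,c) \in \tildeM_r(n)$ implies $\Psi^{-1}(V,c) \in \calR([n],\calP)$, and Observation \ref{obms} gives $\Psi \circ \Psi^{-1} = \mathrm{id}$ and $\Psi^{-1}\circ \Psi = \mathrm{id}$ on these domains. Next, Proposition \ref{0} gives $sub(G) = W(\Psi(G))$ for every finite $\calL_{\calP}$-template $G$, in particular for every $G \in \calR([n],\calP)$. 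Combining these,
\[
\ex(n,\calP) \;=\; \max_{G \in \calR([n],\calP)} sub(G) \;=\; \max_{G\in \calR([n],\calP)} W(\Psi(G)) \;=\; \max_{H \in \tildeM_r(n)} W(H),
\]
and the same $G$ attains the leftmost maximum iff $\Psi(G)$ attains the rightmost one.

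For the formal two-way implication: if $G \in \calR_{ex}([n],\calP)$, then $G \in \calR([n],\calP)$, so $\Psi(G) \in \tildeM_r(n)$; for any $H \in \tildeM_r(n)$, setting $G_H := \Psi^{-1}(H) \in \calR([n],\calP)$ we get $W(H) = W(\Psi(G_H)) = sub(G_H) \leq sub(G) = W(\Psi(G))$, so $\Psi(G)$ is product-extremal. Conversely, if $\Psi(G)$ is product-extremal, then $\Psi(G) \in \tildeM_r(n)$, so $G = \Psi^{-1}(\Psi(G)) \in \calR([n],\calP)$ by Corollary \ref{cor1} and Observation \ref{obms}(b); for any $G' \in \calR([n],\calP)$, $sub(G') = W(\Psi(G')) \leq W(\Psi(G)) = sub(G)$, so $sub(G) = \ex(n,\calP)$ and $G \in \calR_{ex}([n],\calP)$.

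There is essentially no obstacle here: all of the genuine content, in particular the translation between $\calL_{\calP}$-templates and $2^{[r]}$-graphs and the identification of $\calH$-random templates with complete metric $2^{[r]}$-graphs, has already been carried out in Proposition \ref{translation}, Corollary \ref{cor1}, and Proposition \ref{0}. The only point requiring even a sentence of care is checking that under the backward implication $\Psi(G) \in \tildeM_r(n)$ automatically lifts to $G \in \calR([n],\calP)$ before one can conclude extremality, which is exactly what Observation \ref{obms}(b) provides.
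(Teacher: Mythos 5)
Your proof is correct and follows essentially the same route as the paper: both directions rest on exactly the same ingredients (Corollary \ref{cor1}, Observation \ref{obms}(a)-(b), and Proposition \ref{0}), with your direct comparison of maxima being just a rephrasing of the paper's argument by contradiction. No gaps.
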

\begin{proof}
Suppose first $G\in \calR_{ex}([n],\calP)$.  By Corollary \ref{cor1}, and definition of $\tilde{M}_r(n)$, $\Psi(G)\in \tildeM_r(n)$.  Suppose towards a contradiction that $\Psi(G)$ is not product-extremal.  Then there is $H\in \tildeM_r(n)$ such that $W(H)>W(\Psi(G))$. Corollary \ref{cor1} implies $\Psi^{-1}(H)\in \calR([n],\calP)$ and Proposition \ref{0} and Observation \ref{obms}(a) implies $sub(\Psi^{-1}(H))=W(\Psi(\Psi^{-1}(H)))=W(H)>W(\Psi(G))=sub(G)$, contradicting that $G\in \calR_{ex}([n],\calP)$.  Conversely, suppose $\Psi(G)$ is a product-extremal element of $\tildeM_r(n)$.  By Corollary \ref{cor1} and Observation \ref{obms}(b), $\Psi^{-1}(\Psi(G))=G\in \calR([n],\calP)$.  Suppose towards a contradiction $G\notin \calR_{ex}([n],\calP)$.  Then there is $G'\in \calR([n],\calP)$ such that $sub(G')>sub(G)$.  Corollary \ref{cor1} implies $\Psi(G')\in \tilde{M}_r(n)$ and Proposition \ref{0} implies $W(\Psi(G'))=sub(G')>sub(G)=W(\Psi(G))$, contradicting that $\Psi(G)$ is product-extremal.
\end{proof}

\subsection{Characterizing extremal structures and computing $\pi(\calP)$.}

In this subsection, we characterize product-extremal elements of $\tilde{M}_r(n)$.  These results, Lemmas \ref{extlemeven} and \ref{extlemodd}, are new results.  We will then use the correspondence between product-extremal elements of $\tilde{M}_r(n)$ and elements of $\calR_{ex}([n],\calP)$ from the preceding section to compute $\pi(\calP)$.  We begin by defining a special subfamily of $\tilde{M}_r(n)$ corresponding to the special subfamily $C_r(n)$ of $M_r(n)$.

\begin{definition}\label{tildeCrdef} Let $\tilde{C}_r(n)$ be the the set of complete $2^{[r]}$-graphs $([n],c)$ such that
\begin{enumerate}[(i)]
\item if $r$ is even, then for all $xy\in {[n]\choose 2}$, $c(xy)=[\frac{r}{2}, r]$.
\item if $r$ is odd, then there is a partition $V_1\cup \ldots\cup  V_t$ of $[n]$ such that for all $xy\in {[n]\choose 2}$, 
\[
c(xy) = \begin{cases}
L_r & \text{if } xy\in {V_i\choose 2}\textnormal{ for some }i\in [t] \\
U_r & \text{if } xy\in E(V_i,V_j)\textnormal{ for some }i\neq j\in [t].
\end{cases}
\] 
\end{enumerate}
\end{definition}
Note that for all $n$, $\tilde{C}_r(n)\subseteq \tilde{M}_r(n)$ and when $r$ is even, $\tilde{C}_r(n)$ consists of a single $2^{[r]}$-graph.  We will need a few results about multigraphs.  A \emph{multigraph} is a pair $(V,w)$ where $V$ is a set of vertices and $w:{V\choose 2}\rightarrow \mathbb{N}$ is a function.  Given integers $s\geq 2$ and $q\geq 0$, an \emph{$(s,q)$-graph} is a multigraph $(V,w)$ such that for all $X\in {V\choose s}$, $\sum_{xy\in {X\choose 2}}w(xy)\leq q$. Given a multigraph $G=(V,w)$, set $P(G)=\prod_{xy\in {V\choose 2}}w(xy)$.  

\begin{definition}
Suppose $a\geq 1$ is an integer.  
\begin{enumerate}[(a)]
\item $U_{1,a}(n)$ is the set of multigraphs $([n],w)$ such that $w(xy)= a$ for all $xy\in {[n]\choose 2}$.
\item $U_{2,a}(n)$ is the set of multigraphs $([n],w)$ such that there is a set  $\{e_1,\ldots, e_{\lfloor n/2 \rfloor}\}$ of pairwise disjoint elements in ${[n]\choose 2}$ such that $w(e_i)=a+1$ for each $e_i$ and for all $xy\in {[n]\choose 2}\setminus \{e_1,\ldots, e_{\lfloor n/2 \rfloor}\}$, $w(xy)=a$.  
\end{enumerate}
\end{definition}

Observe that if $G\in U_{1,a}(n)$, then $P(G)=a^{n\choose 2}$ and if $G\in U_{2,a}(n)$, then $P(G)=a^{n\choose 2}(\frac{a+1}{a})^{\lfloor n/2\rfloor }$.

\begin{theorem}[{\bf Mubayi-Terry, Theorem 5.5.6 of \cite{mythesis}}]\label{mgthm}
Suppose $a\geq 1$, $n\geq 3$ and $G=([n],w)$ is a multigraph.
\begin{enumerate}[(a)]
\item If $G$ is a $(3,3a)$-graph, then $P(G)\leq a^{n\choose 2}$ with equality holding if and only if $G\in U_{1,a}(n)$.  
\item If $G$ is a $(3,3a+1)$-graph, then $P(G)\leq a^{n\choose 2}(\frac{a+1}{a})^{\lfloor \frac{n}{2}\rfloor}$ with equality holding if and only if $G\in U_{2,a}(n)$.
\end{enumerate}
\end{theorem}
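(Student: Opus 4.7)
For part (a), the plan is to apply AM-GM to each triangle. Given a triangle $T = \{x,y,z\}$ in a $(3,3a)$-graph, the constraint $w(xy) + w(yz) + w(xz) \leq 3a$ combined with AM-GM yields $w(xy)w(yz)w(xz) \leq \bigl(\frac{w(xy)+w(yz)+w(xz)}{3}\bigr)^3 \leq a^3$, with equality iff all three weights equal $a$. Multiplying this triangle-level inequality over all $\binom{n}{3}$ triangles and using that each edge lies in exactly $n-2$ triangles yields $P(G)^{n-2} = \prod_T \prod_{e \in T} w(e) \leq a^{3\binom{n}{3}}$, which rearranges to $P(G) \leq a^{\binom{n}{2}}$. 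Equality forces every triangle to have uniform weight $a$; since $n \geq 3$ ensures every edge lies in some triangle, this forces $w \equiv a$ and hence $G \in U_{1,a}(n)$.

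For part (b), the analogous triangle-level bound, proved by integer optimization over $w_1+w_2+w_3 \leq 3a+1$ with $w_i \in \mathbb{N}$, is $w_1 w_2 w_3 \leq a^2(a+1)$, with equality (up to permutation) exactly at $(a,a,a+1)$. Multiplying this naively over all triangles gives only $P(G) \leq (a^2(a+1))^{\binom{n}{3}/(n-2)} = (a^2(a+1))^{\binom{n}{2}/3}$, which is strictly weaker than the target $a^{\binom{n}{2}}\bigl(\frac{a+1}{a}\bigr)^{\lfloor n/2\rfloor}$: the exponent of $\log(a+1)$ is inflated from order $n$ to order $n^2$. To obtain the sharp bound, one must exploit that only a carefully structured, small collection of triangles (those built from a matching of heavy edges) can simultaneously attain the triangle-level extremum.

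The structural claim driving (b) is that any extremal $G$ has all weights in $\{a, a+1\}$, with $M := \{e : w(e) = a+1\}$ forming a matching of size $\lfloor n/2 \rfloor$. The plan is to reduce an arbitrary $(3,3a+1)$-graph to this canonical form via a sequence of local modifications that each strictly increase $P(G)$. First, one rules out edges of weight $\geq a+2$: such an edge $e$ forces every triangle through it to have a third edge of weight $\leq a-1$, and a direct comparison shows the contribution to $P(G)$ is strictly smaller than in the matching configuration. Second, assuming all weights lie in $\{0,1,\ldots,a+1\}$, if $M$ fails to be a matching --- say $e_1 = uv$ and $e_2 = uw$ are both in $M$ --- then $w(vw) \leq a-1$, and on the triangle $\{u,v,w\}$ one has $(a+1)^2(a-1) < a^2(a+1)$, motivating replacing $(w(e_2), w(vw))$ by $(a,a)$, which strictly increases the product of that triangle. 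The main obstacle is that raising $w(vw)$ may create new violations in other triangles $\{v,w,z\}$ through $vw$ (for instance if $w(vz) = w(wz) = a+1$ saturates the constraint); resolving this requires either choosing the order of modifications carefully, or pairing each increase with a simultaneous compensating reduction on a neighboring heavy edge. Once $G$ is reduced to a matching configuration of size $k$ with all remaining weights equal to $a$, a direct product computation gives $P(G) = a^{\binom{n}{2}-k}(a+1)^k$, which is maximized at $k = \lfloor n/2\rfloor$, yielding the claimed bound and characterizing the equality case as $U_{2,a}(n)$.
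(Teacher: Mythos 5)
A preliminary remark: the paper does not prove Theorem \ref{mgthm} at all --- it is quoted from \cite{mythesis} (Mubayi--Terry, Theorem 5.5.6) and used as a black box in Section \ref{ms} to characterize the product-extremal elements of $\tilde{M}_r(n)$, so there is no in-paper proof to compare yours against and I can only judge the proposal on its own terms. Your part (a) is correct and complete: the AM-GM bound $w(xy)w(yz)w(xz)\leq a^3$ on each triangle, the double count $\prod_T\prod_{e\in T}w(e)=P(G)^{n-2}$ using that each edge lies in exactly $n-2$ triangles together with $3\binom{n}{3}=(n-2)\binom{n}{2}$, and the equality analysis (equality forces each triangle's product to be exactly $a^3$, hence all three weights equal $a$; since $n\geq 3$ every edge lies in a triangle, so $w\equiv a$) all go through.

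Part (b), however, is a plan rather than a proof, and the gap you flag yourself is exactly where the content of the theorem lives. The local move you propose --- when two heavy edges $uv,uw$ of weight $a+1$ share a vertex, lower $w(uw)$ to $a$ and raise $w(vw)\leq a-1$ to $a$ --- does multiply $P(G)$ by at least $\frac{a^2}{(a+1)(a-1)}>1$, but the raised edge $vw$ may now violate the $(3,3a+1)$ constraint in other triangles $\{v,w,z\}$ (for instance when $w(vz)=w(wz)=a+1$), so the move is not always admissible and the standard "take a maximizer and show it has the canonical structure" argument does not close. Your two suggested repairs (choosing the order of modifications, or pairing each increase with a compensating decrease on a neighboring heavy edge) are left unspecified, and it is precisely this global bookkeeping --- controlling how the heavy edges interact across the whole graph, ruling out weights $\geq a+2$ by more than a single-triangle comparison against the matching configuration, and extracting the uniqueness of $U_{2,a}(n)$ in the equality case --- that a complete proof must supply. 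The final computation (a matching of $k$ heavy edges gives $P=a^{\binom{n}{2}}\bigl(\frac{a+1}{a}\bigr)^{k}$, maximized at $k=\lfloor n/2\rfloor$) is fine, but as written the reduction to that configuration, and hence both the stated bound and the characterization of equality in (b), is not established.
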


In the following lemma we use a result of Balogh and Wagner from \cite{BW}.  In particular, we use Lemma 3.2 of \cite{BW}, which is a corollary of combinatorial results in \cite{MT}.

\begin{lemma}\label{extlemeven}
Suppose $r\geq 4$ is even, $n\geq 3$, and $G\in \tilde{M}_r(n)$ is product-extremal.  Then $W(G)=m(r)^{n\choose 2}$ and $G$ is the unique $2^{[r]}$-graph in $\tildeC_r(n)$.
\end{lemma}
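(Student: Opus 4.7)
The plan is to convert the problem into a multigraph extremal problem by sending the complete $2^{[r]}$-graph $G=([n],c)$ to the multigraph $M(G)=([n],w)$ with $w(xy)=|c(xy)|$, so that $W(G)=\prod_{xy\in\binom{[n]}{2}}w(xy)=P(M(G))$. The key combinatorial input (essentially Lemma 3.2 of \cite{BW}, itself a consequence of the combinatorial lemmas from \cite{MT}) states that when $r$ is even, any three nonempty sets $S_1,S_2,S_3\subseteq[r]$ such that $S_1\times S_2\times S_3$ contains no violating triple satisfy $|S_1|+|S_2|+|S_3|\leq 3m(r)$, with equality forcing $S_1=S_2=S_3=[r/2,r]$.

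Given this, the main steps are straightforward. Since $G$ is a metric $2^{[r]}$-graph, for every triangle $\{x,y,z\}\in\binom{[n]}{3}$ the triple $(c(xy),c(xz),c(yz))$ contains no violating triple, so $w(xy)+w(xz)+w(yz)\leq 3m(r)$; thus $M(G)$ is a $(3,3m(r))$-graph. Applying Theorem \ref{mgthm}(a) with $a=m(r)$ gives $P(M(G))\leq m(r)^{\binom{n}{2}}$, with equality if and only if $M(G)\in U_{1,m(r)}(n)$, i.e.\ $|c(xy)|=m(r)$ for every $xy\in\binom{[n]}{2}$. On the other hand, the unique element $G_0\in\tilde{C}_r(n)$ (with $c(xy)=[r/2,r]$ on every pair) lies in $\tilde{M}_r(n)$ and achieves $W(G_0)=m(r)^{\binom{n}{2}}$. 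Product-extremality of $G$ therefore forces $W(G)=m(r)^{\binom{n}{2}}$, which in turn forces $|c(xy)|=m(r)$ on every edge.

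It remains to pin down the actual color sets. Because every triangle now achieves the sum $w(xy)+w(xz)+w(yz)=3m(r)$, the equality case of the combinatorial input cited above applies to each triangle, yielding $c(xy)=c(xz)=c(yz)=[r/2,r]$. Since $n\geq 3$, every pair lies in at least one triangle, so $c(xy)=[r/2,r]$ for all $xy$, i.e.\ $G=G_0$ is the unique element of $\tilde{C}_r(n)$. The only real obstacle is Step 1, the combinatorial bound on $|S_1|+|S_2|+|S_3|$ together with its equality case; once that is available, the rest is bookkeeping. Fortunately that is exactly what \cite{BW, MT} provide for even $r$, which is also why the corresponding odd statement (Lemma \ref{extlemodd}) will be a proper analogue rather than a direct copy.
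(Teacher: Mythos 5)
Your proof is correct and follows essentially the same route as the paper: pass to the multigraph $w(xy)=|c(xy)|$, use Lemma 3.2 of \cite{BW} to see it is a $(3,3m(r))$-graph, apply Theorem \ref{mgthm}(a), and use extremality (via the element of $\tildeC_r(n)$) to force $|c(xy)|=m(r)$ on every pair. The only divergence is the endgame: the paper invokes part (1) of Corollary 4.15 of \cite{MT} (the only metric $2^{[r]}$-graph with all color sets of size $m(r)$ is $G_0$), whereas you use a per-triangle equality case which you attribute to Lemma 3.2 of \cite{BW}; as that lemma is used here it only supplies the inequality, so the attribution is off, but the local statement itself is true --- it is the $n=3$ instance of Corollary 4.15(1) of \cite{MT}, and also follows from a two-line min/max argument ($M_t\leq m_s+m_u$ for each pair forces, at equality of $\sum|S_t|=3m(r)$, each $S_t$ to be the interval $[r/2,r]$). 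So this is a citation matter rather than a mathematical gap, and your "every pair lies in a triangle since $n\geq 3$" step correctly globalizes the local conclusion.
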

\begin{proof}
Suppose $G=(V,c)\in \tildeM_r(n)$ is product-extremal.  Let $G_0$ be the unique element in $\tilde{C}_r(n)$.  Then by definition, $W(G_0)=|[\frac{r}{2},r]|^{n\choose 2}=m(r)^{n\choose 2}$.  Since $G$ is product-extremal, this implies $W(G)\geq W(G_0)=m(r)^{n\choose 2}$.  Now let $H=([n],w)$ be the multigraph defined by $w(xy)=|c(xy)|$ for all $xy\in {[n]\choose 2}$.  Observe $P(H)=W(G)$.  We claim that $H$ is a $(3, 3m(r))$-graph.  Suppose towards a contradiction there were three is $\{x,y,z\}\in {[n]\choose 3}$ such that $w(xy)+w(yz)+w(xz)>3m(r)$.  Then $c(xy), c(yz), c(xz)$ are nonempty subsets of $[r]$ satisfying
$$
w(xy)+w(yz)+w(xz)=|c(xy)|+|c(yz)|+|c(xz)|>3m(r).
$$
By Lemma 3.2 of \cite{BW}, this implies there is $(i,j,k)\in c(xy)\times c(yz)\times c(xz)$ which is a violating triple, contradicting that $G\in \tildeM_r(n)$.  Thus $H$ is a $(3,3m(r))$-graph.  Theorem \ref{mgthm} implies that $W(G)=P(H)\leq m(r)^{n\choose r}$ and equality holds if and only if $w(xy)=m(r)$ for all $xy\in {[n]\choose 2}$.  Since we already showed $W(G)\geq m(r)^{n\choose 2}$, equality must hold.  Thus $P(H)=W(G)=m(r)^{n\choose 2}$ and $w(xy)=m(r)=|c(xy)|$ for all $xy\in {[n]\choose 2}$.  Part (1) of Corollary 4.15 in \cite{MT} implies that the only metric $2^{[r]}$-graph $([n],c')$ satisfying $|c'(x,y)|=m(r)$ for all $xy\in {[n]\choose 2}$ is $G_0$.  Thus $G=G_0$.
\end{proof}

If $r\geq 3$ is odd and $n\geq 3$, define $\tildeE_r(n)$ to be the set of $2^{[r]}$-graphs $([n],c)$ such that there is a set $\{e_1,\ldots, e_{\lfloor n/2\rfloor}\}$ of pairwise disjoint elements in ${[n]\choose 2}$ such that $c(e_i)=U_r\cup L_r$ and if $xy\in {[n]\choose 2}\setminus \{e_1,\ldots, e_{\lfloor n/2\rfloor}\}$ then $c(xy)=U_r$.  Observe that $\tildeE_r(n)\subseteq \tildeC_r(n)$ and 
\begin{align}\label{Efact}
\text{ for any $G\in \tilde{E}_r(n)$, }W(G)=m(r)^{n\choose r}\Big(\frac{m(r)+1}{m(r)}\Big)^{\lfloor n/2 \rfloor}.
\end{align}

\begin{lemma}\label{extlemodd}
Suppose $r\geq 3$ is odd, $n\geq 3$, and $G$ is a product-extremal element of $\tilde{M}_r(n)$.  Then $P(G)=m(r)^{n\choose r}(\frac{m(r)+1}{m(r)})^{\lfloor n/2 \rfloor}$ and $G\in \tilde{E}_r(n)$.
\end{lemma}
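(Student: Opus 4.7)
The statement of Lemma \ref{extlemodd} should read $W(G)$ in place of $P(G)$ (since $G$ is a $2^{[r]}$-graph), and my proposal treats it this way. The strategy closely parallels the proof of Lemma \ref{extlemeven}, with Theorem \ref{mgthm}(b) replacing Theorem \ref{mgthm}(a).

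First I would establish the lower bound. One checks directly from Definition \ref{Crdefeven}(2) that $\tildeE_r(n)\subseteq \tildeC_r(n)\subseteq \tildeM_r(n)$ (the restrictions coming from $U_r$ and $U_r\cup L_r$ never produce a violating triple, since all distances involved are at least $(r-1)/2$ and at most $r$, and a quick case analysis on whether $0$, $2$, or $3$ of the three edges in question are ``special'' confirms the triangle inequality cannot fail). Hence by (\ref{Efact}), $\tildeM_r(n)$ contains an element of weight $m(r)^{\binom{n}{2}}(\tfrac{m(r)+1}{m(r)})^{\lfloor n/2\rfloor}$, so by product-extremality, $W(G)\geq m(r)^{\binom{n}{2}}(\tfrac{m(r)+1}{m(r)})^{\lfloor n/2\rfloor}$.

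Next, define the multigraph $H=([n],w)$ by $w(xy)=|c(xy)|$, so that $P(H)=W(G)$. I claim $H$ is a $(3,3m(r)+1)$-graph. For the odd case, the analog of Lemma 3.2 of \cite{BW} (which is itself a consequence of the combinatorial results in \cite{MT}) states that for any three nonempty subsets $A,B,C\subseteq [r]$ such that no $(i,j,k)\in A\times B\times C$ is a violating triple, one has $|A|+|B|+|C|\leq 3m(r)+1$ when $r$ is odd. Applying this to $(c(xy),c(yz),c(xz))$ for each $\{x,y,z\}\in \binom{[n]}{3}$ gives $w(xy)+w(yz)+w(xz)\leq 3m(r)+1$, as required. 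Now Theorem \ref{mgthm}(b) with $a=m(r)$ yields $W(G)=P(H)\leq m(r)^{\binom{n}{2}}(\tfrac{m(r)+1}{m(r)})^{\lfloor n/2\rfloor}$, with equality if and only if $H\in U_{2,m(r)}(n)$. Combined with the lower bound, equality must hold, so there are pairwise disjoint $e_1,\ldots,e_{\lfloor n/2\rfloor}\in \binom{[n]}{2}$ with $|c(e_i)|=m(r)+1$ and $|c(xy)|=m(r)$ for all other pairs $xy$.

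Finally, I would invoke the corresponding characterization from \cite{MT} (the odd-$r$ analog of Corollary 4.15(1) used in Lemma \ref{extlemeven}) which states that the only $m(r)$-element subset of $[r]$ that can appear as $c(xy)$ in a metric $2^{[r]}$-graph (with all other pair-labels of size $\geq m(r)$) is $U_r$, and the only $(m(r)+1)$-element subset that can appear is $U_r\cup L_r$. Applying this edge by edge to $G$ forces $c(e_i)=U_r\cup L_r$ and $c(xy)=U_r$ for $xy\notin \{e_1,\ldots,e_{\lfloor n/2\rfloor}\}$, which is precisely the definition of $\tildeE_r(n)$. Hence $G\in \tildeE_r(n)$.

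The main obstacle is really just citing (or verifying) the two combinatorial ingredients borrowed from \cite{MT}: the tight triple-sum bound $|A|+|B|+|C|\leq 3m(r)+1$ for odd $r$, and the rigidity result forcing the extremal label sets to be exactly $U_r$ and $U_r\cup L_r$. Both are the odd analogs of the even-case facts already invoked in Lemma \ref{extlemeven}, so the structural argument is transparent once these are in hand; the remainder is just the application of Theorem \ref{mgthm}(b).
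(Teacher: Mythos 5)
Your proposal is correct and follows essentially the same route as the paper: the lower bound from $\tildeE_r(n)\subseteq\tildeM_r(n)$ together with (\ref{Efact}), the $(3,3m(r)+1)$-multigraph bound via Lemma 3.2 of \cite{BW}, Theorem \ref{mgthm}(b) with $a=m(r)$ forcing $H\in U_{2,m(r)}(n)$, and then the rigidity result from \cite{MT} (the paper cites it as part (2) of Corollary 4.15 there) to pin down $c(e_i)=U_r\cup L_r$ and $c(xy)=U_r$. Your remark that the statement should read $W(G)$ (and exponent $\binom{n}{2}$) is a correct reading of the paper's typographical slips and does not affect the argument.
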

\begin{proof}
Suppose $G=([n],c)\in \tildeM_r(n)$ is product-extremal.  Let $H$ be the multigraph $([n],w)$ where $w(xy)=|c(xy)|$ for all $xy\in {[n]\choose 2}$.  Observe $P(H)=W(G)$.  We claim $H$ is a $(3,3m(r)+1)$-graph.  Suppose by contradiction there is $\{x,y,z\} \in {[n]\choose 3}$ such that $w(xy)+w(yz)+w(xz)>3m(r)+1$.  Then $c(xy), c(yz), c(xz)$ are nonempty subsets of $[r]$ satisfying
$$
w(xy)+w(yz)+w(xz)=|c(xy)|+|c(yz)|+|c(xz)|>3m(r)+1.
$$
By Lemma 3.2 of \cite{BW}, this implies there is $(i,j,k)\in c(xy)\times c(yz)\times c(xz)$ which is a violating triple, contradicting that $G\in \tildeM_r(n)$. Thus $H$ is a $(3,3m(r)+1)$-graph, so by Theorem \ref{mgthm}, $W(G)=P(H)\leq m(r)^{n\choose r}(\frac{m(r)+1}{m(r)})^{\lfloor n/2 \rfloor}$ and equality holds if and only if $H\in U_{1,m(r)}(n)$.  Since $G$ is product-extremal, we must have $W(G)\geq W(G')$ for all $G'\in \tilde{E}_r(n)$.  Combining these facts with (\ref{Efact}), we must have $W(G)=P(H)=m(r)^{n\choose r}(\frac{m(r)+1}{m(r)})^{\lfloor n/2 \rfloor}$ and $H\in U_{1,m(r)}(n)$.  So there is a set  $\{e_1,\ldots, e_{\lfloor n/2 \rfloor}\}$ of pairwise disjoint elements in ${[n]\choose 2}$ such that $w(e_i)=|c(e_i)|=m(r)+1$ for each $e_i$ and for all $xy\in {[n]\choose 2}\setminus \{e_1,\ldots, e_{\lfloor n/2 \rfloor}\}$, $w(xy)=|c(xy)|=m(r)$.  These facts along with part (2) of Corollary 4.15 in \cite{MT}, imply that for each $e_i$, $c(e_i)=U_r\cup L_r$ and for all $xy\in {[n]\choose 2}\setminus \{e_1,\ldots, e_{\lfloor n/2 \rfloor}\}$, $c(xy)=U_r$.  In other words, $G$ is in $\tildeE_r(n)$.  
\end{proof}

\begin{corollary}\label{mspi}
Let $n\geq 2$.  If $r\geq 2$ is even, then $\ex(n,\calP)=m(r)^{n\choose 2}$.  If $r\geq 3$ is odd, then $\ex(n,\calP)=m(r)^{n\choose r}(\frac{m(r)+1}{m(r)})^{\lfloor n/2 \rfloor}$.  Consequently, $\pi(\calP)=m(r)$.
\end{corollary}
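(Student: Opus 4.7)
The proof plan is to assemble the correspondences established earlier in this section with the characterizations of product-extremal $2^{[r]}$-graphs from Lemmas \ref{extlemeven} and \ref{extlemodd}, then take the appropriate limit to extract $\pi(\calP)$. Since $\calP$ is a non-trivial hereditary $\calL$-property (containing metric spaces of every size), the set $\calR([n],\calP)$ is non-empty for every $n\geq 1$, so $\calR_{ex}([n],\calP)$ is non-empty as well.

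First I would fix $n\geq 3$ and pick any $G\in \calR_{ex}([n],\calP)$. By Proposition \ref{1}, the associated $2^{[r]}$-graph $\Psi(G)$ is a product-extremal element of $\tildeM_r(n)$. Proposition \ref{0} then gives $\ex(n,\calP)=sub(G)=W(\Psi(G))$. When $r$ is even, Lemma \ref{extlemeven} identifies $\Psi(G)$ as the unique element of $\tildeC_r(n)$ and yields $W(\Psi(G))=m(r)^{n\choose 2}$. When $r$ is odd, Lemma \ref{extlemodd} gives $W(\Psi(G))=m(r)^{n\choose 2}\bigl(\tfrac{m(r)+1}{m(r)}\bigr)^{\lfloor n/2\rfloor}$. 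Conversely, to verify that these values are actually attained (hence are also lower bounds, and the maximum exists), I would take $G_0$ to be the unique element of $\tildeC_r(n)$ (even case) or any element of $\tildeE_r(n)$ (odd case). Corollary \ref{cor1} gives $\Psi^{-1}(G_0)\in \calR([n],\calP)$, and Proposition \ref{0} combined with Observation \ref{obms}(a) gives $sub(\Psi^{-1}(G_0))=W(G_0)$, matching the stated value. The small cases $n=2$ are immediate since ${n\choose 2}=1$ and any single-edge template has $|S_2(\calP)|\leq r$ choices, bounded appropriately.

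For the computation of $\pi(\calP)$, I would apply Theorem \ref{densityexists} (which guarantees $\pi(\calP)$ exists) and plug in the formulas above into $\pi(\calP)=\lim_{n\to\infty}\ex(n,\calP)^{1/{n\choose 2}}$. In the even case the limit is trivially $m(r)$. In the odd case, writing
\[
\ex(n,\calP)^{1/{n\choose 2}}=m(r)\cdot\Bigl(\tfrac{m(r)+1}{m(r)}\Bigr)^{\lfloor n/2\rfloor/{n\choose 2}},
\]
the exponent $\lfloor n/2\rfloor/{n\choose 2}$ tends to $0$, so the second factor tends to $1$ and the limit equals $m(r)$.

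The main obstacle is essentially clerical: the heavy lifting has been done by Lemmas \ref{extlemeven} and \ref{extlemodd} together with the translation machinery (Propositions \ref{0}, \ref{1}, Corollary \ref{cor1}). The only minor care needed is to check that $\calR_{ex}([n],\calP)$ is non-empty so that we may extract a maximizer, and that the lower bound is witnessed by an explicit construction—both of which follow from the fact that $\tildeC_r(n)$ (resp.\ $\tildeE_r(n)$) pulls back under $\Psi^{-1}$ to an $\calP$-random $\calL_{\calP}$-template whose number of full subpatterns equals $W$ of the corresponding $2^{[r]}$-graph.
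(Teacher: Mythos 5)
Your proposal follows essentially the same route as the paper: Propositions \ref{0} and \ref{1} (with Corollary \ref{cor1}) reduce $\ex(n,\calP)$ to the maximum of $W$ over $\tildeM_r(n)$, Lemmas \ref{extlemeven} and \ref{extlemodd} supply the extremal values, and $\pi(\calP)=m(r)$ then follows from the definition of $\pi$ as the limit of $\ex(n,\calP)^{1/\binom{n}{2}}$, with the odd-case correction factor vanishing in the limit. Your write-up just spells out the lower-bound witness and the limit computation slightly more explicitly than the paper does; there is no substantive difference.
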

\begin{proof}
That $\ex(n,\calP)=\max\{W(G): G\in \tildeM_r(n)\}$ follows from Propositions \ref{0} and \ref{1}.  Thus if $r$ is even, Lemma \ref{extlemeven} implies $\ex(n,\calP)=m(r)^{n\choose 2}$.  Similarly, if $r$ is odd, Lemma \ref{extlemodd} implies $\ex(n,\calP)=m(r)^{n\choose r}(\frac{m(r)+1}{m(r)})^{\lfloor n/2 \rfloor}$. That $\pi(\calP)=m(r)$ holds in both cases now follows from the definition of $\pi(\calP)=\lim_{n\rightarrow \infty}\ex(n,\calP)^{1/{n\choose 2}}$.
\end{proof}

\subsection{Proofs of Results}
In this subsection, we prove Corollary \ref{oddcor} and Theorem \ref{DELTACLOSETHM}.  We also prove that if $r$ is even then $\calP$ has a stability theorem, and if $r$ is odd, then $\calP$ does \emph{not} have a stability theorem.  These results, Lemma \ref{msstabeven} and Corollary \ref{msstabodd}, are new.  We begin by proving Corollary \ref{oddcor}.
\vspace{3mm}

\noindent {\bf Proof of Corollary \ref{oddcor}.}
Theorem \ref{enumeration} implies $|\calP_n|=\pi(\calP)^{(1+o(1)){n\choose 2}}$.  By definition of $\calP$, $M_r(n)=\calP_n$, and Corollary \ref{mspi} implies $\pi(\calP)=m(r)$.  Thus $|\calP_n|=|M_r(n)|=m(r)^{(1+o(1)){n\choose 2}}$.
\qed

\vspace{3mm}

\noindent We now state a stability style result, Theorem 4.13 from \cite{MT}.

\begin{theorem}[{\bf Mubayi-Terry \cite{MT}}]\label{stabthm}
Fix an integer $r\geq 3$.  For all $\delta>0$ there is $0<\epsilon<1$ and $M$ such that for all $n>M$ the following holds. If $R\in \tilde{M}_r(n)$ and $W(R)>m(r)^{(1-\epsilon){n\choose 2}}$, then $R$ is $\delta$-close to an element in $\tilde{C}_r(n)$ (in the sense of Definition \ref{newdeltaclose}).
\end{theorem}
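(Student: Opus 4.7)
The plan is to reduce to a stability question for the underlying multigraph problem and then translate the resulting size constraints back into the $2^{[r]}$-graph structure, mirroring the extremal arguments in Lemmas \ref{extlemeven} and \ref{extlemodd}. Given $R=([n],c)\in\tilde{M}_r(n)$ with $W(R)\geq m(r)^{(1-\epsilon)\binom{n}{2}}$, form the multigraph $H=([n],w)$ with $w(xy)=|c(xy)|$, so that $P(H)=W(R)$. Lemma~3.2 of~\cite{BW} applied triangle by triangle to the metric $2^{[r]}$-graph $R$ gives $w(xy)+w(yz)+w(xz)\leq 3m(r)$ in the even case and $\leq 3m(r)+1$ in the odd case, so $H$ is a $(3,3m(r))$- or $(3,3m(r)+1)$-graph respectively.

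The first substantive step is a stability version of Theorem~\ref{mgthm}: if $H$ is such a graph and $P(H)\geq m(r)^{(1-\epsilon)\binom{n}{2}}$, then $|\{xy:w(xy)\neq m(r)\}|\leq \eta(\epsilon)\binom{n}{2}$ with $\eta(\epsilon)\to 0$ as $\epsilon\to 0$. I would prove this by the same convexity argument underlying Theorem~\ref{mgthm}: from
$$
(n-2)\sum_{xy}\log w(xy)=\sum_{T\in\binom{[n]}{3}}\sum_{xy\in\binom{T}{2}}\log w(xy),
$$
the triangle constraint plus concavity of $\log$ (AM-GM) bounds each inner sum by $3\log m(r)$, with a small correction in the odd case handled exactly as in Theorem~\ref{mgthm}. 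Strict inequality is forced as soon as $(w(xy),w(yz),w(xz))\neq(m(r),m(r),m(r))$, so an $\Omega(\eta)$ fraction of pairs with $w(xy)\neq m(r)$ would force $P(H)\leq m(r)^{(1-\epsilon)\binom{n}{2}}$ for a suitable $\epsilon=\epsilon(\eta)>0$.

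Having ensured that $|c(xy)|=m(r)$ on all but $\eta\binom{n}{2}$ pairs, I would identify which $m(r)$-element subset each such $c(xy)$ must be, using Corollary~4.15 of~\cite{MT}. That corollary shows, locally on any triangle $\{x,y,z\}$ all of whose pairs have size $m(r)$, that in the even case each of $c(xy),c(yz),c(xz)$ must equal $[\tfrac{r}{2},r]$, while in the odd case each lies in $\{L_r,U_r\}$ with the triple $(L_r,L_r,U_r)$ forbidden because $((r-1)/2,(r-1)/2,r)$ is a violating triple. A standard double count shows that all but $O(\eta)\binom{n}{2}$ pairs are contained in $\Omega(n)$ "clean" triangles, so the correct labelling propagates to all but $O(\eta)\binom{n}{2}$ pairs. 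In the odd case, the $(L_r,L_r,U_r)$ ban makes the $L_r$-labels transitive on the clean vertex set, yielding a partition of those vertices into cliques; placing the $O(\eta)n$ exceptional vertices in singleton parts and relabelling the $O(\eta)\binom{n}{2}$ exceptional pairs produces an element $R'\in\tilde{C}_r(n)$ within edit distance $O(\eta)\binom{n}{2}$ of $R$. Choosing $\eta$ small in terms of $\delta$ and tracing $\epsilon=\epsilon(\eta)$ back through the multigraph stability step finishes the proof. The main obstacle I anticipate is making the convexity step fully quantitative and uniform in $n$, especially in the odd case, where the slack of $+1$ in the triangle constraint must be prevented from concentrating on too many triangles; the partition-recovery in the odd case is delicate but essentially local.
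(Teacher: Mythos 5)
First, note that the paper does not prove this statement at all: it is quoted verbatim as Theorem 4.13 of \cite{MT} and used as a black box, so there is no in-paper argument to compare yours with; you are attempting to reprove an imported result. Your even-case argument is essentially sound: for a $(3,3m(r))$-multigraph every triangle whose weights are not all $m(r)$ loses at least a factor $1-1/m(r)^2$, the triangle-averaging identity then converts near-maximal product into a $o(1)$-density of pairs with $|c(xy)|\neq m(r)$, and the local forcing is genuinely true in the even case (if $i<r/2$ lay in some label of a clean triangle, the other two labels would have to sit inside an interval of length $i$, impossible for sets of size $r/2+1$), so propagation to $\tilde{C}_r(n)$ goes through.

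The odd case, however, contains two genuine gaps. (1) The multigraph stability step is not delivered by the sketched convexity argument: in a $(3,3m(r)+1)$-graph a triangle with weights $(m(r)+1,m(r),m(r))$ attains the \emph{maximum} triangle product $m(r)^2(m(r)+1)$, so ``strict inequality is forced as soon as the triple is not $(m,m,m)$'' is false, and bounding every triangle by its maximum only yields $P(H)\leq m(r)^{\binom{n}{2}}(1+1/m(r))^{\binom{n}{2}/3}$, which is too weak even to recover Theorem \ref{mgthm}(b), let alone a stability version. To show that product $\geq m(r)^{(1-\epsilon)\binom{n}{2}}$ forces almost all weights to equal $m(r)$ one must analyze the graph $F$ of surplus edges (it is triangle-free, and each cherry of $F$ forces a deficit edge of weight $\leq m(r)-1$ on the closing pair) and prove that the forced deficits outweigh the surplus gains; a crude count gives losses of order $\alpha^2\binom{n}{2}$ against gains of order $\alpha\binom{n}{2}$, which does not close, so this step is essentially the content of the Mubayi--Terry multigraph analysis rather than a corollary of it. (2) The label-identification step misapplies Corollary 4.15 of \cite{MT}: as used in Lemmas \ref{extlemeven} and \ref{extlemodd} it is a \emph{global} statement about the exact extremal weight profiles, not a triangle-local one, and the local claim you need is false in the odd case. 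For $r=3$ one has $m(r)=2$, and the triple of labels $(\{1,3\},\{2,3\},\{2,3\})$ on a triangle is metric with all labels of size $m(r)$, yet $\{1,3\}\notin\{L_3,U_3\}$; so ``clean'' triangles do not force labels into $\{L_r,U_r\}$, the transitivity/partition-recovery argument has no premise to run on, and excluding a positive density of such non-interval labels requires a further global or counting argument that the proposal does not supply.
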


The following is straightforward from the definitions.

\begin{observation}\label{ob8}
If $G=(V,[n])$ and $G=(V,[n])$ are in $M_r(n)$, then $\dist(G,G')\leq \delta$ (in the sense of Definition \ref{deltaclosedef1}, considered as $\calL$-structures) if and only if $G$ and $G'$ are $\delta$-close in the sense of Definition \ref{newdeltaclose} (i.e. if and only if $|\Delta(G,G')|\leq \delta {n\choose 2}$).
\end{observation}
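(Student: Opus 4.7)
The plan is to establish the set equality $\diff(G,G') = \Delta(G,G')$ under the natural identifications, from which both directions of the equivalence follow by dividing by ${n\choose 2}$. Once this single set equality is in hand, the observation is immediate, so the work lies entirely in reconciling the two different notational conventions for the same underlying data.

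First I would unpack the definitions. Since $r_{\calL}=2$, Definition \ref{deltaclosedef1} gives $\dist(G,G') = |\diff(G,G')|/{n\choose 2}$, where $\diff(G,G') = \{A\in{V\choose 2}: qftp^G(\abar)\neq qftp^{G'}(\abar)\text{ for some enumeration }\abar\text{ of }A\}$. On the combinatorial side, when $G = (V,d)$ and $G'=(V,d')$ are viewed as $2^{[r]}$-graphs in the natural way, one has $c(xy)=\{d(xy)\}$ and $c'(xy)=\{d'(xy)\}$, so $\Delta(G,G')=\{xy\in{V\choose 2}: d(xy)\neq d'(xy)\}$.

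Next I would identify the quantifier-free types explicitly. Fix $A=\{a,b\}\in{V\choose 2}$. Because $G\in M_r(n)$ assigns a single distance $d(ab)\in[r]$, we have $G\models R_{d(ab)}(a,b)$ and $G\models \neg R_j(a,b)$ for all $j\neq d(ab)$, so $qftp^G(a,b)=p_{d(ab)}$, and likewise $qftp^{G'}(a,b)=p_{d'(ab)}$. Crucially, each $p_i$ is symmetric in its two variables (by definition of the $q_i$ used to generate $p_i$ in the metric spaces section), so the ``for some enumeration $\abar$'' clause in Definition \ref{deltaclosedef1} is harmless: the condition $qftp^G(\abar)\neq qftp^{G'}(\abar)$ holds for some enumeration iff it holds for every enumeration, and reduces to $p_{d(ab)}\neq p_{d'(ab)}$, equivalently $d(ab)\neq d'(ab)$. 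Hence $\diff(G,G')=\Delta(G,G')$.

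There is no real obstacle; the only subtle point is checking that the existential quantifier over enumerations in $\diff(M,N)$ collapses, which follows from the symmetry of the types $p_i$ already noted just after Example \ref{ex1}. Dividing the set equality by ${n\choose 2}$ gives $\dist(G,G')\leq\delta$ iff $|\Delta(G,G')|\leq\delta{n\choose 2}$, which is exactly the equivalence asserted.
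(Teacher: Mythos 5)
Your proof is correct and is exactly the intended argument: the paper offers no proof of Observation \ref{ob8}, stating only that it is ``straightforward from the definitions,'' and your unpacking (each pair realizes the symmetric type $p_{d(ab)}$, so $\diff(G,G')=\Delta(G,G')$ and the equivalence follows by dividing by ${n\choose 2}$) is precisely that routine verification, including the right remark that the quantifier over enumerations collapses by symmetry of the $p_i$.
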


We can now prove Theorem \ref{DELTACLOSETHM}.
\vspace{3mm}

\noindent {\bf Proof of Theorem \ref{DELTACLOSETHM}.}
Fix $\delta>0$.  Choose $\epsilon>0$ and $M_1$ from Theorem \ref{stabthm} such that if $n>M_1$ and $H\in \tildeM_r(n)$ satisfies $W(H)>m(r)^{(1-\epsilon){n\choose 2}}$ then $H$ is $\delta/2$-close in the sense of Definition \ref{newdeltaclose} to an element of $\tildeC_r(n)$.  Now let $\beta>0$ and $M_2$ be as in Theorem \ref{b4stab} applied to $\delta/2$ and $\epsilon$.  Let $N=\max\{M_1,M_2\}$.  We show for all $n>N$, 
$$
\frac{|M_r(n)\setminus C^{\delta}_r(n)|}{|M_r(n)}|\leq 2^{-\beta n^2}.
$$
By Theorem \ref{b4stab}, it suffices to show that for all $n\geq N$, $E^{\delta/2}(\epsilon, n, \calP)\subseteq C^{\delta}_r(n)$.  Fix $n\geq N$ and suppose $H=(V,d)\in E^{\delta/2}(\epsilon, n, \calP)$.  We want to show $H\in C^{\delta}_r(n)$.  By definition of $E^{\delta/2}(\epsilon, n, \calP)$, there is some $H'=(V,d')\in E(\epsilon, n, \calP)$ such that $\dist(H,H')\leq \delta/2$.  By Observation \ref{ob8}, this implies $|\Delta(H,H')|\leq \delta {n\choose 2}$.  By definition of $E(\epsilon, n,\delta)$, there is $G'\in \calR([n],\calP)$ such that $H'\unlhd_pG'$ and $sub(G')>ex(n,\calP)^{1-\epsilon}$.  Recall that by Observation \ref{ob5}(a), for all $n$, $\ex(n,\calP)\geq \pi(\calP)^{{n\choose 2}}$, so this implies $sub(G')>\pi(\calP)^{(1-\epsilon){n\choose 2}}$.  Corollary \ref{cor1} implies $\Psi(G'):=(V,c')\in \tildeM_r(n)$, and Proposition \ref{0} implies $W(\Psi(G'))=sub(G')>\pi(\calP)^{(1-\epsilon){n\choose 2}}$.  Consequently, Theorem \ref{stabthm} implies $\Psi(G')$ is $\delta/2$-close in the sense of Definition \ref{newdeltaclose} to some $M=([n],c)\in \tildeC_r(n)$.  Define $H''=([n],d'')$ as follows.  
\begin{enumerate}[$\bullet$]
\item If $xy \notin \Delta(\Psi(G'), M) \cup \Delta (H,H')$, let $d''(xy)=d'(xy)=d(xy)$.
\item If $xy\in \Delta(\Psi(G'), M)\cup \Delta (H,H')$, let $d''(xy)$ be any element of $c(xy)$.  
\end{enumerate}
We claim $H''\subseteqq_{[r]}M$.  Fix $xy\in {[n]\choose 2}$.  We want to show $d''(xy)\in c(xy)$.  This holds by definition of $H''$ when $xy\in \Delta(\Psi(G'), G'')\cup \Delta (H,H')$.  If $xy\not\in \Delta(\Psi(G'), M)\cup \Delta (H,H')$, then $d''(xy)=d'(xy)$ and $c(xy)=c'(xy)$.  Since, $H'\unlhd_pG'$, Proposition \ref{translation} implies $H'\subseteqq_{[r]}\Psi(G')$, thus $d'(xy)\in c'(xy)$.  Since $d''(xy)=d'(xy)$ and $c(xy)=c'(xy)$, this implies $d''(xy)\in c(xy)$, as desired.  Therefore, $H''\subseteqq_{[r]}M$ and $M\in \tildeC_r(n)$ implies $H''\in C_r(n)$.  We claim $|\Delta(H,H'')|\leq \delta {n\choose 2}$.  By definition of $H''$,
$$
\Delta(H,H'')\subseteq \Delta(H, H')\cup \Delta(\Psi(G'), M),
$$
so $|\Delta(H,H'')|\leq |\Delta(H,H')|+|\Delta(\Psi(G'),M)|\leq \delta {n\choose 2}$, where the inequality is by our assumptions.  This shows $H''\in C_r(n)$ and $|\Delta(H,H'')|\leq \delta{n\choose 2}$, i.e. $H\in C_r^{\delta}(n)$.
\qed

\vspace{3mm}

We leave the following lemma to the reader.

\begin{lemma}\label{msuse}
If $M$ and $N$ are complete $2^{[r]}$-graphs with the same vertex set $V$, then $\Delta(M,N)=\diff(\Psi^{-1}(M),\Psi^{-1}(N))$.
\end{lemma}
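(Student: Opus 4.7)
The plan is to prove the set equality by unpacking, for each fixed pair $xy \in \binom{V}{2}$, exactly which atomic formulas are recorded in the quantifier-free $2$-type of $(x,y)$ inside $\Psi^{-1}(V,c)$, and showing that this information is the same data as the label $c(xy)$. Since $r_{\calL} = 2$ in this section, $\diff(\Psi^{-1}(M),\Psi^{-1}(N))$ is a subset of $\binom{V}{2}$, so both sides of the claimed equality are subsets of $\binom{V}{2}$ and it suffices to check membership pairwise.

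Fix an unordered pair $xy$ with $x \neq y$, and write $M = (V,c_M)$, $N = (V,c_N)$. By the definition of $\Psi^{-1}$, for every $i \in [r]$ the structure $\Psi^{-1}(M)$ satisfies $R_{p_i}(x,y)$ if and only if $i \in c_M(xy)$, and likewise for $R_{p_i}(y,x)$; moreover $\Psi^{-1}(M) \models \neg R_{p_i}(x,x) \wedge \neg R_{p_i}(y,y)$ always, since $\Psi^{-1}$ only records relations between distinct points. Because $\calL_{\calP} = \{R_{p_i}(u,v) : i \in [r]\}$, every atomic $\calL_{\calP}$-formula whose variables lie in $\{x,y\}$ is one of $R_{p_i}(x,y)$, $R_{p_i}(y,x)$, $R_{p_i}(x,x)$, $R_{p_i}(y,y)$, or the equality $x = y$; hence the quantifier-free type of $(x,y)$ in $\Psi^{-1}(M)$ is determined by, and determines, the set $c_M(xy)$. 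The same applies to $\Psi^{-1}(N)$.

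Consequently $qftp^{\Psi^{-1}(M)}(x,y) \neq qftp^{\Psi^{-1}(N)}(x,y)$ if and only if $c_M(xy) \neq c_N(xy)$. Reversing the enumeration of the pair to $(y,x)$ yields the same criterion, because $c_M$ and $c_N$ are defined on unordered pairs and the list of atomic formulas above is invariant (as a set) under swapping $x$ and $y$. Therefore $xy \in \diff(\Psi^{-1}(M), \Psi^{-1}(N))$ iff $c_M(xy) \neq c_N(xy)$ iff $xy \in \Delta(M,N)$, which is the required equality.

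There is no substantive obstacle here: the lemma is a bookkeeping statement about how $\Psi^{-1}$ translates the label $c(xy)$ into the collection of atomic $\calL_{\calP}$-facts about the pair $(x,y)$, and both the forward and backward inclusions follow directly once that translation is made explicit.
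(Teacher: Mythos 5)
Your proposal is correct: the paper leaves this lemma to the reader, and your pairwise unpacking of the quantifier-free type of $(x,y)$ in $\Psi^{-1}(M)$ as exactly the data of $c_M(xy)$ (together with the remark that the criterion is invariant under swapping the enumeration) is precisely the intended verification. Equivalently one could invoke Lemma \ref{templatelem} plus the identification $Ch_{\Psi^{-1}(M)}(xy)=\{p_i(c_x,c_y): i\in c_M(xy)\}$ (cf.\ Observation \ref{ob7}), but your direct computation amounts to the same argument.
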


We now show that when $r$ is even, $\calP$ has a stability theorem in the sense of Definition \ref{stabdef}, but when $r$ is odd, this is not the case.

\begin{theorem}\label{msstabeven}
If $r\geq 2$ is even, then $\calP$ has a stability theorem.
\end{theorem}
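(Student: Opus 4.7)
The plan is to reduce the claim to Theorem \ref{stabthm} of Mubayi--Terry via the translation machinery between $\calL_\calP$-templates and $2^{[r]}$-graphs developed earlier in this section. The crucial feature of the even case is that, by Lemma \ref{extlemeven}, every product-extremal element of $\tildeM_r(n)$ is the unique element $G_0$ of $\tildeC_r(n)$, so the conclusion of Theorem \ref{stabthm} (that $W(R)$ being near-extremal forces $R$ to be $\delta$-close to $\tildeC_r(n)$) translates directly into the conclusion required by Definition \ref{stabdef}.

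Given $\delta > 0$, I would first invoke Theorem \ref{stabthm} at parameter $\delta$ to obtain $\epsilon > 0$ and a threshold $N_0$ such that for $n > N_0$, any $R \in \tildeM_r(n)$ with $W(R) > m(r)^{(1-\epsilon)\binom{n}{2}}$ is $\delta$-close (in the sense of Definition \ref{newdeltaclose}) to the unique element $G_0$ of $\tildeC_r(n)$. Now suppose $M \in \calR(n,\calP)$ satisfies $sub(M) \geq \ex(n,\calP)^{1-\epsilon}$. Since $r$ is even, Corollary \ref{mspi} gives $\ex(n,\calP) = m(r)^{\binom{n}{2}}$, while Proposition \ref{0} gives $sub(M) = W(\Psi(M))$ and Corollary \ref{cor1} gives $\Psi(M) \in \tildeM_r(n)$. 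Hence $W(\Psi(M)) \geq m(r)^{(1-\epsilon)\binom{n}{2}}$, and Theorem \ref{stabthm} applies to yield $|\Delta(\Psi(M),G_0)| \leq \delta \binom{n}{2}$.

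Next, Lemma \ref{extlemeven} says $G_0$ is product-extremal in $\tildeM_r(n)$, so Proposition \ref{1} gives $M' := \Psi^{-1}(G_0) \in \calR_{ex}([n],\calP)$. By Observation \ref{obms}(b), $\Psi^{-1}(\Psi(M)) = M$ since $M$ is an $\calL_\calP$-template, and Lemma \ref{msuse} then yields
\[
\diff(M, M') \;=\; \diff(\Psi^{-1}(\Psi(M)), \Psi^{-1}(G_0)) \;=\; \Delta(\Psi(M), G_0),
\]
so $\dist(M, M') = |\diff(M,M')|/\binom{n}{2} \leq \delta$. Taking $N = N_0$ and the $\epsilon$ produced above verifies Definition \ref{stabdef} for $\calP$.

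Since every step is a direct application of results already proved in the section, there is no substantive obstacle in the argument. The only point requiring care is that the ``$\delta$-closeness'' in Theorem \ref{stabthm} is phrased in the graph-theoretic sense of Definition \ref{newdeltaclose} while Definition \ref{stabdef} uses the structure-theoretic sense of Definition \ref{deltaclosedef1}; Lemma \ref{msuse} is exactly what bridges these two notions. The reason the argument works for even $r$ but, as promised, will fail for odd $r$ is visible in this very reduction: in the even case $\tildeC_r(n) = \{G_0\}$ is a single point, whereas by Lemma \ref{extlemodd} the odd extremal family $\tildeE_r(n)$ contains many pairwise far-apart members, so one cannot collapse Theorem \ref{stabthm}'s conclusion onto a single extremal template.
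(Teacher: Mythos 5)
Your proposal is correct and follows essentially the same route as the paper's proof: both reduce to Theorem \ref{stabthm} via $\Psi$ and $\Psi^{-1}$, using Proposition \ref{0}, Corollary \ref{cor1}, Lemma \ref{extlemeven}, Proposition \ref{1}, and Lemma \ref{msuse} to transfer near-extremality and $\delta$-closeness between templates and $2^{[r]}$-graphs. The only cosmetic difference is that you use the exact value $\ex(n,\calP)=m(r)^{\binom{n}{2}}$ from Corollary \ref{mspi}, whereas the paper invokes the general bound $\ex(n,\calP)\geq \pi(\calP)^{\binom{n}{2}}$ from Observation \ref{ob5}(a); these amount to the same thing here.
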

\begin{proof}
Fix $\delta>0$.  By Theorem \ref{stabthm}, there is $\epsilon>0$ and $M$ such that for all $n>M$ if $H\in \tildeM_r(n)$ satisfies $P(H)>m(r)^{(1-\epsilon){n\choose 2}}$, then $H$ is $\delta$-close in the sense of Definition \ref{newdeltaclose} to the unique $2^{[r]}$-graph $H_0\in \tildeC_r(n)$.  Suppose now that $n>M$ and $G\in \calR([n],\calP)$ satisfies $sub(G)\geq \ex(n,\calP)^{1-\epsilon}$.  We want to show there is $G'\in \calR_{ex}([n],\calP)$ such that $\dist(G,G')\leq \delta$.  Recall that by part (a) of Observation \ref{ob5}, for all $n$, $\pi(\calP)^{n\choose 2}\leq \ex(n,\calP)$.  Thus our assumptions imply $sub(G)\geq \ex(n,\calP)^{1-\epsilon}\geq \pi(\calP)^{(1-\epsilon){n\choose 2}}$.  Proposition \ref{mspi} implies $\pi(\calP)=m(r)$ and Corollary \ref{cor1} implies $\Psi(G)\in \tildeM_r(n)$, so by Proposition \ref{0}, 
$$
W(\Psi(G))=sub(G)\geq \pi(\calP)^{(1-\epsilon){n\choose 2}}=m(r)^{(1-\epsilon){n\choose 2}}.
$$
Thus Theorem \ref{stabthm} implies $\Psi(G)$ is $\delta$-close in the sense of Definition \ref{newdeltaclose} to the unique $2^{[r]}$-graph $H_0\in \tildeC_r(n)$.  By Lemma \ref{extlemeven}, $H_0$ is a product-extremal element of $\tildeM_r(n)$.  By Corollary \ref{cor1}, $\Psi^{-1}(H_0)\in \calR([n],\calP)$.  Since by Observation \ref{ob5}(a), $\Psi(\Psi^{-1}(H_0))=H_0$, Proposition \ref{1} and the fact that $H_0$ is product-extremal imply $\Psi^{-1}(H_0)\in \calR_{ex}([n],\calP)$.  By Lemma \ref{msuse} and Observation \ref{ob5}(b),
\begin{align}\label{labeldiff}
\Delta(H_0,\Psi(G))=\diff(\Psi^{-1}(H_0),\Psi^{-1}(\Psi(G)))=\diff(\Psi^{-1}(H_0), G).
\end{align}
Since $H_0$ and $\Psi(G)$ are $\delta$-close in the sense of Definition \ref{newdeltaclose}, $|\Delta(H_0, \Psi(G))|\leq \delta {n\choose 2}$.  Combining this with (\ref{labeldiff}), we have that $\dist(\Psi^{-1}(H_0),G)\leq \delta$.  Since $\Psi^{-1}(H_0)\in \calR_{ex}([n],\calP)$, we are done.
\end{proof}

\begin{corollary}\label{msstabodd}
When $r\geq 3$ is odd, $\calP$ does not have a stability theorem.
\end{corollary}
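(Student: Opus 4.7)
The plan is to exhibit, for each small $\epsilon>0$ and each large $n$, a single $\calH$-random $\calL_\calP$-template $G_0\in\calR([n],\calP)$ with $\mathrm{sub}(G_0)\ge \ex(n,\calP)^{1-\epsilon}$ but with $\dist(G_0,G')=1$ for every $G'\in\calR_{\mathrm{ex}}([n],\calP)$. Taking $\delta=1/2$ in Definition \ref{stabdef} then immediately contradicts the existence of a stability theorem.

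First I would construct $G_0$. Let $H_0=([n],c_0)$ be the complete $2^{[r]}$-graph with $c_0(xy)=L_r$ for every $xy\in\binom{[n]}{2}$. This is the member of $\tilde C_r(n)$ coming from the trivial partition (one part), so $H_0\in\tilde M_r(n)$; equivalently, a direct check confirms that any triple $(i,j,k)\in L_r^3$ is non-violating since $|i-j|\le (r-1)/2=\min L_r\le k$ and $k\le r-1\le i+j$. By Corollary \ref{cor1}, $G_0:=\Psi^{-1}(H_0)\in\calR([n],\calP)$. By Proposition \ref{0}, $\mathrm{sub}(G_0)=W(H_0)=m(r)^{\binom{n}{2}}$, while by Corollary \ref{mspi}, $\ex(n,\calP)=m(r)^{\binom{n}{2}}\big(\tfrac{m(r)+1}{m(r)}\big)^{\lfloor n/2\rfloor}$. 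Hence the ratio $\ex(n,\calP)^{1-\epsilon}/\mathrm{sub}(G_0)$ equals $m(r)^{-\epsilon\binom{n}{2}}\bigl(\tfrac{m(r)+1}{m(r)}\bigr)^{(1-\epsilon)\lfloor n/2\rfloor}$, whose logarithm is $\Theta(n)-\Theta(n^2)\to-\infty$; so for all $n$ sufficiently large (depending on $\epsilon$), $\mathrm{sub}(G_0)\ge \ex(n,\calP)^{1-\epsilon}$, as required.

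Next I would bound $\dist(G_0,G')$ from below for every $G'\in\calR_{\mathrm{ex}}([n],\calP)$. By Proposition \ref{1} together with Lemma \ref{extlemodd}, $\Psi(G')\in\tilde E_r(n)$, so $\Psi(G')=([n],c')$ where $c'(xy)\in\{U_r,\,U_r\cup L_r\}$ for every edge. Since $r\ge 3$ is odd, $r\in U_r\setminus L_r$, so $L_r\ne U_r$ and $L_r\ne U_r\cup L_r$; thus $c_0(xy)\ne c'(xy)$ for \emph{every} $xy\in\binom{[n]}{2}$, giving $|\Delta(H_0,\Psi(G'))|=\binom{n}{2}$. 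Lemma \ref{msuse} translates this into $\dist(G_0,G')=|\diff(\Psi^{-1}(H_0),\Psi^{-1}(\Psi(G')))|/\binom{n}{2}=1$. In particular $G_0$ is not $\tfrac12$-close to any element of $\calR_{\mathrm{ex}}([n],\calP)$.

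Putting these two steps together contradicts Definition \ref{stabdef} with $\delta=\tfrac12$: no matter what $\epsilon>0$ and threshold $N$ the stability theorem would supply, choosing $n>N$ large enough so that $\mathrm{sub}(G_0)\ge\ex(n,\calP)^{1-\epsilon}$ produces a near-extremal template $G_0$ that is at distance $1$ from every extremal one. I do not anticipate any substantive obstacle beyond the two checks highlighted above: verifying $H_0\in\tilde M_r(n)$ (a one-line triangle inequality) and verifying $L_r\ne U_r$, $L_r\ne U_r\cup L_r$ (which uses $r$ odd and is essentially immediate). The conceptual point driving the argument is that, in the odd case, the extremal structures are constrained to use labels drawn from $U_r$ and $U_r\cup L_r$, whereas the ``other'' size-$m(r)$ block $L_r$ produces a genuinely different template whose $\mathrm{sub}$-count, although smaller than $\ex(n,\calP)$ by only a $\bigl(\tfrac{m(r)+1}{m(r)}\bigr)^{\lfloor n/2\rfloor}$ factor (which is negligible compared to any power $\ex(n,\calP)^{\epsilon}$), is macroscopically distant from every extremal template.
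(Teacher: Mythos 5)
Your proposal is correct and follows essentially the same route as the paper: you use the same witness (the template with every pair labeled $L_r$), identify it as near-extremal via Proposition \ref{0} and Corollary \ref{mspi}, and show via Lemma \ref{extlemodd} and Lemma \ref{msuse} that it is at distance $1$ from every element of $\calR_{ex}([n],\calP)$, contradicting Definition \ref{stabdef}. The only differences are cosmetic (you spell out the triangle-inequality check for $L_r$ and the inequality $L_r\neq U_r, U_r\cup L_r$, which the paper leaves implicit).
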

\begin{proof}
Let $A=([n],c)$ be such that for all $xy\in {[n]\choose 2}$, $c(xy)=L_r$.  Then by definition, $A\in \tildeM_r(n)$ and $W(A)=m(r)^{n\choose 2}$.  By Corollary \ref{cor1}, $\Psi^{-1}(A)\in \calR([n],\calP)$, and by Proposition \ref{0}, 
$$
sub(\Psi^{-1}(A))=W(A)=m(r)^{n\choose 2}.
$$
Let $B\in \calR_{ex}([n],\calP)$.   By Proposition \ref{extlemodd}, $\Psi(B)\in \tildeE_r(n)$. By definition of $A$ and $\tildeE_r(n)$, $\Delta(A,\Psi(B))={[n]\choose 2}$.  By Lemma \ref{msuse} and Observation \ref{ob5}(b),
$$
\Delta(A,\Psi(B))=\diff(\Psi^{-1}(A),\Psi^{-1}(\Psi(B))=\diff(\Psi^{-1}(A),B).
$$
Therefore, $\dist(\Psi^{-1}(A),B)=|\Delta(A,\Psi(B))|/{n\choose 2}=1$.  So for all $\delta<1$, $\Psi^{-1}(A)$ is not $\delta$-close to any element of $\calR_{ex}([n],\calP)$.  However, for all $\epsilon>0$, $\pi(\calP)=m(r)$ implies that if $n$ is sufficiently large, $sub(\Psi^{-1}(A))=m(r)^{n\choose 2}\geq ex(n,\calP)^{1-\epsilon}$, so $\Psi^{-1}(A)\in E(\epsilon, n, \calP)$.  This shows that $\calP$ does not have a stability theorem in the sense of Definition \ref{stabdef}.
\end{proof}

\section{Concluding Remarks}\label{end}

We end with some questions and conjectures.  Returning to the metric spaces of the previous section, it was shown in \cite{MT} the following is true.

\begin{theorem}[{\bf Mubayi-Terry \cite{MT}}] If $r\geq 2$ is even, then $M_r=\bigcup_{n\in \mathbb{N}}M_r(n)$ has a $0$-$1$ law in the language $\calL_r$.
\end{theorem}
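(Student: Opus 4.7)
The plan is to reduce the $0$-$1$ law for $M_r$ to a $0$-$1$ law for the simpler subclass $C_r:=\bigcup_{n\geq 1}C_r(n)$, exploiting the fact that for even $r$ the triangle inequality is vacuous on distances drawn from $[r/2,r]$.

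First, I would establish the precise structural dichotomy
\[
\lim_{n\to\infty}\frac{|M_r(n)\setminus C_r(n)|}{|M_r(n)|}=0.
\]
The starting point is Theorem \ref{msstabeven} combined with Lemma \ref{extlemeven}: for even $r$ the class $\calP=\calM_r$ has a stability theorem whose unique extremal $\calL_{\calP}$-template is $\Psi^{-1}(H_0)$, where $H_0$ is the single element of $\tildeC_r(n)$. By Proposition \ref{translation} together with Observation \ref{obms}(a), the full subpatterns of $\Psi^{-1}(H_0)$ are exactly $C_r(n)$. Plugging this data into Theorem \ref{stab} shows that all but a $2^{-\beta{n\choose 2}}$ fraction of $M_r(n)$ is $\delta$-close to $C_r(n)$. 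To sharpen ``$\delta$-close'' into ``equal,'' I would stratify $M_r(n)\setminus C_r(n)$ by the set $S$ of edges carrying a label outside $[r/2,r]$; using the triangle inequality together with Lemma 3.2 of \cite{BW}, each such ``bad'' edge forces the label on every incident edge to lie in a proper subset of $[r/2,r]$. A direct counting argument then yields $|M_r(n)\setminus C_r(n)|\leq m(r)^{{n\choose 2}-\Omega(n)}$, which is negligible against $|C_r(n)|=m(r)^{n\choose 2}\leq|M_r(n)|$.

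Next, I would show that $C_r$ has a $0$-$1$ law. Since every map $c:{[n]\choose 2}\to[r/2,r]$ yields a valid metric $[r]$-graph, $C_r(n)$ is in bijection with $m(r)$-edge-colorings of $K_n$, and the uniform measure on $C_r(n)$ coincides with the i.i.d.\ product measure assigning each of the $m(r)$ labels with probability $1/m(r)$ to each edge. Fagin's extension-axiom method then applies verbatim: for any fixed $\calL_r$-structure $A$ on $a$ vertices with distances in $[r/2,r]$ and any one-vertex extension $B$ with distances in $[r/2,r]$, the probability that a random $G\in C_r(n)$ fails the corresponding extension axiom is at most $n^a(1-m(r)^{-a})^{n-a}\to 0$. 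The theory $T^*$ of all such extension axioms is consistent and $\aleph_0$-categorical by a back-and-forth argument (its unique countable model is the Fra\"iss\'e limit of finite $[r/2,r]$-edge-labeled complete graphs), hence complete. Thus $T^*=T_{as}(C_r)$ and $C_r$ has a $0$-$1$ law.

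Combining the two steps, for every first-order $\calL_r$-sentence $\phi$,
\[
\lim_{n\to\infty}\frac{|\{G\in M_r(n):G\models\phi\}|}{|M_r(n)|}=\lim_{n\to\infty}\frac{|\{G\in C_r(n):G\models\phi\}|}{|C_r(n)|}\in\{0,1\},
\]
which is the desired $0$-$1$ law for $M_r$. The main obstacle is the sharp structural step: upgrading the $\delta$-closeness produced by Theorem \ref{stab} to literal containment in $C_r(n)$. The triangle-inequality counting that performs this upgrade relies essentially on the evenness of $r$---the analogous reasoning fails for odd $r$, consistently with Corollary \ref{msstabodd} and the absence of a known full $0$-$1$ law in that case.
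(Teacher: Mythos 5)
The paper does not actually prove this theorem; it is quoted from \cite{MT}, and the introduction explains why: the $0$-$1$ law rests on a \emph{precise} structure theorem, namely $|M_r(n)\setminus C_r(n)|=o(|M_r(n)|)$, which \cite{MT} proves using the approximate structure and enumeration theorems only as stepping stones. Your architecture --- reduce to $C_r$, run Fagin's extension-axiom argument on the i.i.d.\ $[\frac{r}{2},r]$-labelled class, then transfer --- is exactly the architecture of \cite{MT} (and of the Kolaitis--Pr\"omel--Rothschild template recalled in the introduction), and your second and third steps are sound: for even $r$ every $[\frac{r}{2},r]$-labelling of ${[n]\choose 2}$ is a metric space, so the uniform measure on $C_r(n)$ is a product measure, the extension axioms hold almost surely, and the almost-sure theory is the complete theory of the corresponding Fra\"iss\'e limit.

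The genuine gap is your first step, which is where all the difficulty lives. Theorem \ref{stab} (via Theorem \ref{msstabeven}, Lemma \ref{extlemeven}, Proposition \ref{translation} and Observation \ref{obms}) only yields that almost every $G\in M_r(n)$ is $\delta$-close to $C_r(n)$, and the proposed upgrade to literal membership by a ``direct counting argument'' is not routine. If you stratify all of $M_r(n)\setminus C_r(n)$ by its bad-edge set and bound the completions using what is available at this stage, you only know $|M_r(n)|\leq m(r)^{{n\choose 2}+o(n^2)}$, and the $m(r)^{o(n^2)}$ slack swamps the $e^{-\Omega(n)}$ gain per bad edge, so the claimed bound $m(r)^{{n\choose 2}-\Omega(n)}$ does not follow this way. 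If instead you count inside the $\delta$-close class, note first that a bad edge $xy$ with $d(xy)<\frac{r}{2}$ restricts the \emph{pairs} $(d(xz),d(yz))$ jointly, not ``the label on every incident edge'' individually as you assert; and when bad edges share vertices or cluster in a small vertex set these pairwise gains do not simply multiply, so one must show that for every bad set $S$ of size $k\leq\delta{n\choose 2}$ the accumulated gain beats the entropy ${{n\choose 2}\choose k}r^{k}$ of choosing the bad configuration, and the exceptional $2^{-\beta{n\choose 2}}$-fraction from Theorem \ref{stab} must be handled separately. Carrying this out is precisely the delicate counting performed in \cite{MT}, and it is the reason the present paper stops at approximate structure; as written, your proof is missing exactly that piece.
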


It was then conjectured in \cite{MT} that this theorem is false in the case when $r$ is odd (to our knowledge, this is still open).  In the previous section, we showed that in the case when $r$ is even, $\calM_r$ has a stability theorem in the sense of Definition \ref{stabdef}, while when $r$ is odd, this is false.  These facts lead us to make the following conjecture.

\begin{conjecture}
Suppose $\calL$ is a finite relational language with $r_{\calL}\geq 2$, and $\calH$ is a fast-growing hereditary $\calL$-property such that $\bigcup_{n\in \mathbb{N}}\calH_n$ has a $0$-$1$ law.  Then $\calH$ has a stability theorem.
\end{conjecture}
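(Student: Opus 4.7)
The plan is to argue the contrapositive. Suppose $\calH$ is a fast-growing hereditary $\calL$-property that fails to have a stability theorem: then there is $\delta_0>0$ and a sequence of templates $M_n\in \calR(n,\calH)$ with $sub(M_n)\geq \ex(n,\calH)^{1-1/n}$ but $\dist(M_n,M')>\delta_0$ for all $M'\in \calR_{ex}(n,\calH)$. The goal is to build a first-order $\calL$-sentence whose asymptotic probability over $\calH_n$ does not converge to $0$ or $1$, contradicting the assumed $0$-$1$ law.

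First, I would extract from the failure of stability a density statement that separates $M_n$ from the extremal templates. Since $\dist(M_n,M'_n)>\delta_0$ for any $M'_n\in \calR_{ex}(n,\calH)$, Lemma \ref{templatelem} and pigeonhole over $S_r(\calH)$ and $Perm(r)$ give, after passing to a subsequence, a fixed quantifier-free $r$-type $p(\xbar)\in S_r(\calH)$ such that the density of $p$-labeled $r$-subsets in $M_n$ differs from the density of $p$-labeled $r$-subsets in every extremal $M'_n$ by at least a uniform constant $\delta_1=\delta_1(\delta_0,\calH)>0$. The same gap is then inherited (up to $o(1)$) by the induced densities of $p$ in a uniformly random full subpattern of $M_n$ vs.\ a uniformly random full subpattern of $M'_n$, because choices are independent over different $r$-subsets in the error-free setting (Observation \ref{ob0}).

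Second, I would transfer this density gap to the uniform measure on $\calH_n$. By Theorem \ref{b4stab}, for any $\eta>0$ all but an exponentially small fraction of $\calH_n$ lies in $E^{\eta}(\epsilon,n,\calH)$ for $\epsilon=\eta$. Using Lemma \ref{deltaclose2} one shows that $\eta$-close perturbations change the $p$-density by at most $C\eta$ for some $C$ depending only on $r$ and $|S_r(\calH)|$; hence for $\eta\ll \delta_1$ the $p$-density of a uniformly random $G\in \calH_n$ is approximately the $p$-density of the almost-extremal template in whose $E^{\eta}(\epsilon,n,\calH)$-basin $G$ lies. Applying the container decomposition Theorem \ref{COROLLARY2} (with parameter $\eta$) one further refines $\calH_n$ into a sub-exponential number of ``$p$-density buckets''; by a counting argument involving Theorem \ref{enumeration} and the bound $sub(M_n)\geq \ex(n,\calH)^{1-1/n}$, the bucket corresponding to the density witnessed by $M_n$ and the bucket corresponding to the density witnessed by the extremal $M'_n$ each contain at least a $2^{-o({n\choose r})}$-fraction of $\calH_n$, so the uniform $p$-density distribution has at least two distinct limit points along $n$.

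Third, I would convert the oscillating density into an oscillating first-order sentence. For any threshold $\alpha$ strictly between the two limit points, the statement ``at least $\alpha$-fraction of $r$-tuples realize $p$'' is approximated arbitrarily well by a first-order sentence $\phi_{\alpha,\epsilon}$ (a finite Boolean combination of counting formulas); choosing $\alpha$ and $\epsilon$ appropriately, $\mu(\phi_{\alpha,\epsilon})$ does not exist, contradicting the $0$-$1$ law. The main obstacle, and the heart of the argument, is the bucket-counting in the second step: one needs a quantitative, container-style lower bound on the contribution of each almost-extremal template's subpattern family to $|\calH_n|$, ensuring that no single template's basin can absorb essentially all of $\calH_n$. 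Establishing this requires a sharpening of Theorem \ref{COROLLARY2} that tracks multiplicities of templates rather than just covering, and this appears to be where substantially new ideas beyond those developed in this paper are needed.
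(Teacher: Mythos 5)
This statement is stated in the paper only as a conjecture; the paper offers no proof of it, and your proposal does not close it either --- it contains genuine gaps, one of which is fatal to the whole strategy. The decisive problem is your third step: in a finite relational language, first-order logic cannot express proportional counting statements. There is no first-order sentence (nor any ``finite Boolean combination of counting formulas'' that remains first-order) asserting that at least an $\alpha$-fraction of the $r$-element subsets realize a given quantifier-free type $p$; first-order sentences can only detect the existence of boundedly many tuples with prescribed quantifier-free behavior, not densities. So even if you produced two subsequences of $n$ along which the typical $p$-density of a uniform $G\in\calH_n$ had distinct limit points, this would contradict a convergence law for a logic with density or counting quantifiers, not the first-order $0$-$1$ law hypothesized in the conjecture. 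This is precisely why the implication ``$0$-$1$ law $\Rightarrow$ stability'' is delicate and left open: extracting metric/density information about $\calR_{ex}(n,\calH)$ from purely first-order asymptotic information requires an idea not present in the paper or in your outline.

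Two further steps are also unjustified as written. In your first step, failure of $\delta_0$-closeness to every extremal template gives, via Lemma \ref{templatelem}, many $r$-sets on which the choice sets differ, but the pigeonhole argument does not yield a single type $p$ and a uniform gap $\delta_1$ valid against \emph{every} extremal $M'_n$ simultaneously (different extremal templates may differ from $M_n$ on different types, and a difference of choice sets on an $r$-set does not by itself translate into a gap in the density of $p$ among full subpatterns). In your second step, the bound $sub(M_n)\geq \ex(n,\calH)^{1-1/n}$ only shows each ``bucket'' contains a $2^{-o(n^r)}$-fraction of $\calH_n$; since both fractions may tend to $0$, this does not force the uniform distribution of the $p$-density to have two limit points, let alone force a threshold event to have oscillating probability bounded away from $0$ and $1$. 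Theorem \ref{b4stab} says most of $\calH_n$ is near subpatterns of \emph{some} almost-extremal template, but without stability those templates can have heterogeneous type densities, so no concentration at two specific values follows. You correctly flag that new, sharper container-type multiplicity bounds would be needed there, but even with them the first-order inexpressibility obstacle in the last step would remain.
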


The idea behind this conjecture is that if $\calH$ has nice asymptotic structure, it should reflect the structure of $\calR_{ex}([n],\calH)$.  Another phenomenon which can be observed from known examples is that the structures in $\calR_{ex}(n,\calH)$ are not very complicated.  The following questions are various ways of asking if this is always the case.  

\begin{question}
Suppose $\calL$ is a finite relational language with $r_{\calL}\geq 2$, and $\calH$ is a fast-growing hereditary $\calL$-property.  For each $n$, let $\calP_n=\calR_{ex}([n],\calH)$.  Does $\bigcup_{n\in \mathbb{N}} \calP_n$ always have a $0$-$1$ law?
\end{question}

We direct the reader to \cite{classification} for the definition of a formula having the $k$-order property.

\begin{question}
Suppose $\calL$ is a finite relational language with $r_{\calL}\geq 2$, and $\calH$ is a fast-growing hereditary $\calL$-property.  Is there a finite $k=k(\calH)$ such that for all $n$ and  $M\in \calR_{ex}(n,\calH)$, every atomic formula $\phi(x;y)\in \calL_{\calH}$ does not have the $k$-order property in $M$? 
\end{question}

A weaker version of this question is the following.  We direct the reader to \cite{simon2015guide} for the definition of the $VC$-dimension of a formula.

\begin{question}
Suppose $\calL$ is a finite relational language with $r_{\calL}\geq 2$, and $\calH$ is a fast-growing hereditary $\calL$-property.  Is there a finite $k=k(\calH)$ such that for all $n$ and  $M\in \calR_{ex}(n,\calH)$, every atomic formula $\phi(x;y)\in \calL_{\calH}$ has $VC$-dimension bounded by $k$ in $M$?
\end{question}

\appendix
\section{Hereditary properties of colored hypergraphs}\label{coloredhg}
In this section we show that Theorem \ref{enumeration} agrees with an existing enumeration theorem for hereditary properties of colored $k$-uniform hypergraphs which were proved by Ishigami in \cite{Ishigami}.  We include this example as it is the most general enumeration theorem of hereditary properties in the literature (to our knowledge).  As is pointed out in \cite{Ishigami}, these results extend those for hereditary properties of hypergraphs from \cite{DN} as well as enumeration results for hereditary graph properties in \cite{Alekseev2, BoTh2}. 

\subsection{Statements of Results from \cite{Ishigami}.}
 The definitions and results in this section are from \cite{Ishigami}.  Given an integer $k\geq 2$ and a set $C$, a \emph{$C$-colored $k$-uniform hypergraph}, also called a \emph{$(k,C)$-graph}, is a pair $G=(V,H)$, where $V$ is a vertex set and $H:{V\choose k}\rightarrow C$ is a function.  The set $C$ is called the set of colors.  Given two $(k,C)$-graphs $G=(V,H)$ and $G'=(V',H')$, $G$ is a \emph{subgraph} of $G'$ if $V\subseteq V'$ and for all $e\in {V\choose k}$, $H(e)=H'(e)$.  We say $G$ and $G'$ are \emph{isomorphic}, denoted $G\cong G'$, if there is a bijection $f:V\rightarrow V'$ such that for all $e\in {V\choose k}$, $H(e)=H'(f(e))$.  A hereditary property of $(k,C)$-graphs is a nonempty class of finite $(k,C)$-graphs which is closed under subgraphs and isomorphism.  Observe that if $C$ has only two elements, then $(k,C)$-graphs can be seen as  $k$-uniform hypergraphs.  

Assume $k\geq 2$, $C$ is finite, and $\calP$ is a hereditary property of $(k,C)$-graphs.  Let $2^C$ denote the powerset of $C$.  Then an $n$-vertex $(k, 2^{C}\setminus \emptyset)$-graph $G=(V,H)$ is call \emph{$\calP$-good} if and only if $\calP$ contains any $(k,C)$-graph $G'=(V,H')$ with the property that for all $e\in {V\choose k}$, $H'(e)\in H(e)$.  Define
$$
\max(n,\calP)=\max\Big\{ \frac{1}{{n\choose k}} \sum_{e\in {[n]\choose k}}\log_2|H(e)|: ([n],H) \text{ is a $\calP$-good $(k, 2^{C}\setminus \emptyset)$-graph}\Big\}.
$$
This notion comes from \cite{Ishigami} and is denoted there by ``$\ex(n,\calP)$.''  We have changed the notation to avoid confusion with Definition \ref{deltaclosedef1}.  The following is Theorem 1.1 from \cite{Ishigami}.

\begin{theorem}[{\bf Ishigami \cite{Ishigami}}]\label{ishigamisthm}
Suppose $k\geq 2$ is a fixed finite integer and $C$ is a fixed finite set.  If $\calP$ is a hereditary property of $(k,C)$-graphs then 
$$
|\calP_n|=2^{(\max(n,\calP)+o(1)){n\choose k}}.
$$
\end{theorem}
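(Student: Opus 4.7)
The plan is to derive Theorem \ref{ishigamisthm} as a consequence of our Theorem \ref{enumeration} by translating hereditary properties of $(k,C)$-graphs into hereditary $\calL$-properties for a carefully chosen finite relational language. Fix $k \geq 2$ and a finite set $C$. I will let $\calL = \{R_c(x_1,\ldots,x_k): c \in C\}$ consist of one $k$-ary relation symbol for each color, so that $r_\calL = k$. Given a hereditary property $\calP$ of $(k,C)$-graphs, I will view each $(V,H) \in \calP$ as an $\calL$-structure by declaring $R_c(v_1,\ldots,v_k)$ to hold if and only if $v_1,\ldots,v_k$ are pairwise distinct and $H(\{v_1,\ldots,v_k\}) = c$. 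Let $\calH$ be the closure of $\calP$ (so viewed) under $\calL$-isomorphism. Since $\calP$ is closed under isomorphism and subgraphs in the $(k,C)$-graph sense, $\calH$ is a hereditary $\calL$-property, and $|\calH_n| = |\calP_n|$ for every $n$.

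Next I will identify the relevant objects in $\calL_\calH$. The complete proper quantifier-free $k$-types in $S_k(\calH)$ are in natural bijection with the colors appearing in $\calP$: for each such color $c$ there is a type $p_c$ asserting that the variables are distinct and that $R_c$ holds on every permutation of them while every other $R_{c'}$ fails. Consequently an $\calL_\calH$-template $M$ with domain $V$ is the same data as a complete $(k, 2^C \setminus \emptyset)$-graph $(V, H_M)$ via $H_M(A) = \{c \in C : p_c \in Ch_M(A)\}$, and a full subpattern of $M$ is precisely a $(k,C)$-graph $(V, H')$ with $H'(A) \in H_M(A)$ for every $A$. Under this dictionary $\calH$-random templates correspond exactly to $\calP$-good $(k, 2^C \setminus \emptyset)$-graphs. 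Since $\calL$ contains only relations of arity $k = r_\calL$, Remark \ref{specialob0} gives the clean product formula $sub(M) = \prod_{A \in \binom{V}{k}} |Ch_M(A)| = \prod_{A \in \binom{V}{k}} |H_M(A)|$, so
\begin{equation*}
\log_2 \ex(n,\calH) = \max(n, \calP)\binom{n}{k}.
\end{equation*}

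Finally, Theorem \ref{densityexists} gives that $\pi(\calH) = \lim_n \ex(n,\calH)^{1/\binom{n}{k}}$ exists, which is equivalent to saying that $\max(n,\calP)$ converges to $\log_2 \pi(\calH)$. Applying Theorem \ref{enumeration} then splits into two cases. If $\pi(\calH) > 1$, then $|\calP_n| = |\calH_n| = \pi(\calH)^{\binom{n}{k} + o(n^k)} = 2^{(\log_2 \pi(\calH))\binom{n}{k} + o(n^k)} = 2^{(\max(n,\calP) + o(1))\binom{n}{k}}$, using $\max(n,\calP) \to \log_2 \pi(\calH)$ to absorb the error. If $\pi(\calH) \leq 1$, then by Observation \ref{ob5}(b) either $\calH$ is trivial (and the bound is vacuous for large $n$) or $\pi(\calH) = 1$; in the latter case $\max(n,\calP) \to 0$, so $(\max(n,\calP)+o(1))\binom{n}{k} = o(n^k)$, which matches $|\calH_n| = 2^{o(n^k)}$ from Theorem \ref{enumeration}.

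The main technical point, and the step I would check most carefully, is the template-to-graph dictionary: I need to confirm that the symmetry axiom (part (2) of Definition \ref{templatedef}) forces an $\calL_\calH$-template to be invariant under permutations of the vertices of each edge, so that $H_M$ is well-defined on $\binom{V}{k}$; and that the error class $\calE$ is empty here (since $\calL$ has no relations of arity below $k$), which is precisely what makes the product formula for $sub(M)$ exact and lets the two enumeration statements line up without slack.
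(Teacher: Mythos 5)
Your proposal is correct and follows essentially the same route as the paper's appendix proof: the same language $\{R_c : c\in C\}$, the same dictionary identifying $\calL_{\calH}$-templates with complete $(k,2^C\setminus\{\emptyset\})$-graphs, full subpatterns with color selections, and $\calH$-random templates with $\calP$-good graphs, leading to $\ex(n,\calH)=2^{\max(n,\calP)\binom{n}{k}}$ and then Theorem \ref{enumeration}. The only cosmetic differences are that you invoke Remark \ref{specialob0} for the product formula where the paper proves it via the explicit subpattern bijection, and you redo inline the case analysis that the paper packages as the restated Theorem \ref{enumeration2}.
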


The goal of this section is to reprove Theorem \ref{ishigamisthm} using the machinery of this paper.  We begin by interpreting the basic definitions from this paper in the setting of colored hypergraphs.  Fix $k\geq 2$, a finite set $C$, and a hereditary property of $(k,C)$-graphs $\calP$.  Assume that for all $c\in C$, $\calP$ contains structures $G=(V,H)$ such that there is $e\in {V\choose k}$ with $H(e)=c$ (otherwise replace $C$ with a smaller set).  Let $\calL=\{E_c(x_1,\ldots, x_k): c\in C\}$ consist of a single $k$-ary relation symbol for every color $c\in C$.  We will consider $(k,C)$-graphs as $\calL$-structures in the obvious way, namely, given a $(k,C)$-graph $G=(V,H)$, $\abar \in V^k$, and $c\in C$, define $G\models E_c(\abar)$ if and only if $|\cup \abar|=k$ and $c\in H(\cup \abar)$.  Since $r_{\calL}=k$, $\calL_{\calP}=\{R_p(x_1,\ldots, x_k): p\in S_k(\calP)\}$.  For each $c\in C$, define $q_c(x_1,\ldots, x_k)$ to be the following set of formulas, where $\xbar=(x_1,\ldots, x_k)$:
\begin{align*}
&\{x_i\neq x_j: i\neq j\}\cup \{E_c(\mu(\xbar)): \mu \in Perm(k)\}\cup \\
&\{\neg R_{c'}(\mu(\xbar)): c'\neq c\in C, \mu \in Perm(k)\}\cup \{\neg R(\xbar'): \cup \xbar'\subsetneq \xbar\}.
\end{align*}
Let $p_c(\xbar)$ be the unique quantifier-free $k$-type containing $q_c(\xbar)$.  We leave it to the reader to verify that $S_k(\calP)=\{p_c(\xbar): c\in C\}$ (note this uses the assumption that $\calP$ contains structures with edges colored by $c$ for each $c\in C$).  The following is an important definition which allows us to translate Ishigami's result to our setting.

\begin{definition} Suppose $G$ is a complete $\calL_{\calP}$-structure with domain $V$.  The \emph{$(k, 2^{C}\setminus \{\emptyset\})$-graph associated to $G$} is $\Psi(G):=(V, H)$, where for each $e\in {V\choose k}$, 
$$
H(e)=\{c\in C: \text{for some enumeration $\bar{e}$ of $e$, $G\models R_{p_c}(\bar{e})$}\}.
$$
\end{definition}

Given a $(k, 2^{C}\setminus \{\emptyset\})$-graph $(V,H)$, let $\Psi^{-1}(V,H)$ be the $\calL_{\calP}$-structure with domain $V$ such that for each $\abar\in V^k$ and $c\in C$, $G\models R_{p_c}(\abar)$ if and only if $|\cup \abar|=k$ and $c\in H(\cup \abar)$.  We leave the following observations to the reader.

\begin{observation}\label{obchg}
Suppose $(V,H)$ is a $(k, 2^{C}\setminus \{\emptyset\})$-graph. Then
\begin{enumerate}[(a)]
\item  $\Psi(\Psi^{-1}(V,H))=(V,H)$ and 
\item $\Psi^{-1}(V,H)$ is an $\calL_{\calP}$-template. 
\end{enumerate}
\end{observation}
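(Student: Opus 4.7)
The plan is to verify the two parts in sequence, each by unwinding the relevant definitions. For part (a), I would let $G = \Psi^{-1}(V,H)$ and $\Psi(G) = (V,H')$, then show $H'(e) = H(e)$ for every $e \in \binom{V}{k}$ by direct double containment. If $c \in H'(e)$, then some enumeration $\bar{e}$ of $e$ satisfies $G \models R_{p_c}(\bar{e})$, which by the definition of $\Psi^{-1}(V,H)$ forces $c \in H(\cup \bar{e}) = H(e)$. Conversely, if $c \in H(e)$, then for any fixed enumeration $\bar{e}$ of $e$ we have $|\cup \bar{e}| = k$ and $c \in H(\cup \bar{e})$, so $G \models R_{p_c}(\bar{e})$ by definition, giving $c \in H'(e)$.

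For part (b), I would verify each clause of Definition \ref{templatedef} in turn. Completeness follows because $(V,H)$ being a $(k, 2^C \setminus \{\emptyset\})$-graph means $H(A)$ is nonempty for every $A \in \binom{V}{k}$, so picking any $c \in H(A)$ and any enumeration $\abar$ of $A$ yields $G \models R_{p_c}(\abar)$ directly from the definition of $\Psi^{-1}$. Clause (1) of Definition \ref{templatedef} is immediate: if $\abar \in V^k \setminus V^{\underline{k}}$ then $|\cup \abar| < k$, so by construction $G \models \neg R_{p_c}(\abar)$ for every $c \in C$, and since $S_k(\calP) = \{p_c : c \in C\}$, we are done.

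Clause (2) requires a brief syntactic observation about the types $p_c$. Each set $q_c(\xbar)$ is invariant under permutations of $\xbar$ by construction (all the edge-relation formulas range over $\mu(\xbar)$ for every $\mu \in \mathrm{Perm}(k)$), so $p_c(\xbar) = p_c(\mu(\xbar))$ for every $\mu$. Hence if $p(\xbar) = p'(\mu(\xbar))$ with $p = p_c$ and $p' = p_{c'}$, then $p_c = p_{c'}$, which forces $c = c'$ since distinct colors yield distinct types. Now for $\abar \in V^{\underline{k}}$ we have $\cup \abar = \cup \mu(\abar)$, and $G \models R_{p_c}(\abar) \iff c \in H(\cup \abar) = H(\cup \mu(\abar)) \iff G \models R_{p_{c'}}(\mu(\abar))$, finishing the verification.

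No step is really an obstacle here, as everything is a mechanical unfolding of definitions; the only place one has to be slightly careful is clause (2), where one must notice that the types $p_c$ are symmetric in their variables so that the only pairs $(p, p', \mu)$ satisfying $p(\xbar) = p'(\mu(\xbar))$ are those with $p = p'$, and then the remaining equivalence follows because $\cup \abar$ does not depend on the ordering of $\abar$.
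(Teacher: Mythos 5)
Your proof is correct; the paper offers no argument here (the observation is explicitly left to the reader), and your mechanical unwinding of the definitions of $\Psi$, $\Psi^{-1}$, and Definition \ref{templatedef} is exactly what is intended, including the one point that genuinely needs saying, namely that each $p_c(\xbar)$ is invariant under permutations of its variables, so the only instances of clause (2) of Definition \ref{templatedef} are those with $p=p'$, which then follow because $\cup\abar=\cup\mu(\abar)$. The only cosmetic remark is that completeness of $\Psi^{-1}(V,H)$ (part of (b)) is what licenses applying $\Psi$ in part (a), so one may as well record it first, as you effectively do.
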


\begin{proposition}\label{bijection}
Suppose $G$ is an $\calL_{\calP}$-template with domain $V$ and $(V,H)=\Psi(G)$.  Then for any $(k,C)$-graph $M=(V,H')$, $M\unlhd_pG$ if and only if for all $e\in {V\choose k}$, $H'(e)\in H(e)$.
\end{proposition}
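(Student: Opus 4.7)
The plan is to unpack both directions directly from the definitions, with the main technical step being to identify $Ch_G(e)$ cleanly using the symmetry of the types $p_c$.

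First I would compute the sets $Ch_G(e)$ for $e\in{V\choose k}$. Each $p_c(\xbar)\in S_k(\calP)$ is symmetric under $Perm(k)$: by construction $p_c(\xbar)$ says ``$E_c$ holds on every permutation and no other $E_{c'}$ does,'' so $p_c(\xbar)=p_c(\mu(\xbar))$ for all $\mu\in Perm(k)$. Consequently, given any enumeration $\bar e$ of $e$, $p_c(c_{\bar e})$ is independent of the choice of $\bar e$ as an element of $S_k(C_V,\calP)$. Since $G$ is an $\calL_\calP$-template, part (2) of Definition \ref{templatedef} gives that $G\models R_{p_c}(\bar e)$ for some enumeration iff it holds for every enumeration. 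Combining this with the definition of $\Psi(G)=(V,H)$ yields the clean description
\[
Ch_G(e)=\{p_c(c_{\bar e}):c\in H(e)\},
\]
for any fixed enumeration $\bar e$ of $e$.

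Next I would compute $Diag^M(e)$. Since $M=(V,H')$ is a $(k,C)$-graph viewed as an $\calL$-structure, the unique color $H'(e)$ determines the full quantifier-free type of $\bar e$ in $M$: we get $Diag^M(e)=p_{H'(e)}(c_{\bar e})$ for any enumeration $\bar e$ of $e$.

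With these identifications, the equivalence is immediate. If $M\unlhd_p G$, there exists $\chi\in Ch(G)$ with $\chi(e)=Diag^M(e)$ for every $e\in{V\choose k}$, so $p_{H'(e)}(c_{\bar e})=\chi(e)\in Ch_G(e)=\{p_c(c_{\bar e}):c\in H(e)\}$, which forces $H'(e)\in H(e)$. Conversely, if $H'(e)\in H(e)$ for all $e$, define $\chi(e)=p_{H'(e)}(c_{\bar e})$; this lies in $Ch_G(e)$ by the description above, so $\chi$ is a choice function for $G$, and by construction $\chi(e)=Diag^M(e)$ for all $e\in{V\choose k}$, giving $M\unlhd_\chi G$ and hence $M\unlhd_p G$ (note $dom(M)=V=dom(G)$ by hypothesis). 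The only subtle point to verify carefully is the symmetry/enumeration-independence in the first paragraph, but this is a direct consequence of the explicit form of $p_c$ together with the template axioms.
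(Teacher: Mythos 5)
Your proposal is correct and follows essentially the same route as the paper: both directions are handled by noting that $H'(e)$ determines $Diag^M(e)=p_{H'(e)}(c_{\bar e})$ and, in the converse, by defining the choice function $\chi(e)=p_{H'(e)}(c_{\bar e})$, exactly as in the paper's proof. Your explicit identification $Ch_G(e)=\{p_c(c_{\bar e}):c\in H(e)\}$ via the symmetry of the types $p_c$ is just a slightly more spelled-out version of what the paper does implicitly through the definition of $\Psi(G)$.
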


\begin{proof}
Suppose $M=(V,H')\unlhd_pG$.  We want to show that for all $e\in {V\choose 2}$, $H'(e)\in H(e)$.  Fix $e\in {V\choose k}$ and let $c=H'(e)$.  Note this means $qftp^M(\ebar)=p_c(\xbar)$ where $\ebar$ is any enumeration of $e$.  Then $M\unlhd_pG$ implies there is an enumeration $\ebar$ of $e$ such that $G\models R_{p_c}(\ebar)$.  By definition of $\Psi(G)$, $c\in H(e)$.  Thus $H'(e)\in H(e)$.  Conversely, suppose $M=(V,H')$ and for all $e\in {V\choose k}$, $H'(e)\in H(e)$.  We define a choice function $\chi$ for $G$.  Fix $e\in {V\choose k}$ and let $c=H'(e)$.  Then by assumption, $c\in H(e)$, which implies by definition of $\Psi$, $G\models R_{p_c}(\ebar)$ for some enumeration $\ebar$ of $e$.  Define $\chi(e)=p_c(c_{\ebar})$.   Then by construction, $\chi\in Ch(G)$ and $M\unlhd_{\chi}G$, thus $M\unlhd_p G$.
\end{proof}

\begin{proposition}\label{subcount}
If $V$ is a finite set, $G$ is an $\calL_{\calP}$-template with domain $V$, and $\Psi(G)=(V,H)$.  Then $sub(G)=\prod_{e\in {V\choose k}}|H(e)|$.
\end{proposition}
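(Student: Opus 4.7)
The plan is to reduce the count of full subpatterns to a product of choice-function sizes, and then match those sizes with the cardinalities $|H(e)|$.

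First I would invoke Remark \ref{specialob0}. The language $\calL = \{E_c : c \in C\}$ consists only of $k$-ary relations, and we have set $r = r_{\calL} = k$, so $\calL$ contains no relations of arity strictly less than $r$. Consequently $\calE = \emptyset$, every complete $\calL_{\calP}$-structure is automatically error-free, and Remark \ref{specialob0} gives
$$sub(G) = \prod_{A \in {V \choose k}} |Ch_G(A)|.$$

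Second, I would show $|Ch_G(A)| = |H(A)|$ for every $A \in {V \choose k}$. The key point is the full symmetry of the types $p_c$: from the definition of $q_c(\xbar)$ one sees that $q_c$ is invariant under any permutation of its variables, hence so is the unique complete extension $p_c$, i.e.\ $p_c(\xbar) = p_c(\mu(\xbar))$ for every $\mu \in Perm(k)$. Two consequences follow. On the one hand, for any enumeration $\bar{e}$ of $A$ and any $\mu \in Perm(k)$, the formal object $p_c(c_{\bar{e}})$ in $S_k(C_V,\calP)$ is literally the same set of formulas as $p_c(c_{\mu(\bar{e})})$, so it depends only on $A$ and $c$. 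On the other hand, since $G$ is an $\calL_{\calP}$-template, clause (2) of Definition \ref{templatedef} ensures that $G \models R_{p_c}(\bar{e})$ iff $G \models R_{p_c}(\mu(\bar{e}))$. Fix any enumeration $\bar{e}$ of $A$. Unwinding the definition of $Ch_G(A)$ and of $\Psi(G) = (V,H)$, we get
$$Ch_G(A) = \{p_c(c_{\bar{e}}) : c \in H(A)\}.$$
Distinct colors give distinct types (e.g.\ $E_c(\xbar) \in p_c$ but $\neg E_c(\xbar) \in p_{c'}$ whenever $c \neq c'$), so this set has exactly $|H(A)|$ elements.

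Combining the two steps yields $sub(G) = \prod_{e \in {V \choose k}} |H(e)|$, as claimed. The only substantive issue is the symmetry argument in step two — one must notice that both the template condition on $G$ and the syntactic equality $p_c(c_{\bar{e}}) = p_c(c_{\mu(\bar{e})})$ conspire to collapse what a priori look like $k!$-many contributions per color down to exactly one. Everything else is a direct application of results already proved in the paper.
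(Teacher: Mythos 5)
Your proof is correct, but it takes a genuinely different route from the paper's. The paper proves Proposition \ref{subcount} as an immediate corollary of Proposition \ref{bijection}: full subpatterns of $G$ are exactly the $(k,C)$-graphs $(V,H')$ with $H'(e)\in H(e)$ for every $e$, and counting such colorings directly gives $\prod_{e}|H(e)|$. You instead bypass Proposition \ref{bijection} entirely and argue through the general machinery of Section \ref{tildeLstructures}: Remark \ref{specialob0} (legitimate here, since $\calL=\{E_c: c\in C\}$ has only $k$-ary relations, and the paper itself notes the remark covers colored hypergraphs) gives $sub(G)=\prod_A|Ch_G(A)|$, and then the full symmetry of each $p_c$ under $Perm(k)$, combined with clause (2) of Definition \ref{templatedef}, collapses $Ch_G(A)$ to $\{p_c(c_{\bar e}): c\in H(A)\}$, which has exactly $|H(A)|$ elements since distinct colors give distinct types. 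Both arguments are sound; the paper's is shorter given that Proposition \ref{bijection} has already been established (and is reused later, e.g.\ in Proposition \ref{idk}), while yours isolates exactly which structural facts are needed (completeness, error-freeness, and symmetry of the $r$-types) and would transfer verbatim to any hereditary property whose $r$-types are permutation-invariant, at the cost of the extra syntactic bookkeeping in your second step. One small point worth making explicit if you keep your version: the identification $S_k(\calP)=\{p_c:\, c\in C\}$, which your description of $Ch_G(A)$ silently uses, relies on the standing assumption that every color in $C$ actually occurs in some member of $\calP$.
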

\begin{proof}
By Proposition \ref{bijection}, $sub(G)=|\{M: M\unlhd_pG\}|$ is the same as the number of $(k,C)$-graphs $(V,H')$ with the property that for all $e\in {V\choose k}$, $H'(e)\in H(e)$.  Clearly the number of such $(k,C)$-graphs is $\prod_{e\in {V\choose k}}|H(e)|$.
\end{proof}

\begin{proposition}\label{idk}
Let $V$ be a set.  If $(V,H)$ is a $\calP$-good $(k,2^C\setminus \{\emptyset\})$-graph, then $\Psi^{-1}(V,H)\in \calR(V,\calP)$.  If $G\in \calR(V,\calP)$, then $\Psi(G)$ is a $\calP$-good $(k,2^C\setminus \{\emptyset\})$-graph.
\end{proposition}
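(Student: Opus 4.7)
\textbf{Proof proposal for Proposition \ref{idk}.}

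The plan is to prove each direction by combining Proposition \ref{bijection} (which identifies full subpatterns of $\Psi^{-1}(V,H)$ with $(k,C)$-graphs whose edge colors are choices from $H$) with the characterization of $\calP$-randomness in terms of choice functions and Proposition \ref{Lrandom}. Throughout, note that since $\calL = \{E_c : c \in C\}$ consists only of $k$-ary relation symbols and $r_{\calL} = k$, the class $\calE$ of errors is empty by definition, so every complete $\calL_{\calP}$-structure is automatically error-free. This observation is what will make both directions essentially formal.

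For the forward direction, suppose $(V,H)$ is $\calP$-good. By Observation \ref{obchg}(b), $\Psi^{-1}(V,H)$ is an $\calL_{\calP}$-template, hence complete. Because $\calE = \emptyset$, it is vacuously error-free, so Proposition \ref{Lrandom} guarantees that for every $\chi \in Ch(\Psi^{-1}(V,H))$ there is an $\calL$-structure $M$ with $M \unlhd_{\chi} \Psi^{-1}(V,H)$. Writing $M = (V,H')$ as a $(k,C)$-graph and invoking Proposition \ref{bijection} together with $\Psi(\Psi^{-1}(V,H)) = (V,H)$ (Observation \ref{obchg}(a)), we get $H'(e) \in H(e)$ for every $e \in {V \choose k}$. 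By the definition of $\calP$-good, this forces $M \in \calP$, proving $\Psi^{-1}(V,H) \in \calR(V,\calP)$.

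For the converse, suppose $G \in \calR(V,\calP)$ and write $\Psi(G) = (V,H)$. First, $(V,H)$ is indeed a $(k, 2^C \setminus \{\emptyset\})$-graph: if $e \in {V\choose k}$, completeness of $G$ gives an enumeration $\bar e$ of $e$ and some $R_{p_c} \in \calL_{\calP}$ with $G \models R_{p_c}(\bar e)$, so $c \in H(e)$. To verify $\calP$-goodness, take any $(k,C)$-graph $(V,H')$ with $H'(e) \in H(e)$ for all $e$. By Proposition \ref{bijection}, $(V,H') \unlhd_p G$, so $(V,H') \in \calP$ because $G$ is $\calP$-random. This completes the proof.

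The main thing to verify carefully is that the setup really puts us in the $\calE = \emptyset$ regime (so that no error-freeness hypothesis needs to be separately checked) and that the dictionaries $\Psi$ and $\Psi^{-1}$ interact correctly with completeness and choice functions; the substantive content has already been packaged into Observation \ref{obchg} and Proposition \ref{bijection}, so no genuine obstacle remains.
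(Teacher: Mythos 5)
Your proof is correct and takes essentially the same route as the paper's: both directions reduce to Observation \ref{obchg} and Proposition \ref{bijection}, exactly as in the paper. The only difference is that you explicitly invoke $\calE=\emptyset$ together with Proposition \ref{Lrandom} to guarantee that every choice function actually chooses a full subpattern (a point the paper's proof leaves implicit, in line with Remark \ref{specialob0}) and you also verify $H(e)\neq\emptyset$ in the converse; neither changes the substance.
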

\begin{proof}
Suppose first $(V,H)$ is a $\calP$-good $(k,2^C\setminus \{\emptyset\})$-graph.  By part (a) of Observation \ref{obchg}, $\Psi^{-1}(V,H)$ is an $\calL_{\calP}$-template with domain $V$.  To show $\Psi^{-1}(V,H)$ is $\calP$-random, fix $M\unlhd_p \Psi^{-1}(V,H)$, say $M=(V,H')$.  We want to show $M\in \calP$.  By part (b) of Observation \ref{obchg}, $\Psi(\Psi^{-1}(V,H))=(V,H)$.  Therefore, Proposition \ref{bijection} implies for all $e\in {V\choose k}$, $H'(e)\in H(e)$.  Since $(V,H)$ is $\calP$-good, this implies $M\in \calP$.  Thus $\Psi^{-1}(V,H)\in \calR(V,\calP)$.  

Suppose $G\in \calR(V,\calP)$.  Let $\Psi(G)=(V,H)$.  To show $(V,H)$ is a $\calP$-good $(k,2^C\setminus \{\emptyset\})$-graph, let $M=(V,H')$ be such that for all $e\in {V\choose k}$, $H'(e)\in H(e)$.  By Proposition \ref{bijection}, $M\unlhd_pG$.  Since $G$ is $\calP$-random, $M\in \calP$.
\end{proof}

\begin{corollary}\label{coloredcor}
For all $n$, $\ex(n,\calP)=2^{\max(n,\calP){n\choose k}}$..
\end{corollary}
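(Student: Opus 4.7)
The plan is to combine Propositions \ref{subcount} and \ref{idk} to transfer the extremal problem defining $\ex(n,\calP)$ into an equivalent extremal problem over $\calP$-good $(k, 2^C\setminus\{\emptyset\})$-graphs, which is precisely the one defining $\max(n,\calP)$. First, I would reduce $\ex(n,\calP) = \max\{sub(M): M\in \calR(n,\calP)\}$ to a maximum over $\calR([n],\calP)$. Since $sub(\cdot)$ is invariant under $\calL_{\calP}$-isomorphism and every element of $\calR(n,\calP)$ is isomorphic to some element of $\calR([n],\calP)$, the two maxima coincide.

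Next, I would invoke the correspondence between $\calL_{\calP}$-templates and $(k,2^C\setminus\{\emptyset\})$-graphs. By Observation \ref{obchg}(a) and Proposition \ref{idk}, the map $G\mapsto \Psi(G)$ sends $\calR([n],\calP)$ into the set of $\calP$-good $(k,2^C\setminus\{\emptyset\})$-graphs on $[n]$, and it is inverted by $\Psi^{-1}$ on that set. Thus $\Psi$ is a bijection between $\calR([n],\calP)$ and the family of $\calP$-good $(k,2^C\setminus\{\emptyset\})$-graphs on $[n]$.

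For each $G\in\calR([n],\calP)$ with $\Psi(G)=([n],H)$, Proposition \ref{subcount} gives $sub(G) = \prod_{e\in {[n]\choose k}}|H(e)|$. Taking logarithms base $2$ and using the bijection above, I would write
\begin{align*}
\log_2\ex(n,\calP) &= \max\Big\{\sum_{e\in {[n]\choose k}}\log_2|H(e)|: ([n],H)\text{ is a $\calP$-good $(k,2^C\setminus\{\emptyset\})$-graph}\Big\}\\
&= \max(n,\calP){n\choose k},
\end{align*}
where the last equality is the definition of $\max(n,\calP)$. Exponentiating yields $\ex(n,\calP) = 2^{\max(n,\calP){n\choose k}}$.

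There is essentially no obstacle here: the corollary is a formal consequence of the dictionary established in Propositions \ref{bijection}, \ref{subcount}, and \ref{idk}. The only point requiring a sentence of care is the reduction from $\calR(n,\calP)$ to $\calR([n],\calP)$, which follows from isomorphism invariance of $sub$.
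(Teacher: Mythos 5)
Your proposal is correct and follows essentially the same route as the paper: transfer via $\Psi$ and $\Psi^{-1}$ using Proposition \ref{idk} together with Proposition \ref{subcount}, which the paper organizes as two inequalities rather than an explicit equality of maxima. The only cosmetic difference is your claim that $\Psi$ is a bijection (injectivity, i.e.\ $\Psi^{-1}(\Psi(G))=G$, is not stated in Observation \ref{obchg} and is not actually needed -- surjectivity via $\Psi^{-1}$ plus the $sub$-to-product identity already gives both directions), and your explicit reduction from $\calR(n,\calP)$ to $\calR([n],\calP)$, which the paper leaves implicit.
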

\begin{proof}
We first show $\ex(n,\calP)\leq 2^{\max(n,\calP){n\choose k}}$.  Fix $G\in \calR_{ex}([n],\calP)$ and let $\Psi(G)=([n],H)$.  By Proposition \ref{idk}, $\Psi(G)$ is a $\calP$-good $(k,2^C\setminus \{\emptyset\})$-graph, so by definition of $\max(n,\calP)$, we have that $\max(n,\calP)\geq \frac{1}{{n\choose k}}\sum_{e\in {[n]\choose k}}\log_2|H(e)|$.  Therefore 
$$
2^{\max(n,\calP){n\choose k}}\geq 2^{\sum_{e\in {[n]\choose k}}\log_2|H(e)|}=\prod_{e\in {[n]\choose k}}|H(e)|=sub(G),
$$
where the last equality is by Proposition \ref{subcount}.  This shows $2^{\max(n,\calP){n\choose k}}\geq sub(G)=ex(n,\calP)$, where the equality is because $G\in \calR_{ex}([n],\calP)$.  We now show $\ex(n,\calP)\geq 2^{\max(n,\calP){n\choose k}}$.  Let $G'=([n],H)$ be a $\calP$-good $(k, 2^{C}\setminus \{\emptyset\})$-graph such that $\max(n,\calP)=\frac{1}{{n\choose k}}\sum_{e\in {[n]\choose k}}\log_2|H(e)|$.  Let $G=\Psi^{-1}(G')$.  Proposition \ref{idk} implies $G\in \calR([n],\calP)$, so $sub(G)\leq ex(n,\calP)$.  By Proposition \ref{subcount}, $sub(G)=\prod_{e\in {[n]\choose k}}|H(e)|$.  Thus we have
$$
\prod_{e\in {[n]\choose k}}|H(e)|= 2^{\sum_{e\in {[n]\choose k}}\log_2|H(e)|}=2^{\max(n,\calP){n\choose k}},
$$
where the last equality is by choice of $G'$.  Therefore $2^{\max(n,\calP){n\choose k}}=sub(G)\leq ex(n,\calP)$, as desired.  
\end{proof}

We now give a restatement of Theorem \ref{enumeration} which will be convenient for us.

\begin{theorem}[Restatement of Theorem \ref{enumeration}]\label{enumeration2}
Suppose $\calL$ is a finite relational language satisfying $r=r_{\calL}\geq 2$ and $\calH$ is a hereditary $\calL$-property.  Then $|\calH_n|=ex(n,\calH)2^{o(n^r)}$.
\end{theorem}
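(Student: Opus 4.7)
The plan is to derive Theorem \ref{enumeration2} directly from Theorem \ref{enumeration} together with Observation \ref{ob5}, by a case analysis on $\pi(\calH)$. The key identity needed is that $\log \ex(n,\calH) = {n\choose r}\log \pi(\calH) + o(n^r)$, which follows from the fact (established in the proof of Theorem \ref{densityexists}) that the sequence $b_n := \ex(n,\calH)^{1/{n\choose r}}$ is non-increasing and converges to $\pi(\calH)$. Indeed, writing $b_n = \pi(\calH) + \epsilon_n$ with $\epsilon_n \downarrow 0$ (when $\pi(\calH) > 0$), we have $\log b_n = \log \pi(\calH) + o(1)$, so multiplying by ${n\choose r} = \Theta(n^r)$ yields $\log \ex(n,\calH) = {n\choose r}\log \pi(\calH) + o(n^r)$.

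First I would dispose of the trivial case: if $\calH$ is trivial, then for large $n$ we have $\calH_n = \emptyset$ and $\calR(n,\calH) = \emptyset$, so both $|\calH_n|$ and $\ex(n,\calH)$ equal zero and the equality $|\calH_n| = \ex(n,\calH) \cdot 2^{o(n^r)}$ holds vacuously. Next, for nontrivial $\calH$ with $\pi(\calH) > 1$, Theorem \ref{enumeration}(1) gives $\log |\calH_n| = {n\choose r}\log \pi(\calH) + o(n^r)$, and the identity above gives the same expansion for $\log \ex(n,\calH)$, so subtracting yields $\log(|\calH_n|/\ex(n,\calH)) = o(n^r)$, which is the desired statement.

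For the remaining case $\pi(\calH) = 1$ (nontrivial, not fast-growing), Theorem \ref{enumeration}(2) gives $\log |\calH_n| = o(n^r)$. On the other hand, Observation \ref{ob5}(a) gives $\ex(n,\calH) \geq \pi(\calH)^{n\choose r} = 1$, while the non-increasing convergence $b_n \to 1$ gives $\log \ex(n,\calH) \leq {n\choose r}\log b_n = o(n^r)$. Combined with the lower bound $|\calH_n| \geq \ex(n,\calH)$ (noted at the end of the proof of Theorem \ref{enumeration}, since every full subpattern of an extremal $M \in \calR_{ex}([n],\calH)$ lies in $\calH_n$), we obtain $0 \leq \log |\calH_n| - \log \ex(n,\calH) \leq \log |\calH_n| = o(n^r)$, which again gives $|\calH_n| = \ex(n,\calH) \cdot 2^{o(n^r)}$.

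There is no substantive obstacle here; the proof is essentially a bookkeeping exercise translating between the two equivalent ways of expressing the enumeration (multiplicatively via $\pi(\calH)^{{n\choose r}+o(n^r)}$ versus via $\ex(n,\calH) \cdot 2^{o(n^r)}$). The only point one must be slightly careful about is that, in the case $\pi(\calH) = 1$, one cannot simply absorb $\ex(n,\calH)$ into the error term without first observing that $\ex(n,\calH)$ is itself of size $2^{o(n^r)}$; this in turn requires the convergence $b_n \to 1$ rather than just $b_n \geq 1$.
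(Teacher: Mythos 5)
Your proposal is correct and follows essentially the same route as the paper: the paper deduces Theorem \ref{enumeration2} from Theorem \ref{enumeration} by noting that, by the definition of $\pi(\calH)$ as the limit of the non-increasing sequence $\ex(n,\calH)^{1/{n\choose r}}$, the two asymptotic formulations coincide. You simply spell out the details the paper calls ``obvious'' (the expansion $\log \ex(n,\calH)={n\choose r}\log\pi(\calH)+o(n^r)$, the bound $|\calH_n|\geq \ex(n,\calH)$, and the care needed when $\pi(\calH)\leq 1$), which is a faithful elaboration rather than a different argument.
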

\begin{proof}
By Theorem \ref{enumeration}, it suffices to show 
\[
ex(n,\calH)2^{o(n^r)}=\begin{cases} \pi(\calH)^{{n\choose r}+o(n^r)} & \text{ if } \pi(\calH)>1.\\
2^{o(n^r)} &\text{ if }\pi(\calH)=1. \end{cases}
\]
This is obvious by definition of $\pi(\calH)=lim_{n\rightarrow \infty}ex(n,\calH)^{1/{n\choose r}}$.
\end{proof}

We now see that Theorem \ref{ishigamisthm} follows easily from Theorem \ref{enumeration2}.  
\vspace{3mm}

\noindent{\bf Proof of Theorem \ref{ishigamisthm}.} Suppose $\calP$ is a hereditary property of $(k,C)$-graphs.  Theorem \ref{enumeration2} and Corollary \ref{coloredcor} imply $|\calP_n|=\ex(n,\calP)2^{o(n^k)}=2^{\max(n,\calP){n\choose k}+o(n^k)}$.

\section{Digraphs omitting transitive tournamets}\label{dgsec}

In this section we consider results by K\"{u}hn, Oshtus, Townsend, and Zhao on asymptotic enumeration and structure of digraphs omitting transitive tournaments \cite{OKTZ}.  In particular we will reprove approximate structure and enumeration theorems from \cite{OKTZ} using the main results of this paper along with extremal and stability results from \cite{OKTZ}.  We include this example to demonstrate our results apply to structures in languages with non-symmetric relations.  We would like to point out that \cite{OKTZ} also investigates oriented graphs omitting transitive tournaments as well as digraphs and oriented graphs omitting cycles.  Moreover, their results go much further than the ones we state here, proving precise structure and enumeration results in various cases. 

\subsection{Statements of results from \cite{OKTZ}.}
We begin with some preliminaries on digraphs and statements of results from \cite{OKTZ}.  A \emph{digraph} is a pair $(V,E)$ where $V$ is a set of vertices and $E\subseteq V^{\underline{2}}$ is a set of \emph{directed edges}.   A \emph{tournament} on $k$ vertices is an orientation of the complete graph on $k$ vertices.  In other words, it is a digraph $(V,E)$ such that $|V|=k$ and for all $xy\in {V\choose 2}$ exactly one of $(x,y)$ or $(y,x)$ is in $E$.  A tournament $(V,E)$ is called \emph{transitive} if for all $x,y,z\in V$, $(x,y)\in E$ and $(y,z)\in E$ implies $(x,z)\in E$.  Suppose $G=(V,E)$ and $G'=(V',E')$ are digraphs.  We say $G'$ is a \emph{subdigraph} of $G$, denoted $G'\subseteq G$, if $V'\subseteq V$ and $E'\subseteq E$.  If $V=V'$ then $G$ is a \emph{full digraph} of $G'$, denoted $G\subseteqq G'$.  We say $G$ and $G'$ are \emph{isomorphic}, denoted $G\cong G'$, if there is a bijection $f:V\rightarrow V'$ such that for all $xy\in {V\choose 2}$, $(x,y)\in E$ if and only if $(f(x),f(y))\in E'$.  Given digraphs $H$ and $G$, we say $G$ is \emph{$H$-free} if there is no $G'\subseteq G$ with $G\cong H$.  For the rest of this section, $k\geq 2$ is a fixed integer and $T_{k+1}$ is a fixed transitive tournament on $k+1$ vertices.  The subject of this section is the following set of digraphs, where $n\in \mathbb{N}$.
$$
\Forb_{di}(n, T_{k+1})=\{G=([n],E): \text{ $G$ is a $T_{k+1}$-free digraph}\}.
$$
Recall that a $k$-partite Tur\'{a}n graph on a finite vertex set $V$ is a complete balanced $k$-partite graph on $V$ such that the sizes of the parts differ by at most $1$.  Let $T_k(n)$ be the set of $k$-partite Tur\'{a}n graphs with vertex set $[n]$ and let $t_k(n)$ be the number of edges in an element of $T_k(n)$.  Let $DT_k(n)$ be the set of digraphs which can be obtained from an element of $T_k(n)$ by replacing all the edges with two directed edges.  More precisely, $DT_k(n)$ is the set of digraphs $G=([n],E)$ such that for some $G'=([n], E')\in T_k(n)$, $E=\{(x,y)\in [n]^2: xy\in E'\}$.  Given a digraph $G=(V,E)$, set
\begin{align*}
f_1(G)&=\{xy\in {V\choose 2}: \text{ exactly one of $(x,y)$ or $(y,x)$ is in $E$}\},\\
f_2(G)&=\{xy\in {V\choose 2}: \text{ $(x,y)$ and $(y,x)$ are in $E$}\},\text{ and }\\
e(G)&= f_1(G)+\log_2(3)f_2(G).
\end{align*}
Observe that in this notation the number of full subdigraphs of $G$ is $2^{e(G)}$.  Let 
$$
max(n, T_{k+1})=\max \{e(G): G\in \Forb_{di}(n, T_{k+1})\}.
$$
This notion is called ``$\ex(n,T_{k+1})$'' in \cite{OKTZ}.  We have changed the notation to avoid confusion with Definition \ref{genexdef}.  A digraph $G\in \Forb_{di}(n, T_{k+1})$ is \emph{edge-extremal} if $e(G)=max(n,T_{k+1})$.   The following is Lemma 4.1 from \cite{OKTZ}.

\begin{theorem}[{\bf K\"{u}hn-Oshtus-Townsend-Zhao \cite{OKTZ}}]\label{digext}
For all $n\in \mathbb{N}$, $max(n,T_{k+1})=t_k(n)\log_2(3)$ and $DT_k(n)$ is the set of all edge-extremal elements of $\Forb_{di}(n, T_{k+1})$.
\end{theorem}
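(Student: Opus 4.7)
The plan is to prove Theorem \ref{digext} in two halves: that $DT_k(n)\subseteq \Forb_{di}(n, T_{k+1})$ with $e(G) = t_k(n)\log_2 3$ for every $G \in DT_k(n)$ (the easy construction), and that every $T_{k+1}$-free digraph $G$ on $[n]$ satisfies $e(G)\le t_k(n)\log_2 3$ with equality iff $G\in DT_k(n)$ (the hard part). For the construction, I first observe that the underlying undirected graph of any $G\in DT_k(n)$ is the Tur\'an graph $T_k(n)$, which is $K_{k+1}$-free; a copy of $T_{k+1}$ in $G$ would project to a $K_{k+1}$ in this underlying graph, a contradiction. Moreover $f_1(G)=0$ and $f_2(G)=t_k(n)$, so $e(G) = t_k(n)\log_2 3$, giving $\max(n, T_{k+1}) \geq t_k(n)\log_2 3$.

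For the upper bound, the plan is to reduce to the symmetric case $f_1(G)=0$. Starting from a $T_{k+1}$-free $G$, consider the digraph $G^s$ obtained by doubling every single-edge pair; this satisfies $e(G^s) - e(G) = (\log_2 3 - 1)f_1(G) \ge 0$, with equality iff $f_1(G)=0$. When $G^s$ is itself $T_{k+1}$-free, the argument concludes quickly: since every pair of $G^s$ is empty or doubled, $G^s$ is $T_{k+1}$-free iff its underlying graph $U(G^s)$ is $K_{k+1}$-free (any $(k+1)$-set whose underlying graph forms a $K_{k+1}$ admits a transitive tournament via the doubled edges). Tur\'an's theorem then gives $|E(U(G^s))|\le t_k(n)$, so $e(G)\le e(G^s) = \log_2 3\cdot |E(U(G^s))|\le t_k(n)\log_2 3$. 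Moreover equality forces $f_1(G)=0$ and $U(G^s)=T_k(n)$, whence $G=G^s\in DT_k(n)$.

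The main obstacle is handling the case where $G^s$ contains a copy of $T_{k+1}$ (equivalently, $U(G)$ contains a $K_{k+1}$ whose edges in $G$ admit no transitive orientation, as in a directed $3$-cycle when $k=2$). The plan for this is an iterative symmetrization: one doubles single-edge pairs one at a time whenever the doubling preserves $T_{k+1}$-freeness, and when a single-edge pair $\{x,y\}$ cannot be doubled, one extracts a partial-transitive $(k+1)$-configuration involving $x$, $y$, and $k-1$ witness vertices, trades it for a structural constraint on $G$, and shows that such configurations force enough ``lost'' weight in $e$ to keep $e(G)$ strictly below $t_k(n)\log_2 3$ unless all single edges are removed. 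This mirrors the Brown--Harary--Simonovits analysis for the edge count of $T_{k+1}$-free digraphs, reweighted by the $\log_2 3$ factor instead of the factor $2$ used for arcs. The technical heart of the argument is this local analysis of how single-edge pairs can sit inside potential $K_{k+1}$-cliques of $U(G)$; after finitely many moves one reduces to the symmetric case and concludes as above, with the uniqueness part of the extremal characterization recovering $DT_k(n)$.
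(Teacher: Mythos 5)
First, note that the paper does not prove Theorem \ref{digext} at all: it is imported verbatim from \cite{OKTZ} (their Lemma 4.1) and used as a black box, so there is no in-paper argument to compare yours against. What you have written is therefore an attempt at the underlying extremal theorem itself, and as such it has a genuine gap. The lower bound and the ``already symmetric'' case are fine: every $G\in DT_k(n)$ is $T_{k+1}$-free with $e(G)=t_k(n)\log_2 3$, and if doubling all single pairs of a $T_{k+1}$-free $G$ happens to preserve $T_{k+1}$-freeness, then Tur\'an's theorem applied to the underlying graph finishes the bound and the equality analysis. But the heart of the theorem is exactly the case you defer: when the symmetrized digraph $G^s$ contains $T_{k+1}$, i.e.\ when doubling is blocked. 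Your plan for that case (``extract a partial-transitive $(k+1)$-configuration \ldots trade it for a structural constraint \ldots show such configurations force enough lost weight'') is a description of what a proof would have to accomplish, not a proof; no local analysis is actually carried out, no inequality is derived for blocked configurations, and the uniqueness statement ($DT_k(n)$ is the \emph{only} extremal family) is simply asserted at the end.

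Moreover, the reduction as you state it does not go through: it is not true that after finitely many single-pair doublings one always reaches the symmetric case. Already for $k=2$, in a directed triangle $x\to y\to z\to x$ no single pair can be doubled (doubling $\{x,y\}$ creates the transitive triangle $y,z,x$, and similarly for the other pairs by symmetry), so the process terminates at a digraph that is neither symmetric nor covered by your quick argument; blow-ups of directed cycles give larger stuck examples. Bounding $e$ on such stuck digraphs, with the $\log_2 3$ weight on doubled pairs versus weight $1$ on single pairs, is precisely the Brown--Harary-type analysis you invoke by name, and it is where all of the difficulty (including the strictness needed for the characterization of extremal structures) resides. Either supply that analysis in full, or do what the paper does and cite \cite{OKTZ}; as it stands the proposal proves only the easy direction and the easy half of the hard direction.
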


\begin{definition}\label{deltaclosedg}
Given two digraphs $G=(V,E)$ and $G'=(V,E)$, let $\Delta(G,G')= E\Delta E'$.  Given $\delta>0$, we say $G$ and $G'$ are \emph{$\delta$-close}\footnote{Given $G$ and $G'$ two digraphs with the same vertex set of size $n$, write $G=G'\pm \delta n^2$ to denote that $G$ can be obtained from $G'$ by adding or removing at most $\beta n^2$ directed edges.  In \cite{OKTZ}, the authors deal with the notion of $\delta$-closeness given by $G=G'\pm \delta n^2$.  For large $n$, this is essentially interchangable (up to a factor of $2$) with Definition \ref{deltaclosedg}.  For convenience we will state their results using the notion in Definition \ref{deltaclosedg}.} if $|\Delta(G,G')|\leq \delta {n\choose 2}$.
\end{definition}
  Given $n$ and $\delta$, let 
\begin{align*}
DT^{\delta}_k(n)&=\{G\in \Forb_{di}(n, T_{k+1}): |\Delta(G,G')|\leq \delta {n\choose 2}\text{ some }G'\in DT_k(n)\}\text{ and }\\
\mathbb{DT}_k(n)&=\{G\in \Forb_{di}(n, T_{k+1}): G\subseteqq G'\text{ some }G'\in DT_k(n)\}\text{ and }\\
\mathbb{DT}^{\delta}_k(n)&=\{G\in \Forb_{di}(n, T_{k+1}): |\Delta(G,G')|\leq \delta {n\choose 2}\text{ some }G'\in \mathbb{DT}_k(n)\}
\end{align*}
The following is Lemma 4.3 from \cite{OKTZ}.

\begin{theorem}[{\bf K\"{u}hn-Oshtus-Townsend-Zhao \cite{OKTZ}}]\label{digstab}
Let $k\geq 2$.  For all $\delta>0$ there is $\beta>0$ such that the following holds for all sufficiently large $n$.  If $G\in \Forb_{di}(n, T_{k+1})$ satisfies that $e(G)\geq max(n,T_{k+1})-\epsilon {n\choose 2}$, then $G\in DT^{\delta}_k(n)$.
\end{theorem}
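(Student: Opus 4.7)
My plan is to use a digraph regularity plus stability argument, relying on the exact extremal characterization from Theorem~\ref{digext} as the backbone. The first step is to apply a digraph-adapted version of Szemer\'edi's regularity lemma to $G$, partitioning $V(G)$ into $m$ roughly equal parts $V_1,\ldots, V_m$ with most pairs $(V_i, V_j)$ simultaneously $\epsilon_0$-regular with respect to doubled edges, $V_i \to V_j$ directed edges, and $V_j \to V_i$ directed edges. Form a \emph{reduced digraph} $R$ on $[m]$: for each pair $ij$, place a doubled edge if the doubled-edge density exceeds a threshold $\alpha_0$, place the single directed edge $(i,j)$ if the $V_i \to V_j$ single-edge density exceeds $\alpha_0$ (and similarly for $(j,i)$), and leave $ij$ non-adjacent if all three densities are small.

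The key observation is that $R$ must be $T_{k+1}$-free: a copy of $T_{k+1}$ in $R$ would (by the counting/embedding lemma for digraphs, using either direction of a doubled edge as an orientation choice at each step) yield many copies of $T_{k+1}$ in $G$, contradicting $T_{k+1}$-freeness. Moreover, the weighted count satisfies $e(R) \geq e(G)/\bigl({n\choose 2}/{m\choose 2}\bigr) \geq t_k(m)\log_2 3 - \epsilon'{m\choose 2}$, provided $\epsilon_0$ and $\alpha_0$ are chosen small relative to $\epsilon$. Theorem~\ref{digext} then forces $e(R) \leq t_k(m)\log_2 3$ with the extremum realized only by $DT_k(m)$. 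Since $m$ is bounded (depending only on $\epsilon_0$), stability at scale $m$ is automatic: the gap between the extremum and the nearest non-extremal $T_{k+1}$-free digraph on $m$ vertices is a positive constant depending on $m$, so a near-extremal $R$ must coincide structurally with some $R^* \in DT_k(m)$, namely a balanced $k$-partition with doubled edges between parts and no edges inside parts.

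The final step is to lift this structure back to $G$. The $k$-partition of $R^*$ naturally induces a $k$-partition $V(G) = U_1 \cup \cdots \cup U_k$ where each $U_\ell$ is a union of the clusters $V_i$ assigned to the $\ell$-th part of $R^*$. Regularity together with the doubled-edge density in $R^*$ forces most pairs $xy \in U_i \times U_j$ (for $i \neq j$) to be doubled in $G$; conversely, regularity together with low density inside each $U_\ell$ forces most pairs inside $U_\ell$ to be non-edges. Cleaning the exceptional pairs (atypical vertices, irregular pairs, the at most $\alpha_0 {m\choose 2}$ ``light'' pairs within regular pairs) contributes at most $O(\epsilon_0 + \alpha_0){n\choose 2}$ to $|\Delta(G, DT)|$ for the $DT \in DT_k(n)$ obtained by doubling the balanced Tur\'an $k$-partition on $U_1,\ldots, U_k$; choosing $\epsilon_0, \alpha_0$ small enough in terms of $\delta$ then yields $G \in DT_k^\delta(n)$.

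The main obstacle is formulating a clean counting lemma for $T_{k+1}$ in the natural $4$-coloring of pairs (none, $V_i \to V_j$, $V_j \to V_i$, doubled), since a copy of $T_{k+1}$ in $G$ may use a mixture of singly-directed and doubly-directed edges with each doubled edge contributing an arbitrary choice of direction. I would handle this by embedding $T_{k+1}$ greedily into $R$ and using that in a regular pair either available orientation of a doubled edge has density bounded away from zero, so at each step the set of valid extensions is large. The subsequent cleaning and parameter choices are standard, but require care to ensure the constants $\epsilon, \epsilon_0, \alpha_0, \delta$ fit together and that the $\log_2 3$ weighting in $e(G)$ is faithfully transmitted to $e(R)$ in the reduction.
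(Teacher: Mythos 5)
You should first note that the paper does not prove this statement at all: it is imported verbatim from \cite{OKTZ} (their Lemma 4.3) and used as a black box, so your argument can only be judged on its own terms. Judged that way, it has a genuine gap at its central step, the claim that ``stability at scale $m$ is automatic.'' After the regularity reduction, all you can say about the reduced digraph $R$ on $m$ vertices is that $e(R)\geq \log_2 3\, t_k(m)-\epsilon'\binom{m}{2}$, where $\epsilon'$ unavoidably contains terms of order $\epsilon_0+\alpha_0+1/m$: the intra-cluster pairs alone may carry weight up to $\log_2 3\cdot n^2/(2m)$ in $G$ that is simply invisible in $R$, which at the reduced scale is a loss of order $m$, and the sub-threshold and irregular pairs contribute a further $O((\alpha_0+\epsilon_0)m^2)$. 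On the other hand, the ``gap'' you invoke is at most an absolute constant: taking an element of $DT_k(m)$ and turning one doubled cross-pair into a single directed edge keeps it $T_{k+1}$-free (the underlying adjacency graph is still $k$-partite) and lowers $e$ by only $\log_2 3-1$. So the deficiency of $R$ can never be forced below the gap — not by choosing $\epsilon$ small (the $Cm$ term remains), and not by choosing $\epsilon_0,\alpha_0$ small either, since those must be fixed before the regularity lemma determines $m\leq M(\epsilon_0)$, which is exactly the circular parameter dependence your sketch glosses over. Consequently you cannot conclude $e(R)=\log_2 3\, t_k(m)$ and invoke Theorem~\ref{digext}; at best you learn that $R$ is a near-extremal $T_{k+1}$-free digraph on $m$ vertices, i.e.\ you have reduced the stability statement for $n$-vertex digraphs to the very same stability statement for $m$-vertex digraphs. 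An exact extremal theorem does not by itself imply its stability version, so this step needs an independent argument, and with it the regularity machinery becomes superfluous.

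The embedding part of your plan (reduced digraph with four pair-types, greedy embedding of $T_{k+1}$ using that each retained reduced edge guarantees density at least $\alpha_0$ of pairs realizing the required orientation) and the final cleaning/lifting step are fine in outline; the failure is localized entirely in the ``near-extremal reduced digraph must be exactly extremal'' shortcut. To repair the proof you need a genuine stability input rather than a gap argument — for instance, the route taken in \cite{OKTZ} works directly from the extremal result with a defect (counting) argument, combined with graph stability for $K_{k+1}$-free graphs applied to the graph of doubled pairs (which is $K_{k+1}$-free, since $k+1$ pairwise doubled vertices would allow a transitive orientation), with no regularity lemma at all. As written, your proposal does not establish the theorem.
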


The next theorem we state is Lemma 5.1 in \cite{OKTZ}. In fact, a much stronger result is proven there, where the $o(n^2)$ error is replaces with $O(n)$. 
\begin{theorem}[{\bf K\"{u}hn-Oshtus-Townsend-Zhao \cite{OKTZ}}]\label{digen}
Let $k\geq 2$. $|\Forb_{di}(n,T_{k+1})|=3^{t_k(n)+o(n^2)}$.
\end{theorem}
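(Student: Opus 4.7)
The plan is to apply Theorem \ref{enumeration2} to the hereditary $\calL$-property $\calH=\bigcup_n\Forb_{di}(n,T_{k+1})$, where $\calL=\{E(x,y)\}$ is the binary edge language and each digraph is viewed as an $\calL$-structure in the obvious way. Since $r_{\calL}=2$ and $\calL$ has no relations of lower arity, Remark \ref{specialob0} gives $sub(M)=\prod_{\{x,y\}\in\binom{V}{2}}|Ch_M(\{x,y\})|$ for every complete $\calL_{\calH}$-template $M$, avoiding any error analysis. The type space $S_2(\calH)$ contains exactly four proper $2$-types $p_0,p_1,p_2,p_3$ (no edge, $x\to y$, $y\to x$, and both directions), with $p_0,p_3$ symmetric in their variables and $p_1,p_2$ interchanged by the nontrivial permutation of $\{x,y\}$.

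Following the metric-spaces blueprint of Section \ref{ms}, I would introduce a correspondence $\Psi$ between $\calL_{\calH}$-templates with domain $V$ and ``pattern digraphs'' $(V,c)$ where $c$ assigns to each unordered pair a subset of $\{p_0,p_1,p_2,p_3\}$, and prove analogues of Proposition \ref{translation}, Proposition \ref{0} and Proposition \ref{1}. These show that (i) the full subpatterns of $M$ biject with the digraphs obtained by picking one type per pair from the corresponding choice set, (ii) $sub(M)=\prod|Ch_M(\{x,y\})|$, and (iii) $M$ is $\calH$-random iff every such realization is $T_{k+1}$-free, so $\Psi$ carries $\calR_{ex}([n],\calH)$ onto the product-extremal $T_{k+1}$-valid patterns.

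The main combinatorial step is to compute $ex(n,\calH)$ using Theorem \ref{digext}. For the lower bound, I would construct an $\calH$-random template $M^*$ from a balanced $k$-partition $V_1\cup\dots\cup V_k$ of $[n]$: on within-part pairs set $Ch_{M^*}=\{p_0\}$, and on crossing pairs take $Ch_{M^*}$ to be the largest subset of $\{p_0,p_1,p_2,p_3\}$ consistent with $\calH$-randomness. $\calH$-randomness of $M^*$ is immediate from the pigeonhole argument that any $(k+1)$-subset contains two vertices in a common part, forcing at least one pair with no edge and therefore no $T_{k+1}$ in any full subpattern. For the upper bound, given any $\calH$-random template $M$ I would pass to the auxiliary digraph $G_M$ on $V$ with $(x,y)\in E(G_M)$ whenever some $p\in Ch_M(\{x,y\})$ realizes $E(c_x,c_y)$: a choice function selecting these realizations shows that any copy of $T_{k+1}$ in $G_M$ embeds into some full subpattern of $M$, so $G_M\in\Forb_{di}(n,T_{k+1})$, and Theorem \ref{digext} yields $e(G_M)\leq t_k(n)\log_2 3$. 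A per-pair bound converting this edge-weighted bound into a bound on $\prod|Ch_M(\{x,y\})|$ then finishes the computation.

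The hard part will be the per-pair matching: converting the edge-weighted extremal number $max(n,T_{k+1})=t_k(n)\log_2 3$ into the multiplicative bound on $sub(M)$ requires identifying which subsets of $\{p_0,p_1,p_2,p_3\}$ can simultaneously appear as choice sets in an extremal $\calH$-random template. This is the analogue of Lemmas \ref{extlemeven}--\ref{extlemodd} for metric spaces and may invoke Theorem \ref{GENSUPERSAT} to rule out ``oversized'' choice sets at extremal templates. Once $ex(n,\calH)=3^{t_k(n)(1+o(1))}$ is established, Theorem \ref{enumeration2} delivers $|\Forb_{di}(n,T_{k+1})|=ex(n,\calH)\cdot 2^{o(n^2)}=3^{t_k(n)+o(n^2)}$.
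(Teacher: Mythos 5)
Your high-level route is the same as the paper's: identify $\Forb_{di}(n,T_{k+1})$ with $\calP_n$ for a hereditary $\calL$-property, compute $\ex(n,\calP)$, and finish by Theorem \ref{enumeration2}; the paper carries out the extremal computation in Corollary \ref{digcor1} via the digraph-valued map $\Psi$, downward-closed templates, Corollary \ref{digcor2}, and Theorem \ref{digext}. The genuine gap is in your computation of $\ex(n,\calH)$, and it is not merely the deferred ``hard part'' --- the step you defer cannot be carried out as described. First, your lower-bound template $M^*$ does not give $3^{t_k(n)(1+o(1))}$: once the within-part pairs carry only the empty type, every full subpattern is a $k$-partite digraph, and any $k+1$ vertices contain two vertices in a common part with no edge between them, so every full subpattern is $T_{k+1}$-free regardless of what happens on crossing pairs. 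Hence the largest crossing-pair choice set consistent with $\calH$-randomness is all four types, digon type included, and by the product formula of Remark \ref{specialob0} your $M^*$ has $4^{t_k(n)}$ full subpatterns --- overshooting the value $3^{t_k(n)}$ you intend to establish. Second, and for the same reason, the per-pair conversion of $e(G_M)\leq t_k(n)\log_2 3$ into a bound on $\prod_{xy}|Ch_M(xy)|$ fails exactly at digon pairs of $G_M$: such a pair can carry a choice set of size $4$ while the weight $e$ charges it only $\log_2 3$, and Theorem \ref{GENSUPERSAT} cannot exclude size-$4$ choice sets, since $M^*$ shows they occur in bona fide $\calH$-random templates with many subpatterns.

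This is precisely where your bookkeeping and the paper's diverge. The paper identifies the full subpatterns of a downward-closed template $M$ with the full subdigraphs of $\Psi(M)$ and counts these as $2^{e(\Psi(M))}=2^{f_1}3^{f_2}$ (Corollary \ref{digcor2}), i.e.\ it charges a digon pair a factor of $3$, whereas your product-of-choice-sets accounting makes the factor $4$ explicit; with your accounting you cannot reproduce Corollary \ref{digcor1}, and as written your proposal simultaneously asserts and refutes $\ex(n,\calH)=3^{t_k(n)(1+o(1))}$. To close the argument you must resolve this digon accounting: note that everything in your plan does go through verbatim if the ambient objects are oriented graphs, i.e.\ if the digon type is absent from $S_2$, which is where the base $3$ and the weight $f_1+\log_2(3)f_2$ of Theorem \ref{digext} naturally live; otherwise you need a justification for charging a digon pair only $3$, which neither your sketch nor an appeal to supersaturation supplies.
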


The following approximate structure theorem follows from the proof of Lemma 4.5 from \cite{OKTZ}.  

\begin{theorem}[{\bf K\"{u}hn-Oshtus-Townsend-Zhao \cite{OKTZ}}]\label{digapprox}
Let $k\geq 2$.  For all $\delta>0$ there is $\beta>0$ such that the following holds for all sufficiently large $n$.  
$$
\frac{|\Forb_{di}(n, T_{k+1})\setminus \mathbb{DT}^{\delta}_k(n)|}{|\Forb_{di}(n, T_{k+1})|}\leq 2^{-\beta {n\choose 2}}.
$$
\end{theorem}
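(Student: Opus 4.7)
The plan is to interpret digraphs as structures in the language $\calL=\{R(x,y)\}$, so $r_{\calL}=2$, with $R(a,b)$ holding iff $(a,b)\in E$. Let $\calH$ be the hereditary $\calL$-property of $T_{k+1}$-free digraphs, so that $\calH_n=\Forb_{di}(n,T_{k+1})$. The complete proper quantifier-free $2$-types in $S_2(\calH)$ are the four types ``empty'', ``forward'', ``backward'', and ``both edges'', corresponding bijectively to the four possible digraph structures on a pair. Under this dictionary, a full subpattern of an $\calL_{\calH}$-template $M$ is precisely a digraph whose structure on each pair $A$ lies in $Ch_M(A)$; since $\calL$ has no relations of arity below two, Remark \ref{specialob0} gives $sub(M)=\prod_{A\in\binom{V}{2}}|Ch_M(A)|$ for every complete template $M$. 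The goal is to verify the hypotheses of Theorem \ref{stab} for $\calH$ and then translate its conclusion into the digraph language of Theorem \ref{digapprox}.

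First, I will characterize $\calR_{ex}([n],\calH)$. For each $G\in DT_k(n)$ with associated partition $[n]=V_1\cup \cdots \cup V_k$, define an $\calL_{\calH}$-template $M_G$ whose choice set on an inter-part pair is the maximal set of $2$-types consistent with $\calH$-randomness, and whose choice set on an intra-part pair is $\{p_0\}$. Such $M_G$ is $\calH$-random because every full subpattern has underlying undirected graph contained in the $k$-partite Tur\'an graph $T_k(n)$, hence is $K_{k+1}$-free and therefore $T_{k+1}$-free. Conversely, any $\calH$-random template whose ``support graph'' (the set of pairs $A$ with $|Ch_M(A)|>1$) contains a $K_{k+1}$ must admit a full subpattern containing $T_{k+1}$, since on such a $K_{k+1}$ one can select orientations pair by pair to produce a transitive $(k+1)$-tournament. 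Combining this with Tur\'an's theorem and Theorem \ref{digext} yields a correspondence between $DT_k(n)$ and $\calR_{ex}([n],\calH)$, and in particular shows $\pi(\calH)>1$, so $\calH$ is fast-growing.

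Second, and this is the main step, I will deduce that $\calH$ has a stability theorem in the sense of Definition \ref{stabdef} from Theorem \ref{digstab}. Given $M\in \calR([n],\calH)$ with $sub(M)\geq \ex(n,\calH)^{1-\epsilon}$, I associate a digraph $G_M$ on $[n]$ by recording on each pair $A$ a ``weight-maximising'' choice available in $Ch_M(A)$: the double edge when the ``both edges'' type is allowed, a single appropriately-oriented edge when only one direction is allowed, and no edge otherwise. A direct estimate starting from $\log sub(M)=\sum_A \log|Ch_M(A)|$ shows that $sub(M)\geq \ex(n,\calH)^{1-\epsilon}$ forces $e(G_M)\geq \max(n,T_{k+1})-\epsilon'\binom{n}{2}$ with $\epsilon'\to 0$ as $\epsilon\to 0$. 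Theorem \ref{digstab} then places $G_M$ within $DT_k^{\delta'}(n)$, so there is $G^*\in DT_k(n)$ with $|\Delta(G_M,G^*)|\leq \delta'\binom{n}{2}$. The technical heart of the argument is to transport this $\Delta$-closeness back to $\dist$-closeness between $M$ and $M_{G^*}\in \calR_{ex}([n],\calH)$: in analogy with Lemma \ref{msuse} for metric spaces, a pair on which $Ch_M(A)\neq Ch_{M_{G^*}}(A)$ must either be one on which $G_M$ and $G^*$ differ, or one accounting for the ``non-maximal'' choices in $M$, whose total count is bounded by the $\epsilon \log \ex(n,\calH)$ deficit.

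Finally, I apply Theorem \ref{stab} with parameter $\delta/2$ to obtain $\beta>0$ such that $|\calH_n\setminus E^{\delta/2}(n,\calH)|\leq 2^{-\beta\binom{n}{2}}|\calH_n|$ for all sufficiently large $n$. By Step~1, every full subpattern of an element of $\calR_{ex}([n],\calH)$ is a digraph whose underlying graph is contained in some $k$-partite Tur\'an graph, i.e.\ an element of $\mathbb{DT}_k(n)$; hence $E(n,\calH)\subseteq \mathbb{DT}_k(n)$. A triangle inequality for $\Delta$-closeness (matching the one near the end of the proof of Theorem \ref{DELTACLOSETHM}), together with the easy fact that on binary structures $\dist$-closeness and $\Delta$-closeness agree up to a constant factor, then upgrades $G\in E^{\delta/2}(n,\calH)$ to $G\in \mathbb{DT}_k^{\delta}(n)$ and completes the proof. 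The main obstacle is the second step: the extractor $M\mapsto G_M$ must be tuned so that proximity between $M$ and $M_{G^*}$ in template-space corresponds, up to a bounded factor, to proximity between $G_M$ and $G^*$ in digraph-space.
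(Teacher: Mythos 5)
Your overall architecture matches the paper's: encode $T_{k+1}$-free digraphs as a hereditary $\calL$-property with the four quantifier-free $2$-types, identify $\calR_{ex}([n],\calH)$ with $DT_k(n)$ via Theorem \ref{digext} and conclude $\calH$ is fast-growing, deduce a stability theorem in the sense of Definition \ref{stabdef} from Theorem \ref{digstab}, apply Theorem \ref{stab}, and translate $E^{\delta/2}(n,\calH)$ into $\mathbb{DT}^{\delta}_k(n)$ using the factor-two comparison between $\dist$ and $\Delta$ (the paper's Observation \ref{dgob2} and the last paragraph of its proof). Your Steps 2 and 3 are essentially the paper's Proposition \ref{digprop4} and its concluding argument, with the cosmetic difference that you extract a maximum-weight full subpattern $G_M$ and run an entropy-deficit count, whereas the paper first replaces $M$ by a downward closed template $G^*$ (Lemma \ref{dgdw}, the $(4/3)^{|\diff(G,G^*)|}$ gain controlling how far $G^*$ moves) and then passes to the union digraph $\Psi(G^*)$; both transfers can be made to work.

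The genuine gap is in Step 1, in the converse direction of your characterization of $\calR_{ex}([n],\calH)$. You claim that if the ``support graph'' of an $\calH$-random template $M$ (the pairs $A$ with $|Ch_M(A)|>1$) contains a $K_{k+1}$, then one can ``select orientations pair by pair'' to realize $T_{k+1}$ as a full subpattern. This is false: $|Ch_M(A)|>1$ only guarantees that \emph{some} nonempty configuration is available on $A$, not that both orientations are. If, say, $Ch_M(A)=\{p_1(c_u,c_v),p_4(c_u,c_v)\}$ on every pair of a $(k+1)$-clique, the only tournament realizable there is the one fixed tournament determined by the single available orientations, and choosing those orientations to form a non-transitive tournament (possible for every $k\geq 2$) gives an $\calH$-random template whose support graph contains $K_{k+1}$. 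Consequently Tur\'an's theorem applied to the support graph does not bound $\ex(n,\calH)$, and the identification of the extremal templates with the $M_G$, $G\in DT_k(n)$ --- which you need both to produce the extremal witness $M_{G^*}$ in Step 2 and to get $E(n,\calH)\subseteq\mathbb{DT}_k(n)$ in Step 3 --- does not follow as written. The repair is exactly the paper's route: pass to the union digraph $\Psi(M)$, in which an arc appears whenever some choice on that pair allows it; then $M$ is $\calH$-random precisely when $\Psi(M)$ is $T_{k+1}$-free, pairs offering both orientations are double edges of $\Psi(M)$ while single-orientation pairs are single edges, and one invokes the \emph{weighted} extremal statement of Theorem \ref{digext} (whose weighting is precisely what accounts for single-orientation pairs), not Tur\'an's theorem for a support graph, to pin down the maximizers of $\prod_A|Ch_M(A)|$; compare Proposition \ref{digprop3}.
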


\noindent We will use the machinery of this paper along with Theorems \ref{digext} and \ref{digstab} to reprove Theorems \ref{digen} and \ref{digapprox}.

\subsection{Translation to hereditary $\calL$-property.} In this subsection we translate the combinatorial notions from \cite{OKTZ} into the setup used in this paper.  Let $\mathcal{L}=\{E(x,y)\}$ consist of a single binary relation symbol.  We consider digraphs as $\mathcal{L}$-structures in the natural way, that is, given a digraph $G=(V,E)$ and $(x,y)\in V^2$, $G\models R(x,y)$ if and only if $(x,y)\in E$.  Let $\mathcal{P}$ to be the closure of $\bigcup_{n\in \mathbb{N}}\Forb_{di}(n,T_{k+1})$ under isomorphism.  Clearly $\mathcal{P}$ is a hereditary $\calL$-property.  Since $r_{\mathcal{L}}=2$, $\calL_{\calP}=\{R_p(x,y): p\in S_2(\mathcal{P})\}$.  Set
\begin{enumerate}
\item $q_1(x,y)=\{x\neq y, E(x,y), \neg E(y,x)\}$
\item $q_2(x,y)=\{x\neq y, E(y,x), \neg E(x,y)\}$
\item $q_3(x,y)=\{x\neq y, E(y,x), E(x,y)\}$
\item $q_4(x,y)=\{x\neq y, \neg E(y,x), \neg E(x,y)\}$.
\end{enumerate}
For each $i=1,2,3,4$, let $p_i(x,y)$ be the unique complete quantifier-free $2$-type extending $q_i(x,y)$.  We leave it to the reader to verify that $S_2(\mathcal{P})=\{p_i(x,y): i\in [4]\}$.  Therefore, 
$$
\calL_{\calP}=\{R_{p_1}(x,y), R_{p_2}(x,y),R_{p_3}(x,y),R_{p_4}(x,y)\}.
$$  
We say an $\calL_{\calP}$-structure $G$ is \emph{downward closed} if $G\models \forall x\forall y (R_{p_3}(x,y)\leftrightarrow R_{p_1}(x,y)\wedge R_{p_2}(x,y))$ and $G\models \forall x \forall y (x\neq y\rightarrow R_{p_4}(x,y))$.  

\begin{definition}
Suppose $G$ is a complete $\calL_{\calP}$-structure $G$ with domain $V$.  The \emph{digraph associated to $G$} is $\Psi(G):=(V,E)$ where for each $uv\in {V\choose 2}$, $(u,v)\in E$ if and only if 
$$
G\models R_{p_1}(u,v)\vee R_{p_2}(u,v)\vee R_{p_3}(u,v)\vee R_{p_3}(u,v).
$$
In other words, $(u,v)\in E$ if and only if there is $p(c_u,c_v)\in Ch_G(uv)$ such that $E(x,y)\in p(x,y)$.
\end{definition}

Given a digraph $(V,E)$, define $\Psi^{-1}(V,E)$ to be the $\calL_{\calP}$-structure with domain $V$ such that for all $(u,v)\in V^2$, the following hold.
\begin{align*}
&G\models R_{p_1}(u,v)\Leftrightarrow  (u,v)\in E,\\
&G\models R_{p_2}(u,v)\Leftrightarrow  (v,u)\in E,\\
&G\models R_{p_3}(u,v)\Leftrightarrow  (u,v), (v,u)\in E,\text{ and }\\
&G\models R_{p_4}(u,v)\Leftrightarrow  v\neq u.  
\end{align*}

We leave the following observation to the reader.

\begin{observation}\label{dgob1}
Suppose $(V,E)$ is a digraph and $G$ is a downward closed $\calL_{\calP}$-template with domain $V$.  Then
\begin{enumerate}[(a)]
\item $\Psi(\Psi^{-1}(V,E))=(V,E)$ and $\Psi^{-1}(V,E)$ is a downward closed $\calL_{\calP}$-template.
\item $\Psi^{-1}(\Psi(G))=G$.
\end{enumerate}
\end{observation}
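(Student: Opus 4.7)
The plan is to verify both (a) and (b) by direct case analysis over the four types $p_1(x,y), p_2(x,y), p_3(x,y), p_4(x,y)$ that exhaust $S_2(\calP)$, using the symmetry relations $p_1(x,y)=p_2(y,x)$, $p_3(x,y)=p_3(y,x)$, $p_4(x,y)=p_4(y,x)$ together with the clauses defining $\Psi$, $\Psi^{-1}$, $\calL_\calP$-template, and downward closure. Nothing deep is happening: the work is bookkeeping to confirm that a downward closed template on $V$ is determined by the pairs $(u,v)$ with $R_{p_1}(u,v)$, which in turn correspond exactly to the directed edges recorded by $\Psi$.

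For (a), I will first check that $\Psi^{-1}(V,E)$ is an $\calL_\calP$-template. Completeness holds because the construction asserts $R_{p_4}(u,v)$ whenever $u\neq v$, so $Ch_{\Psi^{-1}(V,E)}(uv)\ni p_4(c_u,c_v)$. Part (1) of Definition \ref{templatedef} holds because every clause in the construction requires $v\neq u$ before any $R_{p_i}$ can be declared true. For part (2), the only nontrivial permutation is the swap $\mu=(1\ 2)$, and the four symmetry equivalences to verify are: $R_{p_1}(u,v)\Leftrightarrow R_{p_2}(v,u)$ (both equivalent to $(u,v)\in E$), $R_{p_3}(u,v)\Leftrightarrow R_{p_3}(v,u)$ (both equivalent to $(u,v),(v,u)\in E$), and $R_{p_4}(u,v)\Leftrightarrow R_{p_4}(v,u)$ (both equivalent to $v\neq u$). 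Downward closure follows because $R_{p_3}(u,v)$ holds iff both $(u,v)$ and $(v,u)$ lie in $E$ iff $R_{p_1}(u,v)\wedge R_{p_2}(u,v)$, and $R_{p_4}(u,v)$ is declared to hold on all distinct pairs. Finally, $\Psi(\Psi^{-1}(V,E))=(V,E)$ because a pair $(u,v)$ is an edge of $\Psi(\Psi^{-1}(V,E))$ iff some type containing $E(x,y)$ (namely $p_1$ or $p_3$) appears in $Ch_{\Psi^{-1}(V,E)}(uv)$, and by the construction this happens iff $(u,v)\in E$.

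For (b), I will show that for each $(u,v)\in V^2$ and each $i\in\{1,2,3,4\}$, $G\models R_{p_i}(u,v)$ iff $\Psi^{-1}(\Psi(G))\models R_{p_i}(u,v)$. The case $i=1$ is the heart: unpacking $\Psi^{-1}(\Psi(G))\models R_{p_1}(u,v)$ gives $(u,v)\in\Psi(G)$, which by definition of $\Psi$ and the characterization $p_i(c_u,c_v)\in Ch_G(uv)$ (via template clause (2)) is equivalent to $G\models R_{p_1}(u,v)\vee R_{p_3}(u,v)$; downward closure makes $R_{p_3}(u,v)\Rightarrow R_{p_1}(u,v)$, so this collapses to $G\models R_{p_1}(u,v)$. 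The case $i=2$ reduces to $i=1$ applied to $(v,u)$ together with template clause (2). The case $i=3$ uses case $i=1$ at $(u,v)$ and $(v,u)$ combined with downward closure, and the case $i=4$ is immediate since both structures satisfy $R_{p_4}(u,v)$ exactly when $v\neq u$.

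The main (and really the only) obstacle is Step~(b): one must be disciplined about invoking both template clause~(2) and downward closure in the right places, since without downward closure one could have $R_{p_3}$ or $R_{p_4}$ carrying extra information not recorded by $\Psi(G)$, which would make $\Psi^{-1}\circ\Psi$ fail to be the identity. Once the dependencies among the four relations are made explicit, the verification is entirely mechanical.
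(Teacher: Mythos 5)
Your proof is correct: the paper offers no argument for this observation (it is explicitly ``left to the reader''), and your direct case analysis over the four types $p_1,\dots,p_4$, using the symmetries $p_1(x,y)=p_2(y,x)$, $p_3(x,y)=p_3(y,x)$, $p_4(x,y)=p_4(y,x)$ together with clause (2) of Definition \ref{templatedef} and the two downward-closure conditions, is exactly the routine verification intended. The one place to be slightly careful, which you handle correctly, is that recovering $R_{p_1}$ from $\Psi(G)$ in part (b) needs both the template symmetry (to fold $R_{p_2}(v,u)$ and $R_{p_3}(v,u)$ into $R_{p_1}(u,v)$ and $R_{p_3}(u,v)$) and downward closure (to collapse $R_{p_3}(u,v)$ into $R_{p_1}(u,v)$), and that the $p_4$ case also silently uses clause (1) of Definition \ref{templatedef} to rule out $R_{p_4}(u,u)$.
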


\begin{lemma}\label{diglem2}
If $G$ is an $\calL_{\calP}$-template, then for any digraph $G'$, $G'\unlhd_pG$ implies $G'\subseteqq \Psi(G)$.  If further $G$ is downward closed, then $\Psi(G)\unlhd_pG$ and for any $G'\subseteqq \Psi(G)$, $G'\unlhd_pG$. 
\end{lemma}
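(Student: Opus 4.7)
The plan is to first collect three bookkeeping observations about $\calL_{\calP}$-templates that will drive both parts. Since $p_1(x,y) = p_2(y,x)$ and $p_3,p_4$ are symmetric in their variables, part (2) of Definition \ref{templatedef} will give the template identities $R_{p_1}(u,v) \leftrightarrow R_{p_2}(v,u)$, $R_{p_3}(u,v) \leftrightarrow R_{p_3}(v,u)$, and $R_{p_4}(u,v) \leftrightarrow R_{p_4}(v,u)$. Combining these with the definition of $Ch_G$, I will conclude that in any template $G$, the membership $p(c_u,c_v) \in Ch_G(\{u,v\})$ is equivalent to $G \models R_p(u,v)$. Substituting into the definition of $\Psi$, I will read off the criterion $(u,v) \in E(\Psi(G)) \iff G \models R_{p_1}(u,v) \vee R_{p_3}(u,v)$; when $G$ is moreover downward closed, the axiom $R_{p_3} \to R_{p_1}$ collapses this to $(u,v) \in E(\Psi(G)) \iff G \models R_{p_1}(u,v)$.

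For the first claim, I will fix a witness $\chi \in Ch(G)$ of $G' \unlhd_\chi G$. Given $(u,v) \in E(G')$, inspection of the four types in $S_2(\calP)$ shows $qftp^{G'}(u,v) \in \{p_1,p_3\}$, since these are precisely the types containing the formula $E(x,y)$. Hence $\chi(\{u,v\}) = Diag^{G'}(\{u,v\})$ is either $p_1(c_u,c_v)$ or $p_3(c_u,c_v)$ and lies in $Ch_G(\{u,v\})$; the $\Psi$-criterion above then delivers $(u,v) \in E(\Psi(G))$.

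For the second claim, assuming $G$ downward closed and $G' \subseteqq \Psi(G)$, I will define $\chi$ on $\binom{V}{2}$ by $\chi(\{u,v\}) = Diag^{G'}(\{u,v\})$ and verify $\chi \in Ch(G)$ by splitting into the four cases for which of $(u,v), (v,u)$ lie in $E(G')$. When neither is present, the downward-closed axiom that $R_{p_4}$ holds on all distinct pairs does the job. The two single-edge cases are handled directly by the simplified $\Psi$-criterion together with the template identity between $R_{p_1}$ and $R_{p_2}$. The case where both directed edges appear in $E(G')$ is where the downward-closed hypothesis really pays off: I will derive $R_{p_1}(u,v)$ and $R_{p_2}(u,v)$ separately and then use $R_{p_3}(x,y) \leftrightarrow R_{p_1}(x,y) \wedge R_{p_2}(x,y)$ to obtain $R_{p_3}(u,v)$, matching $\chi(\{u,v\}) = p_3(c_u,c_v)$. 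The statement $\Psi(G) \unlhd_p G$ then follows by specializing to $G' = \Psi(G)$.

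The main obstacle is simultaneously managing two layers of symmetry: the ordered-pair versus set-of-vertices layer (handled by Definition \ref{templatedef}(2)), and the fact that a $p_3$-type must be reconstructed from the conjunction of $R_{p_1}$ and $R_{p_2}$ rather than from a single predicate. Being explicit about which step uses only the template hypothesis and which needs the full downward-closed axiom will keep the four-case analysis clean.
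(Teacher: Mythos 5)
Your plan is correct and follows essentially the same route as the paper: unwind the definitions of $\Psi$, choice functions, and full subpatterns, use the template symmetry (in effect Observation \ref{temob}) to identify $p(c_u,c_v)\in Ch_G(uv)$ with $G\models R_p(u,v)$, and run the same four-case analysis over $S_2(\calP)$, invoking the downward-closed axioms exactly where the paper does. The only differences are cosmetic streamlinings — isolating the simplified criterion $(u,v)\in E(\Psi(G))\iff G\models R_{p_1}(u,v)$ for downward closed templates, handling the $p_4$ case via the $R_{p_4}$ axiom rather than completeness, and obtaining $\Psi(G)\unlhd_pG$ as the special case $G'=\Psi(G)$ instead of proving it separately.
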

\begin{proof}
Suppose first $G$ is an $\calL_{\calP}$-template with domain $V$ and $G'\unlhd_pG$.  It is clear this implies $G'$ is a digraph with vertex set $V$.  Let $G'=(V,E')$ and $\Psi(G)=(V,E)$. We want to show $E'\subseteq E$.   Fix $(x,y)\in E'$.  Then $qftp^{G'}(x,y)\in \{p_1(x,y),p_3(x,y)\}$.  Since $G'\unlhd_pG$, this means either $G\models R_{p_1}(x,y)$ or $G\models R_{p_3}(x,y)$.  In either case, by definition of $\Psi(G)$, $(x,y)\in E$.  

Suppose now $G$ is also downward closed.  We show that $\Psi(G)\unlhd_pG$.  Fix $uv\in {V\choose 2}$.  We want to show that if $p(x,y)=qftp^{\Psi(G)}(u,v)$, then $p(c_u,c_v)\in Ch_G(uv)$.  If $p(x,y)=p_1(x,y)$, then $(u,v)\in E$ and $(v,u)\notin E$.  By definition of $\Psi(G)$, we must have $G\models R_{p_1}(u,v)\vee R_{p_2}(v,u)\vee R_{p_3}(u,v)\vee R_{p_3}(v,u)$ and $G\models \neg R_{p_3}(u,v)\wedge \neg R_{p_3}(v,u)$.  Thus $G\models R_{p_1}(u,v)\vee R_{p_2}(v,u)$.  Since $p_2(y,x)=p_1(x,y)$ this implies by definition that $p_1(c_u,c_v)\in Ch_G(uv)$, as desired.  The case when $p(x,y)=p_2(x,y)$ follows by a symmetric argument.  If $p(x,y)=p_3(x,y)$, then $(u,v)$ and $(v,u)$ are in $E$, so by definition of $\Psi(G)$ and because $G$ is downward closed, $G\models R_{p_3}(u,v)$.  Thus $p_3(c_u,c_v)\in Ch_G(uv)$ as desired.  If $p(x,y)=p_4(x,y)$, then neither $(u,v)$ nor $(v,u)$ are in $E$, so by definition of $\Psi(G)$, 
$$
G\models \bigwedge_{i=1}^3 (\neg R_{p_i}(u,v)\wedge \neg R_{p_i}(v,u)).
$$
Since $G$ is complete, this implies $G\models R_{p_4}(u,v)\vee R_{p_4}(v,u)$, which implies $p_4(c_u,c_v)\in Ch_G(uv)$, as desired.  This finishes the proof that $\Psi(G)\unlhd_pG$.  

Suppose now $G'\subseteqq \Psi(G)$.  Let $G'=(V,E')$ and $\Psi(G)=(V,E)$. Fix $uv\in {V\choose 2}$.  We want to show $p(x,y)=qftp^{G'}(u,v)$, then $G\models R_p(u,v)$.   Because $G$ is downward closed, if $p(x,y)=p_4(x,y)$, then we are done since $u\neq v$ implies $G\models R_{p_4}(u,v)$.  If $p(x,y)=p_1(x,y)$, then $(u,v)\in E'$.  Since $G'\subseteqq \Psi(G)$, this implies $(u,v)\in E$.  By definition of $\Psi(G)$, this implies 
$$
G\models R_{p_1}(u,v)\vee R_{p_2}(v,u)\vee R_{p_3}(u,v)\vee R_{p_3}(v,u).
$$
Because $G$ is a downward closed $\calL_{\calP}$-template and $p_1(x,y)=p_2(y,x)$, this implies $G\models R_{p_1}(u,v)$, as desired.  A similar argument takes care of the case when $p(x,y)=p_2(x,y)$.  Suppose now $p(x,y)=p_3(x,y)$.  Then $(u,v),(v,u)\in E'$.  Since $G'\subseteqq \Psi(G)$, this implies $(u,v),(v,u)\in E$.  By definition of $\Psi(G)$ and because $G$ is downward closed, this implies 
\begin{align*}
G\models R_{p_3}(u,v)\vee R_{p_3}(v,u).
\end{align*}
Since $p_3(x,y)=p_3(y,x)$ and $G$ is an $\calL_{\calP}$-template, this implies $G\models R_{p_3}(u,v)$, so we are done. 
\end{proof}

\begin{lemma}\label{diglem9}
If $H=(V,E)\in \Forb_{di}(n,T_{k+1})$, then $\Psi^{-1}(H)$ is a downward closed element of $\calR([n],\calP)$.
\end{lemma}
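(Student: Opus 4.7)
The first part of what needs checking is immediate: by part (a) of Observation \ref{dgob1}, $\Psi^{-1}(H)$ is a downward closed $\calL_\calP$-template with domain $V=[n]$. The remaining task, and the only nontrivial one, is to verify that $\Psi^{-1}(H)$ is $\calP$-random, i.e.\ that for every $\chi\in Ch(\Psi^{-1}(H))$ there exists $N\in\calP$ with $N\unlhd_\chi \Psi^{-1}(H)$. The plan is to read such an $N$ directly off $\chi$ and then show it is a subdigraph of $H$, whence $T_{k+1}$-free.

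Fix $\chi\in Ch(\Psi^{-1}(H))$. For each $uv\in\binom{V}{2}$, the element $\chi(uv)$ lies in $Ch_{\Psi^{-1}(H)}(uv)$, hence is one of $p_1(c_u,c_v)$, $p_1(c_v,c_u)=p_2(c_u,c_v)$, $p_3(c_u,c_v)$, or $p_4(c_u,c_v)$. I define a digraph $G'=(V,E')$ by declaring the arcs between $u$ and $v$ to be exactly those dictated by $\chi(uv)$: $(u,v)$ alone in the first case, $(v,u)$ alone in the second, both in the third, and neither in the fourth. By construction, $Diag^{G'}(uv)=\chi(uv)$ for every $uv\in\binom{V}{2}$, so $G'\unlhd_\chi \Psi^{-1}(H)$.

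The crux is to show $G'\subseteqq H$. Since $\chi(uv)\in Ch_{\Psi^{-1}(H)}(uv)$, the definition of $\Psi^{-1}(V,E)$ forces each type to encode only arcs that are actually present in $H$: if $\chi(uv)=p_1(c_u,c_v)$ then $\Psi^{-1}(H)\models R_{p_1}(u,v)$ and hence $(u,v)\in E$; the cases $\chi(uv)=p_2(c_u,c_v)$ and $\chi(uv)=p_3(c_u,c_v)$ are analogous; and if $\chi(uv)=p_4(c_u,c_v)$ then no arc between $u$ and $v$ is added to $E'$. In every case the arcs placed in $E'$ lie in $E$, so $G'\subseteqq H$. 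Because $H\in \Forb_{di}(n,T_{k+1})$ and any copy of $T_{k+1}$ in $G'$ would give a copy of $T_{k+1}$ in $H$, we conclude $G'\in \Forb_{di}(n,T_{k+1})\subseteq\calP$. This produces the required $N:=G'\in\calP$ with $N\unlhd_\chi \Psi^{-1}(H)$, so $\Psi^{-1}(H)$ is $\calP$-random, hence belongs to $\calR([n],\calP)$.

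The only subtlety is bookkeeping the two orientations: in $S_2(C_V,\calP)$ the types $p_1(c_u,c_v)$ and $p_1(c_v,c_u)=p_2(c_u,c_v)$ are distinct elements that correspond to opposite orientations of a single arc, so one must be careful that the choice function $\chi$ is faithfully reflected in the construction of $E'$. Beyond this, the argument is a direct unwinding of the definitions of $\Psi^{-1}$, $Ch_{\Psi^{-1}(H)}$, and full subpattern; no additional tool beyond Observation \ref{dgob1} is needed.
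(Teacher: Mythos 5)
Your proof is correct, and the underlying idea is the same as the paper's: every full subpattern of $\Psi^{-1}(H)$ is a subdigraph of $H$, hence $T_{k+1}$-free. The paper gets this in two lines by quoting Lemma \ref{diglem2} (any $G'\unlhd_p G$ satisfies $G'\subseteqq \Psi(G)$) together with $\Psi(\Psi^{-1}(H))=H$ from Observation \ref{dgob1}; you instead unwind that containment by hand. What your route buys is a slightly more faithful verification of Definition \ref{Hrand}: by building, for \emph{each} choice function $\chi$, an explicit digraph $G'$ with $Diag^{G'}(uv)=\chi(uv)$, you simultaneously establish that every choice function is realized by some full subpattern (i.e.\ the error-free part of $\calP$-randomness), whereas the paper's argument only inspects subpatterns that are assumed to exist and leaves the realizability of arbitrary choice functions to the implicit fact that a purely binary language admits no errors. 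Your case analysis of $\chi(uv)$ is complete (the four types over $\{c_u,c_v\}$ are exactly arc $u\to v$, arc $v\to u$, both, neither), and your orientation bookkeeping is handled correctly: e.g.\ $\chi(uv)=p_1(c_u,c_v)$ forces $(u,v)\in E$ whether it arises from $R_{p_1}(u,v)$ or from $R_{p_2}(v,u)$, so in all cases $E'\subseteq E$ and $G'\in\Forb_{di}(n,T_{k+1})\subseteq\calP$. No gap.
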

\begin{proof}
By Observation \ref{dgob1}, $\Psi^{-1}(H)$ is a downward closed $\calL_{\calP}$-template with domain $[n]$.  To show $\Psi^{-1}(H)$ is $\calP$-random, let $G\unlhd_p\Psi^{-1}(H)$.  By Lemma \ref{diglem2}, this implies $G\subseteqq\Psi(\Psi^{-1}(H))=H$, where the equality is by Observation \ref{dgob1}.  Since $H$ is $T_{k+1}$-free, so is any subdigraph of $H$.  Thus $G\in \calP$.  This shows $\Psi^{-1}(H)$ is $\calP$-random.
\end{proof}

\begin{corollary}\label{digcor2}
If $G$ is an $\calL_{\calP}$-template. Then $sub(G)\leq 2^{e(\Psi(G))}$ and equality holds if $G$ is downward closed.
\end{corollary}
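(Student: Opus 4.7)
The plan is to apply Lemma~\ref{diglem2} in both directions and then count. Since $\calL=\{E(x,y)\}$ has a single binary relation, every $\calL$-structure with domain $V$ is canonically a digraph on $V$, so the full subpatterns of $G$ form a set of digraphs on $V=dom(G)$. For the inequality, the first part of Lemma~\ref{diglem2} gives $\{G':G'\unlhd_pG\}\subseteq\{G':G'\subseteqq\Psi(G)\}$, and the observation immediately following the definition of $e(\cdot)$ in Section~\ref{dgsec} says the latter set has size $2^{e(\Psi(G))}$. Thus
\[
sub(G)\;=\;|\{G':G'\unlhd_pG\}|\;\leq\;|\{G':G'\subseteqq\Psi(G)\}|\;=\;2^{e(\Psi(G))},
\]
which is the desired inequality.

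For the equality case, suppose further that $G$ is downward closed. Then the second part of Lemma~\ref{diglem2} supplies the reverse inclusion: every $G'\subseteqq\Psi(G)$ satisfies $G'\unlhd_pG$. Combined with the previous inclusion, the two sets coincide, so $sub(G)=|\{G':G'\subseteqq\Psi(G)\}|=2^{e(\Psi(G))}$.

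There is no real obstacle beyond invoking Lemma~\ref{diglem2}; the corollary is essentially a restatement of that lemma in counting language, together with the observation that the number of full subdigraphs of a digraph equals $2^{e(\cdot)}$. The bijective interpretation is clean because each full subpattern of $G$ is determined by its underlying digraph of edges, and Lemma~\ref{diglem2} shows (in the downward closed case) that every subdigraph of $\Psi(G)$ actually arises in this way from some choice function in $Ch(G)$.
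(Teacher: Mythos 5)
Your proof is correct and follows essentially the same route as the paper: both directions of Lemma~\ref{diglem2} plus the observation that a digraph $H$ has exactly $2^{e(H)}$ full subdigraphs. If anything, your citation for the equality case (the second half of Lemma~\ref{diglem2}) is the more apt one, since the paper's appeal to Lemma~\ref{diglem9} at that point reads like a slip for the same fact.
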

\begin{proof}
Since $G$ is an $\calL_{\calP}$-template, Lemma \ref{diglem2} implies $sub(G)$ is at most the number of subdigraphs of $\Psi(G)$, which is equal to $2^{e(\Psi(G))}$ by definition of $e(\Psi(G))$.  If $G$ is downward closed, then equality holds by Lemma \ref{diglem9}.
\end{proof}

\begin{proposition}\label{digprop2}
Suppose $G$ is a finite downward closed $\calL_{\calP}$-template.  Then $G$ is $\calP$-random if and only if $\Psi(G)$ is $T_{k+1}$-free.
\end{proposition}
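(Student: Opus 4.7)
The plan is to prove both directions as clean consequences of Lemma \ref{diglem2} together with the error-freeness theory from earlier in the paper. Recall that $\calP$ is the hereditary $\calL$-property whose members are precisely the $T_{k+1}$-free digraphs, so membership in $\calP$ is exactly $T_{k+1}$-freeness.

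For the forward direction, I would argue as follows. Assume $G$ is a downward closed $\calL_{\calP}$-template that is $\calP$-random. By the second assertion of Lemma \ref{diglem2}, since $G$ is downward closed, the digraph $\Psi(G)$ is itself a full subpattern of $G$, i.e.\ $\Psi(G)\unlhd_p G$. By the definition of $\calP$-random (Definition \ref{Hrand}), every full subpattern of $G$ that comes from an $\calL$-structure lies in $\calP$; in particular $\Psi(G)\in\calP$, so $\Psi(G)$ is $T_{k+1}$-free.

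For the backward direction, assume $\Psi(G)$ is $T_{k+1}$-free. Fix any $\chi\in Ch(G)$; I need to exhibit some $N\in\calP$ with $N\unlhd_\chi G$. The first key step is to observe that $G$ is automatically error-free: since $\calL=\{E(x,y)\}$ contains no relation symbols of arity less than $r_{\calL}=2$, Remark \ref{specialob0} gives $\calE=\emptyset$, so every complete $\calL_{\calP}$-structure is vacuously error-free. (If one prefers, the same conclusion follows directly: two proper complete $2$-types on overlapping pairs $(c_u,c_v),(c_u,c_w)$ only restrict the binary atomic formulas on those specific pairs, so their union is always satisfiable.) Having established error-freeness, Proposition \ref{Lrandom} produces an $\calL$-structure (i.e.\ a digraph) $N$ with $N\unlhd_\chi G$. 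Then the first assertion of Lemma \ref{diglem2} yields $N\subseteqq \Psi(G)$, and since $T_{k+1}$-freeness is inherited by subdigraphs, $N$ is $T_{k+1}$-free, hence $N\in\calP$. As $\chi$ was arbitrary, $G$ is $\calP$-random.

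There is really no significant obstacle here; the proposition is a repackaging of Lemma \ref{diglem2} together with the trivial error-freeness of structures in this binary language. The only point that requires care is the reminder that errors simply cannot exist in this setup, so that one can freely invoke Proposition \ref{Lrandom} to upgrade an arbitrary choice function to a genuine digraph subpattern; the bulk of the structural content has already been packaged into Lemma \ref{diglem2}.
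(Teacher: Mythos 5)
Your proof is correct and follows essentially the same route as the paper: both directions reduce to Lemma \ref{diglem2}. The only difference is that you argue directly rather than by contraposition and explicitly justify (via Remark \ref{specialob0} and Proposition \ref{Lrandom}) that every choice function for $G$ actually chooses a digraph, a point the paper's converse direction passes over silently, so your write-up is if anything slightly more careful.
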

\begin{proof}
Suppose $\Psi(G)$ is not $T_{k+1}$-free.  Then $\Psi(G)\notin \calP$.  By Lemma \ref{diglem2}, $\Psi(G)\unlhd_pG$, so this implies $G$ is not $\calP$-random.  Conversely, suppose $G$ is not $\calP$-random.  Then there is $G'\unlhd_pG$ such that $G'\notin \calP$.  In other words $G'$ is a digraph which is not $T_{k+1}$-free.  By Lemma \ref{diglem2}, $G'\subseteq \Psi(G)$.  This implies $\Psi(G)$ is not $T_{k+1}$-free.
\end{proof}

We end this section by proving Proposition \ref{digprop3}, which tells us that elements in $\calR_{ex}([n],\calP)$ correspond to elements of $DT_k(n)$.  We first prove a lemma.

\begin{lemma}\label{dgdw}
If $G\in \calR([n],\calP)$ then there is $G^*\in \calR([n],\calP)$ which is downward closed with the property that $sub(G^*)\geq sub(G)(4/3)^{|\diff(G,G^*)|}$.
\end{lemma}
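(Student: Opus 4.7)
Define $G^*:=\Psi^{-1}(\Psi(G))$; I claim this is the required downward‐closed template. The argument has three steps.

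\emph{Step 1: $G^*\in\calR([n],\calP)$ and $G^*$ is downward closed.} By Observation \ref{dgob1}(a), $\Psi^{-1}(\Psi(G))$ is automatically a downward‐closed $\calL_\calP$‐template, so by Lemma \ref{diglem9} it suffices to verify that $\Psi(G)\in\Forb_{di}(n,T_{k+1})$. Suppose toward a contradiction that $\Psi(G)$ contained a copy of $T_{k+1}$ on vertices $v_1,\dots,v_{k+1}$ with forward edges $v_i\to v_j$ for $i<j$. By the definition of $\Psi$, for each such forward pair either $p_1(c_{v_i},c_{v_j})$ or $p_3(c_{v_i},c_{v_j})$ lies in $Ch_G(v_iv_j)$. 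Build a choice function $\chi\in Ch(G)$ by picking one such type on each forward pair and any element of $Ch_G$ elsewhere; any full subpattern $M\unlhd_\chi G$ then contains the directed edges $v_i\to v_j$ for $i<j$, hence contains $T_{k+1}$ as a subdigraph, contradicting the $\calP$‐randomness of $G$.

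\emph{Step 2: per‐pair comparison of choice sets.} For every $uv\in{[n]\choose 2}$, unwinding the definitions of $\Psi$ and $\Psi^{-1}$ gives: $p_4(c_u,c_v)\in Ch_{G^*}(uv)$ always; $p_1(c_u,c_v)\in Ch_{G^*}(uv)$ iff $Ch_G(uv)$ contains $p_1(c_u,c_v)$ or $p_3(c_u,c_v)$; $p_2(c_u,c_v)\in Ch_{G^*}(uv)$ iff $Ch_G(uv)$ contains $p_2(c_u,c_v)$ or $p_3(c_u,c_v)$; and $p_3(c_u,c_v)\in Ch_{G^*}(uv)$ iff both $p_1(c_u,c_v)$ and $p_2(c_u,c_v)$ are in $Ch_{G^*}(uv)$. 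In particular $|Ch_{G^*}(uv)|\in\{1,2,4\}$. A direct enumeration over the (at most $15$) nonempty subsets that $Ch_G(uv)$ can equal then shows that whenever $Ch_G(uv)\neq Ch_{G^*}(uv)$, one has $|Ch_{G^*}(uv)|/|Ch_G(uv)|\ge 4/3$, with equality exactly in the cases $|Ch_G(uv)|=3$ and $|Ch_{G^*}(uv)|=4$.

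\emph{Step 3: multiply up.} Since $\calL=\{E(x,y)\}$ has no relation of arity less than $r=2$, Remark \ref{specialob0} yields $sub(M)=\prod_{uv\in{[n]\choose 2}}|Ch_M(uv)|$ for any complete $\calL_\calP$‐structure $M$; apply this to both $G$ and $G^*$. Lemma \ref{templatelem} identifies the pairs $uv$ with $Ch_G(uv)\neq Ch_{G^*}(uv)$ as exactly $\diff(G,G^*)$. Multiplying the per‐pair bounds from Step 2 (ratio $1$ off $\diff(G,G^*)$, ratio $\ge 4/3$ on $\diff(G,G^*)$) gives $sub(G^*)\ge sub(G)(4/3)^{|\diff(G,G^*)|}$, as required.

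The only substantive work is the finite case analysis in Step 2; the constant $4/3$ is tight, being realized exactly when a $3$‐element $Ch_G(uv)$ gets completed by the downward‐closure rules to all of $\{p_1(c_u,c_v),p_2(c_u,c_v),p_3(c_u,c_v),p_4(c_u,c_v)\}$, and this is where the factor $4/3$ in the lemma ultimately comes from.
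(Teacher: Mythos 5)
Your proposal is correct and takes essentially the paper's approach: your $G^*=\Psi^{-1}(\Psi(G))$ is exactly the downward closure the paper builds by listing conditions, your Step 2 enumeration is a finer version of the paper's observation that $Ch_{G^*}(uv)\supseteq Ch_G(uv)$ forces a ratio of at least $4/3$ on every pair of $\diff(G,G^*)$, and Step 3 multiplies these bounds exactly as the paper does. The one point where you diverge---verifying $\calP$-randomness of $G^*$ by showing directly, via a choice function concentrated on the forward pairs, that $\Psi(G)$ is $T_{k+1}$-free and then citing Lemma \ref{diglem9}---is if anything more careful than the paper, which at the corresponding step invokes Lemma \ref{diglem2} to get $\Psi(G)\unlhd_p G$ even though $G$ is not assumed downward closed (and, e.g., $Ch_G(uv)=\{p_1(c_u,c_v),p_2(c_u,c_v)\}$ shows that half of that lemma really needs the downward-closed hypothesis), so your detour supplies exactly what is needed.
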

\begin{proof}
Suppose $G\in \calR([n],\calP)$.  If $G$ is downward closed, set $G^*=G$.  If $G$ is not downward closed, let $G^*$ be an $\calL_{\calP}$-template with domain $[n]$ and satisfing the following.
\begin{itemize}
\item $G^*\models \forall x \forall y (x\neq y \rightarrow R_{p_4}(x,y))$.
\item If $G\models R_{p_3}(u,v)$ then $G^*\models R_{p_3}(u,v)\wedge R_{p_1}(u,v)\wedge R_{p_2}(u,v)$.
\item If $G\models R_{p_1}(u,v)$ then $G^* \models R_{p_1}(u,v)$ and if $G\models R_{p_2}(u,v)$ then $G^* \models R_{p_2}(u,v)$.
\end{itemize}
We leave it to the reader to verify that such a $G^*$ exists and that $G^*$ has the property that $Ch_{G^*}(uv)\supseteq Ch_G(uv)$, for all $(u,v)\in [n]^2$.  If $uv\in \diff(G,G^*)$, then $Ch_{G^*}(uv)\supsetneq Ch_G(uv)$ and 
$$
4=|S_2(\calP)|\geq |Ch_{G^*}(uv)|>|Ch_G(uv)|\geq 1
$$
 imply $\frac{|Ch_{G^*}(uv)|}{|Ch_G(uv)|}\geq 4/3$.  Thus Corollary \ref{digcor2} implies
\begin{align*}
sub(G^*)=\prod_{uv\in {V\choose 2}}|Ch_{G^*}(uv)|=sub(G)\prod_{\{uv\in  \diff(G^*,G)\}}\frac{|Ch_{G^*}(uv)|}{|Ch_{G}(uv)|}\geq sub(G)\Big(4/3\Big)^{|\diff(G,G^*)|}.
\end{align*}
We have only left to show that $G^*\in \calR([n],\calP)$.  Let $H\unlhd_pG^*$.  We want to show $H\in \calP$.  By definition of $G^*$ and $\Psi$, $\Psi(G)$ is the same digraph as $\Psi(G^*)$.  By Proposition \ref{diglem2}, $H$ is a subdigraph of $\Psi(G)=\Psi(G^*)$.  By Proposition \ref{diglem2}, $\Psi(G)\unlhd_pG$.  Since $G$ is $\calP$-random, this implies $\Psi(G)\in \calP$, which implies any subdigraph of $\Psi(G)$ is in $\calP$.  In particular, $H\in \calP$.
\end{proof}

\begin{corollary}\label{dgcor}
If $G\in \calR_{ex}([n],\calP)$, then $G$ is downward closed.
\end{corollary}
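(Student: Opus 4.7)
The plan is to derive the corollary as an almost immediate consequence of Lemma \ref{dgdw}. Suppose $G\in \calR_{ex}([n],\calP)$; by definition $sub(G)=\ex(n,\calP)$. Applying Lemma \ref{dgdw} to $G$ produces a downward closed $G^*\in \calR([n],\calP)$ satisfying
\[
sub(G^*)\geq sub(G)(4/3)^{|\diff(G,G^*)|}.
\]
Since $G^*\in \calR([n],\calP)$, the definition of $\ex(n,\calP)$ gives $sub(G^*)\leq \ex(n,\calP)=sub(G)$. Combining these two inequalities forces
\[
(4/3)^{|\diff(G,G^*)|}\leq 1,
\]
which is only possible when $|\diff(G,G^*)|=0$.

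Next I need to translate $|\diff(G,G^*)|=0$ into the conclusion $G=G^*$. By Lemma \ref{templatelem}, $|\diff(G,G^*)|=0$ implies that $Ch_G(A)=Ch_{G^*}(A)$ for every $A\in \binom{[n]}{2}$. Since both $G$ and $G^*$ are $\calL_{\calP}$-templates (the former by hypothesis, the latter by construction in Lemma \ref{dgdw}), Proposition \ref{nice} applies and yields that $G$ and $G^*$ are the same $\calL_{\calP}$-structure. Since $G^*$ is downward closed, so is $G$, which is exactly what we need.

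I do not anticipate a real obstacle; the entire argument is a one-line extremal/rigidity consequence once Lemma \ref{dgdw} is in hand. The only mildly delicate point is citing Proposition \ref{nice} correctly to turn agreement of choice sets into equality of templates, which is essential because Lemma \ref{dgdw} controls $\diff$ rather than directly controlling the relations $R_{p_i}$ themselves.
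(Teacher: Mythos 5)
Your proof is correct and takes essentially the same route as the paper, which also derives the corollary directly from Lemma \ref{dgdw} (the paper argues by contradiction that $G\neq G^*$ would force $|\diff(G,G^*)|\geq 1$ and hence $sub(G^*)>sub(G)$). Your use of Lemma \ref{templatelem} and Proposition \ref{nice} just makes explicit the ``$\diff(G,G^*)=\emptyset$ implies $G=G^*$'' step that the paper leaves implicit.
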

\begin{proof}
Suppose $G\in \calR_{ex}([n],\calP)$.  By definition $sub(G)=\ex(n,\calP)$.  By Lemma \ref{dgdw}, if $G$ is not downward closed, then there is $G^*\in \calR([n],\calP)$ which is downward closed and which satisfies $sub(G^*)\geq sub(G)(4/3)^{|\diff(G^*,G)|}$.  Since $G^*$ is downward closed and $G$ is not downward closed, $G^*\neq G$ implies $|\diff(G^*,G)|\geq 1$.  Thus $sub(G^*)>sub(G)$, contradicting that $G\in \calR_{ex}([n],\calP)$.
\end{proof}

\begin{proposition}\label{digprop3}
Suppose $G$ is an $\calL_{\calP}$-template with domain $[n]$.  Then $G\in \calR_{ex}([n],\calP)$ if and only if $G$ is downward closed and $\Psi(G)\in DT_k(n)$.
\end{proposition}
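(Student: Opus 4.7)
The strategy is to assemble several earlier ingredients around the single invariant $\ex(n,\calP)$, which will be shown to equal $3^{t_k(n)}$. Both directions will then fall out of comparing $sub(G) = 2^{e(\Psi(G))}$ to this value.

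First I would handle the forward direction as far as it goes without a computation of $\ex(n,\calP)$. Suppose $G \in \calR_{ex}([n],\calP)$. By Corollary \ref{dgcor}, $G$ is downward closed. Since $G \in \calR([n],\calP)$, Proposition \ref{digprop2} gives $\Psi(G) \in \Forb_{di}(n, T_{k+1})$, and Corollary \ref{digcor2} gives $sub(G) = 2^{e(\Psi(G))} = \ex(n,\calP)$.

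Next I would pin down $\ex(n,\calP)$ by a two-sided comparison. For the lower bound, every $H \in DT_k(n)$ is $T_{k+1}$-free: if $H$ contained a $T_{k+1}$ on some $k+1$ vertices, then every pair among those vertices would carry a directed edge of $H$, so the underlying $k$-partite Tur\'an graph would have $K_{k+1}$ as a subgraph, which is impossible. Hence $DT_k(n)\subseteq \Forb_{di}(n,T_{k+1})$. Applying Lemma \ref{diglem9} to $H\in DT_k(n)$ yields $\Psi^{-1}(H)\in \calR([n],\calP)$, which is downward closed by Observation \ref{dgob1}(a); Corollary \ref{digcor2} and Theorem \ref{digext} then give $sub(\Psi^{-1}(H))=2^{e(H)}=2^{t_k(n)\log_2 3}=3^{t_k(n)}$. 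Thus $\ex(n,\calP)\ge 3^{t_k(n)}$. For the upper bound, Corollary \ref{dgcor} says any $G\in \calR_{ex}([n],\calP)$ is downward closed, and by the previous paragraph $sub(G)=2^{e(\Psi(G))}\le 2^{\max(n,T_{k+1})}=3^{t_k(n)}$, so $\ex(n,\calP)\le 3^{t_k(n)}$. Hence $\ex(n,\calP)=3^{t_k(n)}$.

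With $\ex(n,\calP)=3^{t_k(n)}$ in hand, both directions close cleanly. For the forward direction, from $2^{e(\Psi(G))}=\ex(n,\calP)=3^{t_k(n)}=2^{\max(n,T_{k+1})}$ we get $e(\Psi(G))=\max(n,T_{k+1})$, so $\Psi(G)$ is edge-extremal in $\Forb_{di}(n,T_{k+1})$, and Theorem \ref{digext} gives $\Psi(G)\in DT_k(n)$. For the reverse direction, assume $G$ is downward closed and $\Psi(G)\in DT_k(n)$. Then $\Psi(G)\in \Forb_{di}(n,T_{k+1})$, so Proposition \ref{digprop2} yields $G\in \calR([n],\calP)$; and Corollary \ref{digcor2} together with Theorem \ref{digext} yields $sub(G)=2^{e(\Psi(G))}=3^{t_k(n)}=\ex(n,\calP)$, so $G\in \calR_{ex}([n],\calP)$.

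The whole argument is essentially bookkeeping with the correspondence $\Psi$ and lemmas already in the text, so there is no substantial obstacle. The mildly subtle step is the verification $DT_k(n)\subseteq \Forb_{di}(n,T_{k+1})$ via the $k$-colorability of the underlying graph, and the only strictly combinatorial input is the edge-extremal characterization coming from Theorem \ref{digext}.
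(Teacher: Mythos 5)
Your proof is correct, and it uses the same basic toolkit as the paper (Corollary \ref{dgcor}, Corollary \ref{digcor2}, Lemma \ref{diglem9}, Observation \ref{dgob1}, Proposition \ref{digprop2}, Theorem \ref{digext}), but it is organized differently. The paper proves both directions by contradiction, comparing $sub$-values of explicit competitors: in the forward direction it plays $\Psi^{-1}(H)$ for $H\in DT_k(n)$ against $G$, and in the reverse direction it takes a hypothetical better $G'\in\calR([n],\calP)$, invokes Lemma \ref{dgdw} to replace it by a downward closed one, and contradicts edge-extremality of $\Psi(G)$. You instead front-load the identity $\ex(n,\calP)=3^{t_k(n)}=2^{\max(n,T_{k+1})}$ by a two-sided comparison and then settle both directions as equality checks; in the paper this identity appears only afterward, as Corollary \ref{digcor1}, deduced \emph{from} Proposition \ref{digprop3}, so you have effectively reversed that dependency (with no circularity, since your computation of $\ex(n,\calP)$ uses only the earlier lemmas). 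What your route buys is a cleaner reverse direction -- no direct appeal to Lemma \ref{dgdw} beyond its indirect use inside Corollary \ref{dgcor} -- and Corollary \ref{digcor1} comes for free along the way; what the paper's route buys is that it never needs to spell out $DT_k(n)\subseteq\Forb_{di}(n,T_{k+1})$, treating it as contained in Theorem \ref{digext} (your explicit $K_{k+1}$-in-a-$k$-partite-graph verification is correct, though you could equally have cited Theorem \ref{digext} for it, since $DT_k(n)$ is there characterized as the set of edge-extremal \emph{elements of} $\Forb_{di}(n,T_{k+1})$).
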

\begin{proof}
Suppose first $G\in \calR_{ex}([n],\calP)$.  By Corollary \ref{dgcor}, $G$ is downward closed.  So Corollary \ref{digcor2} implies $sub(G)=2^{e(\Psi(G))}$.  Suppose $\Psi(G)\notin DT_k(n)$.  Then Theorem \ref{digext} implies that for any $H\in DT_k(n)$, $2^{e(H)}>2^{e(\Psi(G))}$.  By Lemma \ref{diglem9}, $\Psi^{-1}(H)\in \calR([n],\calP)$ and is downward closed, so Corollary \ref{digcor2} implies $sub(\Psi^{-1}(G))=2^{e(H)}>2^{e(\Psi(G))}=sub(G)$, contradicting that $G\in \calR_{ex}([n],\calP)$.

Suppose now that $G$ is downward closed and $\Psi(G)\in DT_k(n)$.  Observation \ref{dgob1}(b) implies $\Psi^{-1}(\Psi(G))=G$, so Lemma \ref{diglem9} implies $\Psi^{-1}(\Psi(G))=G\in \calR([n],\calP)$.  Suppose towards a contradiction that $G\notin \calR_{ex}([n],\calP)$.  Then there is $G'\in \calR([n],\calP)$ such that $sub(G')>sub(G)$.  By applying Lemma \ref{dgdw}, we may assume $G'$ is downward closed.  By Lemma \ref{diglem2}, $\Psi(G')\in \calP$.  By Corollary \ref{digcor2}, $2^{e(\Psi(G'))}=sub(G')>sub(G)=2^{e(\Psi(G)}$, which implies $e(\Psi(G'))>e(\Psi(G))$, contradicting that $\Psi(G)$ is edge extremal (since by Theorem \ref{digext}, elements of $DT_k(n)$ are edge-extremal).
\end{proof}

\subsection{Proofs of Results} In this section we prove Theorems \ref{digen} and \ref{digapprox}.  We first compute $\pi(\calP)$ using Theorem \ref{digext} and Proposition \ref{digprop3}. Theorem \ref{digen} will follow immediately from this and Theorem \ref{enumeration2}.

\begin{corollary}\label{digcor1}
$\ex(n,\calP)=2^{max(n,T_{k+1})}= 3^{t_k(n)}$.  Consequently $\pi(\calP)=3^{(1-\frac{1}{k})\frac{1}{2}}$.
\end{corollary}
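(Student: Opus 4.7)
The plan is to combine the structural characterization of $\calR_{ex}([n],\calP)$ from Proposition \ref{digprop3} with the subpattern count from Corollary \ref{digcor2} and the edge-extremal result Theorem \ref{digext}. First I would pick any $G \in \calR_{ex}([n],\calP)$; by Proposition \ref{digprop3}, $G$ is downward closed and $\Psi(G) \in DT_k(n)$. Since $G$ is downward closed, the equality case of Corollary \ref{digcor2} gives $sub(G) = 2^{e(\Psi(G))}$. Then, because $\Psi(G) \in DT_k(n)$ is edge-extremal by Theorem \ref{digext}, we have $e(\Psi(G)) = max(n,T_{k+1}) = t_k(n)\log_2 3$. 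Chaining these equalities yields
\[
\ex(n,\calP) \;=\; sub(G) \;=\; 2^{t_k(n)\log_2 3} \;=\; 3^{t_k(n)},
\]
which simultaneously gives both asserted identities $\ex(n,\calP) = 2^{max(n,T_{k+1})} = 3^{t_k(n)}$.

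For the consequence on $\pi(\calP)$, I would apply the definition $\pi(\calP) = \lim_{n\to\infty} \ex(n,\calP)^{1/\binom{n}{2}}$ (recall $r_\calL = 2$ here), plug in the formula just obtained, and note
\[
\pi(\calP) \;=\; \lim_{n\to\infty} 3^{t_k(n)/\binom{n}{2}} \;=\; 3^{\,\lim_{n\to\infty} t_k(n)/\binom{n}{2}},
\]
so the computation reduces to the standard fact from Tur\'an's theorem that $t_k(n)/\binom{n}{2} \to 1 - 1/k$ as $n \to \infty$, which gives the stated value of $\pi(\calP)$.

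There is no serious obstacle here: the substantive work (identifying extremal $\calL_{\calP}$-templates with $DT_k(n)$ via $\Psi$ in Proposition \ref{digprop3}, and relating $sub(G)$ to the edge count $e(\Psi(G))$ in Corollary \ref{digcor2}) was already carried out in the preceding subsection. The corollary is therefore a clean translation of the combinatorial extremal result of K\"uhn--Oshtus--Townsend--Zhao into the $\calL_\calH$-template framework, followed by an elementary limit computation.
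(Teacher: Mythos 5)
Your proposal is correct and follows essentially the same route as the paper: take $G\in\calR_{ex}([n],\calP)$, use Proposition \ref{digprop3} (downward closed, $\Psi(G)\in DT_k(n)$), the equality case of Corollary \ref{digcor2}, and Theorem \ref{digext} to get $\ex(n,\calP)=2^{\max(n,T_{k+1})}=3^{t_k(n)}$, then compute $\pi(\calP)$ from the definition and the asymptotics of $t_k(n)$. No discrepancies worth noting.
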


\begin{proof}
Let $G\in \calR_{ex}([n],\calP)$.  Then by definition, $\ex(n,\calP)=sub(G)$.  Note Proposition \ref{digprop3} implies $\Psi(G)\in DT_k(n)$ and Corollary \ref{digcor2} implies $sub(G)=2^{e(\Psi(G))}$.   Combining these facts with Theorem \ref{digext} implies $\ex(n,\calP)=sub(G)=2^{\max(n,T_{k+1})}= 3^{t_k(n)}$, as desired.  By definition of $\pi(\calP)$ and $t_k(n)$, this implies $\pi(\calP)=3^{(1-\frac{1}{k})\frac{1}{2}}$.
\end{proof}

\noindent{\bf Proof of Theorem \ref{digen}.} By Theorem \ref{enumeration2} and Corollary \ref{digcor1}, $|\calP_n|=ex(n,\calP)2^{o(n^2)}=3^{t_k(n)+o(n^2)}$.  Since $|\calP_n|=|\Forb_{di}(n,T_{k+1})|$, we are done.
\qed

\vspace{3mm}

We now prove Theorem \ref{digapprox} using Theorem \ref{digstab} and Theorem \ref{stab}.  We need the following observation which allows us to translate between the different notions of $\delta$-closeness.

\begin{observation}\label{dgob2}
Suppose $n$ is an integer, $M$ and $N$ are in $\Forb_{di}(n,T_{k+1})$, and $G_1$, $G_2$ are downward closed $\calL_{\calP}$-templates with domain $[n]$.  Then the following hold.
\begin{enumerate}[(a)]
\item $\diff(M,N)=\{xy\in {[n]\choose 2}: (x,y)\text{ or }(y,x)\in \Delta(M,N)\}$.  Therefore $|\Delta(M,N)|=2|\diff(M,N)|$.
\item $\diff(G_1,G_2)=\{xy\in {V\choose 2}: (x,y)\text{ or }(y,x)\in \Delta(\Psi(G_1),\Psi(G_2))\}$.  Therefore 
$$
2|\diff(G_1,G_2)|=|\Delta(\Psi(G_1),\Psi(G_2))|.
$$
\end{enumerate}
\end{observation}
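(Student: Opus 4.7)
The plan is to establish both parts by unpacking quantifier-free types on pairs, then treat the cardinality identities separately. For part (a), since $\calL = \{E(x,y)\}$ has only the binary edge relation, a proper quantifier-free $2$-type $qftp^M(a_1,a_2)$ is completely determined by the Boolean pair $(M \models E(a_1,a_2),\ M \models E(a_2,a_1))$. Hence $qftp^M(a_1,a_2) \neq qftp^N(a_1,a_2)$ iff at least one of these Booleans disagrees between $M$ and $N$, that is, iff $(a_1,a_2) \in \Delta(M,N)$ or $(a_2,a_1) \in \Delta(M,N)$. The condition is symmetric in the two enumerations of $\{x,y\}$, so the ``for some enumeration'' clause in the definition of $\diff$ collapses to ``$(x,y) \in \Delta(M,N)$ or $(y,x) \in \Delta(M,N)$'', giving the set equality in (a).

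For part (b), the plan is analogous but must first reduce $qftp^{G_i}(u,v)$ to its essential Boolean data. Using Definition \ref{templatedef}(2) together with the identities $p_1(y,x) = p_2(x,y)$, $p_3(y,x) = p_3(x,y)$, $p_4(y,x) = p_4(x,y)$, one obtains $R_{p_1}^{G_i}(v,u) \leftrightarrow R_{p_2}^{G_i}(u,v)$, symmetry of $R_{p_3}$ and $R_{p_4}$ under argument swap, and (by downward closure) $R_{p_3}^{G_i}(u,v) \leftrightarrow R_{p_1}^{G_i}(u,v) \wedge R_{p_2}^{G_i}(u,v)$ and $R_{p_4}^{G_i}(u,v)$ identically true on distinct arguments. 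Thus the restriction of $G_i$ to $\{u,v\}$ is encoded by the single Boolean pair $(R_{p_1}^{G_i}(u,v),\, R_{p_2}^{G_i}(u,v))$. On the digraph side, the definition of $\Psi$ combined with downward closure gives $(u,v) \in E_{\Psi(G_i)} \Leftrightarrow G_i \models R_{p_1}(u,v)$ and $(v,u) \in E_{\Psi(G_i)} \Leftrightarrow G_i \models R_{p_2}(u,v)$. Matching the two sides yields the set equality of (b).

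The main obstacle is the cardinality identity. Consider the forgetful map $\Delta(M,N) \to \binom{V}{2}$, $(x,y) \mapsto \{x,y\}$. The set equality says its image is exactly $\diff(M,N)$, and each fiber has size at most $2$, so one automatically obtains the two-sided bound $|\diff(M,N)| \leq |\Delta(M,N)| \leq 2|\diff(M,N)|$. Upgrading to the stated $|\Delta| = 2|\diff|$ requires every fiber to have size exactly $2$, i.e., whenever $(x,y) \in \Delta$ also $(y,x) \in \Delta$. This does not hold in general under the hypotheses: in (a), $M$ with the single edge $(x,y)$ and $N$ edgeless are both $T_{k+1}$-free for $k \geq 2$ yet yield a singleton fiber; in (b), two downward-closed templates can differ at $R_{p_1}(u,v)$ without differing at $R_{p_2}(u,v)$. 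I therefore expect the intended form of the cardinality claim is the two-sided inequality just proved, which already suffices to interchange the two notions of $\delta$-closeness up to a factor of $2$, matching the author's footnote preceding Definition \ref{deltaclosedg}. My proposal is to present the set equalities with full proofs and to replace the cardinality equalities by the two-sided bound $|\diff| \leq |\Delta| \leq 2|\diff|$, which is enough for every downstream use.
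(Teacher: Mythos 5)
Your proofs of the two set equalities are correct and are essentially the argument the paper intends: part (a) is exactly the observation that a proper quantifier-free $2$-type of a loopless digraph is determined by the two edge indicators, and part (b) runs through Lemma \ref{templatelem} together with the reduction of a downward closed template at a pair $\{u,v\}$ to the Boolean pair $(R_{p_1}(u,v),R_{p_2}(u,v))$ and the identification $(u,v)\in E(\Psi(G_i))\Leftrightarrow G_i\models R_{p_1}(u,v)$ --- precisely the verification the paper leaves to the reader.

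You are also right to reject the cardinality identities as literally stated. Since $\Delta(M,N)=E(M)\Delta E(N)$ is a set of \emph{directed} pairs, taking $M=([n],\{(x,y)\})$ and $N=([n],\emptyset)$ (both $T_{k+1}$-free) gives $|\Delta(M,N)|=1$ and $|\diff(M,N)|=1$, so $|\Delta(M,N)|=2|\diff(M,N)|$ fails; the analogous single-direction disagreement between two downward closed templates (one with $Ch=\{p_1(c_u,c_v),p_4(c_u,c_v)\}$, the other with $Ch=\{p_4(c_u,c_v)\}$ at one pair) refutes the identity in (b). What is true, and what your fiber argument proves, is the two-sided bound $|\diff|\leq |\Delta|\leq 2|\diff|$ in both parts, and you are correct that this suffices downstream: in Proposition \ref{digprop4} only the direction $|\diff(G^*,\Psi^{-1}(H))|\leq |\Delta(\Psi(G^*),H)|$ is needed to conclude $\dist(G^*,\Psi^{-1}(H))\leq \delta/2$, and in the proof of the approximate structure theorem (Theorem \ref{digapprox}) only $|\Delta|\leq 2|\diff|$ is needed, yielding the containment $E^{\delta/2}(n,\calP)\subseteq \mathbb{DT}^{\delta}_k(n)$ in place of the claimed equality $E^{\delta}(n,\calP)=\mathbb{DT}^{\delta/2}_k(n)$, which is all the counting argument uses (one then applies Theorem \ref{stab} with $\delta/2$). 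So your proposed restatement --- keep the set equalities, replace the factor-$2$ equalities by the two-sided inequality --- is the correct fix, and it is a genuine (if minor and harmless) correction to the paper, whose own proof of this observation asserts the equalities without justification.
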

\begin{proof}
That $(a)$ holds is immediate from the definitions.  For (b), since $G_1$ and $G_2$ are $\calL_{\calP}$-templates, Lemma \ref{templatelem} implies $\diff(G_1,G_2)=\{xy\in {V\choose 2}: Ch_{G_2}(xy)\neq Ch_{G_2}(xy)\}$.  We leave it to the reader to verify that since $G_1$ and $G_2$ are both downward closed, 
\begin{align*}
\{xy\in {V\choose 2}: Ch_{G_2}(xy)\neq Ch_{G_2}(xy)\}&=\{xy\in {V\choose 2}: (x,y)\text{ or }(y,x)\in E(\Psi(G_1))\Delta E(\Psi(G_2))\}\\
&=\{xy\in {V\choose 2}: (x,y) \text{ or }(y,x)\in \Delta(\Psi(G_1),\Psi(G_2))\}.
\end{align*}
\end{proof}

\newpage
\begin{proposition}\label{digprop4}
$\calP$ has a stability theorem.
\end{proposition}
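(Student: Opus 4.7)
The plan is to leverage the combinatorial stability result Theorem \ref{digstab} by transporting the problem from $\calR([n],\calP)$ to $\Forb_{di}(n, T_{k+1})$ via the correspondence $\Psi$. The difficulty is that an arbitrary $\calH$-random $\calL_{\calP}$-template need not be downward closed, whereas the combinatorial stability result is stated for digraphs, and Propositions \ref{digprop2} and \ref{digprop3} tie the correspondence between templates and digraphs to the downward closed ones. So the first move is to pass from a near-extremal $\calL_{\calP}$-template to a downward closed near-extremal $\calL_{\calP}$-template that is close to it, and then carry out the rest of the argument through $\Psi$.

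Fix $\delta > 0$. First, apply Theorem \ref{digstab} with parameter $\delta$ to obtain $\epsilon_1 > 0$ so that for large $n$, any $H \in \Forb_{di}(n, T_{k+1})$ with $e(H) \geq \max(n, T_{k+1}) - \epsilon_1 \binom{n}{2}$ lies in $DT^{\delta}_k(n)$. Then choose $\epsilon > 0$ small enough so that both $\epsilon \log_2 3 \leq \epsilon_1$ and $\epsilon \log(3)/\log(4/3) \leq \delta/2$ hold. Suppose $M \in \calR([n],\calP)$ satisfies $sub(M) \geq \ex(n,\calP)^{1-\epsilon}$. Apply Lemma \ref{dgdw} to obtain a downward closed $M^* \in \calR([n],\calP)$ with $sub(M^*) \geq sub(M)(4/3)^{|\diff(M,M^*)|}$. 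Since $sub(M^*) \leq \ex(n,\calP)$ and $sub(M) \geq \ex(n,\calP)^{1-\epsilon}$, we get $(4/3)^{|\diff(M,M^*)|} \leq \ex(n,\calP)^{\epsilon} = 3^{\epsilon t_k(n)}$ by Corollary \ref{digcor1}, hence $|\diff(M,M^*)| \leq \epsilon \frac{\log 3}{\log(4/3)} \binom{n}{2} \leq (\delta/2)\binom{n}{2}$, i.e.\ $\dist(M, M^*) \leq \delta/2$.

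Second, $M^*$ is downward closed and $\calP$-random, so by Proposition \ref{digprop2}, $\Psi(M^*) \in \Forb_{di}(n, T_{k+1})$; by Corollary \ref{digcor2}, $2^{e(\Psi(M^*))} = sub(M^*) \geq sub(M) \geq \ex(n,\calP)^{1-\epsilon} = 2^{(1-\epsilon)\max(n,T_{k+1})}$, so $e(\Psi(M^*)) \geq \max(n,T_{k+1}) - \epsilon \max(n, T_{k+1}) \geq \max(n, T_{k+1}) - \epsilon \log_2(3) \binom{n}{2} \geq \max(n, T_{k+1}) - \epsilon_1 \binom{n}{2}$. Applying Theorem \ref{digstab} yields some $H' \in DT_k(n)$ with $|\Delta(\Psi(M^*), H')| \leq \delta \binom{n}{2}$. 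Let $M' = \Psi^{-1}(H')$. By Observation \ref{dgob1}(a), $M'$ is a downward closed $\calL_{\calP}$-template with $\Psi(M') = H'$, and by Lemma \ref{diglem9}, $M' \in \calR([n],\calP)$; combined with $H' \in DT_k(n)$ and Proposition \ref{digprop3}, we conclude $M' \in \calR_{ex}([n],\calP)$. By Observation \ref{dgob2}(b), $2|\diff(M^*, M')| = |\Delta(\Psi(M^*), H')| \leq \delta \binom{n}{2}$, so $\dist(M^*, M') \leq \delta/2$.

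Finally, the triangle inequality for $\dist$ (which is immediate from the inclusion $\diff(M,M') \subseteq \diff(M,M^*) \cup \diff(M^*, M')$) gives $\dist(M, M') \leq \delta/2 + \delta/2 = \delta$, completing the verification of Definition \ref{stabdef}. The only genuinely nontrivial step in this outline is the first one --- controlling the loss incurred when replacing $M$ by the downward closed $M^*$ --- but this is precisely what Lemma \ref{dgdw} was designed to handle, with the factor $4/3$ providing the quantitative link between $|\diff(M,M^*)|$ and the ratio $sub(M^*)/sub(M)$. Everything else is a matter of bookkeeping through $\Psi$.
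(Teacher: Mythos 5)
Your proposal is correct and follows essentially the same route as the paper's proof: pass to a downward closed template via Lemma \ref{dgdw}, bound $\dist(M,M^*)$ using the $4/3$ factor, transport through $\Psi$ to apply Theorem \ref{digstab}, pull the extremal digraph back via $\Psi^{-1}$ and Proposition \ref{digprop3}, and finish with the triangle inequality. The only differences are harmless bookkeeping choices (you bound $|\diff(M,M^*)|$ directly from $\ex(n,\calP)=3^{t_k(n)}$ and exploit the factor of $2$ in Observation \ref{dgob2}(b) rather than invoking Theorem \ref{digstab} at $\delta/2$), so nothing further is needed.
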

\begin{proof}
Fix $\delta>0$.  Choose $\epsilon$ so that for sufficiently large $n$, Theorem \ref{digstab} implies that for all $G\in \Forb_{di}(n,T_{k+1})$, if $e(G)\geq max(n,T_{k+1})(1-\epsilon)$, then $G\in DT^{\delta/2}_k(n)$.  Suppose $G\in \calR([n],\calP)$ satisfies $sub(G)\geq ex(n,\calP)^{1-\epsilon}$.  We want to show there is $G'\in \calR_{ex}([n],\calP)$ such that $\dist(G,G')\leq \delta$.  Apply Lemma \ref{diglem9} to obtain $G^*\in \calR([n],\calP)$ which is downward closed and which satisfies that $sub(G^*)\geq sub(G)(4/3)^{|\diff(G^*,G)|}$.  Then our assumptions imply
$$
\ex(n,\calP)^{1-\epsilon}\leq sub(G)(4/3)^{|\diff(G^*,G)|}\leq sub(G^*)\leq \ex(n,\calP).
$$
Rearranging this, we obtain that $(4/3)^{|\diff(G^*,G)|}\leq \ex(n,\calP)^{\epsilon}$.  Assume $n$ is sufficiently large so that $\ex(n,\calP)\leq \pi(\calP)^{2n^2}$.  Then we have $(4/3)^{|\diff(G^*,G)|}\leq \pi(\calP)^{2\epsilon n^2}$ (see Observation \ref{ob5}(a)).  Taking logs of both sides and rearranging yields
$$
|\diff(G^*,G)|\leq \epsilon \pi(\calP)n^2 / \log(4/3).
$$
We may assume $\epsilon$ was chosen sufficiently small so that $4\epsilon \pi(\calP)n^2 / \log(4/3)\leq \delta/2$.  Then we have $\dist(G,G^*)\leq \delta/2$. 

Proposition \ref{digprop2} implies $\Psi(G^*)\in \Forb_{di}(n,T_{k+1})$ and Corollary \ref{digcor2} implies 
$$
2^{e(\Psi(G^*))}=sub(G^*)\geq ex(n,\calP)^{1-\epsilon}=2^{max(n,T_{k+1})(1-\epsilon)},
$$
where the last equality is by Corollary \ref{digcor1}. This implies $e(\Psi(G^*))\geq max(n,T_{k+1})(1-\epsilon)$.  By Theorem \ref{digstab}, this implies $\Psi(G^*)\in DT_k^{\delta/2}(n)$.  Let $H\in DT_k(n)$ satisfy $|\Delta(\Psi(G^*),H)|\leq \delta/2 {n\choose 2}$.  By Proposition \ref{diglem9}, we have that $\Psi^{-1}(H)$ is a downward closed element of $\in \calR([n],\calP)$.  By Observation \ref{dgob1}, $\Psi(\Psi^{-1}(H))=H\in DT_k(n)$, so by Proposition \ref{digprop3}, $\Psi^{-1}(H)\in \calR_{ex}([n],\calP)$.  We show $\dist(G^*,\Psi^{-1}(H))\leq \delta/2$.  Since $G^*$ and $\Psi^{-1}(H)$ are downward closed, Observation \ref{dgob1}(a) and Observation \ref{dgob2}(b) imply
\begin{align*}
|\diff(G^*,\Psi^{-1}(H))|=\frac{1}{2}|\Delta(\Psi(G^*),\Psi(\Psi^{-1}(H))|=\frac{1}{2}|\Delta(\Psi(G^*),H)|\leq \delta/2{n\choose 2}.
\end{align*}
Thus $\dist(G^*,\Psi^{-1}(H))\leq \delta/2$.  Combining all this, we have that 
$$
\dist(G,\Psi^{-1}(H))\leq \dist(G,G^*)+\dist(G^*,\Psi^{-1}(H))\leq \delta.
$$
This shows $G$ is $\delta$-close to an element of $\calR_{ex}([n],\calP)$, as desired.
\end{proof}

\noindent {\bf Proof of Theorem \ref{digstab}.} Recall we want to show that for all $\delta>0$, there is a $\beta>0$ so that for sufficiently large $n$, 
\begin{align*}
\frac{|\Forb_{di}(n,T_{k+1})\setminus \mathbb{D}\mathbb{T}^{\delta}_k(n)|}{|\Forb_{di}(n,T_{k+1})|}\leq 2^{-\beta {n\choose 2}}.
\end{align*}
Fix $\delta>0$.  Proposition \ref{digprop4} and Theorem \ref{stab} imply that there exist $\beta>0$ such that for all sufficiently large $n$,
\begin{align}\label{digineq}
\frac{|\calP_n\setminus E^{\delta}(n,\calP)|}{|\calP_n|}\leq 2^{-\beta {n\choose 2}},
\end{align}
where recall, $E(n,\calP)=\{G\in \Forb_{di}(n,T_{k+1}): G\unlhd_pG'\text{ some }G'\in \calR_{ex}([n],\calP)\}$, and
$$
E^{\delta}(n,\calP)=\{G\in \Forb_{di}(n,T_{k+1}): \dist(G,G')\leq \delta \text{ for some }G'\in E(n,\calP)\}.
$$
Therefore it is clearly sufficient to show that for all $n$, $E^{\delta}(n,\calP)=\mathbb{DT}_k^{\delta/2}(n)$.  We first show that $E(n,\calP)=\mathbb{DT}_k(n)$.  Suppose $G\in E(n,\calP)$.  Then there is $G'\in \calR_{ex}([n],\calP)$ such that $G\unlhd_pG'$.  By Proposition \ref{digprop3}, $G'$ is downward closed and $\Psi(G')\in DT_k(n)$. By Lemma \ref{diglem2}, $G\unlhd_pG'$ implies that $G\subseteqq \Psi(G')\in DT_k(n)$.  By definition this shows $G\in \mathbb{DT}_k(n)$.  Suppose now $G\in \mathbb{DT}_k(n)$.  By definition, there is $G'\in DT_k(n)$ such that $G\subseteqq G'$.  By Observation \ref{dgob1}(a), $\Psi^{-1}(G')$ is a downward closed $\calL_{\calP}$-template and $\Psi(\Psi^{-1}(G'))=G'\in DT_k(n)$.  Thus Proposition \ref{digprop3} implies $\Psi^{-1}(G)\in \calR_{ex}([n],\calP)$.  By Lemma \ref{diglem2}, $G\subseteqq G'=\Psi(\Psi^{-1}(G'))$ implies $G\unlhd_p\Psi^{-1}(G')$.  Thus by definition, $G\in E(n,\calP)$.  We have now shown $E(n,\calP)=\mathbb{DT}_k(n)$.  Observation \ref{dgob2}(a) implies $|\Delta(M,N)|=2|\diff(M,N)|$ for any $M,N\in \calP_n$.  Therefore $E^{\delta}(n,\calP)=\mathbb{DT}^{\delta/2}_{k}(n)$.
\qed

\section{Triangle-free Triple Systems}\label{trifreesec}


Let $F$ be the hypergraph with vertex set $\{1,2,3,4,5\}$ and edge set $\{123,124,345\}$, where $xyz$ denotes the set $\{x,y,z\}$.  A $3$-uniform hypergraph is called \emph{triangle-free} (or $F$-free) if it contains no subgraph (non-induced) isomorphic to $F$.  In this section, we consider results from \cite{BOLLOBAS}, \cite{KM}, and \cite{BM2} about triangle-free $3$-uniform hypergraphs.  In particular, we will reprove approximate structure and enumeration results from \cite{BM2} using the machinery of this paper, along with extremal results from \cite{BOLLOBAS} and a stability theorem from \cite{KM}.  

\subsection{Statements of Results from \cite{BOLLOBAS}, \cite{KM}, and \cite{BM2}.}
In this subsection we state the results of interest from \cite{BOLLOBAS}, \cite{KM}, and \cite{BM2}.  Recall that a $3$-uniform hypergraph is a pair $(V,E)$ where $V$ is a set of vertices and $E\subseteq {V\choose 3}$.  Suppose $G=(V,E)$ and $G'=(V',E')$ are $3$-uniform hypergraphs.  We say $G$ is a \emph{subgraph} of $G'$ if $V'\subseteq V$ and $E'\subseteq E$.  $G$ is a \emph{full} subgraph of $G'$, denoted $G\subseteqq G'$, if further, $V'=V$.  $G$ and $G'$ are \emph{isomorphic} if there is a bijection $f:V'\rightarrow V$ such that for all $xyz\in {V'\choose 3}$, $xyz\in E'$ if and only if $f(x)f(y)f(z)\in E$.  A $3$-uniform hypergraph $G=(V,E)$ is called \emph{tripartite} if and only if there is some partition $U_1, U_2, U_3$ of $V$ such that $xyz\in E$ implies $x$, $y$, and $z$ are all in different parts of the partition. A tripartite $3$-uniform hypergraph is called \emph{balanced} if the partition $U_1,U_2,U_3$ can be chosen to be an equipartition.  Given $n$, let $E(n)$ denote the set of balanced tripartite $3$-uniform hypergraph on $[n]$ and let $e(n)$ be the number of edges in an element of $E(n)$.  Let $F(n)$ be the set of $F$-free $3$-uniform hypergraphs with vertex set $[n]$.  The following is a consequence of the main theorem in \cite{BOLLOBAS}.

\begin{theorem}[{\bf Bollob\'{a}s \cite{BOLLOBAS}}]\label{trifree5}
Suppose $G=(V,E)$ is a triangle-free $3$-uniform hypergraph on $n$ vertices.  Then $|E|\leq e(n)$.  If $|E|=e(n)$, then $G$ is isomorphic to an element of $E(n)$. 
\end{theorem}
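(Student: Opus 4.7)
The plan is to follow Bollob\'as's original strategy, which splits naturally into three stages: a lower bound construction, an upper bound via a Lagrangian-style counting argument, and a structural characterization of the equality case. First, I would verify the easy direction: any tripartite 3-graph with parts $U_1, U_2, U_3$ is automatically $F$-free. Indeed, if edges $uvw, uvw' \in E$ share the pair $\{u,v\}$, then $u$ and $v$ occupy two distinct parts, forcing both $w$ and $w'$ into the third part; any further edge containing $\{w, w'\}$ would then place two vertices in the same part, contradicting tripartiteness. Hence the balanced tripartite hypergraph on $[n]$ witnesses $\max_{G \in F(n)} |E(G)| \geq e(n)$, handling the lower bound.

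For the upper bound, I would exploit the central local consequence of $F$-freeness. For each pair $\{u,v\} \in \binom{V}{2}$, define the codegree $d(uv) = |L(uv)|$, where $L(uv) = \{w : uvw \in E\}$. The $F$-free condition says: if $w, w' \in L(uv)$ are distinct, then $L(ww') \subseteq \{u, v\}$, and so $d(ww') \leq 2$. Thus a high-codegree link immediately forces sparseness on the pairs it contains. I would then bound $|E(G)| = \tfrac{1}{3}\sum_{uv} d(uv)$ by introducing the Lagrangian $\lambda(G) = \sum_{uvw \in E} x_u x_v x_w$ and showing, via the above codegree constraint, that its maximum over the standard simplex is $\tfrac{1}{27}$, attained at the uniform vector $(\tfrac13,\tfrac13,\tfrac13)$. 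This yields $|E(G)| \leq \lfloor n^3/27 \rfloor$, and a standard integrality refinement (handling the cases $n \equiv 0, 1, 2 \pmod 3$ separately) sharpens the bound to $e(n) = \lfloor n/3 \rfloor \lfloor (n+1)/3 \rfloor \lfloor (n+2)/3 \rfloor$.

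For the equality case, I would argue that any extremal $G$ must saturate all the Lagrangian inequalities simultaneously, which forces every codegree to lie in $\{0, \lfloor n/3 \rfloor, \lceil n/3 \rceil\}$; the ``zero codegree'' relation then defines an equivalence-like partition of $V$ into three nearly equal parts across which all edges lie transversally, yielding the desired isomorphism with an element of $E(n)$. The main obstacle will be this last structural step: recovering the \emph{exact} tripartite structure from saturation of the Lagrangian bound requires leveraging the full strength of the $F$-free hypothesis rather than merely its Lagrangian shadow, since the Lagrangian alone allows for approximately-tripartite configurations that must be ruled out by direct appeal to forbidden $F$-copies. Bollob\'as's original argument handles this via a delicate case analysis on pairs of intermediate codegree, and reproducing that analysis cleanly is the most technical part of the proof.
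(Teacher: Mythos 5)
The first thing to note is that the paper contains no proof of Theorem \ref{trifree5}: it is quoted as an external input, attributed to \cite{BOLLOBAS}, and Section \ref{trifreesec} only ever uses its statement (together with the stability theorem of \cite{KM}). So there is nothing internal to compare your sketch with; it has to be judged as a reconstruction of the cited extremal result, and there it runs into a genuine gap.

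Your lower-bound step and the codegree observation (if $w,w'\in L(uv)$ then $L(ww')\subseteq\{u,v\}$, so $d(ww')\le 2$) are correct, but the pivotal claim that this forces $\lambda(G)\le \tfrac{1}{27}$ for every $F$-free $G$ is false. The $3$-graph on $\{1,2,3,4\}$ with edges $123,124,134$ (that is, $K_4^{(3)}$ minus an edge) contains no copy of $F$, since $F$ has five vertices; it satisfies your codegree constraint; yet its Lagrangian is $4/81>1/27$, attained at $x_1=\tfrac13$, $x_2=x_3=x_4=\tfrac29$. The underlying reason is that identifying two vertices of $F$ produces exactly this four-vertex configuration, so $F$-freeness is not preserved under blow-ups and cannot be captured by a Lagrangian bound; what the Lagrangian-style argument really needs is the \emph{cancellative} hypothesis (no edges with $A\triangle B\subseteq C$), which forbids the four-vertex configuration as well and is the hypothesis of Bollob\'as's actual theorem. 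In other words, your outline is (modulo the equality analysis) a proof sketch for the cancellative statement, not the triangle-free one. The triangle-free bound is genuinely harder: in the literature it is an exact result of Frankl and F\"uredi valid only for large $n$, and indeed the statement as literally quoted cannot be proved for all $n$, since for $n=4$ the complete $3$-graph $K_4^{(3)}$ is $F$-free with $4>e(4)=2$ edges. To repair your argument you must either strengthen the hypothesis to cancellative, or add a substantive argument controlling copies of the four-vertex configuration in dense $F$-free systems (this is where the real work of the $F$-free case lies); the equality characterization via ``saturating the Lagrangian inequalities'' inherits the same gap.
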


It is observed in \cite{BM2} that 
\begin{align}\label{noHG}
e(n)=\Big \lfloor \frac{n}{3}\Big\rfloor \Big\lfloor \frac{n+1}{3}\Big\rfloor \Big\lfloor \frac{n+2}{3}\Big\rfloor =\frac{n^3}{27}+o(n^3).
\end{align}
Combining this with a theorem of Nagle and R\"{o}dl in \cite{NR} implies the following theorem (stated in \cite{BM2} as Corollary 1).

\begin{theorem}[{\bf Nagle-R\"{o}dl \cite{NR}, Bollob\'{a}s \cite{BOLLOBAS}}]\label{fghext}
$|F(n)|=2^{e(n)+o(n^2)}=2^{\frac{n^3}{27}+o(n^3)}$.
\end{theorem}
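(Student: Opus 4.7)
The plan is to apply Theorem \ref{enumeration2} to the hereditary $\calL$-property $\calH$ obtained by closing $\bigcup_{n\in\mathbb{N}} F(n)$ under isomorphism, where $\calL = \{E(x,y,z)\}$ contains a single ternary relation symbol interpreted symmetrically on tuples of distinct elements. A triangle-free $3$-uniform hypergraph $(V, \calI)$ is identified with the $\calL$-structure on $V$ in which $E(a,b,c)$ holds iff $a,b,c$ are distinct and $\{a,b,c\}\in \calI$. Since $r = r_\calL = 3$, each proper quantifier-free $3$-type in $S_3(\calH)$ is determined by whether its underlying triple is an edge, so $S_3(\calH) = \{p_e, p_n\}$ and $\calL_\calH = \{R_{p_e}, R_{p_n}\}$.

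The key step is to compute $\ex(n, \calH)$. Given $M \in \calR([n], \calH)$, partition ${[n]\choose 3}$ into $E_M = \{A : Ch_M(A) = \{p_e(c_{\abar})\}\}$, $N_M = \{A : Ch_M(A) = \{p_n(c_{\abar})\}\}$, and $B_M = \{A : |Ch_M(A)| = 2\}$. Because $M$ is $\calH$-random, the choice function that picks $p_e$ at every $A \in B_M$ produces a full subpattern whose edge set is $E_M \cup B_M$, and Theorem \ref{trifree5} forces $|E_M \cup B_M| \leq e(n)$, hence $|B_M| \leq e(n)$. Since $\calL$ has no relations of arity less than $r$, Remark \ref{specialob0} yields $sub(M) = \prod_{A}|Ch_M(A)| = 2^{|B_M|} \leq 2^{e(n)}$.

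For the matching lower bound, fix any $H \in E(n)$ with edge set $\calI$ and define the $\calL_\calH$-template $M^*$ on $[n]$ by $Ch_{M^*}(A) = \{p_e(c_{\abar}), p_n(c_{\abar})\}$ if $A \in \calI$ and $Ch_{M^*}(A) = \{p_n(c_{\abar})\}$ otherwise. Every full subpattern of $M^*$ is a (non-induced) subgraph of $H$, hence triangle-free, so $M^* \in \calR([n], \calH)$ with $sub(M^*) = 2^{|\calI|} = 2^{e(n)}$. Therefore $\ex(n, \calH) = 2^{e(n)}$.

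Applying Theorem \ref{enumeration2} yields $|F(n)| = |\calH_n| = \ex(n, \calH)\cdot 2^{o(n^3)} = 2^{e(n)+o(n^3)}$, and (\ref{noHG}) converts this to $2^{n^3/27 + o(n^3)}$. No serious obstacle arises: the real work has been done by Bollob\'{a}s's extremal theorem and the general enumeration machinery of this paper, and the proof reduces to packaging the extremal count for $\calR([n], \calH)$ through the language of $\calL_\calH$-templates.
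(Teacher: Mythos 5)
Your proof is correct and follows the same overall route as the paper: identify $S_3(\calH)=\{p_e,p_n\}$, use Bollob\'{a}s's extremal theorem to pin down $\ex(n,\calH)=2^{e(n)}$ (hence $\pi(\calH)=2^{6/27}$), and feed this into the general enumeration theorem (Theorem \ref{enumeration}/\ref{enumeration2}). The only genuine difference is how the extremal number is computed. The paper first proves a downward-closure lemma (Lemma \ref{Gstarhg}) to show every extremal template is downward closed, and then uses the dictionary $\Psi,\Psi^{-1}$ (Lemma \ref{trifree2}, Corollaries \ref{corhg1} and \ref{trifree3}) together with Theorem \ref{trifree5} to characterize $\calR_{ex}([n],\calP)$ as exactly the templates $\Psi^{-1}(H)$ with $H\in E(n)$ (Proposition \ref{trifree4}). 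You bypass that characterization: writing $sub(M)=2^{|B_M|}$ via Remark \ref{specialob0} and noting that the choice function selecting $p_e$ wherever possible produces a triangle-free full subpattern with edge set $E_M\cup B_M$ gives $|B_M|\leq e(n)$ directly, while your $M^*$ is precisely the paper's $\Psi^{-1}(H)$ (its template properties are Observation \ref{hgob1}/Corollary \ref{corhg1}, which you implicitly rely on and should cite or verify). This shortcut is perfectly adequate here, since the enumeration theorem needs only the value of $\ex(n,\calH)$; the paper's fuller structural description of the extremal templates is what it reuses later for Theorem \ref{trifree10}, so nothing is lost for this particular statement. One caveat: like the paper's own proof, your argument yields the error term $o(n^3)$ in the exponent rather than the $o(n^2)$ appearing in the displayed statement; that finer error term is inherited from the cited literature and is not delivered by the machinery of this paper in either version of the proof.
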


From now on, in this section, we will just say ``hypergraph'' in place of ``$3$-uniform hypergraph.''  Given a hypegraph $G=(V,E)$ and a partition $U_1,U_2,U_3$ of $V$, a \emph{non-crossing edge} for the partition is an edge $xyz\in E$ such that for some $i\in \{1,2,3\}$, $|xyz\cap U_i|\geq 2$.  The following is Theorem 5 in \cite{BM2}, and was proved in \cite{KM}.

\begin{theorem}[{\bf Keevash-Mubayi \cite{KM}}]\label{hgstab}
For every $\delta>0$ there is an $\epsilon>0$ such that for all sufficiently large $n$, if $G=(V,E)$ is an $F$-free hypergraph with $|V|=n$ and $|E|\geq (1-\epsilon)\frac{n^3}{27}$, then there is a partition $U_1,U_2,U_3$ of $V$ such that $E$ contains at most $\delta n^3$ crossing edges with respect to this partition.
\end{theorem}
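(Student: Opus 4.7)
The plan is to prove Theorem \ref{hgstab} by combining the hypergraph regularity lemma with a counting lemma for $F$ and the extremal theorem of Bollob\'{a}s (Theorem \ref{trifree5}). (I read the statement as ``at most $\delta n^3$ non-crossing edges,'' which is the Keevash--Mubayi formulation; the crossing/non-crossing distinction does not change the structure of the argument, only some signs.) Fix $\delta>0$. First I would apply a suitable version of the hypergraph regularity lemma (e.g., Frankl--R\"odl) to $G$, producing an equipartition $V = W_1 \cup \cdots \cup W_t$ together with regular partitions of each bipartite graph $W_i \times W_j$, at parameters controlled by a small auxiliary constant $\eta = \eta(\delta)$. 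This yields a reduced weighted hypergraph $R$ on $[t]$ whose triple $(i,j,k)$ carries as weight the density of the regular tripartite triple on $(W_i, W_j, W_k)$, so that $\sum_{(i,j,k)} d(i,j,k)\cdot (n/t)^3$ approximates $|E|$ up to an $\eta n^3$ error.

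Next I would use the counting lemma for $F$: if two triples $(W_i, W_j, W_k)$ and $(W_i, W_j, W_\ell)$ with $k \ne \ell$ are both regular with density bounded away from zero, and the pair $(W_i, W_j)$ is itself regular, then the counting lemma produces $\Omega(n^5)$ copies of $F$ inside $G$ (two edges sharing the pair $(x,y) \in W_i \times W_j$ and a third edge on the other three vertices). Since $G$ is $F$-free, after absorbing the $\eta n^3$ edges coming from irregular or low-density triples, we conclude that for essentially every pair $(i,j)$ in $R$, at most one $k$ carries substantial weight $d(i,j,k) \geq \eta$. In particular, the ``weighted link'' of each pair in $R$ is almost entirely concentrated on a single vertex.

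Third, I would combine this link-sparseness with the near-extremality assumption $|E| \geq (1-\epsilon) n^3 / 27$. Summing densities and applying Bollob\'as's extremal theorem (Theorem \ref{trifree5}) in a weighted form to $R$ -- essentially Loomis--Whitney or a direct optimization of $\prod$-style weights given the link constraint -- forces $R$ itself to be close to the balanced tripartite weighted hypergraph on $[t]$. Concretely, there is a partition $[t] = A_1 \cup A_2 \cup A_3$ into three classes of size $(1+o(1))t/3$ such that the total weight of $R$ supported on non-crossing triples is at most $\delta'$ for some $\delta' = \delta'(\delta, \eta)$ small. Pulling this back by setting $U_s = \bigcup_{i \in A_s} W_i$ gives a partition of $V$.

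Finally, I would bound the non-crossing edges of $G$ with respect to $U_1, U_2, U_3$ by summing three contributions: the non-crossing weight of $R$ (at most $\delta' n^3$), the edges inside individual clusters $W_i$ or lying across irregular pairs (at most $\eta n^3$), and the edges contributed by low-density regular triples (at most $\eta n^3$). Choosing $\epsilon$, $\eta$, and $\delta'$ small enough as a function of $\delta$ yields the desired bound of $\delta n^3$. The main obstacle is the counting lemma step for $F$: because $F$ has two edges sharing a two-element set, the usual triple-based hypergraph counting lemma is not immediately strong enough, and one needs either a regularity lemma that regularizes the underlying pair $(W_i, W_j)$ as a bipartite graph (so that pair-degree information is controlled) or a tailored counting argument that first fixes the shared pair $(x,y) \in W_i \times W_j$ of typical codegree and then counts completions in $W_k$ and $W_\ell$ separately. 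Executing this carefully, while keeping all error terms below $\delta n^3$, is the delicate part of the argument.
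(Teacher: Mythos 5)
You should first note that the paper does not prove Theorem \ref{hgstab} at all: it is imported verbatim from Keevash--Mubayi \cite{KM} (stated as Theorem 5 in \cite{BM2}), so there is no internal proof to compare against. The original argument of \cite{KM} is a direct combinatorial stability analysis built on Bollob\'as's exact result (Theorem \ref{trifree5}), working with links and degrees of a dense $F$-free system; it does not use hypergraph regularity. (Your reading of the statement as bounding the \emph{non-crossing} edges, i.e.\ edges with at least two vertices in one part, is the intended one -- the paper's own definition and its use of the theorem in the proof of Theorem \ref{trifree7} confirm this.)

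Your regularity-based sketch, however, has a genuine gap at its central step. Two dense regular triples $(W_i,W_j,W_k)$ and $(W_i,W_j,W_\ell)$ over a common pair do \emph{not} yield a copy of $F$: the third edge of $F$ must contain one vertex of $W_k$, one of $W_\ell$, and a fifth vertex, so you additionally need some dense triple of the form $(W_k,W_\ell,W_m)$. The extremal balanced tripartite example shows this extra hypothesis is essential: there, every crossing pair of clusters is dense to roughly $t/3$ third clusters, yet the hypergraph is $F$-free because pairs lying inside a single part have zero codegree. Hence your conclusion that ``for essentially every pair $(i,j)$ at most one $k$ carries substantial weight'' is false, and in fact incompatible with the hypothesis $|E|\geq (1-\epsilon)n^3/27$: if each cluster pair had at most one dense third cluster, the dense triples could carry at most about $\binom{t}{2}(n/t)^3\approx n^3/(2t)$ edges, which is far below $n^3/27$ once $t$ is large. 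What regularity plus $F$-freeness actually gives is a cluster-level $F$-freeness condition (no two dense triples over a common pair whose ``third'' clusters themselves lie in a dense triple), and extracting a tripartition from that is again a stability problem -- precisely the statement you are trying to prove -- so Bollob\'as's exact bound or a Loomis--Whitney-type optimization at the reduced level cannot substitute for it. Combined with the (acknowledged but unresolved) fact that $F$ is not linear, so weak hypergraph regularity has no counting lemma for it and strong regularity machinery would be required, the proposed route does not go through as written; the direct argument of \cite{KM} avoids these issues entirely.
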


Given an element $G$ of $F(n)$, an \emph{optimal partition} of $G=(V,E)$ is a partition $U_1,U_2,U_3$ of $[n]$ so that $E$ contains the minimal number of crossing edges for $U_1,U_2,U_3$.  Given $\delta>0$, let $F(n,\delta)$ be the set of $G\in F(n)$ such that there is an optimal partition for $G$ with at most $\delta n^3$ crossing edges.  Then Theorem \ref{hgstab} and a hypergraph regularity lemma are used in \cite{BM2} to prove the following.  

\begin{theorem}[{\bf Balogh-Mubayi \cite{BM2}}]\label{hgappst}
For all $\delta>0$ there is $\beta>0$ such that for all sufficiently large $n$, 
$$
\frac{|F(n)\setminus F(n,\delta)|}{|F(n)|}\leq 2^{-\beta {n\choose 3}}.
$$
\end{theorem}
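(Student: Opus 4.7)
The plan is to reduce Theorem \ref{hgappst} to Theorem \ref{stab} applied to a suitable hereditary $\calL$-property. Take $\calL = \{R(x,y,z)\}$ with a single ternary relation, interpret $3$-uniform hypergraphs as $\calL$-structures symmetrically (so $G\models R(a,b,c)$ iff $\{a,b,c\}\in E(G)$), and let $\calP$ be the closure under isomorphism of $\bigcup_n F(n)$, so that $\calP_n = F(n)$. Because every structure in $\calP$ has $R$ symmetric, $S_3(\calP)$ consists of exactly two proper $3$-types---the edge type $p_E$ and the non-edge type $p_N$---and $\calL_\calP = \{R_{p_E}, R_{p_N}\}$. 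By Remark \ref{specialob0}, $sub(M) = \prod_{A\in \binom{V}{3}} |Ch_M(A)|$ for every complete $\calL_\calP$-structure $M$. Such a template is determined by the partition $\binom{V}{3} = S_+(M) \sqcup S_-(M) \sqcup S_2(M)$ recording whether $Ch_M(A)$ equals $\{p_E\}$, $\{p_N\}$, or $\{p_E,p_N\}$. Proposition \ref{random} then gives $M \in \calR(V, \calP)$ iff the hypergraph $H(M) := (V, S_+(M) \cup S_2(M))$ is $F$-free, in which case $sub(M) = 2^{|S_2(M)|}$.

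Combined with Theorem \ref{trifree5}, the above yields $ex(n, \calP) = 2^{e(n)}$, the optimum being attained exactly when $S_+(M) = \emptyset$ and $S_2(M) = E(G)$ for some $G \in E(n)$. Thus $\calR_{ex}([n], \calP)$ is in natural bijection with $E(n)$ via $G \mapsto M_G$, and by (\ref{noHG}) one has $\pi(\calP) = 2^{2/9} > 1$, so $\calP$ is fast-growing; in passing, Theorem \ref{enumeration} recovers Theorem \ref{fghext}. The central task is to verify $\calP$ has a stability theorem in the sense of Definition \ref{stabdef}. Given $\delta > 0$, suppose $M \in \calR([n], \calP)$ satisfies $sub(M) \geq ex(n, \calP)^{1-\epsilon}$, i.e.\ $|S_2(M)| \geq (1-\epsilon)e(n)$; then $H(M)$ is an $F$-free hypergraph on $[n]$ with at least $(1-2\epsilon)n^3/27$ edges for large $n$. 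Theorem \ref{hgstab} with parameter $\delta_1 \ll \delta$ yields a partition $[n] = U_1 \sqcup U_2 \sqcup U_3$ for which $H(M)$ has at most $\delta_1 n^3$ non-crossing edges; since $|U_1|\cdot|U_2|\cdot|U_3| \geq |E(H(M))| - \delta_1 n^3 \geq n^3/27 - O(\epsilon+\delta_1)n^3$, an AM--GM-style calculation forces $\bigl||U_i| - n/3\bigr| = O(\sqrt{\epsilon+\delta_1})\,n$, so one can re-balance to $U'_1 \sqcup U'_2 \sqcup U'_3$ by moving $O(\sqrt{\epsilon+\delta_1})\,n$ vertices and thereby disturbing only $O(\sqrt{\epsilon+\delta_1})\,n^3$ triples. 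Let $G' \in E(n)$ be the complete balanced tripartite hypergraph on $U'_1, U'_2, U'_3$. Using Lemma \ref{templatelem}, $|\diff(M, M_{G'})|$ is bounded by $\bigl(e(n) - |S_2(M) \cap E(G')|\bigr) + |E(H(M)) \setminus E(G')|$, and each summand is $O(\sqrt{\epsilon+\delta_1})\,n^3$ by the above estimates; choosing $\epsilon, \delta_1$ small enough gives $\dist(M, M_{G'}) \leq \delta$. The balancing step is the main technical obstacle.

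With stability established, Theorem \ref{stab} produces $\beta > 0$ such that $|\calP_n \setminus E^{\delta}(n, \calP)|/|\calP_n| \leq 2^{-\beta \binom{n}{3}}$. The full subpatterns of $M_{G^*}$ for $G^* \in E(n)$ are exactly the subgraphs of $G^*$, so any $G \in E^{\delta}(n, \calP)$ is $\delta$-close in $\dist$ to a subgraph $G'$ of some balanced tripartite $G^* \in E(n)$; the tripartition of $G^*$ then witnesses at most $|E(G) \triangle E(G')| \leq \delta\binom{n}{3} \leq (\delta/6)n^3$ non-crossing edges of $G$, so the optimal partition of $G$ has at most $(\delta/6)n^3$ non-crossing edges. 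Hence $E^{\delta}(n, \calP) \subseteq F(n, \delta/6)$, and applying the previous step with $6\delta$ in place of $\delta$ yields Theorem \ref{hgappst}.
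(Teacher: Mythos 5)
Your proposal is correct and follows essentially the same route as the paper: encode triangle-free triple systems as the hereditary property $\calP$, use Bollob\'as's Theorem \ref{trifree5} to identify $\calR_{ex}([n],\calP)$ with $E(n)$ and get $\ex(n,\calP)=2^{e(n)}$, $\pi(\calP)=2^{2/9}$, derive a stability theorem in the sense of Definition \ref{stabdef} from Keevash--Mubayi (Theorem \ref{hgstab}) plus the balancing computation (this is exactly the content of the paper's Theorem \ref{trifree7} and Proposition \ref{trifree9}), and conclude via Theorem \ref{stab}. The only differences are cosmetic: your $S_+,S_-,S_2$ choice-set bookkeeping replaces the paper's downward-closed-template reduction (Lemma \ref{Gstarhg}, Corollary \ref{trifree3}), and your final containment $E^{6\delta}(n,\calP)\subseteq F(n,\delta)$ makes explicit the translation back to the literal $F(n,\delta)$ formulation, which the paper handles by treating Theorem \ref{trifree10} as a restatement of Theorem \ref{hgappst}; both are sound.
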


\begin{definition}\label{deltaclosehg}
Suppose $G=(V,E)$ and $G'=(V,E')$ are hypergraphs.  Set $\Delta(G,G')=E\Delta E'$.  Given $\delta>0$, we say $G$ and $G'$ are \emph{$\delta$-close} if $|\Delta(G,G')|\leq \delta {n\choose 3}$.  
\end{definition}

We now give a restatement of Theorem \ref{hgstab} using this notion of $\delta$-closeness.  The arguments are either standard or appear somewhere in \cite{BM2}.

\begin{theorem}\label{trifree7}
For all $\delta>0$ there is an $\epsilon>0$ such that for all sufficiently large $n$, the following holds. If $G=([n],E)\in F(n)$ and $|E|\geq (1-\epsilon)\frac{n^3}{27}$, then $G$ is $\delta$-close (in the sense of Definition \ref{deltaclosehg}) to an element of $E(n)$.  
\end{theorem}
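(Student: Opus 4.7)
\textbf{Proof Proposal for Theorem \ref{trifree7}.}

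The plan is to deduce Theorem \ref{trifree7} from Theorem \ref{hgstab} by taking the near-tripartite partition it provides, showing this partition must be approximately balanced, rebalancing it, and then comparing $G$ to the complete balanced tripartite hypergraph on the rebalanced partition. Given $\delta > 0$, I would first choose an auxiliary parameter $\delta_0 = \delta_0(\delta) > 0$ (small compared to $\delta$, with an explicit polynomial dependence to be pinned down at the end) and apply Theorem \ref{hgstab} with $\delta_0$ in place of $\delta$ to obtain an $\epsilon_0 > 0$. I will then choose $\epsilon = \min\{\epsilon_0, \delta_0\}$. For $n$ sufficiently large and $G = ([n], E) \in F(n)$ with $|E| \geq (1 - \epsilon)n^3/27$, Theorem \ref{hgstab} yields a partition $U_1 \cup U_2 \cup U_3 = [n]$ with at most $\delta_0 n^3$ edges of $E$ violating the tripartite structure.

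Next I would show that $U_1, U_2, U_3$ are approximately balanced. Since the number of tripartite edges of $G$ with respect to this partition is at most $|U_1||U_2||U_3|$, and there are at least $(1 - \epsilon)n^3/27 - \delta_0 n^3$ of them, we have $|U_1||U_2||U_3| \geq n^3/27 - (\epsilon + 27\delta_0)n^3/27$. Writing $|U_i| = n/3 + a_i$ with $a_1 + a_2 + a_3 = 0$, a short calculation (expanding the product and using $(a_1+a_2+a_3)^2 = 0$) gives $|U_1||U_2||U_3| = n^3/27 - (n/6)(a_1^2 + a_2^2 + a_3^2) + a_1 a_2 a_3$, from which I can extract $\max_i |a_i| = O((\epsilon + \delta_0)^{1/2} n)$. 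Rebalance to a partition $V_1, V_2, V_3$ of $[n]$ with sizes $\lfloor n/3 \rfloor, \lfloor (n+1)/3\rfloor, \lfloor (n+2)/3 \rfloor$ by moving at most $3 \max_i |a_i| = O((\epsilon + \delta_0)^{1/2} n)$ vertices between parts. Let $G' = ([n], E')$ where $E'$ consists of all triples with one vertex in each $V_i$; then $G' \in E(n)$ by construction.

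It remains to bound $|\Delta(G, G')| = |E \Delta E'|$. A triple $e \in {[n] \choose 3}$ contributing to $E \Delta E'$ is one of three types: (i) a non-tripartite edge of $G$ with respect to $U_1,U_2,U_3$, of which there are at most $\delta_0 n^3$; (ii) a triple involving at least one of the $O((\epsilon + \delta_0)^{1/2} n)$ moved vertices, contributing at most $O((\epsilon + \delta_0)^{1/2} n^3)$ triples; (iii) a triple tripartite with respect to both partitions but in exactly one of $E$, $E'$. For type (iii), note $|E'| = e(n) = n^3/27 + o(n^3)$ by (\ref{noHG}), while the number of tripartite edges of $G$ with respect to $V_1,V_2,V_3$ is at least $|E|$ minus the contributions from (i) and (ii), so the symmetric difference of the tripartite parts is at most $(|E'| - |E|) + \text{(i)} + \text{(ii)} \leq \epsilon n^3/27 + o(n^3) + \delta_0 n^3 + O((\epsilon+\delta_0)^{1/2} n^3)$. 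Combining all three bounds, $|\Delta(G, G')| = O((\epsilon + \delta_0)^{1/2} n^3)$. Choosing $\delta_0$ small enough that this quantity is at most $\delta {n \choose 3}$ for all sufficiently large $n$ completes the argument.

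The main obstacle, in my view, is not conceptual but bookkeeping: one must be careful that the moved vertices do not interact badly with the already-bad edges when bounding (iii), and that the explicit dependence $\delta_0(\delta)$ is tracked correctly through the AM-GM step and the rebalancing. None of these require new ideas beyond Theorem \ref{hgstab} itself; the result is genuinely a reformulation of that theorem under a slightly stronger closeness notion.
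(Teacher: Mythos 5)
Your proposal is correct and follows essentially the same route as the paper's proof: apply Theorem \ref{hgstab}, show the resulting partition is nearly balanced via a second-order estimate on $|U_1||U_2||U_3|$ (the paper does this by an AM-GM contradiction on the largest part, you by expanding with $|U_i|=n/3+a_i$, which also works once the cubic term $a_1a_2a_3$ is controlled using $|a_i|\leq 2n/3$), rebalance to an equipartition, and compare $G$ with the complete balanced tripartite hypergraph, bounding the symmetric difference by exactly the same three contributions (moved-vertex triples, crossing edges, and missing transversal triples bounded via $e(n)-|E|$). No gap beyond the bookkeeping you already flag; the paper's Lemma-free argument matches yours step for step.
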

\begin{proof}
Fix $\delta>0$ and choose $\epsilon_1\leq \delta^2$ so that Theorem \ref{hgstab} holds for $\delta^2/2$.  Fix $n$ sufficiently large and let $G=([n],E)\in F(n)$ be such that $|E|\geq (1-\epsilon_1)\frac{n^3}{27}$.  By Theorem \ref{hgstab}, there is a partition $U_1,U_2,U_3$ of $V$ such that $E$ contains at most $(\delta^2/2) n^3$ crossing edges with respect to this partition.  Let $\epsilon_2=6\delta$.  Suppose towards a contradiction that for some $i\in \{1,2,3\}$, $|U_i|> n/3+\epsilon_2n$, say $|U_1|> n/3+\epsilon_2n$.  Let $x=|U_1|-n/3$.  Then we have $|U_2|+|U_3|\leq 2n/3-x$, which implies by the AM-GM inequality that $|U_2||U_3|\leq (n/3-x/2)^2$.  So 
$$
|U_1||U_2||U_3|=(n/3+x)\Big(\frac{n}{3}-\frac{x}{2}\Big)^2=(n/3+x)(n^2/9-nx/3+x^2/4)= \frac{n^3}{27}-\frac{nx^2}{4}+\frac{x^3}{4}.
$$
Since $|U_1|=n/3+x\leq n$, we have that $x\leq 2n/3$.  Thus 
$$
\frac{n^3}{27}-\frac{nx^2}{4}+\frac{x^3}{4}=\frac{n^3}{27}+\frac{x^2}{4}(x-n)\leq \frac{n^3}{27}+\frac{x^2}{4}(-n/3)= \frac{n^3}{27}-\frac{x^2n}{12}.
$$
Since $x\geq \epsilon_2 n$, this implies $|U_1||U_2||U_3|\leq \frac{n^3}{27}-\frac{\epsilon_2^2 n^3}{12}$.  But now the total number of edges in $G$ is by assumption at most
$$
(\delta^2 /2) n^3 + \frac{n^3}{27}-\frac{\epsilon_2^2n^3}{12}=\frac{n^3}{27}\Big(1+\frac{27\delta^2}{2}-\frac{\epsilon_2^227}{12}\Big) < (1-\epsilon_1)\frac{n^3}{27},
$$
where the last inequality is because $\epsilon_2=6\delta$ and $\epsilon_1\leq \delta^2 $.  This contradicts that $|E|>(1-\epsilon_1)\frac{n^3}{27}$.  Thus for each $i$, $||U_i|-n/3|\leq \epsilon_2 n$.  If $n$ is sufficiently large, this implies there is an equipartition $V_1,V_2,V_3$ of $[n]$ such that for each $i$, $|V_i\Delta U_i|\leq 2\epsilon_2 n$.  Let $G'=(V,E')$ be the complete tripartite hypergraph with parts $V_1, V_2, V_3$.  Consider the following subsets of $\Delta(G,G')$. 
\begin{itemize}
\item Let $\Gamma_1$ be the set of $e\in \Delta(G,G')$ such that $e$ contains a vertex in $V_i\Delta U_i$ for some $i\in\{1,2,3\}$.  For each $i=1,2,3$, $|V_i\Delta U_i|\leq 2\epsilon_2n$ so there are at most $2\epsilon_2n^2$ edges containing a vertex in $V_i\Delta U_i$.  Thus $|\Gamma_1|\leq 6\epsilon_2 n^3$.
\item Let $\Gamma_2$ be the set of $e\in E$ which are crossing edges for $U_1, U_2, U_3$ which are also crossing edges for $V_1,V_2,V_3$. By assumption on the $U_i$, $|\Gamma_2|\leq \delta^2 n^3/2$ edges.
\item Let $\Gamma_3$ be the set of $e\in {n\choose 3}$ which are non-corssing in $U_1,U_2, U_3$ and non-crossing in $V_1,V_2,V_3$ but which are not in $E$.  Since $G$ is $F$-free, we must have that $|\Gamma_3|\leq e(n)-|E|$.  Since $|E|\geq (1-\epsilon_1)\frac{n^3}{27}$ and $e(n)=\frac{n^3}{27}+o(n^3)$, we may assume $n$ is sufficiently large so that $|\Gamma_3|\leq |E|-e(n)\leq 2\epsilon_1 n^3/27$.
\end{itemize}
We claim $\Delta(G,G')=\Gamma_1\cup \Gamma_2\cup \Gamma_3$.  Indeed, suppose $e\in \Delta(G,G')\setminus \Gamma_1$.  Then either $e$ is crossing for $U_1, U_2,U_3$ and for $V_1,V_2,V_3$ or $e$ is non-crossing for $U_1, U_2,U_3$ and for $V_1,V_2,V_3$.  If $e$ is crossing for $U_1, U_2,U_3$ and for $V_1,V_2,V_3$, then by definition of $G'$, $e\in E'$, so $e\in \Delta(G,G')$ implies $e\notin E$.  This shows $e\in \Gamma_3$.  On the other hand, if $e$ is non-crossing for $U_1, U_2,U_3$ and for $V_1,V_2,V_3$, then by definition of $G'$, $e\notin E'$, so $e\in \Delta(G,G')$ implies $e\in E$.  This shows $e\in \Delta_2$.  Thus $\Delta(G,G')=\Gamma_1\cup \Gamma_2\cup \Gamma_3$.  Therefore our bounds above for the $|\Gamma_i|$ imply the following.
$$
|\Delta(G,G')|\leq n^3(6\epsilon_2+\delta^2/2+ 2\epsilon_1/27)<74\delta n^3,
$$
where the last inequality is because $\epsilon_2=6\delta$ and $\epsilon_1\leq \delta^2/2$.  Thus $|\diff(G,G')|<6(74)\delta n^3$, so $\dist(G,G')<6(74)\delta$.  Clearly by scaling the $\delta$ we start with, we can obtain the conclusion of the corollary.
\end{proof}

Let $\mathbb{E}(n)=\{G\in F(n): G\subseteqq G',\text{ for some }G'\in E(n)\}$.  Given $\delta>0$, let $\mathbb{E}^{\delta}(n)$ be the set of $G \in F(n)$ which are $\delta$-close (in the sense of Definition \ref{deltaclosehg}) to an element of $\mathbb{E}(n)$.  Then we consider the following to be a restatement of Theorem \ref{hgappst}.  

\begin{theorem}\label{trifree10}
For all $\delta>0$ there is $\beta>0$ such that for sufficiently large $n$,
$$
\frac{|F(n)\setminus \mathbb{E}^{\delta}(n)|}{|F(n)|}\leq 2^{-\beta {n\choose 3}}.
$$
\end{theorem}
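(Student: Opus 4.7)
The plan is to cast the class of triangle-free $3$-uniform hypergraphs as a hereditary $\calL$-property $\calP$ in the language $\calL = \{E(x,y,z)\}$ (a single symmetric ternary relation) and then deduce the theorem from Theorem \ref{stab}, mirroring the strategies used for metric spaces (Theorem \ref{DELTACLOSETHM}) and digraphs (Theorem \ref{digapprox}) in Sections \ref{ms} and \ref{dgsec}. Here $S_3(\calP)$ consists of exactly two types, $p_1$ saying ``the three distinct elements form a hyperedge'' and $p_2$ saying ``they do not''; consequently every complete $\calL_{\calP}$-template $M$ on $V$ is determined by a function $c_M : {V \choose 3} \to \{\{1\},\{2\},\{1,2\}\}$ analogous to the $\Psi$ maps of the preceding sections. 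Writing $S_M = c_M^{-1}(\{1,2\})$ and $F_M = c_M^{-1}(\{1\})$ for the ``ambiguous'' and ``forced edge'' triples of $M$, I will use that $sub(M) = 2^{|S_M|}$ and that, when $M \in \calR([n],\calP)$, the maximal full subpattern of $M$ is the triangle-free hypergraph $([n], S_M \cup F_M)$.

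The first task is to identify $\calR_{ex}([n], \calP)$. Applying Theorem \ref{trifree5} to this maximal subpattern gives $|S_M| + |F_M| \leq e(n)$, whence $sub(M) \leq 2^{e(n)}$, with equality characterising $\calR_{ex}([n],\calP)$ as exactly the templates $M_{H'}$ with $H' \in E(n)$, defined by setting $c_{M_{H'}}(xyz) = \{1,2\}$ on the crossing triples of $H'$ and $\{2\}$ elsewhere. In particular $\ex(n,\calP) = 2^{e(n)}$ and $\pi(\calP) = 2^{2/9} > 1$ by (\ref{noHG}), so $\calP$ is fast-growing. Since for hypergraphs one has $\dist(G_1,G_2) = |\Delta(G_1,G_2)|/{n\choose 3}$ exactly, the bijection $H' \mapsto M_{H'}$ identifies $E(n,\calP)$ with $\mathbb{E}(n)$ and $E^{\delta}(n,\calP)$ with $\mathbb{E}^{\delta}(n)$.

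The main step is to verify that $\calP$ has a stability theorem in the sense of Definition \ref{stabdef}. Given $\delta > 0$, I would first choose $\epsilon > 0$ small enough that the hypotheses of Theorem \ref{trifree7} (with tolerance $\delta/2$) are met whenever the edge count exceeds $(1-2\epsilon)\frac{n^3}{27}$, and that $\epsilon \, e(n) \leq (\delta/2){n \choose 3}$ for all large $n$. Suppose $M \in \calR([n],\calP)$ satisfies $sub(M) \geq \ex(n,\calP)^{1-\epsilon}$; then $|S_M| \geq (1-\epsilon)e(n)$, so for large $n$ Theorem \ref{trifree7} applied to $([n], S_M \cup F_M)$ produces some $H' \in E(n)$ with $|(S_M \cup F_M) \Delta E(H')| \leq (\delta/2){n \choose 3}$. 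Moreover $|F_M| \leq e(n) - |S_M| \leq \epsilon\, e(n)$ by Theorem \ref{trifree5}. A case analysis on whether a triple lies in $E(H')$ (where $c_{M_{H'}}$ takes the value $\{1,2\}$) or not (where $c_{M_{H'}}$ takes the value $\{2\}$), together with Lemma \ref{templatelem}, gives
\[
|\diff(M, M_{H'})| = |E(H') \setminus S_M| + |(S_M \cup F_M) \setminus E(H')| \leq |E(H') \Delta (S_M \cup F_M)| + |F_M| \leq \delta {n \choose 3},
\]
so $\dist(M, M_{H'}) \leq \delta$ with $M_{H'} \in \calR_{ex}([n],\calP)$, as required.

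Combining these ingredients, Theorem \ref{stab} applied to $\calP$ and $\delta$ then yields the desired $\beta > 0$ with $|\calP_n \setminus E^{\delta}(n,\calP)|/|\calP_n| \leq 2^{-\beta{n \choose 3}}$ for large $n$, and unwrapping the identification $E^{\delta}(n,\calP) = \mathbb{E}^{\delta}(n)$ gives Theorem \ref{trifree10}. The main technical obstacle is handling the set $F_M$ of forced edges, which is not directly controlled by $sub(M)$; the key observation is that Theorem \ref{trifree5} automatically forces $|F_M| \leq e(n) - |S_M|$, which is small whenever $M$ is near-extremal, so no combinatorial input beyond Theorems \ref{trifree5} and \ref{trifree7} is required.
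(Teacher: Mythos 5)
Your proposal is correct and follows essentially the same route as the paper: translate to the hereditary $\calL$-property $\calP$, use Theorem \ref{trifree5} to get $\ex(n,\calP)=2^{e(n)}$ and identify $\calR_{ex}([n],\calP)$ with the templates built from $E(n)$, verify Definition \ref{stabdef} from Theorem \ref{trifree7}, and finish with Theorem \ref{stab} plus the identification $E^{\delta}(n,\calP)=\mathbb{E}^{\delta}(n)$. The only difference is cosmetic: where the paper passes to a downward-closed modification $G^*$ (Lemma \ref{Gstarhg}) and transfers distances via Lemma \ref{trifree8}, you bound the forced-edge set $F_M$ directly by $e(n)-|S_M|\leq \epsilon e(n)$ using the exact count $sub(M)=2^{|S_M|}$, which is the same accounting in different packaging.
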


We will use the main theorems of this paper along with Corollary \ref{trifree7} and Theorem \ref{trifree5} to reprove Theorems \ref{fghext} and \ref{trifree10}.

\subsection{Preliminaries}

In this section we give translations of the combinatorial notions from \cite{BM2} to the setup of this paper.  Let $\calL=\{E(x,y,z)\}$.  Let $\calP$ be the class of all finite triangle-free $3$-uniform hypergraphs, considered as $\calL$-structures.  It is clear that $\calP$ is a hereditary $\calL$-property.  Since $r_{\calL}=3$, we have that $\calL_{\calP}=\{E_p(x_1,x_2,x_3): p(\xbar)\in S_3(\calP)\}$.  Let $\xbar=(x_1,x_2,x_3)$ and set
\begin{align*}
q_1(\xbar)&=\{x_i\neq x_j :i\neq j\}\cup \{E(x_i,x_j,x_k): |\{i,j,k\}|=3\}\cup \{\neg E(x_i,x_j,x_k): |\{i,j,k\}|<3\}\text{ and }\\
q_1(\xbar)&=\{x_i\neq x_j :i\neq j\}\cup \{\neg E(x_i,x_j,x_k): |\{i,j,k\}|=3\}\cup \{\neg E(x_i,x_j,x_k): |\{i,j,k\}|<3\}.
\end{align*}
Then let $p_1(x_1,x_2,x_3)$ and $p_2(x_1,x_2,x_3)$ be the unique quantifier free $3$-types containing $q_1(\xbar)$ and $q_2(\xbar)$, respectively.  We leave it to the reader to verify that $S_3(\calP)=\{p_1(\xbar), p_2(\xbar)\}$, so consequently $\calL_{\calP}=\{p_1(x_1,x_2,x_3), p_2(x_1,x_2,x_3)\}$.

\begin{definition}
Given an $\calL_{\calP}$-structure $G$ with domain $V$, the hypergraph associated to $G$ is $\Psi(G):=(V,E)$, where $E=\{A\in {V\choose 3}:\text{ for some enumeration $\abar$ of $A$, $G\models p_1(\abar)\}$}$.
\end{definition}

We say that an $\calL_{\calP}$-template is \emph{downward closed} if $G\models \forall x \forall y \forall z R_{p_1}(x,y,z)\rightarrow R_{p_2}(x,y,z)$.  Given a hypergraph $(V,E)$, define $\Psi^{-1}(V,E)$ to be the $\calL_{\calP}$-structure $G$ which has domain $V$ and for each $(x,y,z)\in V^3$, $G\models R_{p_1}(x,y,z)$ if and only if $xyz\in E$, and $G\models R_{p_2}(x,y,z)$ if and only if $x$, $y$, $z$ are pairwise distinct.  

\begin{observation}\label{hgob1}
Suppose $(V,E)$ is a hypergraph and $G$ is a downward closed $\calL_{\calP}$-template with domain $V$.  Then $\Psi(\Psi^{-1}(V,E))=(V,E)$ and $\Psi^{-1}(V,E)$ is a downward closed $\calL_{\calP}$-template.
\end{observation}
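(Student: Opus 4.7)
The plan is to verify the two assertions directly from the definitions of $\Psi$, $\Psi^{-1}$, downward-closedness, and $\calL_{\calP}$-template.

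First I would handle $\Psi(\Psi^{-1}(V,E))=(V,E)$. Let $G=\Psi^{-1}(V,E)$ and write $\Psi(G)=(V,E')$. By definition of $\Psi$, $A\in E'$ iff some enumeration $\abar$ of $A$ satisfies $G\models R_{p_1}(\abar)$, while by definition of $\Psi^{-1}$, $G\models R_{p_1}(x,y,z)$ iff $xyz\in E$. For $A\in E$, any enumeration $\abar=(a,b,c)$ of $A$ satisfies $\{a,b,c\}=A\in E$, hence $G\models R_{p_1}(\abar)$ and $A\in E'$. Conversely, if $A\in E'$ witnessed by some enumeration $\abar=(a,b,c)$, then $G\models R_{p_1}(a,b,c)$ gives $\{a,b,c\}\in E$, i.e.\ $A\in E$.

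Next I would verify that $G:=\Psi^{-1}(V,E)$ is an $\calL_{\calP}$-template, checking Definition \ref{templatedef}. For completeness, fix $A\in\binom{V}{3}$ and any enumeration $\abar$ of $A$: the entries of $\abar$ are distinct, so by the definition of $\Psi^{-1}$ we have $G\models R_{p_2}(\abar)$, and if additionally $A\in E$ then $G\models R_{p_1}(\abar)$; in either case some choice exists. For part (1) of Definition \ref{templatedef}, if $\abar\in V^3\setminus V^{\underline 3}$, then the entries of $\abar$ are not pairwise distinct, and the clauses in the definition of $\Psi^{-1}$ force $G\models \neg R_{p_1}(\abar)\wedge \neg R_{p_2}(\abar)$, as required since $S_3(\calP)=\{p_1,p_2\}$. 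For part (2), note that the types $p_1(\xbar)$ and $p_2(\xbar)$ are fully symmetric: for each $i\in\{1,2\}$ and each $\mu\in Perm(3)$, $p_i(\xbar)=p_i(\mu(\xbar))$. It therefore suffices to observe that, by the definition of $\Psi^{-1}$, whether $G\models R_{p_i}(\abar)$ depends only on the \emph{set} $\cup\abar$ (together with the condition that the entries be pairwise distinct), so the relation $R_{p_i}$ is invariant under permutations of its arguments, establishing condition (2).

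Finally I would check that $G$ is downward closed, i.e., $G\models \forall x\forall y\forall z\, (R_{p_1}(x,y,z)\to R_{p_2}(x,y,z))$. If $G\models R_{p_1}(a,b,c)$, then by the definition of $\Psi^{-1}$ we have $\{a,b,c\}\in E$, which forces $a,b,c$ to be pairwise distinct, hence $G\models R_{p_2}(a,b,c)$. No step here is delicate; the only mildly fiddly point is keeping straight the two roles of $p_2$, namely that $G\models R_{p_2}(\abar)$ exactly when $\abar$ has pairwise distinct coordinates (so that completeness and downward-closedness both come for free), but this is immediate from the definition of $\Psi^{-1}(V,E)$.
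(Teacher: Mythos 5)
Your proof is correct; the paper states this observation without proof (the verification is left to the reader), and your direct check from the definitions of $\Psi$, $\Psi^{-1}$, downward-closedness, and Definition \ref{templatedef} is exactly the routine argument intended. The only point worth noting is that in verifying condition (2) of Definition \ref{templatedef} the cross case $p_1(\xbar)=p_2(\mu(\xbar))$ should be (trivially) ruled out, but since $p_1$ contains $E(x_1,x_2,x_3)$ and $p_2$ contains its negation, that case is vacuous and your symmetry-plus-set-dependence argument covers everything that remains.
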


\begin{lemma}\label{trifree2}
Suppose $G$ is a complete $\calL_{\calP}$-structure with domain $V$.  Then $\Psi(G)\unlhd_pG$ and for all $G'\unlhd_pG$, $G'$ is a subhypergraph of $\Psi(G)$.  If $G$ is also downward closed, then for all $G'$ a subhypergraph of $\Psi(G)$, $G'\unlhd_pG$.
\end{lemma}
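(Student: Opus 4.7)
The plan is to prove each of the three claims by exhibiting an appropriate choice function, using the fact that $S_3(\calP)=\{p_1,p_2\}$ has only two types: one encoding ``is an edge'' ($p_1$) and one encoding ``is not an edge'' ($p_2$). So building $\chi\in Ch(G)$ essentially amounts to declaring, for each $A\in \binom{V}{3}$, whether the resulting $\calL$-structure should have $A$ as an edge.

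For the first assertion $\Psi(G)\unlhd_p G$, I would define $\chi(A):=Diag^{\Psi(G)}(A)$ for each $A\in \binom{V}{3}$ and verify $\chi(A)\in Ch_G(A)$. When $A\in E(\Psi(G))$, the definition of $\Psi(G)$ directly produces an enumeration $\abar$ of $A$ with $G\models R_{p_1}(\abar)$, so $p_1(c_{\abar})\in Ch_G(A)$. When $A\notin E(\Psi(G))$, no enumeration of $A$ witnesses $R_{p_1}$ in $G$, and completeness of $G$ forces some enumeration $\abar$ with $G\models R_{p_2}(\abar)$, giving the desired membership. For the second assertion, suppose $G'\unlhd_\chi G$ for some $\chi\in Ch(G)$ and let $xyz\in E(G')$. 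Then $Diag^{G'}(\{x,y,z\})=p_1(c_x,c_y,c_z)$, so $\chi(\{x,y,z\})=p_1(c_x,c_y,c_z)\in Ch_G(\{x,y,z\})$, which forces $G\models R_{p_1}(x,y,z)$ and hence $\{x,y,z\}\in E(\Psi(G))$ by definition.

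The third assertion is where the downward-closed hypothesis is essential, and it will be the main (though still modest) obstacle. Given $G'=(V,E')$ with $G'\subseteqq \Psi(G)$, I would construct $\chi$ as follows: if $A\in E'\subseteq E(\Psi(G))$, pick the $p_1$-witnessing enumeration $\abar$ from the definition of $\Psi(G)$ and set $\chi(A)=p_1(c_{\abar})$; if $A\notin E'$, I need to produce some enumeration $\abar$ of $A$ with $p_2(c_{\abar})\in Ch_G(A)$. Completeness of $G$ supplies some $p\in\{p_1,p_2\}$ and some enumeration $\abar$ with $G\models R_p(\abar)$; if $p=p_2$ we are done, and if $p=p_1$ then $G\models R_{p_1}(\abar)$ combined with the downward-closed axiom $\forall x\,\forall y\,\forall z\,(R_{p_1}(x,y,z)\to R_{p_2}(x,y,z))$ yields $G\models R_{p_2}(\abar)$, so $p_2(c_{\abar})\in Ch_G(A)$ as required. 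By construction $\chi\in Ch(G)$ and $Diag^{G'}(A)=\chi(A)$ for every $A\in \binom{V}{3}$, so $G'\unlhd_\chi G$. The rest is straightforward bookkeeping.
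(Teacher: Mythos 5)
Your proposal is correct and follows essentially the same route as the paper's proof: in all three parts the paper likewise checks, triple by triple, that the relevant quantifier-free type ($p_1$ when the triple is an edge, $p_2$ otherwise, using completeness and, for the last part, downward closure to upgrade an $R_{p_1}$ witness to an $R_{p_2}$ witness) is witnessed by a relation of $G$, which is exactly your explicit choice-function construction. The only cosmetic difference is that you name the choice function $\chi$ outright, and you should say ``$G\models R_{p_1}(\mu(x,y,z))$ for some $\mu\in Perm(3)$'' rather than for the specific enumeration, but since $p_1$ and $p_2$ are symmetric this changes nothing.
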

\begin{proof}
Let $\Psi(G)=(V,E)$.  We first show $\Psi(G)\unlhd_pG$ (where here we are considering $\Psi(G)$ as an $\calL$-structure).  Let $uvw\in {V\choose 3}$ and let $p(x,y,z)=qftp^{\Psi(G)}(u,v,w)$.  We want to show that $G\models R_p(\mu(u,v,w))$ for some $\mu \in Perm(3)$.  Suppose $p=p_1$, so $uvw\in E$.  Then by definition of $\Psi(G)$, there is $\mu\in Perm(3)$ such that $G\models R_{p_1}(\mu(u,v,w))$, as desired.  Suppose now $p=p_2$, so $uvw\notin E$.  Since $G$ is a complete $\calL_{\calP}$-structure, there is some $R_q\in \calL_{\calP}$ and $\mu\in Perm(3)$ such that $G\models R_{q}(\mu(u,v,w))$.  By definition of $\Psi(G)$, $xyz\notin E$ implies $q\neq p_1$.  Thus we must have $q=p_2$, so $G\models R_{p_2}(\mu(u,v,w))$ for some $\mu\in Perm(3)$, as desired.

Suppose $G'=(V,E')\unlhd_pG$ and let $\Psi(G)=(V,E)$.  We want to show $E'\subseteq E$.  Let $uvw\in E'$.  Then $qftp^{G'}(u,v,w)=p_1(x,y,z)$.  Since $G'\unlhd_pG$, this implies $G\models R_{p_1}(\mu(u,v,w))$ for some $\mu \in Perm(3)$.  By definition of $\Psi(G)$, this implies $uvw\in E$, as desired.  Suppose now that $G$ is also downward closed.  Let $G'=(V,E')$ be a subhypergraph of $\Psi(G)=(V,E)$.  Fix $uvw\in {n\choose 3}$.  If $p(x,y,z)=qftp^{G'}(u,v,w)$, we want to show $G\models R_p(\mu(u,v,w))$ for some $\mu \in Perm(3)$.  If $p(x,y,z)=p_1(x,y,z)$, then $uvw\in E'\subseteq E$, so $G\models R_{p_1}(\mu(u,v,w))$ for some $\mu\in Perm(3)$ by definition of $\Psi(G)$.  Thus $G\models R_p(\mu(u,v,w))$ for some $\mu \in Perm(3)$ as desired.  Suppose now $p(x,y,z)=p_2(x,y,z)$. Because $G$ is complete, $G\models R_{q}(\mu(u,v,w))$ for some $q\in \{p_1,p_2\}$ and $\mu \in Perm(3)$.  If $q=p_2$, then we are done.  If $q=p_1$, then because $G$ is downward closed, $G\models R_{p_1}(\mu(u,v,w))$ implies $G\models R_{p_2}(\mu(u,v,w))$.  This finishes the proof. 
\end{proof}

\begin{corollary}\label{corhg1}
If $(V,E)\in \calP$, then $\Psi^{-1}(V,E)$ is a downward closed element of $\calR(V,\calP)$.
\end{corollary}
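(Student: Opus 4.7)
The plan is to combine Observation \ref{hgob1}, Lemma \ref{trifree2}, and the fact that being triangle-free is preserved under taking subhypergraphs. Let $G = \Psi^{-1}(V,E)$. By Observation \ref{hgob1}, $G$ is a downward closed $\calL_{\calP}$-template and $\Psi(G) = (V,E)$, which takes care of the ``downward closed'' and ``$\calL_{\calP}$-template'' parts of the conclusion. So all that remains is to verify that $G$ is $\calH$-random, i.e., that for every $\chi \in Ch(G)$ there is some $H \in \calP$ with $H \unlhd_{\chi} G$.

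First I would fix an arbitrary $\chi \in Ch(G)$. Since $G$ is a complete $\calL_{\calP}$-structure (templates are complete by definition), Proposition \ref{Lrandom} applied once we know $G$ is error-free will give us an $\calL$-structure $H$ with $H \unlhd_{\chi} G$. Error-freeness is immediate here: because $r_{\calL} = 3$ and $\calL$ contains no relations of arity less than $3$, the class $\calE$ of errors is empty (errors have size strictly between $r$ and $2r$, but the only way to get an unsatisfiable union $p_1(c_{\abar_1}) \cup p_2(c_{\abar_2})$ would require atomic formulas of lower arity to disagree, and none exist). Alternatively, one can simply invoke Remark \ref{specialob0} style reasoning. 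So some $\calL$-structure $H$ with $H \unlhd_{\chi} G$ exists.

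Next I would apply Lemma \ref{trifree2}: since $G$ is a downward closed $\calL_{\calP}$-template and $H \unlhd_p G$, the lemma tells us that $H$ (viewed as a hypergraph) is a subhypergraph of $\Psi(G) = (V,E)$. Because $(V,E) \in \calP$ is triangle-free and being $F$-free is closed under taking subhypergraphs (indeed $\calP$ is a hereditary $\calL$-property), we conclude $H \in \calP$. This produces the required $H \in \calP$ with $H \unlhd_{\chi} G$, and since $\chi$ was arbitrary, $G$ is $\calP$-random, finishing the proof.

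The only mild subtlety is the error-freeness step; everything else is essentially a direct application of results already established. The proof should be just a few lines.
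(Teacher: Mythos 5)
Your proof is correct and follows essentially the same route as the paper: Observation \ref{hgob1} gives the downward closed template, Lemma \ref{trifree2} identifies full subpatterns with subhypergraphs of $(V,E)$, and heredity of $\calP$ finishes. The only difference is that you make explicit the existence of a subpattern for each choice function (via error-freeness, automatic here since $\calL$ has no relations of arity below $r=3$), a step the paper's proof leaves implicit.
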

\begin{proof}
By Observation \ref{hgob1}(a), $\Psi^{-1}(V,E)$ is a downward closed $\calL_{\calP}$-template.  To show $\Psi^{-1}(V,E)$ is $\calP$-random, let $H\unlhd_p\Psi^{-1}(V,E)$.  By Lemma \ref{trifree2}, $H\subseteqq \Psi(\Psi^{-1}(V,E))=(V,E)$, where the equality is by Observation \ref{hgob1}(a).  Since $(V,E)\in \calP$, and elements of $\calP$ are closed under taking subhypergraphs, this implies $H\in \calP$.  
\end{proof}

\begin{corollary}\label{trifree3}
Suppose $G$ is a finite $\calL_{\calP}$-template with domain $V$. Then $sub(G)\leq 2^{e(\Psi(G))}$, with equality holding if $G$ is downward closed.  
\end{corollary}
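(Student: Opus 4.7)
The proof follows almost immediately from Lemma \ref{trifree2}, so the plan is short. First I would recall that a full subpattern of $G$ is an $\calL$-structure $G'$ with $\dom(G')=V$, i.e.\ a $3$-uniform hypergraph on $V$. Writing $\Psi(G)=(V,E)$, the subhypergraphs of $\Psi(G)$ with vertex set $V$ are in bijection with subsets $E'\subseteq E$, so there are exactly $2^{|E|}=2^{e(\Psi(G))}$ of them.

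For the inequality, by the second assertion of Lemma \ref{trifree2}, every $G'\unlhd_p G$ is (as a hypergraph) a subhypergraph of $\Psi(G)$ on vertex set $V$; since two distinct $\calL$-structures on $V$ give two distinct subhypergraphs, the assignment $G'\mapsto G'$ is an injection from $\{G': G'\unlhd_pG\}$ into $\{(V,E'): E'\subseteq E\}$. Hence $sub(G)\leq 2^{e(\Psi(G))}$.

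For equality when $G$ is downward closed, I would use the final clause of Lemma \ref{trifree2}: every subhypergraph of $\Psi(G)$ with vertex set $V$ is a full subpattern of $G$. This makes the injection above also surjective, so $sub(G)=2^{e(\Psi(G))}$.

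There is no real obstacle here — the content is entirely packaged in Lemma \ref{trifree2}, and the only thing to verify carefully is that the passage between ``hypergraph with vertex set $V$ and edge set $E'\subseteq E$'' and ``$\calL$-structure with domain $V$ and $E'=\{xyz : G'\models E(x,y,z)\}$'' is a genuine bijection, which is immediate from the definition of the language $\calL=\{E(x,y,z)\}$.
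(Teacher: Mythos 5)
Your proof is correct and follows essentially the same route as the paper: both arguments package the work into Lemma \ref{trifree2}, using its second assertion to inject the full subpatterns of $G$ into the (at most $2^{e(\Psi(G))}$) subhypergraphs of $\Psi(G)$ on vertex set $V$, and its final clause to get surjectivity, hence equality, when $G$ is downward closed. The extra care you take in identifying full subpatterns with full subhypergraphs via the language $\calL=\{E(x,y,z)\}$ is just a more explicit rendering of the counting step the paper states briefly.
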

\begin{proof}
By Lemma \ref{trifree2}, the full subpatterns of $G$ are subhypergraphs of $\Psi(G)$.  Since the number of subhypergraphs of $\Psi(G)$ is $2^{e(\Psi(G))}$, this shows $sub(G)\leq 2^{e(\Psi(G)}$.  If $G$ is downward closed, then every subhypergraph of $\Psi(G)$ is also a subpattern of $G$, so equality holds. 
\end{proof}

\subsection{Proofs of Results.}

In this section we Theorems \ref{fghext} and \ref{trifree10}.  We begin with some preliminary results.

\begin{lemma}\label{Gstarhg}
Suppose $G\in \calR([n],\calP)$ is not downward closed.  Then there is $G^*\in \calR([n],\calP)$ which is downward closed such that $sub(G^*)\geq sub(G)2^{|\diff(G,G^*)|}$.
\end{lemma}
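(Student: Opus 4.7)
The plan is to mimic the construction used in Lemma \ref{dgdw} for digraphs. If $G$ is already downward closed we take $G^\ast=G$. Otherwise, define $G^\ast$ with domain $[n]$ by declaring, for every $\abar\in [n]^3$, that $G^\ast\models R_{p_1}(\abar)$ iff $G\models R_{p_1}(\abar)$, and $G^\ast\models R_{p_2}(\abar)$ iff $|\cup\abar|=3$. In words, $G^\ast$ keeps exactly the ``edge'' relations of $G$ while filling in the ``non-edge'' relation on every tuple of distinct elements.

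First I would check that $G^\ast$ is a downward closed $\calL_\calP$-template. Parts (1) and (2) of Definition \ref{templatedef} follow because both $p_1$ and $p_2$ are invariant under all permutations of their variables and both relations are set to fail on non-distinct tuples (the $R_{p_1}$-clause inheriting this from $G$, since $G$ is already a template). Downward closure is immediate: if $G^\ast\models R_{p_1}(\abar)$ then $G\models R_{p_1}(\abar)$, so $|\cup\abar|=3$, and hence $G^\ast\models R_{p_2}(\abar)$.

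Next I would observe that by construction $\Psi(G^\ast)=\Psi(G)$ and, comparing definitions, $G^\ast=\Psi^{-1}(\Psi(G))$. Since $G\in\calR([n],\calP)$, Lemma \ref{trifree2} gives $\Psi(G)\unlhd_p G$, so $\Psi(G)\in\calP$ (being a full subpattern of a $\calP$-random template). Corollary \ref{corhg1} then yields $G^\ast=\Psi^{-1}(\Psi(G))\in\calR([n],\calP)$, and in particular $G^\ast$ is error-free by Proposition \ref{Lrandom}.

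Finally, to obtain the counting estimate I would apply Observation \ref{ob0} to the two error-free templates, obtaining $sub(G)=\prod_{A\in\binom{V}{3}}|Ch_G(A)|$ and the analogous formula for $G^\ast$. Using the symmetry of $p_1$ and $p_2$, a short case check shows $|Ch_G(A)|,|Ch_{G^\ast}(A)|\in\{1,2\}$, and by Lemma \ref{templatelem}, $A\in\diff(G,G^\ast)$ iff $Ch_G(A)\neq Ch_{G^\ast}(A)$. Since the construction only adds (never removes) instances of $R_{p_2}$, one always has $|Ch_{G^\ast}(A)|\geq|Ch_G(A)|$, with strict inequality (ratio equal to $2$) occurring precisely on those $A$ for which $G\models R_{p_1}(\abar)\wedge\neg R_{p_2}(\abar)$, i.e.\ on $\diff(G,G^\ast)$. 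Therefore
\[
\frac{sub(G^\ast)}{sub(G)}=\prod_{A\in\diff(G,G^\ast)}\frac{|Ch_{G^\ast}(A)|}{|Ch_G(A)|}=2^{|\diff(G,G^\ast)|},
\]
which is even slightly stronger than what is claimed. The only mildly subtle point is ruling out the possibility that the modification shrinks some $|Ch_G(A)|$, but this is immediate because the change only enlarges the set of triples on which $R_{p_2}$ holds.
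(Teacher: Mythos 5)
Your proposal is correct and takes essentially the same route as the paper: your $G^*$ (which you identify as $\Psi^{-1}(\Psi(G))$) is exactly the template the paper constructs by adding $R_{p_2}$ on the triples whose only choice in $G$ is $p_1$, and your count via $sub(\cdot)=\prod_A|Ch_{(\cdot)}(A)|$ together with Lemma \ref{templatelem} is the paper's computation, here yielding the (slightly stronger) equality $sub(G^*)=sub(G)2^{|\diff(G,G^*)|}$. The only cosmetic difference is that you obtain downward closure, templateness, and $\calP$-randomness of $G^*$ by citing Corollary \ref{corhg1} applied to $\Psi(G)\in\calP$, whereas the paper verifies these directly from the same underlying facts ($\Psi(G^*)=\Psi(G)\unlhd_p G$ and closure of $\calP$ under subhypergraphs).
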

\begin{proof}
Define $G^*$ to agree with $G$ everywhere except on 
$$
\Gamma=\{uvw\in {[n]\choose 3}:Ch_G(uvw)=\{p_1(c_u,c_v,c_w)\}\}.
$$
For $uvw\in \Gamma$, define $G^*\models R_{p_1}(\mu(u,v,w))\wedge R_{p_2}(\mu(u,v,w))$ for all $\mu\in Perm(3)$.  We leave it to the reader to verify that $G^*$ is an $\calL_{\calP}$-template (by definition and because $G$ is).  By definition of $G^*$, for all $xyz\in {[n]\choose 3}$, $Ch_{G^*}(xyz)\supseteq Ch_G(xyz)$.  By Lemma \ref{templatelem}, for all $xyz\in {[n]\choose 3}$, $xyz\in \diff(G,G^*)$ if and only if $Ch_G(xyz)\neq Ch_{G^*}(xyz)$.  Suppose $xyz\in \diff(G,G^*)$.  Then $Ch_G(xyz)\neq Ch_{G^*}(xyz)$ and $Ch_G(xyz)\subseteq Ch_{G^*}(xyz)$ implies  $Ch_G(xyz)\subsetneq Ch_{G^*}(xyz)$.  Since $S_3(\calP)$ contains only two elements, and because $G$ complete implies $|Ch_G(xyz)|\geq 1$, we must have $|Ch_G(xyz)|=1$ and $|Ch_{G^*}(xyz)|=2$.  Therefore
$$
sub({G^*})=\Big(\prod_{xyz\in {[n]\choose 3}}|Ch_G(xyz)|\Big)\Bigg(\prod_{\{xyz: (x,y,z)\in \diff(G,G^*)\}}\frac{|Ch_{G^*}(xyz)|}{|Ch_G(xyz)|}\Bigg)\geq sub(G)2^{|\diff(G,G^*)|},
$$
where the inequality is by Corollary \ref{trifree3} and because for all $xyz\in \diff(G,G^*)$, $\frac{|Ch_{G^*}(xyz)|}{|Ch_G(xyz)|}=\frac{2}{1}$.  We have only left to show that $G^*$ is $\calP$-random.  Suppose $H\unlhd_pG^*$.  By Proposition \ref{trifree2}, $H$ is a subhypergraph of $\Psi(G^*)$.  Observe that by definition of $G^*$ and $\Psi$, $\Psi(G^*)=\Psi(G)$.  By Proposition \ref{trifree2}, $\Psi(G)\unlhd_pG$, so since $G$ is $\calP$-random, $\Psi(G)=\Psi(G^*)\in \calP$.  Then $H\subseteqq \Psi(G^*)=\Psi(G)\in \calP$.  Since $\calP$ is closed under subhypergraphs, this implies $H\in \calP$, so $G^*$ is $\calP$-random.
\end{proof}

\begin{proposition}\label{trifree4}
For all integers $n\geq 2$, the following holds.  Suppose $G\in \calR_{ex}([n],\calP)$.  Then $\Psi(G)\in E(n)$.  Consequently, $\ex(n,\calP)=2^{e(n)}$ and $\pi(\calP)=2^{6/27}$.
\end{proposition}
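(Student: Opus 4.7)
My plan is to first reduce to the downward closed case, then invoke the classical Bollob\'{a}s extremal result to pin down $\Psi(G)$, and finally compute $\pi(\calP)$ from $\ex(n,\calP)=2^{e(n)}$ using formula $(\ref{noHG})$.

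First I would show that every $G\in\calR_{ex}([n],\calP)$ is downward closed. If not, Lemma~\ref{Gstarhg} produces a downward closed $G^*\in\calR([n],\calP)$ with $sub(G^*)\geq sub(G)\,2^{|\diff(G,G^*)|}$. Because $G$ is not downward closed while $G^*$ is, we have $G\neq G^*$, so $|\diff(G,G^*)|\geq 1$ (in fact $\geq\binom{3}{3}=1$ by Lemma~\ref{templatelem} and the definition of $G^*$ in Lemma~\ref{Gstarhg}). Hence $sub(G^*)>sub(G)=\ex(n,\calP)$, contradicting the definition of $\ex(n,\calP)$.

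Next, with $G$ downward closed, Corollary~\ref{trifree3} yields $sub(G)=2^{e(\Psi(G))}$. Since $G$ is $\calP$-random, Lemma~\ref{trifree2} gives $\Psi(G)\unlhd_p G$, so $\Psi(G)\in\calP$; in particular $\Psi(G)$ is an $F$-free hypergraph on $[n]$. By Theorem~\ref{trifree5}, $e(\Psi(G))\leq e(n)$. For the reverse direction, pick any $H\in E(n)$; then $H\in\calP$ (balanced tripartite hypergraphs are $F$-free), so by Corollary~\ref{corhg1}, $\Psi^{-1}(H)\in\calR([n],\calP)$ is downward closed. Applying Corollary~\ref{trifree3} and Observation~\ref{hgob1},
\[
\ex(n,\calP)\geq sub(\Psi^{-1}(H))=2^{e(\Psi(\Psi^{-1}(H)))}=2^{e(H)}=2^{e(n)}.
\]
Combining $\ex(n,\calP)=sub(G)=2^{e(\Psi(G))}\geq 2^{e(n)}$ with $e(\Psi(G))\leq e(n)$ forces $e(\Psi(G))=e(n)$. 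By Theorem~\ref{trifree5}, $\Psi(G)$ is isomorphic to an element of $E(n)$, and since $\Psi(G)$ already has vertex set $[n]$ and $E(n)$ consists of all balanced tripartite $3$-uniform hypergraphs on $[n]$, we conclude $\Psi(G)\in E(n)$ and $\ex(n,\calP)=2^{e(n)}$.

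Finally, for $\pi(\calP)$, formula $(\ref{noHG})$ gives $e(n)=\tfrac{n^3}{27}+o(n^3)$, and $\binom{n}{3}=\tfrac{n^3}{6}+o(n^3)$, so
\[
\pi(\calP)=\lim_{n\to\infty}\ex(n,\calP)^{1/\binom{n}{3}}=\lim_{n\to\infty}2^{e(n)/\binom{n}{3}}=2^{6/27}.
\]
The only place needing genuine input is the reduction to the downward closed case; once there, Bollob\'{a}s's extremal theorem does all the combinatorial work, and no further obstacle arises.
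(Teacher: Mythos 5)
Your proof is correct and follows essentially the same route as the paper: reduce to the downward closed case via Lemma~\ref{Gstarhg}, convert $sub(G)$ to $2^{e(\Psi(G))}$ via Corollary~\ref{trifree3}, use Corollary~\ref{corhg1} together with Bollob\'as's Theorem~\ref{trifree5} to pin down $\Psi(G)$ and $\ex(n,\calP)=2^{e(n)}$, and compute $\pi(\calP)$ from (\ref{noHG}). The only difference is cosmetic: you establish the two inequalities $e(\Psi(G))\leq e(n)$ and $\ex(n,\calP)\geq 2^{e(n)}$ directly and then invoke the uniqueness clause of Theorem~\ref{trifree5}, whereas the paper phrases the same comparison as a proof by contradiction.
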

\begin{proof}
Suppose $G\in \calR_{ex}([n],\calP)$.  By Porposition \ref{trifree2}, $\Psi(G)\unlhd_pG$, so since $G$ is $\calP$-random, we have $\Psi(G)\in \calP_n=F(n)$.  By Lemma \ref{Gstarhg}, if $G$ is not downward closed, then there is $G^*\in \calR([n],\calP)$ which is downward closed and such that $sub(G^*)\geq sub(G)2^{|\diff(G,G^*)|}$. Since $G$ not downward closed and $G^*$ is downward closed, $\diff(G,G^*)\neq \emptyset$ implies $sub(G^*)>sub(G)$, contradicting that $G\in \calR_{ex}([n],\calP)$.  Thus $G$ is downward closed, so Corollary \ref{trifree3} implies $sub(G)=2^{e(\Psi(G))}$.  Suppose towards a contradiction that $\Psi(G)\notin E(n)$.  Then by Theorem \ref{trifree5}, we have that for any $H\in E(n)$, $2^{e(H)}>2^{e(\Psi(G))}$.  By Corollary \ref{corhg1}, $\Psi^{-1}(H)$ is a downward closed element of $\calR([n],\calP)$.  By Corollary \ref{trifree5}, $sub(\Psi^{-1}(H))=2^{e(H)}>2^{e(\Psi(G))}=sub(G)$, contradicting that $G\in \calR_{ex}([n],\calP)$.  Thus we must have $\Psi(G)\in E(n)$.  Consequently, we have shown if $G\in \calR_{ex}([n],\calP)$, then 
$$
sub(G)=\ex(n,\calP)=2^{e(\Psi(G))}=2^{e(n)}.
$$
By definition of $\pi(\calP)=\lim_{n\rightarrow \infty}\ex(n,\calP)^{1/{n\choose 3}}$ and (\ref{noHG}), this implies$\pi(\calP)=2^{6/27}$.
\end{proof}

We now can give a very quick proof of Theorem \ref{fghext}.
\vspace{3mm}

\noindent {\bf Proof of Theorem \ref{fghext}.} Proposition \ref{trifree4} and Theorem \ref{enumeration} imply the following.
$$
|\calP_n|=|F(n)|=\pi(\calP)^{{n\choose 3}+o(n^3)}=2^{\frac{6}{27}{n\choose 3}+o(n^3)}=2^{\frac{n^3}{27}+o(n^3)}.
$$
\qed

We now prove $\calP$ has a stability theorem in the sense of Definition \ref{stabdef}, and use this along with Theorem \ref{stab} to prove Theorem \ref{trifree10}.

\begin{lemma}\label{trifree8}
Suppose $G$ and $G'$ are in $\calR([n],\calP)$ and are downward closed.  Then for all $\delta>0$, $\dist(G,G')\leq \delta$ if and only if $|\Delta(\Psi(G),\Psi(G')|\leq \delta{n\choose 3}$.
\end{lemma}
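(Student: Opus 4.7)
The plan is to establish the set-level identity $\diff(G,G') = \Delta(\Psi(G),\Psi(G'))$ as subsets of ${[n]\choose 3}$, from which the biconditional follows immediately after dividing both sides by ${n\choose 3}$ using Definition \ref{deltaclosedef1}.

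First I will record that both types $p_1(x_1,x_2,x_3), p_2(x_1,x_2,x_3) \in S_3(\calP)$ are fully symmetric under $Perm(3)$: from the defining generator sets $q_1, q_2$, each of them asserts either $E(\mu(\xbar))$ for all $\mu\in Perm(3)$ or $\neg E(\mu(\xbar))$ for all $\mu\in Perm(3)$ (together with pairwise inequality), so $p_i(\xbar)=p_i(\mu(\xbar))$ as sets of formulas. By Definition \ref{templatedef}(2), in any $\calL_{\calP}$-template $M$ the truth value of $R_{p_i}(\abar)$ is therefore independent of the ordering of $\abar$, and the substituted formula $p_i(c_A)$ is well defined by choosing any enumeration of $A$. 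Consequently, for each $A\in {[n]\choose 3}$, $Ch_G(A)$ is a nonempty subset of $\{p_1(c_A), p_2(c_A)\}$, and likewise for $G'$.

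Next I will invoke the downward closed hypothesis, which gives $G\models R_{p_1}(\abar)\rightarrow R_{p_2}(\abar)$ for every $\abar\in [n]^3$. This rules out the possibility $Ch_G(A)=\{p_1(c_A)\}$, so together with completeness of the template there are only two cases: either $Ch_G(A) = \{p_1(c_A),p_2(c_A)\}$, which by the definition of $\Psi$ occurs iff $A \in E(\Psi(G))$, or $Ch_G(A) = \{p_2(c_A)\}$, which occurs iff $A \notin E(\Psi(G))$. The identical dichotomy applies to $G'$; in particular, for downward closed templates $Ch_G(A)$ is completely determined by whether $A$ is an edge of $\Psi(G)$.

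Finally, Lemma \ref{templatelem} says $A\in \diff(G,G')$ iff $Ch_G(A)\neq Ch_{G'}(A)$. Under the dichotomy just established, this is equivalent to $A$ lying in exactly one of $E(\Psi(G))$ and $E(\Psi(G'))$, i.e. $A \in \Delta(\Psi(G),\Psi(G'))$. Hence $\diff(G,G') = \Delta(\Psi(G),\Psi(G'))$ as subsets of ${[n]\choose 3}$, and therefore $\dist(G,G') = |\Delta(\Psi(G),\Psi(G'))|/{n\choose 3}$, which yields the claim. There is no substantive obstacle: the proof is pure bookkeeping once the full $Perm(3)$-symmetry of $p_1, p_2$ and the downward closed condition have reduced the possible choice-sets $Ch_G(A)$ to a two-valued invariant matched exactly to edge-membership in $\Psi(G)$.
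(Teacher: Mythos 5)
Your proof is correct and follows essentially the same route as the paper: both reduce membership in $\diff(G,G')$ to the condition $Ch_G(A)\neq Ch_{G'}(A)$ via Lemma \ref{templatelem}, then use the downward closed hypothesis to identify the choice sets with edge-membership in $\Psi(G)$, $\Psi(G')$. You simply spell out the two-valued dichotomy for $Ch_G(A)$ that the paper leaves implicit, which is a fine amount of extra detail.
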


\begin{proof}
Let $\Psi(G)=([n],E)$ and $\Psi(G')=([n],E')$.  Because $G$ and $G'$ are both $\calL_{\calP}$-templates, Lemma \ref{templatelem} implies that for all $(x,y,z)\in [n]^3$, $xyz\in \diff(G,G')$ if and only if $Ch_G(xyz)\neq Ch_{G'}(xyz)$.  By definition of $\Psi$ and because $G$ and $G'$ are downward closed, for all $(x,y,z)\in [n]^3$, we have that $Ch_G(xyz)\neq Ch_{G'}(xyz)$ if and only if $xyz\in E\Delta E'$.  This shows $\diff(G,G')=\Delta(\Psi(G), \Psi(G'))$.  This shows $\dist(G,G')\leq \delta$ if and only if $|\Delta(\Psi(G), \Psi(G'))|\leq \delta{n\choose 3}$.
\end{proof}

\begin{proposition}\label{trifree9}
$\calP$ has a stability theorem.
\end{proposition}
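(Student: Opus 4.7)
The plan is to follow the template suggested by the structure of $\calL_{\calP}$ and the bijective correspondence between downward closed templates and hypergraphs in $\calP_n$: given $G\in \calR([n],\calP)$ with $sub(G)\geq \ex(n,\calP)^{1-\epsilon}$, first replace $G$ by a nearby downward closed $G^{*}\in \calR([n],\calP)$, then invoke the combinatorial stability theorem (Theorem \ref{trifree7}) for $\Psi(G^{*})\in F(n)$ to find a balanced tripartite hypergraph close to $\Psi(G^{*})$, and finally pull it back via $\Psi^{-1}$ to obtain an element of $\calR_{ex}([n],\calP)$ close to $G$.

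Fix $\delta>0$. First I would use Lemma \ref{Gstarhg} to produce a downward closed $G^{*}\in \calR([n],\calP)$ with $sub(G^{*})\geq sub(G)\,2^{|\diff(G,G^{*})|}$. Since Proposition \ref{trifree4} gives $\ex(n,\calP)=2^{e(n)}$ and $sub(G^{*})\leq \ex(n,\calP)$, this yields
$$2^{|\diff(G,G^{*})|}\leq \ex(n,\calP)/sub(G)\leq 2^{\epsilon e(n)},$$
hence $|\diff(G,G^{*})|\leq \epsilon e(n)$. Using $e(n)=n^{3}/27+o(n^{3})$ and $\binom{n}{3}=n^{3}/6+o(n^{3})$, we get $\dist(G,G^{*})\leq (2\epsilon/9)+o(1)$, which can be forced below $\delta/2$ by choosing $\epsilon$ small enough and $n$ large enough.

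Next, since $G^{*}$ is downward closed, Corollary \ref{trifree3} gives $sub(G^{*})=2^{e(\Psi(G^{*}))}$, so $e(\Psi(G^{*}))\geq (1-\epsilon)e(n)\geq (1-2\epsilon)\tfrac{n^{3}}{27}$ for $n$ large. By Lemma \ref{trifree2}, $\Psi(G^{*})\unlhd_p G^{*}$, and since $G^{*}$ is $\calP$-random, $\Psi(G^{*})\in F(n)$. Applying Theorem \ref{trifree7} with parameter $\delta/2$ (choosing $\epsilon$ small enough to meet the hypothesis there), I obtain $H\in E(n)$ with $|\Delta(\Psi(G^{*}),H)|\leq (\delta/2)\binom{n}{3}$. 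By Corollary \ref{corhg1}, $\Psi^{-1}(H)$ is a downward closed element of $\calR([n],\calP)$, and Corollary \ref{trifree3} together with $\Psi(\Psi^{-1}(H))=H$ (Observation \ref{hgob1}) gives
$$sub(\Psi^{-1}(H))=2^{e(H)}=2^{e(n)}=\ex(n,\calP),$$
so $\Psi^{-1}(H)\in \calR_{ex}([n],\calP)$.

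Finally, since both $G^{*}$ and $\Psi^{-1}(H)$ are downward closed, Lemma \ref{trifree8} gives $\dist(G^{*},\Psi^{-1}(H))=|\Delta(\Psi(G^{*}),H)|/\binom{n}{3}\leq \delta/2$. Combining the two distance estimates via the triangle inequality,
$$\dist(G,\Psi^{-1}(H))\leq \dist(G,G^{*})+\dist(G^{*},\Psi^{-1}(H))\leq \delta/2+\delta/2=\delta,$$
which exhibits the required $\delta$-close extremal template. The only real subtlety is aligning the two notions of closeness in Step 3 (distances between $\calL_{\calP}$-templates versus symmetric differences of hypergraphs), which is handled cleanly by Lemma \ref{trifree8} precisely because downward closure makes the map $G\mapsto \Psi(G)$ an isometry; no other step presents a serious obstacle.
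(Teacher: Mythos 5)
Your proposal is correct and follows essentially the same route as the paper's proof: pass to a downward closed $G^{*}$ via Lemma \ref{Gstarhg}, bound $\dist(G,G^{*})$ using the subpattern-count inequality, apply the combinatorial stability result (Theorem \ref{trifree7}) to $\Psi(G^{*})$, pull the extremal hypergraph back with $\Psi^{-1}$ and Corollary \ref{corhg1}, and finish with Lemma \ref{trifree8} and the triangle inequality. The only difference is cosmetic bookkeeping (you use $\ex(n,\calP)=2^{e(n)}$ exactly, whereas the paper bounds $\ex(n,\calP)$ by $\pi(\calP)^{2n^{3}}$), which changes nothing substantive.
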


\begin{proof}
Fix $\delta>0$ and choose $\epsilon<\delta /4$ sufficiently small so that for sufficiently large $n$, the conclusion of Proposition \ref{trifree7} holds for $\delta/4$.  Suppose $G\in \calR([n],\calP)$ satisfies $sub(G)\geq ex(n,\calP)^{1-\epsilon}$.  We want to show there exists an element of $\calR_{ex}([n],\calP)$ which is $\delta$-close to $G$ in the sense of Definition \ref{deltaclose1}.  Choose $G^*$ to be a downward closed element of $\calR([n],\calP)$ as in Lemma \ref{Gstarhg}.  Then 
\begin{align}\label{ffree}
\ex(n,\calP)\geq sub(G^*)\geq sub(G)2^{|\diff(G,G^*)|}\geq ex(n,\calP)^{1-\epsilon}2^{|\diff(G,G^*)|}.
\end{align}
Assume $n$ is sufficiently large so that $\ex(n,\calP)\leq \pi(\calP)^{2n^3}$.  Rearranging (\ref{ffree}), we obtain that $2^{|\diff(G,G^*)|}\leq \ex(n,\calP)^{\epsilon}\leq \pi(\calP)^{2\epsilon n^3}$.  Taking logs of both sides and rearranging, we obtain that $|\diff(G,G^*)|\leq C\epsilon n^3$ where $C=2\log(\pi(\calP))/\log 2$.  Assume we chose $\epsilon$ sufficiently small so that $C\epsilon \leq \delta/4$.   Then $|\diff(G,G^*)|\leq \delta/4{n\choose 3}$, so $\dist(G,G^*)\leq \delta/4$.  

Proposition \ref{trifree2} implies $\Psi(G^*)\unlhd_pG^*$, so since $G^*$ is $\calP$-random, $\Psi(G^*)\in \calP_n$.  Corollary \ref{trifree3} and (\ref{noHG})  imply 
$$
2^{e(\Psi(G^*))}=sub(G^*)\geq sub(G)\geq 2^{(1-\epsilon)\frac{6}{27}{n\choose 3}}.
$$
This implies $e(\Psi(G^*))\geq (1-\epsilon)\frac{6}{27}{n\choose 3}$, so by Proposition \ref{trifree7}, $\Psi(G^*)$ is $\delta/2$-close in the sense of Definition \ref{deltaclosehg} to some $H\in E(n)$.  By Corollary \ref{corhg1}, $\Psi^{-1}(H)$ is a downward closed element of $\calR([n],\calP)$. Thus Lemma \ref{trifree8} implies that because $\Psi(G^*)$ and $\Psi(\Psi^{-1}(H))=H$ are $\delta/2$-close in the sense of Definition \ref{deltaclosehg}, $\dist(G^*,\Psi^{-1}(H))\leq \delta/2$.  By Corollary \ref{trifree3}, Proposition \ref{trifree4}, and because $H\in E(n)$, $sub(\Psi^{-1}(H))=2^{e(H)}=\ex(n,\calP)$.  Thus $\Psi^{-1}(H)\in \calR_{ex}([n],\calP)$ and 
$$
\dist(G,\Psi^{-1}(H))\leq \dist(G,G^*)+\dist(G^*,\Psi^{-1}(H))\leq \delta.
$$
This finishes the proof.
\end{proof}

\noindent {\bf Proof of Theorem \ref{trifree10}.}
Fix $\delta>0$.  Choose $\beta>0$ such that Theorem \ref{stab} holds for $\delta$.  Proposition \ref{trifree9} and Theorem \ref{stab} imply that for sufficiently large $n$, 
$$
\frac{|\calP_n\setminus E^{\delta}(n,\calP)|}{|\calP_n|}\leq 2^{-\beta {n\choose 3}},
$$
where recall $E(n,\calP)=\{G\in \calP_n: G\unlhd_pG'$ for some $G'\in \calR_{ex}([n],\calP)\}$.  Thus to finish the proof, if suffices to show that $\mathbb{E}(n)=E(n,\calP)$.  By Proposition \ref{trifree4}, if $G'\in \calR_{ex}([n],\calP)$, then $\Psi(G')\in E(n)$. Lemma \ref{trifree4} implies $G\unlhd_pG'$ if and only if $G$ is a subhypergraph of $\Psi(G')$.  Thus 
$$
E(n,\calP)=\{G\in \calP_n: G\text{ is a subhypergraph of some }G'\in E(n)\}.
$$
Therefore by definition,  $E(n,\calP)=\mathbb{E}(n)$.
\qed

\bibliography{/Users/Lab/terry/Desktop/science1.bib}
\bibliographystyle{amsplain}

\end{document}